\newtheorem{theorem}{Theorem}[section]
\newtheorem{proposition}[theorem]{Proposition}
\newtheorem{lemma}[theorem]{Lemma}
\newtheorem{claim}{Claim}
\newtheorem{corollary}[theorem]{Corollary}
\newtheorem{conjecture}[theorem]{Conjecture}
\newtheorem{definition}[theorem]{Definition}
\newtheorem{example}[theorem]{Example}
\newtheorem{question}[theorem]{Question}
\newtheorem{remark}[theorem]{Remark}
\def\a{\alpha}
\def\F{\mathbb{F}}
\def\R{\mathbb{R}}
\DeclareMathOperator{\rk}{rk}
\DeclareMathOperator{\sprk}{sprk}
\DeclareMathOperator{\erk}{erk}
\DeclareMathOperator{\drk}{drk}
\DeclareMathOperator{\tr}{tr}
\DeclareMathOperator{\etr}{etr}
\DeclareMathOperator{\dtr}{dtr}
\DeclareMathOperator{\sr}{sr}
\DeclareMathOperator{\esr}{esr}
\DeclareMathOperator{\dsr}{dsr}
\DeclareMathOperator{\pr}{pr}
\DeclareMathOperator{\epr}{epr}
\DeclareMathOperator{\dpr}{dpr}
\DeclareMathOperator{\Rrk}{\mathit{R}rk}
\DeclareMathOperator{\eRrk}{e\mathit{R}rk}
\DeclareMathOperator{\dRrk}{d\mathit{R}rk}
\DeclareMathOperator{\frank_1}{frank_1}
\DeclareMathOperator{\efrank_1}{efrank_1}
\DeclareMathOperator{\trp}{trp}
\DeclareMathOperator{\Rminusrk}{\mathit{R}_{-}rk}
\DeclareMathOperator{\Rcomprk}{\mathit{R}_{comp}rk}
\DeclareMathOperator{\Rdashrk}{\mathit{R^\prime}rk}
\DeclareMathOperator{\eRcomprk}{e\mathit{\Rcomp}rk}
\DeclareMathOperator{\eRdashrk}{e\mathit{R^\prime}rk}
\DeclareMathOperator{\Rcomp}{\mathit{R}_{\mathrm{comp}}}
\DeclareMathOperator{\Rnew}{\mathit{R}_{\mathrm{new}}}
\DeclareMathOperator{\Ronerk}{\mathit{R}_{1}rk}
\DeclareMathOperator{\Rtwork}{\mathit{R}_{2}rk}
\DeclareMathOperator{\Rprodrk}{(\mathit{R}_1 \times \mathit{R}_{2})rk}
\DeclareMathOperator{\scc}{sc}
\DeclareMathOperator{\lc_3}{lc_3}
\DeclareMathOperator{\ar}{ar}
\DeclareMathOperator{\bias}{bias}
\DeclareMathOperator{\matfrank}{frank}
\DeclareMathOperator{\Id}{Id}
\title{High-rank subtensors of high-rank tensors}
\author{Thomas Karam \footnote{Mathematical Institute, University of Oxford. Most of this research (in particular, the first version of the manuscript) was carried out at the Department of Pure Mathematics and Mathematical Sciences, University of Cambridge. Email: \texttt{thomas.karam@maths.ox.ac.uk}.}}
\begin{document}
\maketitle


\begin{abstract}

Let $d \ge 2$ be a positive integer. We show that for a class of notions $R$ of rank for order-$d$ tensors, which includes in particular the tensor rank, the slice rank and the partition rank, there exist functions $F_{d,R}$ and $G_{d,R}$ such that if an order-$d$ tensor has $R$-rank at least $G_{d,R}(l)$ then we can restrict its entries to a product of sets $X_1 \times \dots \times X_d$ such that the restriction has $R$-rank at least $l$ and the sets $X_1, \dots, X_d$ each have size at most $F_{d,R}(l)$. Furthermore, our proof methods allow us to show that under a very natural condition we can require the sets $X_1, \dots, X_d$ to be pairwise disjoint.

\end{abstract}

\tableofcontents

\section{Introduction}\label{section: Introduction}

The last few years have seen a sequence of successes in using notions of ranks for higher-dimensional tensors to solve combinatorial problems. A central idea from the breakthrough solution to the cap-set problem by Ellenberg and Gijswijt \cite{Ellenberg Gijswijt}, which was based on a technique of Croot, Lev, and Pach \cite{Croot Lev Pach}, was reformulated by Tao \cite{Tao} in terms of the notion of slice rank for tensors, leading to what is now known as the slice rank polynomial method. The slice rank was further studied by Sawin and Tao \cite{Sawin and Tao}, and bounds shown there on the slice rank involving orderings on the coordinates were later used by Sauermann \cite{Sauermann} to prove under suitable conditions the existence of solutions with pairwise distinct variables to systems of equations in subsets of $\F_p^n$ that are not exponentially sparse. Another fruitful generalisation of the idea underlying the slice rank has been the partition rank, which was defined by Naslund \cite{Naslund} in order to prove a polynomial upper bound on the size of subsets of $\F_{p^r}^n$ not containing any $k$-right corners (with $p$ a prime integer and $r \ge 1$ a positive integer) and very recently used again by Naslund \cite{Naslund recent} to prove exponential lower bounds on the chromatic number of $\R^n$ with multiple forbidden distances.

In this paper we will focus on high-rank subtensors of tensors: it is a standard fact from linear algebra that if $A$ is a matrix of rank $k$ then $A$ has a $k \times k$ submatrix with rank $k$, and we will study here the extent to which this statement can be generalised to notions of rank for higher-order tensors, in particular to the tensor rank, to the slice rank and to the partition rank. The results that we obtain in this direction as well as the methods that we use in their proofs will also allow us to prove that under a very natural assumption we can find a subtensor such that the coordinates take values in pairwise disjoint sets. As we explain in a few paragraphs, the formulation of this result also arises naturally as an analogue of the standard inequality that every oriented graph has a bipartition such that at least a quarter of the edges go from the first part to the second.

We now define the relevant notions of higher-dimensional ranks for tensors and state our main theorems.

\begin{definition}

Let $d \ge 2$ be an integer and let $\mathbb{F}$ be a field. An \emph{order-$d$ tensor} over $\mathbb{F}$ is a function $T: Q_1 \times \dots \times Q_d \rightarrow \mathbb{F}$ for some finite subsets $Q_1, \dots, Q_d$ of $\mathbb{N}$. \end{definition}

Throughout this paper we shall use the following notation. We write $\F$ for an arbitrary field. If $d \ge 2$ a positive integer, then $Q_1, \dots, Q_d$ will always stand for finite subsets of $\mathbb{N}$, even if this is not explicitly indicated. Given an order-$d$ tensor $T: Q_1 \times \dots \times Q_d \rightarrow \F$ and subsets $X_1 \subset Q_1, \dots, X_d \subset Q_d$, we shall write $T(X_1 \times \dots \times X_d)$ for the restriction $T': X_1 \times \dots \times X_d \rightarrow \F$ of $T$. For each positive integer $n$ we write $\lbrack n \rbrack$ for the set $\{1,2,\dots,n\}$. Given $x\in Q_1 \times \dots \times Q_d$, and $I\subset\lbrack d \rbrack$, we write $x(I)$ for the restriction $(x_{\a}: \a \in I)$ of $x$ to its coordinates in $I$. If $d,s \ge 1$ are positive integers, $T_1, \dots, T_s$ are order-$d$ tensors over $\mathbb{F}$, and $a \in \mathbb{F}^s$, then we write $a.T$ for the linear combination $\sum_{i=1}^s a_i T_i$. Given a bipartition $\{I,J\}$ of $\lbrack d \rbrack$ and points $y: \prod_{\a \in I} Q_{\a} \rightarrow \mathbb{F}$ and $z: \prod_{\a \in J} Q_{\a} \rightarrow \mathbb{F}$, we write $T(y,z)$ for the value $T(x)$ where the element $x \in \prod_{\a=1}^d Q_{\a}$ is defined by $x_\a = y_\a$ for each $\a \in I$ and $x_\a = z_\a$ for each $\a \in J$. If $T: \prod_{\a=1}^d Q_{\a} \rightarrow \mathbb{F}$ is an order-$d$ tensor, $I \subset \lbrack d \rbrack$ and $y \in \prod_{\a \in I^c} Q_{\a}$, then we write $T_y: \prod_{\a \in I} Q_{\a} \rightarrow \mathbb{F}$ for the order-$|I|$ tensor defined by $T_y(z) = T(y,z)$, with $T(y,z)$ defined as in the previous sentence. Given arbitrary subsets $I,J$ of $\lbrack d \rbrack$, and tensors $T_1: \prod_{\a \in I} Q_{\a} \rightarrow \mathbb{F}$ and $T_2: \prod_{\a \in J} Q_{\a} \rightarrow \mathbb{F}$, we write $T_1.T_2: \prod_{\a \in I \Delta J} Q_{\a} \rightarrow \mathbb{F}$ for the tensor defined by \[ (T_1.T_2)(y,z) = \sum_{x \in \prod_{\a \in I \cap J} Q_{\a}} T_1(y,x) T_2(z,x) \] for each $y \in \prod_{\a \in I \setminus J} Q_{\a}$ and each $z \in \prod_{\a \in J \setminus I} Q_{\a}$.

\begin{definition} \label{tensor, slice, partition rank definition}

Let $d \ge 2$ be an integer, and let $T$ be an order-$d$ tensor. We say that $T$ has \emph{tensor rank at most $1$} if there exist functions $a_{\a}: Q_{\a} \rightarrow \mathbb{F}$ for each $\a \in \lbrack d \rbrack$ such that \[ T(x_1,\dots ,x_d) = a_1(x_1)\dots a_d(x_d) \] for every $(x_1,\dots ,x_d) \in Q_1 \times \dots \times Q_d$.

We say that $T$ has \emph{slice rank at most $1$} if there exist $\a \in \lbrack d \rbrack$ and functions $a: Q_{\a} \rightarrow \mathbb{F}$ and $b: \prod_{\a' \in \lbrack d \rbrack, \a' \neq \a} Q_{\a'} \rightarrow \mathbb{F}$ such that we can write \[ T(x_1,\dots ,x_d) = a(x_{\a})b(x_1,\dots ,x_{\a-1},x_{\a+1},\dots ,x_d) \] for every $(x_1,\dots ,x_d) \in Q_1 \times \dots \times Q_d$.

We say that $T$ has \emph{partition rank at most $1$} if there exist a bipartition $\{I,J\}$ of $\lbrack d \rbrack$ with $I,J$ both non-empty and functions $a: \prod_{\a \in I} Q_\a \rightarrow \mathbb{F}$ and $b: \prod_{\a \in J} Q_\a \rightarrow \mathbb{F}$ such that we can write \[ T(x_1,\dots ,x_d) = a(x(I)) b(x(J)) \] for every $(x_1,\dots ,x_d) \in Q_1 \times \dots \times Q_d$.

We say that the \emph{tensor rank} (resp. \emph{slice rank}, resp. \emph{partition rank}) of $T$ is the smallest nonnegative integer $k$ such that there exist tensors $T_1,\dots ,T_k$ each of tensor rank at most $1$ (resp. slice rank at most $1$, resp. partition rank at most $1$) and such that $T = T_1 + \dots  + T_k$. We denote by $\tr T$ the tensor rank of $T$, by $\sr T$ the slice rank of $T$, and by $\pr T$ the partition rank of $T$.

\end{definition}

Whenever $d \ge 2$ is a positive integer and $T$ is an order-$d$ tensor, we always have  $\pr T \le \sr T \le \tr T$: this follows from the fact that every order-$d$ tensor with tensor rank at most $1$ also has slice rank at most $1$ and every order-$d$ tensor with slice rank at most $1$ also has partition rank at most $1$. For $d=2$ the three notions coincide and are the same as the usual notion of rank for matrices. For $d=3$ the slice and partition ranks are the same, but are smaller than the tensor rank in general. For $d \ge 4$ the three notions are pairwise distinct in general. In Section \ref{section: Spanning} we will show that the fact that every matrix of rank $k$ has a $k \times k$ subtensor with rank $k$ generalises in the best way one could hope for to the tensor rank for all $d \ge 2$: every order-$d$ tensor $T$ with tensor rank $k$ has a $k \times k \times \dots  \times k$ ($d$ times) subtensor with tensor rank $k$. In Section \ref{section: Counterexample} we will however give an example which shows that this becomes false for the order-$3$ slice rank. As we will show in Section \ref{section: Slice rank for order-3 tensors} it will nonetheless be true that if an order-$3$ tensor is such that all its subtensors with size at most $48l^3$ have slice rank at most $l$ then the whole tensor has slice rank at most $51l^3$. In Section \ref{section: General case} we will show that such an asymptotic subtensors property holds for the slice and partition rank for all $d \ge 2$ as well as for a more general class of notions of rank which we will now define before stating this asymptotic result.

\begin{definition} \label{Rrk definition}

Let $d \ge 2$ be an integer, and let $R$ be a non-empty family of partitions of $\lbrack d \rbrack$. We say that an order-$d$ tensor $T$ has \emph{$R$-rank at most $1$} if there exist a partition $P \in R$ and for each $I \in P$ a function $a_I: \prod_{\a \in I} Q_{\a} \rightarrow \mathbb{F}$ such that we can write \[ T(x_1,\dots ,x_d) = \prod_{I \in P} a_I(x(I)) \] for every $(x_1,\dots ,x_d) \in Q_1 \times \dots \times Q_d$. We say that the \emph{$R$-rank} of $T$ is the smallest nonnegative integer $k$ such that there exist order-$d$ tensors $T_1,\dots ,T_k$ with $R$-rank at most $1$ such that $T = T_1 + \dots  + T_k$. \end{definition}

We will denote by $\Rrk T$ the $R$-rank of $T$. We can check that for every $d \ge 2$, the $R$-rank specialises to the tensor rank, to the slice rank, and to the partition rank, by taking respectively \begin{align*} 
R & = \{\{\{1\},\{2\},\dots, \{d\}\}\} \\
R & = \{\{\{1\},\{1\}^c\}, \{\{2\},\{2\}^c\}, \dots  , \{\{d\},\{d\}^c\}\} \\
R & = \{\{I,J\}: \{I,J\} \text{ a bipartition of } \lbrack d \rbrack \text{ with } I,J \neq \emptyset\}. 
\end{align*}

We are now in a position to state our first main theorem.

\begin{theorem} \label{Subtensors theorem} Let $d \ge 2$ be an integer, and let $R$ be a non-empty family of partitions of $\lbrack d \rbrack$. There exist functions $F_{d,R}: \mathbb{N} \rightarrow \mathbb{N}$ and $G_{d,R}: \mathbb{N} \rightarrow \mathbb{N}$ such that if $T$ is an order-$d$ tensor with $\Rrk T \ge G_{d,R}(l)$ then there exist $X_1 \subset Q_1, \dots, X_d \subset Q_d$ each with size at most $F_{d,R}(l)$ such that $\Rrk T(X_1 \times \dots \times X_d) \ge l$.\end{theorem}

Another independent starting point is the following standard statement.

\begin{proposition} \label{Graph bipartition proposition} Let $G$ be an oriented graph with vertex set $V$. There exists an ordered bipartition $(X,Y)$ of $V$ such that the number of edges $(u,v) \in X \times Y$ of $G$ is at least a quarter of the total number of edges of $G$. \end{proposition}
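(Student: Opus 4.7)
The plan is to apply a standard probabilistic method argument. I would assign each vertex of $V$ independently to $X$ with probability $1/2$ and to $Y$ with probability $1/2$, thereby producing a random ordered bipartition $(X,Y)$ of $V$. For any fixed directed edge $(u,v)$ of $G$, the events $u \in X$ and $v \in Y$ are determined by independent coin flips, each occurring with probability $1/2$, so the probability that $(u,v) \in X \times Y$ equals exactly $1/4$.

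By linearity of expectation, the expected number of directed edges of $G$ lying in $X \times Y$ is therefore exactly $|E(G)|/4$. Consequently at least one realisation of the random bipartition achieves this expected value or more, which yields the desired ordered bipartition $(X,Y)$ with at least $|E(G)|/4$ edges going from $X$ to $Y$.

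There is essentially no obstacle to this argument: it is a textbook application of the probabilistic method, and the constant $1/4$ is optimal, as is immediately witnessed by the example of a single directed edge. If one insisted on both parts of the bipartition being non-empty, the two trivial assignments (all vertices to $X$, or all to $Y$) could be excluded from the sample space at a negligible cost whenever $|V| \ge 2$, and the conclusion would remain unaffected for any graph with at least one edge.
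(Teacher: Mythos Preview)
Your proof is correct; this is indeed the standard probabilistic argument. The paper does not give an explicit proof of this proposition, merely calling it a ``standard statement,'' but when it later proves the hypergraph generalisation (Lemma~\ref{Disjoint support size}) it uses exactly the same random-assignment and linearity-of-expectation argument that you describe, so your approach matches the paper's.
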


This statement can be seen to be equivalent to the following: given a matrix $A:\lbrack n \rbrack \times \lbrack n \rbrack \rightarrow \mathbb{F}$ there exist disjoint subsets $X,Y$ of $\lbrack n \rbrack$ such that the restriction $A(X \times Y)$ has at least a quarter as many support elements as $A$ has outside the diagonal. A first step will be to obtain an analogue of this statement for ranks of matrices: this will be the aim of Section \ref{section: Disjoint rank for matrices}. We will then generalise this analogue in Section \ref{section: General case} to higher-order tensors. We note that Proposition \ref{Graph bipartition proposition} and its generalisation to uniform hypergraphs will themselves be involved in the proof of the general higher-order tensor case.

Let $E$ be the set of points $(x_1, \dots, x_d) \in Q_1 \times \dots \times Q_d$ that do \emph{not} have pairwise distinct coordinates. The following definition will be central to our second main result.

\begin{definition} \label{Essential and disjoint rank definition} Let $d \ge 2$ be an integer, let $R$ be a non-empty family of partitions of $\lbrack d \rbrack$. For $T: Q_1 \times \dots \times Q_d \rightarrow \mathbb{F}$ an order-$d$ tensor we define the \emph{essential $R$-rank} \[ \eRrk T = \min_V \Rrk (T+V) \] where the minimum is taken over all order-$d$ tensors $V: Q_1 \times \dots \times Q_d \rightarrow \F$ with support contained inside $E$, and the \emph{disjoint $R$-rank} \[ \dRrk T= \max_{X_1,\dots ,X_d} \Rrk (T(X_1 \times \dots \times X_d)) \] where the maximum is taken over all $X_1 \subset Q_1, \dots, X_d \subset Q_d$ with $X_1,\dots ,X_d$ pairwise disjoint.

\end{definition}

In the $d=2$ case we will write $\erk$ and $\drk$ for respectively the associated essential rank and the disjoint rank corresponding to the usual notion of rank for matrices. For general $d \ge 2$, in the special cases $R = \tr$, $\sr$, $\pr$, we will respectively write $\etr$, $\esr$, $\epr$ for the associated essential $R$-rank and $\dtr$, $\dsr$, $\dpr$ for the associated disjoint $R$-rank.

It seems worthwhile to compare the essential $R$-rank with the disjoint $R$-rank, as it is straightforward to show that a tensor has essential $R$-rank equal to $0$ if and only if it has disjoint $R$-rank equal to $0$: the corresponding tensors are the tensors supported inside $E$. Moreover, we can show that the disjoint $R$-rank is at most the essential $R$-rank.

\begin{lemma} Let $d \ge 2$ be an integer, and let $R$ be a non-empty family of partitions of $\lbrack d \rbrack$. For every order $d$ tensor $T: Q_1 \times \dots \times Q_d \rightarrow \F$ we have \[\dRrk T \le \eRrk T.\] \end{lemma}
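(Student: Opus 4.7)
The plan is to establish, for every choice of pairwise disjoint sets $X_1 \subset Q_1, \dots, X_d \subset Q_d$ and every tensor $V : Q_1 \times \dots \times Q_d \to \F$ with support inside $E$, the chain
\[ \Rrk T(X_1 \times \dots \times X_d) = \Rrk (T+V)(X_1 \times \dots \times X_d) \le \Rrk(T+V). \]
Taking the minimum of the right-hand side over admissible $V$ and then the maximum of the left-hand side over admissible tuples $(X_1, \dots, X_d)$ will give $\dRrk T \le \eRrk T$ directly from Definition \ref{Essential and disjoint rank definition}.

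The first equality rests on the key observation that whenever the sets $X_1, \dots, X_d$ are pairwise disjoint, every tuple in $X_1 \times \dots \times X_d$ automatically has pairwise distinct coordinates, so $X_1 \times \dots \times X_d$ and $E$ are disjoint. Any tensor $V$ supported inside $E$ therefore vanishes identically on $X_1 \times \dots \times X_d$, so $T(X_1 \times \dots \times X_d) = (T+V)(X_1 \times \dots \times X_d)$.

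The inequality on the right is the general fact that restriction does not increase $R$-rank. If $S$ decomposes as $S_1 + \dots + S_k$ with each $S_i$ of $R$-rank at most $1$, written as $S_i(x) = \prod_{I \in P_i} a^{(i)}_I(x(I))$ for some $P_i \in R$ and factors $a^{(i)}_I : \prod_{\a \in I} Q_\a \to \F$, then restricting each factor $a^{(i)}_I$ to $\prod_{\a \in I} X_\a$ yields a decomposition of $S(X_1 \times \dots \times X_d)$ into $k$ summands of $R$-rank at most $1$ with the same partitions $P_i$, so $\Rrk S(X_1 \times \dots \times X_d) \le \Rrk S$. Applying this with $S = T+V$ completes the argument. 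There is no real obstacle here: the proof amounts to an unpacking of the definitions, the crucial point being that pairwise disjointness of the coordinate sets automatically forces avoidance of $E$.
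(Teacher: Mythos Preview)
Your proof is correct and follows essentially the same approach as the paper: both arguments fix pairwise disjoint $X_1,\dots,X_d$ and a tensor $V$ supported in $E$, observe that $V$ vanishes on $X_1\times\dots\times X_d$ since this product misses $E$, use that restriction does not increase $R$-rank, and then optimize over $V$ and over the $X_\alpha$. The only cosmetic difference is that the paper writes $T-V$ where you write $T+V$, which is immaterial since the tensors supported in $E$ are closed under negation.
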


\begin{proof} Let $X_1 \subset Q_1, \dots, X_d \subset Q_d$ be pairwise disjoint sets and let $V: Q_1 \times \dots \times Q_d \rightarrow \F$ be an order-$d$ tensor supported inside $E$. Since the support of $V$ is contained in $E$, which has empty intersection with $X_1 \times \dots \times X_d$, we have \[T(X_1 \times \dots \times X_d) = (T-V)(X_1 \times \dots \times X_d).\] Moreover, taking a restriction of a tensor cannot increase its $R$-rank, so \[\Rrk (T-V)(X_1 \times \dots \times X_d) \le \Rrk (T-V).\] Therefore \[\Rrk T(X_1 \times \dots \times X_d) \le \Rrk (T-V),\] so taking the maximum over the $d$-tuples $(X_1, \dots, X_d)$ of pairwise disjoint sets and the minimum over the tensors $V$ supported inside $E$ we obtain the desired inequality. \end{proof}

Our second main result is a weak converse to this last inequality.

\begin{theorem} \label{Disjoint rank subtensors theorem} Let $d \ge 2$ be an integer, and let $R$ be a non-empty family of partitions of $\lbrack d \rbrack$. There exists a function $G_{d,R}': \mathbb{N} \rightarrow \mathbb{N}$ such that if $T$ is an order-$d$ tensor such that $\eRrk T \ge G_{d,R}'(l)$ then we have $\dRrk T \ge l$. \end{theorem}

Theorem \ref{Disjoint rank subtensors theorem} is also an essential ingredient to the proof of the main result of the paper \cite{Gowers and K equidistribution}, where in joint work with Timothy Gowers we generalise a theorem of Green and Tao (\cite{Green and Tao}, Theorem 1.7) on the approximate equidistribution of polynomials with high rank over finite prime fields to the case where the variables are chosen (uniformly and independently) at random in an arbitrary non-empty subset of the field rather than in the whole field. However, the present paper will not focus on this application.

When $R$ is a family of partitions that corresponds to one of the notions of rank that we have already defined, it will be convenient to use the notation for the notion of rank instead of for the corresponding family of partitions when writing a pair $(d,R)$, and similarly for the indices of $F_{d,R}$, $G_{d,R}$, $G_{d,R}'$. For instance when $R$ corresponds to the partition rank for order-$d$ tensors we will write the pair $(d,R)$ as $(d,\pr)$, and the functions $F_{d,R}$, $G_{d,R}$, $G_{d,R}'$ as $F_{d,\pr}$, $G_{d,\pr}$, $G_{d,\pr}'$ respectively.

The methods involved in our proofs of Theorem \ref{Subtensors theorem} and of Theorem \ref{Disjoint rank subtensors theorem} are similar in several ways: those that we will use to prove the latter can be viewed as a moderate complication of those that we will use to prove the former.

\section*{Acknowledgements} 

I thank Timothy Gowers for several useful conversations, for discovering Proposition \ref{Counterexample} and constructing the corresponding example, which I publish here with his permission, and for helpful comments on an earlier draft of this paper. I thank Lisa Sauermann for a sketch that led to Proposition \ref{Disjoint rank for matrices} and which I use here with her permission.

\section{Optimal bounds for tensor rank subtensors using spanning techniques}\label{section: Spanning}

In this section we prove Theorem \ref{Subtensors theorem} in the case of the tensor rank. Given a vector space $V$ over a field $\mathbb{F}$, and a subset $U$ of $V$, we write $ \langle U \rangle$ for the linear subspace of $V$ spanned by $U$. The steps of the proof will be modelled after those of the following standard proof of the existence of full-rank submatrices of matrices. Let $A: Q_1 \times Q_2 \rightarrow \mathbb{F}$ be a matrix. 

\begin{enumerate}

\item We characterise $\rk A$ as the smallest $k \ge 0$ such that there exist vectors $L_1,\dots,L_k \in \F^{Q_2}$ such that $A_x \in \langle L_1, \dots, L_k \rangle$ for each $x \in Q_1$.

\item We extract a basis $ (A_x: x \in X)$ of the linear subspace $\langle A_x: x \in Q_1 \rangle$ spanned by all rows of $A$, with a set $X$ of size $\dim \langle A_x: x \in Q_1 \rangle = \rk A$.

\item It follows from the two previous steps that $\rk A(X \times Q_2) = \rk A$. Iterating again on the second coordinate we are done.
\end{enumerate}

As an analogue of step 1 we begin by expressing the tensor rank of an order-$d$ tensor in terms of a notion of rank for families of order-$(d-1)$ tensors. 

\begin{definition} \label{Spanning rank definition} Let $ V$ be a finite-dimensional vector space over $ \mathbb{F}$ and let $ S,F \subset V$. The \emph{spanning rank} $\sprk_S F$ of $F$ with respect to $S$ is the smallest nonnegative integer $k$ such that there exist vectors $v_1,\dots ,v_k \in S$ satisfying $ F \subset \langle v_1, \dots, v_s \rangle$. \end{definition}

When $ S = V$, we have $\sprk_S F = \dim \langle F \rangle$ for every $F \subset V$. For any fixed $F \subset V$, $S \mapsto \sprk_S F$ is decreasing for inclusion, so in particular we always have $\sprk_S F \ge \dim \langle F \rangle$.

\begin{lemma} \label{Connection between tensor rank and spanning rank} Let $ d \ge 3$ be a positive integer, and let $ S_{d-1}$ be the family of order-$(d-1)$ tensors $\prod_{\a=2}^d Q_{\a} \rightarrow \mathbb{F}$ with order-$(d-1)$ tensor rank equal to $1$. Then every order-$d$ tensor $T: \prod_{\a=1}^d Q_{\a} \rightarrow \mathbb{F}$ satisfies \[ \tr T = \sprk_{S_{d-1}} \{T_{x_1}: x_1 \in Q_1 \}. \] \end{lemma}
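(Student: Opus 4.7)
The plan is to establish the equality by checking both inequalities directly from the definitions, using the simple fact that rank-$1$ order-$d$ tensors are precisely tensor products of a vector in direction $1$ with a rank-$(\le 1)$ order-$(d-1)$ tensor on directions $2,\dots,d$.

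For the inequality $\tr T \ge \sprk_{S_{d-1}} \{T_{x_1}: x_1 \in Q_1\}$, I would start from a minimal decomposition $T = \sum_{i=1}^k a_1^{(i)} \otimes a_2^{(i)} \otimes \cdots \otimes a_d^{(i)}$ with $k = \tr T$ and fix the first coordinate to obtain
\[ T_{x_1} = \sum_{i=1}^k a_1^{(i)}(x_1)\, v_i, \qquad v_i := a_2^{(i)} \otimes \cdots \otimes a_d^{(i)}. \]
Each nonzero $v_i$ lies in $S_{d-1}$, so after discarding the zero summands (which do not affect the linear span) the remaining $v_i$'s witness $\sprk_{S_{d-1}} \{T_{x_1}: x_1 \in Q_1\} \le k$.

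For the reverse inequality, suppose $\sprk_{S_{d-1}} \{T_{x_1}: x_1 \in Q_1\} = k$, and pick rank-$1$ order-$(d-1)$ tensors $v_1,\dots,v_k \in S_{d-1}$, say $v_i = a_2^{(i)} \otimes \cdots \otimes a_d^{(i)}$, such that every $T_{x_1}$ lies in $\langle v_1,\dots,v_k \rangle$. For each $x_1 \in Q_1$, choose scalars $c_1(x_1),\dots,c_k(x_1) \in \mathbb{F}$ so that $T_{x_1} = \sum_{i=1}^k c_i(x_1) v_i$, and set $a_1^{(i)}(x_1) := c_i(x_1)$. Substituting back, I would recover
\[ T(x_1,\dots,x_d) = \sum_{i=1}^k a_1^{(i)}(x_1)\, a_2^{(i)}(x_2) \cdots a_d^{(i)}(x_d), \]
which exhibits $T$ as a sum of $k$ rank-$1$ order-$d$ tensors and gives $\tr T \le k$.

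The argument is almost forced once the correct set $S_{d-1}$ has been isolated, so there is no real obstacle; the only point meriting attention is that $S_{d-1}$ excludes the zero tensor, which is why I must drop zero summands when extracting spanning witnesses from a tensor-rank decomposition. The value of the lemma is precisely that it packages the recursive structure of tensor rank into the language of Definition \ref{Spanning rank definition}, so that the matrix-style three-step argument from Section \ref{section: Spanning} can then be iterated coordinate by coordinate to yield the desired minor.
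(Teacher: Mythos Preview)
Your proof is correct and follows essentially the same approach as the paper: both directions are obtained directly from the definitions by passing between a tensor-rank decomposition of $T$ and a spanning family of rank-$1$ slices for $\{T_{x_1}\}$. Your explicit handling of possible zero summands (since $S_{d-1}$ requires rank exactly $1$) is a minor refinement over the paper's wording, which simply says the extracted $T^i$ have rank ``at most $1$'', but the argument is the same.
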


\begin{proof} Let $k$ be an nonnegative integer. If $ \tr T = k$ then there exist functions $ a_{i,\a}: Q_{\a} \rightarrow \mathbb{F}$ for each $i \in \lbrack k \rbrack$ and each $\a \in \lbrack d \rbrack$ such that \[ T(x_1,\dots ,x_d) = \sum_{i=1}^k \prod_{\a=1}^d a_{i,\a}(x_{\a}) \] for every $(x_1, \dots, x_d) \in \prod_{\a=1}^d Q_{\a}$. Then the order-$(d-1)$ tensors $ T^i=\prod_{\a=2}^d a_{i,\a}$ for each $ i \in \lbrack k \rbrack$ each have order-$(d-1)$ tensor rank at most $ 1$ and $ T_{x_1} \in \langle T^1,\dots ,T^k \rangle$ for each $ x_1 \in Q_1$. Conversely if $\sprk_{S_{d-1}} T = k$ then there exist order-$(d-1)$ tensors $T^1,\dots ,T^k: \prod_{\a=2}^d Q_{\a} \rightarrow \mathbb{F}$ with tensor rank at most $1$ such that for each $ x_1 \in Q_1$ there exist $ a_1(x_1),\dots ,a_k(x_1) \in \F$ satisfying $ T_{x_1} = \sum_{i=1}^k a_i(x_1) T^i$, so we can write \[ T(x_1,\dots ,x_d) = \sum_{i=1}^k a_i(x_1) T^i(x_2,\dots ,x_d) \] for every $(x_1,\dots ,x_d) \in \prod_{\a=1}^d Q_{\a}$, which shows that $T$ has tensor rank at most $ k$. \end{proof}

Our next lemma is an adaptation of the linear-algebra fact underlying step 2: the claim that a finite family of vectors of a vector space has a subfamily with the same rank as that of the original family and with the same size as its rank.

\begin{lemma} \label{Subfamilies for spanning rank} Let $k \ge 1$ be a positive integer, let $V$ be a finite-dimensional vector space over $ \mathbb{F}$, and let $S$ and $F$ be families of elements of $V$. Assume that $\sprk_S F \ge k$. Then there exists a subfamily $ F'$ of $ F$ with size at most $ k$ such that $ \sprk_S F' \ge k$. \end{lemma}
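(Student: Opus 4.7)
The plan is to build $F'$ greedily, element by element, ensuring that each new element is not in the span of the currently chosen cover, and then to show that this automatically forces the chosen elements to be linearly independent, so that the process must produce a set of $R$-rank at least $k$ within at most $k$ steps.

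More concretely, I would initialise $F_0 = \emptyset$ and, at step $j+1$, stop if $\sprk_S F_j \ge k$; otherwise let $r_j = \sprk_S F_j < k$ and pick $v_1, \dots, v_{r_j} \in S$ with $F_j \subset \langle v_1, \dots, v_{r_j} \rangle$. Since $\sprk_S F \ge k > r_j$, the set $F$ cannot be contained in $\langle v_1, \dots, v_{r_j} \rangle$, so I can choose $f_{j+1} \in F \setminus \langle v_1, \dots, v_{r_j} \rangle$ and set $F_{j+1} = F_j \cup \{f_{j+1}\}$. The definition of $\sprk_S$ guarantees that such a cover exists and that the selection of $f_{j+1}$ is always possible as long as we have not yet stopped.

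The crucial observation, which I would prove by induction on $j$, is that $f_1, \dots, f_j$ are linearly independent. Indeed, $f_1, \dots, f_j$ all lie in $\langle v_1, \dots, v_{r_j} \rangle$, whereas $f_{j+1}$ does not; hence $f_{j+1}$ cannot be a linear combination of $f_1, \dots, f_j$, and the induction goes through. In particular $\dim \langle F_j \rangle = j$ at every stage, which combined with the general inequality $\sprk_S F_j \ge \dim \langle F_j \rangle$ yields $\sprk_S F_j \ge j$. Therefore after at most $k$ steps we reach $\sprk_S F_j \ge k$, and taking $F' = F_j$ gives a subfamily of size at most $k$ with $\sprk_S F' \ge k$.

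I do not anticipate a serious obstacle: the only thing one really needs to verify is that the greedy step never fails, and this is immediate from the hypothesis $\sprk_S F \ge k$ together with the fact that the cover $\langle v_1, \dots, v_{r_j} \rangle$ has dimension at most $r_j < k$. The mild subtlety worth emphasising in the write-up is the inequality $\sprk_S F_j \ge \dim \langle F_j \rangle$, which is what turns the geometric statement ``$f_{j+1}$ lies outside the cover'' into the quantitative statement ``the spanning rank strictly increases.''
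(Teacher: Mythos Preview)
Your greedy argument is correct. It takes a different route from the paper, which instead splits directly into two cases according to $\dim\langle F\rangle$: if $\dim\langle F\rangle \ge k$ one extracts any $k$ linearly independent elements of $F$ and invokes $\sprk_S F' \ge \dim\langle F'\rangle = k$; if $\dim\langle F\rangle < k$ one takes $F'$ to be a basis of $\langle F\rangle$ drawn from $F$, so that $|F'|\le k$ and $\langle F'\rangle = \langle F\rangle$ forces $\sprk_S F' = \sprk_S F \ge k$. The paper's argument is a one-line static case analysis, whereas yours builds $F'$ incrementally and relies only on the inequality $\sprk_S F_j \ge \dim\langle F_j\rangle$, never on the invariance of the spanning rank under passing to a subfamily with the same span. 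Both approaches end up producing a linearly independent subfamily of size at most $k$; the paper's is slightly shorter, while yours has the minor conceptual advantage of not needing to identify the threshold $\dim\langle F\rangle$ in advance.
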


\begin{proof} We distinguish two cases depending on the dimension of the linear subspace $ \langle F \rangle$. If $\dim \langle F \rangle \ge k$ then we take $ F'$ to be a linearly independent family of $ k$ elements of $ F$; because $ \dim \langle F' \rangle = k$, in particular $ \sprk F' \ge k$. If on the other hand $ \dim \langle F \rangle \le k$ then we take $ F'$ to be a maximal linearly independent family of elements of $ F$; the family $ F'$ has size at most $ k$, and since $ \langle F \rangle = \langle F' \rangle$, a family of elements of $ S$ spans all elements of $ F$ if and only if it spans all elements of $ F'$, so $ \sprk F' = \sprk F$.\end{proof}

We are now ready to deduce our subtensors result for the tensor rank.

\begin{proposition} \label{Subtensors for order-$d$ tensor rank} Let $d \ge 3$, $ k \ge 1$ be positive integers and let $ T: \prod_{\a=1}^d Q_{\a} \rightarrow \F$ be an order-$d$ tensor. Assume that $ \tr T \ge k$. Then there exist sets $ X_1 \subset Q_1, \dots,X_d \subset Q_d$ with size at most $ k$ such that $ \tr T(X_1 \times \dots  \times X_d) \ge k$. \end{proposition}

\begin{proof} By Lemma \ref{Connection between tensor rank and spanning rank} we have $ \sprk_{S_{d-1}} \{T_{x_1}: x_1 \in Q_1 \} = \tr T$. Since $ \tr T \ge k$, by Lemma \ref{Subfamilies for spanning rank} there exists a subset $ X_1$ of $Q_1$ with size at most $ k$ such that $ \sprk_{S_{d-1}} \{T_{x_1}: x_1 \in X_1\} \ge k$, so applying Lemma \ref{Connection between tensor rank and spanning rank} again we have \[ \tr T(X_1 \times \prod_{j=2}^d Q_j) \ge k.\] Iterating this argument $ d-1$ more times, which we can, since the roles of the $d$ coordinates are the same in the definition of the tensor rank, we obtain the desired sets $ X_2,\dots ,X_d$. \end{proof}

The remainder of this section is devoted to proving a generalisation of Proposition \ref{Subtensors for order-$d$ tensor rank} to linear subspaces spanned by a fixed number of tensors. We begin by formulating such a generalisation for matrices.

\begin{proposition} \label{Multidimensional matrix rank} Let $ s,k \ge 1$ be positive integers, and let $ A_1,\dots ,A_s: Q_1 \times Q_2 \rightarrow \mathbb{F}$ be matrices. Then there exists a subset $X \subset Q_1$ with size at most $sk$ such that \[\rk (a.A)(X \times Q_2) \ge \min (\rk (a.A), k)\] for every $a \in \F^s$. Iterating a second time, there also exists a subset $Y \subset Q_2$ with size at most $sk$ such that \[ \rk (a.A)(X \times Y) \ge \min (\rk (a.A), k)\] for every $a \in \F^s$.

\end{proposition}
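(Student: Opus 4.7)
The plan is to proceed by induction on $k$. The base case $k = 0$ is immediate: take $X = \emptyset$, so that $\rk(a.A)(X \times Q_2) = 0 = \min(\rk(a.A), 0)$ for every $a \in \F^s$.

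For the inductive step, I assume there is $X' \subset Q_1$ with $|X'| \le s(k-1)$ satisfying $\rk(a.A)(X' \times Q_2) \ge \min(\rk(a.A), k-1)$ for every $a \in \F^s$, and aim to construct an extension $X = X' \cup X''$ with $|X''| \le s$ satisfying the $k$-version of the inequality. Only $a$ with $\rk(a.A) \ge k$ require attention, since the inductive hypothesis already handles the rest. For such $a$, writing $V_a^{X'} = \langle (a.A)_y : y \in X' \rangle \subset \F^{Q_2}$, we have $\dim V_a^{X'} \ge k-1$, so I only need to find a row $x \in X''$ with $(a.A)_x \notin V_a^{X'}$ for those $a$ at which $\dim V_a^{X'} = k-1$ exactly.

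The central claim, which I would prove as a sub-lemma, is that an $X''$ of size at most $s$ with the above property exists. My approach is to construct $X''$ one element at a time by a greedy procedure: at each step, while some $a$ is still problematic, pick one such $a$ together with a row $y \in Q_1$ for which $(a.A)_y \notin V_a^{X' \cup X''_{\mathrm{current}}}$ (such a $y$ exists since $\rk(a.A) > \dim V_a^{X' \cup X''_{\mathrm{current}}}$), and adjoin $y$ to $X''$.

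The main obstacle is bounding the number of greedy iterations by $s$. This rests on exploiting the $s$-dimensional linear structure of the ambient subspace $W = \mathrm{span}(A_1, \ldots, A_s) \subset \F^{Q_1 \times Q_2}$. The plan is to maintain a linear invariant controlling the parameters that can still be problematic: at each step, the addition of a carefully chosen $y$ should impose a genuinely new linear condition on the still-problematic parameters in $\F^s$, so that their ``linear span'' strictly drops in dimension; since the zero parameter is never problematic, after at most $s$ iterations no bad parameter remains. Setting up this invariant cleanly is the chief difficulty, because a priori the condition $\dim V_a^X < \min(\rk(a.A), k)$ is a polynomial rather than linear condition on $a$; I would handle this by translating the per-$a$ condition into a linear condition inside the fixed ambient space $W$ via $a \mapsto a.A$ and the linearity of the restriction map $M \mapsto M|_{X' \cup X''_{\mathrm{current}}}$, converting each rank-drop into a linear constraint on $W \cong \F^s$.

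The second assertion of the proposition (the existence of $Y \subset Q_2$) is an immediate consequence of the first, applied to the transposed matrices $A_1(X \times \cdot)^{\top}, \ldots, A_s(X \times \cdot)^{\top}$ of shape $|Q_2| \times |X|$, with the roles of $Q_1$ and $Q_2$ swapped, using $\rk M = \rk M^{\top}$.
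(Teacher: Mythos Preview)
Your induction-on-$k$ framework is natural, but the heart of the argument---bounding the greedy by $s$ steps---has a real gap. The condition ``$a$ is still problematic after adding $y_1,\ldots,y_t$'' unpacks to $(a.A)_{y_j}\in V_a^{X'}$ for each $j\le t$, and since the span $V_a^{X'}$ itself varies with $a$, this is \emph{not} a linear constraint on $a$; linearity of the restriction map $M\mapsto M|_{X'}$ does not rescue it. Concretely, take $s=k=2$, $X'=\{x_1,x_2\}$, $Q_1=\{x_1,x_2,y_1,y_2,y_3\}$, $Q_2=\{1,2,3\}$, and let $A_1,A_2$ have rows (in the order $x_1,x_2,y_1,y_2,y_3$)
\[
A_1:\ (1,0,0),(1,0,0),(1,1,0),(0,0,0),(0,0,1),\qquad A_2:\ (0,1,0),(0,1,0),(0,0,0),(1,1,0),(0,0,0).
\]
Then $X'$ has the $(k{-}1)$-property, yet the greedy run $a^{(1)}=e_1,\,y_1$ followed by $a^{(2)}=e_2,\,y_2$ leaves $a=(1,1)$ problematic: all four rows of $A_1+A_2$ indexed by $X'\cup\{y_1,y_2\}$ equal $(1,1,0)$ while $\rk(A_1+A_2)=2$. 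Moreover, after adding $y_1$ alone the problematic set already contains both $e_2$ and $(1,1)$, so its span is still all of $\F^2$; thus your invariant need not drop under a greedy step, and you give no argument that a span-reducing $y$ always exists.

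The paper proceeds in the opposite direction: it starts from all of $Q_1$ and deletes rows one at a time until at most $sk$ remain. The key asymmetry is that for $a$ with $\rk(a.A)(Q_1'\times Q_2)\le k$, the condition ``$b\in\F^{Q_1'}$ lies in the left kernel of $(a.A)|_{Q_1'}$'' \emph{is} linear in $a$ (via $a\mapsto a.A$). Choosing $a^1,\ldots,a^r$ (with $r\le s$) spanning the set of such $a$ and intersecting their left kernels---each of codimension at most $k$---produces a nonzero $b$ whenever $|Q_1'|>sk$; any row $x$ with $b_x\ne 0$ is then redundant for every such $a$ simultaneously, while for the remaining $a$ (rank $\ge k+1$) a single deletion cannot push the rank below $k$. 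Row spans do not linearise in $a$ the way left kernels do, which is precisely why deletion succeeds where your additive greedy stalls.
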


\begin{proof}

We remove rows one by one until there are only at most $sk$ remaining rows, with the inductive step being as follows. Assume that there are still $ m > sk$ remaining rows and let $Q_1'$ be the set of remaining rows. Let $ \Lambda= \{a \in \mathbb{F}^s: \rk (a.A)(Q_1' \times Q_2) \le k\}$. We can take a family $(a^1,\dots,a^r)$ of elements of $\F^s$ such that $r \le s$ and $\Lambda \subset \langle a^1,\dots,a^r \rangle$. For each $j \in \lbrack r \rbrack$ the linear subspace $U_j$ of $\mathbb{F}^{Q_1'}$ such that $ \sum_{x \in Q_1'} b_x (a^j.A)_x = 0$ has dimension at least $ m-k$, so the intersection $\bigcap_{1 \le j \le r} U_j$ has dimension at least $ m-sk > 0$. Taking a non-zero element $ b$ of this intersection and then taking $x \in Q_1'$ such that $ b_x \neq 0$, removing the $x$th row does not change the rank of $(a.A)(Q_1' \times Q_2)$ for any $ a \in \Lambda$. Moreover for all $ a \in \mathbb{F}^s \setminus \Lambda$ we have $ \rk (a.A )(Q_1' \times Q_2) \ge k+1$ so the rank of $(a.A)(Q_1' \times Q_2)$ is still at least $ k$ after removing the $x$th row from $Q_1'$. \end{proof}

We now apply Proposition \ref{Multidimensional matrix rank} to obtain a multidimensional version of Lemma \ref{Subfamilies for spanning rank}.

\begin{lemma} \label{Multidimensional subfamilies for spanning rank} Let $s \ge 1$ be a positive integer, let $V$ be a finite-dimensional vector space over $ \mathbb{F}$, let $S$ be a family of elements of $ V$, let $Q$ be a finite subset of $\mathbb{N}$ and for each $ j \in \lbrack s \rbrack$ and $x \in Q$ let $T_{j,x}$ be an element of $V$. Then there exists a subset $X$ of $Q$ with size at most $sk$ such that \[\sprk ((a.T)_{x})_{x \in X} \ge \min(k, \sprk ((a.T)_{x})_{x \in Q}) \] for every $a \in \F^s$. \end{lemma}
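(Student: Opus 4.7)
The plan is to reduce Lemma~\ref{Multidimensional subfamilies for spanning rank} to Proposition~\ref{Multidimensional matrix rank} by first replacing $\sprk_S$ by ordinary dimension and then encoding the $T_{j,x}$ as entries of matrices. First I would establish the following reformulation: it suffices to produce a subset $X \subset Q$ of size at most $sk$ such that
\[ \dim \langle (a.T)_x : x \in X \rangle \ge \min\bigl(k, \dim \langle (a.T)_x : x \in Q \rangle\bigr) \]
for every $a \in \mathbb{F}^s$. To see that such an $X$ suffices, I would fix $a \in \mathbb{F}^s$ and distinguish two cases. If $\dim \langle (a.T)_x : x \in Q \rangle \ge k$, then the displayed inequality gives $\dim \langle (a.T)_x : x \in X \rangle \ge k$, and since $\sprk_S$ always dominates the ordinary dimension we conclude $\sprk_S ((a.T)_x)_{x \in X} \ge k$, which is in turn at least $\min(k, \sprk_S ((a.T)_x)_{x \in Q})$. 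Otherwise $\dim \langle (a.T)_x : x \in Q \rangle < k$, and combining the displayed inequality with the monotonicity $\dim \langle (a.T)_x : x \in X \rangle \le \dim \langle (a.T)_x : x \in Q \rangle$ (a consequence of $X \subset Q$) forces these two dimensions and hence the two linear spans to coincide, so any family of elements of $S$ spanning the first also spans the second, yielding $\sprk_S ((a.T)_x)_{x \in X} = \sprk_S ((a.T)_x)_{x \in Q}$.

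Next I would set up the matrix encoding. Fixing a basis of $V$ of size $N = \dim V$ and letting $A_j : Q \times \lbrack N \rbrack \rightarrow \mathbb{F}$ denote the matrix whose $(x,i)$-entry is the $i$-th coordinate of $T_{j,x}$ in the chosen basis, the $x$-th row of $(a.A)$ is precisely the coordinate vector of $(a.T)_x$. Hence for every $X \subset Q$ we have $\rk (a.A)(X \times \lbrack N \rbrack) = \dim \langle (a.T)_x : x \in X \rangle$, and the same equality with $X$ replaced by $Q$. Applying Proposition~\ref{Multidimensional matrix rank} to the matrices $A_1,\dots,A_s$ then produces a subset $X \subset Q$ of size at most $sk$ satisfying the displayed dimension inequality for every $a \in \mathbb{F}^s$, and the first step converts this into the conclusion of the lemma.

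The main obstacle I anticipate is the first step: a direct adaptation of the row-removal argument in the proof of Proposition~\ref{Multidimensional matrix rank} to the $\sprk_S$ setting would fail, because removing a single element from a family can in general decrease $\sprk_S$ by an arbitrary amount rather than by at most $1$. The trick above sidesteps this difficulty by exploiting the fact that spanning rank is automatically preserved whenever the ordinary dimension is preserved, so that the $\sprk_S$ version of the lemma is in fact no stronger than a pure matrix rank statement to which the proposition applies verbatim.
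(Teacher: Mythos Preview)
Your proposal is correct and essentially identical to the paper's proof: both fix a basis of $V$, encode the $T_{j,x}$ as rows of matrices $A_j$, apply Proposition~\ref{Multidimensional matrix rank} to obtain the dimension inequality, and then run the same two-case analysis (dimension at least $k$ gives $\sprk \ge k$ since $\sprk$ dominates dimension; dimension below $k$ forces the spans to coincide, so the spanning ranks are equal). The only difference is cosmetic ordering---you state the reduction to the dimension statement first and the matrix encoding second, whereas the paper does the encoding first and the case analysis afterward.
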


\begin{proof} We fix an arbitrary choice of basis $B = (b_1, \dots, b_{\dim V})$ of $V$. For each $j \in \lbrack s \rbrack$, let $ A_j: Q \times \lbrack \dim V \rbrack \rightarrow \F$ be the matrix such that for each $x \in Q$, the row $(A_j)_x$ is the family of coefficients of $T_{j,x}$ written in the basis $B$. By Proposition \ref{Multidimensional matrix rank} applied to the matrices $ A_1,\dots ,A_s$ there exists a set $ X$ of size at most $ sk$ such that \[\rk ((a.T)_{x})_{x \in X} \ge \min (k, \rk ((a.T)_{x})_{x \in Q}) \] for every $a \in \mathbb{F}^s$. We now fix $a \in \F^s$. If $ \rk ((a.T)_{x})_{x \in Q} \ge k$ then $ \rk ((a.T)_{x})_{x \in X} \ge k$, so since $ \sprk ((a.T)_{x})_{x \in X} \ge \rk ((a.T)_{x})_{x \in X}$ we conclude that \[ \sprk ((a.T)_{x})_{x \in X} \ge k. \] If on the other hand $ \rk ((a.T)_{x})_{x \in Q} \le k$ then $ \langle ((a.T)_{x})_{x \in X} \rangle = \langle ((a.T)_{x})_{x \in Q} \rangle$, so since the spanning ranks of two families with the same linear spans are equal we conclude that \[ \sprk((a.T)_{x})_{x \in X} = \sprk((a.T)_{x})_{x \in Q}.\qedhere\] \end{proof}

\begin{proposition} \label{Multidimensional tensor rank subtensors} Let $ d,s,k \ge 1$ be positive integers, and $T_1,\dots ,T_s: Q_1 \times \dots \times Q_d \rightarrow \mathbb{F}$ be order-$d$ tensors. Then there exist subsets $X_1 \subset Q_1, \dots, X_d \subset Q_d$ all with size at most $sk$ such that \[ \tr (a.T)(X_1 \times \dots  \times X_d) \ge \min(k, \tr a.T) \] for every $ a \in \mathbb{F}^s$.\end{proposition}

\begin{proof} By Lemma \ref{Multidimensional subfamilies for spanning rank} there exists $ X_1$ with size at most $ sk$ such that \[ \sprk ((a.T)_{x_1})_{x_1 \in X_1} \ge \min (k, \sprk ((a.T)_{x_1})_{x_1 \in Q_1})\] for every $a \in \mathbb{F}^s$, so applying Lemma \ref{Connection between tensor rank and spanning rank} to both sides we obtain \[ \tr (a.T)(X_1 \times \prod_{\a=2}^d Q_{\a}) \ge \min(k, \tr a.T) \] for every $a \in \mathbb{F}^s$. Iterating this argument $d-1$ more times we get the desired other sets $ X_2,\dots ,X_d$. \end{proof}

For any fixed positive integers $ k,s$ the bound $ sk$ can be seen to be optimal by taking $Q_1 = \dots = Q_d = \lbrack sk \rbrack$ and taking $T_1, \dots, T_s$ to have pairwise disjoint supports each of size exactly $ k$ and all contained in the diagonal $ \{(x_1, \dots, x_d) \in \lbrack sk \rbrack: x_1 = \dots = x_d \}$. 

Proposition \ref{Multidimensional tensor rank subtensors} suggests the following conjecture, which would if true strengthen Theorem \ref{Subtensors theorem} in two ways: in its statement the lower bound on the $R$-rank of a restriction is the same as the $R$-rank of the original tensor in the regime where the latter is small, and the lower bounds apply to restrictions of linear combinations of several tensors. This conjecture however seems far out of reach of the methods of the present paper.

\begin{conjecture} \label{Multidimensional type 2 subtensors for the general case} Let $ d \ge 2, s \ge 1$ be positive integers and let $ R$ be a non-empty family of partitions of $ \lbrack d \rbrack$. Then there exists a function $ F_{d,R,s, \mathrm{same}}$ such that whenever $ T_1,\dots ,T_s$ are order-$d$ tensors there exist $ X_1,\dots ,X_d$ of size at most $ F_{d,R,s, \mathrm{same}}(l)$ such that
\[ \Rrk (a.T)(X_1 \times \dots  \times X_d) \ge \min(l, \Rrk a.T) \] 
for every $ a \in \mathbb{F}^s$. \end{conjecture}

Conjecture \ref{Multidimensional type 2 subtensors for the general case} would be false if we furthermore required $F_{d,R,s, \text{same}}(l) = l$, as the next section will show for $d=3$, $s=1$, and $R = \{ \{\{1\},\{2,3\}\}, \{\{2\},\{1,3\}\}, \{\{3\},\{1,2\}\} \}$.

\section{A counterexample to a strong bound for order-3 slice-rank subtensors}\label{section: Counterexample}

We thank Timothy Gowers for discovering Proposition \ref{Counterexample} and constructing the example in this section, which we include here with his permission.

\begin{proposition}\label{Counterexample} Let $\mathbb{F}$ be a field. Then there exists an order-$3$ tensor $T: \lbrack 11 \rbrack \times \lbrack 4 \rbrack \times \lbrack 15 \rbrack \rightarrow \mathbb{F}$ such that $\sr T = 4$ but whenever $X \subset \lbrack 11 \rbrack$, $Y \subset \lbrack 4 \rbrack$, $Z \subset \lbrack 15 \rbrack$ are all of size $4$, $\sr T(X \times Y \times Z) \le 3$. \end{proposition}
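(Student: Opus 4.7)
The plan is to construct an explicit tensor $T$ on $\lbrack 11 \rbrack \times \lbrack 4 \rbrack \times \lbrack 15 \rbrack$ and verify the two slice-rank claims separately. Since $|Q_2| = 4$, the constraint $|Y| = 4$ forces $Y = \lbrack 4 \rbrack$, so every admissible restriction is a $4 \times 4 \times 4$ sub-tensor obtained by choosing $X \subset \lbrack 11 \rbrack$ and $Z \subset \lbrack 15 \rbrack$ of size $4$. Viewing $T$ as a $4$-tuple of $11 \times 15$ matrices $M_y(x, z) = T(x, y, z)$, such a restriction corresponds to simultaneously taking $4 \times 4$ sub-matrices $M_y|_{X \times Z}$ for $y \in \lbrack 4 \rbrack$. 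The specific dimensions $11$ and $15$ are suggestive of a combinatorial construction: for instance $11 = \binom{4}{0} + \binom{4}{1} + \binom{4}{2}$ and $15 = 2^4 - 1$, hinting at identifying $\lbrack 11 \rbrack$ and $\lbrack 15 \rbrack$ with suitable families of subsets of $\lbrack 4 \rbrack$ and defining $T(A, y, B)$ by an algebraic or combinatorial rule involving $y$, $A$, and $B$.

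To prove $\sr T(X \times \lbrack 4 \rbrack \times Z) \le 3$ for every $4$-element $X$ and $Z$, I would start from the natural $4$-term slice decomposition $T(x, y, z) = \sum_{y_0 = 1}^{4} \delta_{y, y_0} M_{y_0}(x, z)$ in the $y$-direction and seek to eliminate one of the terms. Since a slice-rank-$3$ decomposition may mix the three directions, the reduction does not require the four restricted matrices $M_{y_0}|_{X \times Z}$ to be literally linearly dependent in $\mathbb{F}^{4 \times 4}$; it suffices that modulo rank-$1$ matrices, which can be absorbed into slices in the $x$- or $z$-direction, they span a subspace of dimension at most $3$. The construction of $T$ must be designed so that the combinatorial identification of $\lbrack 11 \rbrack$ and $\lbrack 15 \rbrack$ with subsets of $\lbrack 4 \rbrack$ forces such a reduction to exist for every choice of four rows and four columns.

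To prove $\sr T \ge 4$, I would rely on standard lower-bound techniques for slice rank, for instance identifying a substructure inside $T$ whose slice rank is forced to be $4$ by a polynomial-method or dimension-counting argument in the Ellenberg--Gijswijt / Tao tradition. Since every $4 \times 4 \times 4$ restriction has slice rank at most $3$, this substructure cannot fit inside any single sub-box and must be spread out across $T$, with different pieces visible in different restrictions. Combined with the obvious upper bound $\sr T \le 4$ from the four-term $y$-direction decomposition, this yields $\sr T = 4$. The main obstacle is precisely this tension: the tensor must be rich enough globally to realise slice rank $4$, yet every $4 \times 4 \times 4$ window must degenerate to slice rank at most $3$. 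This rules out obvious witnesses like a straightforward $4$-diagonal and forces a more delicate combinatorial choice; presumably this is why the minimal dimensions in this counterexample are $11$ and $15$ rather than $4$ and $4$.
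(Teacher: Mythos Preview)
Your proposal is not a proof but an outline of a search strategy, and the strategy is pointed in the wrong direction. The numerological guess $11 = \binom{4}{0}+\binom{4}{1}+\binom{4}{2}$, $15 = 2^4-1$ is not what drives the construction; the relevant identity is simply $15 = 11+4$. The paper takes $T(x,y,z) = S(x,y)\,1_{z=x+y}$ for a specific $\{0,1\}$-valued matrix $S$ supported on an $8$-point set $V \subset [11]\times[4]$. After the coordinate flip $z \mapsto 16-z$ the support lies on the antichain $\{x+y+z=16\}$, and by the Sawin--Tao result the slice rank equals the minimum number of axis-parallel slices needed to cover the support. This reduces both claims to a purely combinatorial line-covering problem in $[11]\times[4]$: one checks that $V$ cannot be covered by three lines of the types $x=a$, $y=b$, $x+y=c$, but that after deleting any two of the six lines $x+y=c$ meeting $V$ (which is forced by $|Z|=4$, since the eight points of $V$ use six distinct values of $x+y$) the remainder can always be covered by three such lines.

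None of the ingredients you describe---viewing $T$ as four $11\times 15$ matrices, eliminating one $y$-slice modulo rank-$1$ corrections, invoking ``standard lower-bound techniques''---appears in the actual argument, and your plan gives no indication of how to choose the tensor or how to certify $\sr T \ge 4$. The antichain/covering reduction is the key idea you are missing; without it there is no obvious way to compute the slice rank of $T$ or of its restrictions exactly.
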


For $n_1, n_2, n_3$ positive integers, we say that a subset $U \subset \lbrack n_1 \rbrack \times \lbrack n_2 \rbrack \times \lbrack n_3 \rbrack$ is an \emph{antichain} if whenever $(x',y',z'), (x'',y'',z'') \in U$ are such that $x' \le x''$, $y' \le y''$, $z' \le z''$, necessarily $(x',y',z') = (x'',y'',z'')$. In particular a set of the type \[ \{(x,y,z) \in \lbrack n_1 \rbrack \times \lbrack n_2 \rbrack \times \lbrack n_3 \rbrack: x+y+z=k\} \] for some integer $k$ is an antichain.

If $U$ is a subset of $\lbrack n_1 \rbrack \times \lbrack n_2 \rbrack \times \lbrack n_3 \rbrack$, we say that the \emph{slice covering number} $\scc U$ is the smallest nonnegative integer $k$ such that $U$ can be covered by $k$ slices, i.e., such that there exist nonnegative integers $r,s,t$ with $r+s+t = k$, and $a_1, \dots, a_r \in \lbrack n_1 \rbrack$, $b_1,\dots,b_s \in \lbrack n_2 \rbrack$, $c_1, \dots,c_t \in \lbrack n_3 \rbrack$ satisfying \[ U \subset (\bigcup_{1 \le i \le r} \{x=a_i\}) \cup (\bigcup_{1 \le j \le s} \{y=b_j\}) \cup (\bigcup_{1 \le k \le t} \{z=c_k\}). \]

The following definition will also be convenient for us. For $V$ a subset of $\lbrack n_1 \rbrack \times \lbrack n_2 \rbrack$, we say that the \emph{three-point line covering number} $\lc_3 V$ of $V$ is the smallest nonnegative integer $k$ such that $V$ can be covered by $k$ lines, with each line of one of the three types $\{x=a\}$ or $\{y=b\}$ or $\{x+y = c\}$.

It is a special case of a result of Sawin and Tao (Proposition 4 in \cite{Sawin and Tao}) that the slice rank $\sr T$ of an order-$3$ tensor $T$ with support $U$ contained in an antichain is equal to the smallest number $\scc U$ of (order-$2$) slices that suffice to cover its support $U$.

Let $S: \lbrack 11 \rbrack \times \lbrack 4 \rbrack \rightarrow \mathbb{F}$ be a matrix with support exactly equal to a subset $V$ of $\lbrack 11 \rbrack \times \lbrack 4 \rbrack$, and $T(x,y,z) = S(x,y) 1_{z=x+y}$. The tensor $T': \lbrack 11 \rbrack \times \lbrack 4 \rbrack \times \lbrack 15 \rbrack \rightarrow \mathbb{F}$ defined by $T':(x,y,z) \mapsto T(x,y,16-z)$ has support $U'$ contained in the antichain $\{x+y+z=16\}$, so it satisfies $\sr T' = \scc U'$. Since $\sr T = \sr T'$ and $\scc U' = \scc U$ we obtain $\sr T = \scc T$.

Let $U$ be the support of $T$. For any positive integer $k$, the intersections of the slices $\{x=a\}$, $\{y=b\}$, $\{z=c\}$ inside $\lbrack 11 \rbrack \times \lbrack 4 \rbrack \times \lbrack 15 \rbrack$ with the set $\{x+y+z=k\}$ are respectively the subsets $\{x=a\}$, $\{y=b\}$, $\{x+y = k - c\}$ of $\lbrack 11 \rbrack \times \lbrack 4 \rbrack$. Hence the following claim.

\begin{claim} \label{connection to covering numbers}We have $\scc U = \lc_3 V$. More generally, for any $X \subset \lbrack 11 \rbrack$, $Y \subset \lbrack 4 \rbrack$, $Z \subset \lbrack 15 \rbrack$, \[ \scc (U \cap (X \times Y \times Z)) = \lc_3 (V \cap \mu(X,Y,Z)) \] where $\mu(X,Y,Z) = \{(x,y)\in X\times Y: x+y \in Z\} $. \end{claim}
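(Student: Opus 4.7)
The plan is to prove both identities at once by exhibiting a bijection between the supports $U$ and $V$ that simultaneously matches slices of $\lbrack 11 \rbrack \times \lbrack 4 \rbrack \times \lbrack 15 \rbrack$ with three-point lines of $\lbrack 11 \rbrack \times \lbrack 4 \rbrack$; the claim will then follow by transporting covers back and forth.

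First I would introduce the map $\phi \colon \lbrack 11 \rbrack \times \lbrack 4 \rbrack \to \lbrack 11 \rbrack \times \lbrack 4 \rbrack \times \lbrack 15 \rbrack$ defined by $\phi(x,y) = (x, y, x+y)$, which is well defined since $x+y$ lies in $\lbrack 2, 15 \rbrack \subseteq \lbrack 15 \rbrack$. From the definition of $T$ we have $U = \phi(V)$, and $\phi$ is injective because its first two coordinates already recover the input, so it restricts to a bijection $V \to U$. The further restriction to $V \cap \mu(X, Y, Z)$ gives a bijection onto $U \cap (X \times Y \times Z)$: for $(x,y) \in V$, the image $\phi(x,y)$ lies in $X \times Y \times Z$ precisely when $x \in X$, $y \in Y$ and $x+y \in Z$, which is exactly the condition defining $\mu(X, Y, Z)$.

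Next I would record the matching of covering primitives through $\phi$: for every $a \in \lbrack 11 \rbrack$, $b \in \lbrack 4 \rbrack$, and $c \in \lbrack 15 \rbrack$, the preimages $\phi^{-1}(\{x = a\})$, $\phi^{-1}(\{y = b\})$ and $\phi^{-1}(\{z = c\})$ inside $\lbrack 11 \rbrack \times \lbrack 4 \rbrack$ are respectively the lines $\{x = a\}$, $\{y = b\}$ and $\{x + y = c\}$, which are exactly the three line types used in the definition of $\lc_3$. Hence any cover of $U \cap (X \times Y \times Z)$ by $r$ slices $\{x = a_i\}$, $s$ slices $\{y = b_j\}$ and $t$ slices $\{z = c_\ell\}$ pulls back under $\phi$ to a cover of $V \cap \mu(X, Y, Z)$ by $r + s + t$ three-point lines, and conversely any three-point line cover of $V \cap \mu(X, Y, Z)$ lifts through $\phi$ to a slice cover of $U \cap (X \times Y \times Z)$ of the same total cardinality. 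Taking minima on both sides yields the general equality $\scc(U \cap (X \times Y \times Z)) = \lc_3(V \cap \mu(X,Y,Z))$, and specialising to $X = \lbrack 11 \rbrack$, $Y = \lbrack 4 \rbrack$, $Z = \lbrack 15 \rbrack$ recovers $\scc U = \lc_3 V$.

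There is no substantive obstacle; the argument is essentially a bookkeeping identity mediated by the bijection $\phi$. The only point worth a moment of care is the verification that $\mu(X,Y,Z)$ equals precisely $\phi^{-1}(X \times Y \times Z) \cap V$, neither more nor less, which is immediate from the definition of $\mu$.
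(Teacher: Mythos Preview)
Your argument is correct and is essentially the same as the paper's: the paper observes in one sentence that the slices $\{x=a\}$, $\{y=b\}$, $\{z=c\}$ intersect the graph $\{z=x+y\}$ in the three line types $\{x=a\}$, $\{y=b\}$, $\{x+y=c\}$, and you have made this precise via the bijection $\phi(x,y)=(x,y,x+y)$ and the transport of covers. One tiny slip in your closing remark: $\mu(X,Y,Z)$ is all of $\phi^{-1}(X\times Y\times Z)$, not its intersection with $V$, but this does not affect the proof since you had already (correctly) argued the bijection between $V\cap\mu(X,Y,Z)$ and $U\cap(X\times Y\times Z)$.
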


We choose a set \[ V = \{(2,1), (6,1), (11,1), (1,2), (11,3), (1,4), (6,4), (10,4) \} \subset  \lbrack 11 \rbrack \times \lbrack 4 \rbrack \] which we draw below as a matrix with the $x$ coordinates increasing from $1$ to $11$ from left to right, and the $y$ coordinates increasing from $1$ to $4$ from top to bottom). \[\begin{pmatrix}
0&1&0&0&0&1&0&0&0&0&1\\

1&0&0&0&0&0&0&0&0&0&0\\

0&0&0&0&0&0&0&0&0&0&1\\

1&0&0&0&0&1&0&0&0&1&0

\end{pmatrix}\] The description provided by Claim \ref{connection to covering numbers} together with the following two claims provides a proof of Proposition \ref{Counterexample}.

\begin{claim} $\lc_3 V = 4$.  \end{claim}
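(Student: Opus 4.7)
The plan is to prove the matching bounds $\lc_3 V \le 4$ and $\lc_3 V \ge 4$ separately. For the upper bound, I would exhibit the explicit cover by the four lines $\{y=1\}$, $\{y=4\}$, $\{x=1\}$, and $\{x=11\}$: the first two account for the six points of $V$ with $y \in \{1,4\}$, while $\{x=1\}$ covers $(1,2)$ and $\{x=11\}$ covers $(11,3)$, exhausting $V$.

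The lower bound rests on one preparatory observation, which I would verify by direct inspection of the eight points of $V$: the maximum of $|V \cap \ell|$ over all permitted lines $\ell$ equals $3$, and this maximum is attained \emph{only} by the two lines $\{y=1\}$ (containing $(2,1),(6,1),(11,1)$) and $\{y=4\}$ (containing $(1,4),(6,4),(10,4)$). Concretely, each line $\{x=a\}$ meets $V$ in at most $2$ points (with equality only for $a \in \{1,6,11\}$), each line $\{x+y=c\}$ meets $V$ in at most $2$ points (with equality only for $c \in \{3,14\}$), and the lines $\{y=2\}, \{y=3\}$ contain one point of $V$ each.

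Assume for contradiction that $\ell_1,\ell_2,\ell_3$ cover $V$, and split into three cases according to how many of the two special lines $\{y=1\}, \{y=4\}$ appear among the $\ell_i$. If both appear, the remaining line must cover the pair $\{(1,2),(11,3)\}$, but these two points disagree in their $x$-coordinates ($1 \ne 11$), in their $y$-coordinates ($2 \ne 3$), and in their $x+y$-values ($3 \ne 14$), so no permitted line covers them. If exactly one of the two appears, say $\{y=1\}$ (the subcase with $\{y=4\}$ being analogous), then two lines of permitted type other than $\{y=4\}$ must cover the five-point set $S = \{(1,2),(11,3),(1,4),(6,4),(10,4)\}$; a direct check shows that every such line meets $S$ in at most $2$ points, giving at most $4 < 5$ points. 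Finally, if neither $\{y=1\}$ nor $\{y=4\}$ appears, then by the preparatory observation each $\ell_i$ meets $V$ in at most $2$ points, yielding at most $6 < 8$ points.

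Each case yields a contradiction, so $\lc_3 V \ge 4$, and combined with the upper bound we conclude $\lc_3 V = 4$. The only substantive obstacle is the mechanical verification of the preparatory line-intersection bounds and of the analogous two-point bound on the restricted set $S$ in the ``exactly one'' subcase; both amount to listing the points of $V$ and tabulating their $x$-, $y$-, and $x+y$-coordinates, so no genuine difficulty is anticipated.
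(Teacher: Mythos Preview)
Your proof is correct and follows essentially the same approach as the paper: both rely on the observation that only the lines $\{y=1\}$ and $\{y=4\}$ meet $V$ in three points, and both conclude by noting that $(1,2)$ and $(11,3)$ cannot lie on a common permitted line. The paper compresses your three-case analysis into a single pigeonhole remark (three lines covering eight points with at most three points each forces at least two lines to contain three points, hence both special lines must be used), but the content is the same.
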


\begin{proof} The upper bound $\scc_3 V \le 4$ follows from taking the four lines $ y=1, y=2, y=3, y=4$. We now prove the lower bound. Every line (of the type $x=a$, $y=b$, or $x+y=c$) contains at most three points. To cover all eight points with three lines, at least two of the lines must have three points. The only such lines are the lines $y=1$ and $y=4$. This leaves us with the task of covering the two remaining points $(1,2)$ and $(11,3)$ with one line, which cannot be done. \end{proof}

\begin{claim} For all $X \subset \lbrack 11 \rbrack$, $Y \subset \lbrack 4 \rbrack$, $Z \subset \lbrack 15 \rbrack$ of size $4$, $\lc_3 (V \cap \mu(X,Y,Z)) \le 3$.  \end{claim}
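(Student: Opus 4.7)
The plan is first to observe that since $|Y|=4$ and $Y \subset \lbrack 4 \rbrack$, we have $Y = \lbrack 4 \rbrack$, so the restriction $V \cap \mu(X,Y,Z)$ depends only on $X$ and $Z$. I will label the eight points of $V$ as $p_1=(2,1)$, $p_2=(6,1)$, $p_3=(11,1)$, $p_4=(1,2)$, $p_5=(11,3)$, $p_6=(1,4)$, $p_7=(6,4)$, $p_8=(10,4)$. The rows $y=1$ and $y=4$ each contain three points of $V$, while the rows $y=2$ and $y=3$ contain only the singletons $p_4$ and $p_5$ respectively; the essential obstruction to a three-line cover is that $p_4$ and $p_5$ lie on no common line of any of the three allowed types $x=a$, $y=b$, or $x+y=c$.

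The first step will be to handle the case in which the restriction already removes at least one of $p_4, p_5$, which happens exactly when one of $1 \notin X$, $11 \notin X$, $3 \notin Z$, or $14 \notin Z$ holds. In each such subcase the surviving points of $V \cap \mu(X,Y,Z)$ all lie in the union of the two rows $y=1$ and $y=4$ together with at most one of $p_4, p_5$, so the cover $\{y=1, y=4, \ell\}$ works, where $\ell$ is any line through the surviving singleton (and can be dropped if none survives).

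In the remaining case $\{1,11\} \subset X$ and $\{3,14\} \subset Z$, both $p_4$ and $p_5$ must be covered. Writing $X' = X \setminus \{1,11\}$ and $Z' = Z \setminus \{3,14\}$ (each of size $2$), only $A := X' \cap \{2,6,10\}$ and $B := Z' \cap \{5,7,10,12\}$ influence which further points of $V$ lie in $V \cap \mu(X,Y,Z)$. The first sub-strategy I will try is to take $\{x=1, x=11\}$ as two of the three lines, which cover $p_3, p_4, p_5, p_6$ whenever those are active, and then seek a single line covering the residual subset of $\{p_1, p_2, p_7, p_8\}$. When this residual does not lie on a common line, the fallback is $\{x+y=3, x+y=14\}$, which covers $\{p_1, p_4, p_5, p_8\}$ and leaves a residual inside $\{p_2, p_3, p_6, p_7\}$.

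The main obstacle is the short list of pairs $(A,B)$ for which neither default sub-strategy completes with a single extra line; these arise precisely when the residual contains two points sharing no common line, such as $\{p_1, p_7\}$ or $\{p_2, p_8\}$. For each such $(A,B)$ I will exhibit an explicit three-line cover, typically combining one of the anti-diagonal lines $x+y=3$ or $x+y=14$ with a vertical line and a row line. The tightest case is $X' = \{2,10\}$, $Z' = \{5,12\}$, where six points of $V$ survive in $V \cap \mu(X,Y,Z)$ and are covered by the partition $\{p_1, p_3\} \cup \{p_4, p_6\} \cup \{p_5, p_8\}$ via the three lines $y=1$, $x=1$, and $x+y=14$.
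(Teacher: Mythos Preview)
Your proposal is correct but takes a genuinely different and more laborious route than the paper's. The paper's proof ignores $X$ and $Y$ entirely and works only with the constraint $|Z|=4$: since the eight points of $V$ take six distinct values of $x+y$ (namely $3,5,7,10,12,14$), any size-$4$ choice of $Z$ must omit at least two of them, so at least two anti-diagonal lines are wiped out. The paper then checks that removing any one of $x+y\in\{3,7,10,14\}$ already leaves a set coverable by three lines, and the only remaining possibility is that both $x+y=5$ and $x+y=12$ are removed, handled by $\{x=6,\ x+y=3,\ x+y=14\}$. That is five short cases in total.

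Your argument instead branches on whether $p_4$ or $p_5$ survives (mixing the $X$- and $Z$-constraints), and in the hard case $\{1,11\}\subset X$, $\{3,14\}\subset Z$ runs two default cover strategies with a residual case analysis on $(A,B)$. This is valid; carrying it out fully you will find exactly three pairs $(A,B)$ where both default strategies fail, namely $(\{2,6\},\{10,12\})$, $(\{2,10\},\{5,12\})$, and $(\{6,10\},\{5,7\})$. You only exhibit the middle one (with six surviving points), but the other two have five survivors each and are covered by $\{x+y=3,\ x=11,\ y=4\}$ and $\{x+y=14,\ x=1,\ y=1\}$ respectively, matching your stated recipe. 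So the plan completes, but the paper's observation that the $Z$-constraint alone suffices yields a substantially shorter and more uniform argument.
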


\begin{proof} The number of values of $ x+y$ among the eight points of $V$ is six: these values are $3$, $5$, $7$, $10$, $12$ and $14$. It follows that in any subtensor of size $4$ we must remove all the points from at least two of the lines of the type $ x+y=c$.

If we remove the line $x+y=3$, then we can cover the remaining points with the lines $x=11$, $y=1$, $y=4$. Similarly if we remove the line $x+y=14$, then we can cover the remaining points with the lines $x=1$, $y=1$, $y=4$.

If we remove the line $x+y=7$ (i.e. the point $(6,1)$) then we can cover the remaining points with the lines $x=11$, $y=4$, $x+y=3$. Similarly if we remove the line $x+y=10$ (i.e. the point $(6,4)$), then we can cover the remaining points with the lines $x=1$, $y=1$, $x+y=14$.

The only remaining possibility is to remove both the lines $x+y = 5$ and $x+y=12$, i.e. the points $(1,4)$ and $(11,1)$. We can then cover the remaining points with the lines $x=6$, $x+y=3$, $x+y=14$. \end{proof}

Using the additivity of the slice rank over diagonal sums recently proved by Gowers \cite{Gowers}, we may extend the present example to a class of examples which shows in particular that the gap \[\sr T - \max_{|X|, |Y|, |Z| \le \sr T} \sr T(X \times Y \times Z)\] may be arbitrarily large.

\begin{proposition} For every positive integer $k$, there exists an order-$3$ tensor $T_k: [11k] \times [4k] \times [15k] \rightarrow \F$ such that $\sr T_k = 4k$ but $\sr T_k(X \times Y \times Z) \le (59/15)k$ for every $X \subset Q_1, Y \subset Q_2, Z \subset Q_3$ with size at most $4k$.\end{proposition}

\begin{proof} For each $i \in [k]$ and every positive integer $q$ let $I_{i,q}$ be the set of integers \[\{q(i-1)+1, \dots, qi\}.\] We take $T_k$ to be the diagonal sum of $k$ copies of the tensor $T$ from Proposition \ref{Counterexample}), i.e. we define $T_k$ by \[T_k(x,y,z) = T(x-11(i-1), y-4(i-1), z-15(i-1))\] if there exists $i \in [k]$ such that $x \in I_{i,11}, y \in I_{i,4}, z \in I_{i,15}$, and by $T_k(x,y,z) = 0$ otherwise. For each $i \in [k]$ we consider the translated intersections \begin{align*} X_i & = (X \cap I_{i,11}) - \{11(i-1)\} \\ Y_i & = (Y \cap I_{i,4}) - \{4(i-1)\} \\ Z_i & = (Z \cap I_{i,15}) - \{15(i-1)\}.\end{align*} Let $a_1,a_2,a_3,b$ be the proportions of indices $i \in [k]$ satisfying $|X_i| \le 3$, $|Y_i| \le 3$, $|Z_i| \le 3$, and $|X_i| = |Y_i| = |Z_i| = 4$ respectively. The proportion of indices $i \in [k]$ satisfying at least one of the first three properties is at least $\max(a_1,a_2,a_3) \ge (a_1+a_2+a_3)/3$. For such indices $i \in [k]$ we have \[\sr T(X_i \times Y_i \times Z_i) \le \min(|X_i|, |Y_i|, |Z_i|) \le 3\] and if the fourth property is instead satisfied, then we also have $\sr T(X_i \times Y_i \times Z_i) \le 3$ because $T$ is as in Proposition \ref{Counterexample}. Moreover for every $i \in [k]$ we have \[\sr T(X_i \times Y_i \times Z_i) \le \sr T = 4,\] so using the additivity of the slice rank on diagonal sums we obtain \[\sr T_k(X \times Y \times Z) = \sum_{i=1}^k \sr T(X_i \times Y_i \times Z_i) \le (4 - ((a_1+a_2+a_3)/3+b)) k.\] Because $X,Y,Z$ have size $4k$, the proportions of indices $i \in [k]$ such that $|X_i| \ge 5$, $|Y_i| \ge 5$, $|Z_i| \ge 5$ are at most respectively $4a_1, 4a_2, 4a_3$, so $b \ge 1 - 5(a_1+a_2+a_3)$ by the union bound. Thus, $(a_1+a_2+a_3)/3+b \ge 1/15$. We conclude $\sr T_k(X \times Y \times Z) \le (59/15)k$. \end{proof}

\section{Partition rank subtensors in the finite fields case using projections and analytic rank}\label{section: Projection}

Throughout this section and only in this section we assume that the field $\mathbb{F}$ is finite. We will prove Theorem \ref{Subtensors theorem} for the partition rank in this special case, and our main result will be Theorem \ref{Partition rank subtensors for finite fields}. Although our upper bound on the function $G_{d,\pr}^{|\mathbb{F}|}$ there turns out to be independent of $|\mathbb{F}|$, we shall nonetheless keep the notation of this function in order to distinguish it from the function $G_{d, \pr}$ from Theorem \ref{Subtensors theorem}, where the finiteness of the field is not assumed. For $f_d,g_d: \mathbb{N} \rightarrow [0, \infty)$ two sequences of functions, we shall write $f_d = O_d(g_d)$ for the statement that for every $d \ge 2$ there exist quantities $C_d, L_d > 0$ (depending on $d$) such that $f_d(l) \le C_d g_d(l)$ for all $l \ge L_d$.

\begin{theorem} \label{Partition rank subtensors for finite fields} Let $d \ge 2$ be a positive integer and let $\F$ be a finite field. Then there exist functions $F_{d,\pr}^{|\mathbb{F}|}: \mathbb{N} \rightarrow \mathbb{N}$, $G_{d,\pr}^{|\mathbb{F}|}: \mathbb{N} \rightarrow \mathbb{N}$ satisfying \[ F_{d,\pr}^{|\mathbb{F}|}(l) \le |\F|^{d!l}\text{ and } G_{d,\pr}^{|\mathbb{F}|}(l) = O_d(l \log^{d}(l)) \] such that the following property holds. If $T: Q_1 \times \dots \times Q_d \rightarrow \F$ is an order-$d$ tensor with $\pr T \ge G_{d,\pr}^{|\mathbb{F}|}(l)$ then there exist $X_1 \subset Q_1, \dots, X_d \subset Q_d$ each with size at most $F_{d,\pr}^{|\mathbb{F}|}(l)$ and such that $\pr T(X_1 \times \dots \times X_d) \ge l$. \end{theorem}

We will rely on a connection between the partition rank and the analytic rank which we begin by explaining. 

\begin{definition} \label{Multilinear form definition}

Let $T: \prod_{\a=1}^d Q_{\a} \rightarrow \mathbb{F}$ be an order-$d$ tensor. The \emph{multilinear form $m(T): \prod_{\a=1}^d \mathbb{F}^{Q_{\a}} \rightarrow \mathbb{F}$ associated with $T$} is defined by the formula \[ m(T)(u_1,\dots ,u_d) = \sum_{x_1 \in Q_1,\dots ,x_d \in Q_d} T(x_1,\dots ,x_d) (u_1)_{x_1} \dots (u_{d})_{x_d} \] for each $u_1 \in \F^{Q_1}, \dots, u_d \in \F^{Q_d}$. \end{definition} 

For each $u \in \F^{Q_1}$ let $u.T: \prod_{\a=2}^d Q_{\a} \rightarrow \mathbb{F}$ be the order-$(d-1)$ tensor defined by \[ (u.T)(x_2,\dots ,x_d) = \sum_{x_1 \in Q_1} u(x_1) T(x_1,\dots x_d) \] for every $x_2 \in Q_2$, \dots, $x_d \in Q_d$. It is straightforward to check that the $(d-1)$-linear form $m(u.T): \prod_{\a=2}^d \mathbb{F}^{Q_{\a}} \rightarrow \mathbb{F}$ satisfies \[m(u.T)(u_2,\dots ,u_d) = m(T)(u, u_2, \dots ,u_d)\] for every $u_2 \in \F^{Q_2}, \dots, u_d \in \F^{Q_d}$.

\begin{definition}

Let $d \ge 2$ be a positive integer, and let $T: \prod_{\a=1}^d Q_{\a} \rightarrow \mathbb{F}$ be an order-$d$ tensor. The \emph{bias} of $T$ is defined by \begin{equation} \bias(T) = \mathbb{E}_{u_1 \in \F^{Q_1}, \dots, u_d \in \F^{Q_d}} \chi(m(T)(u_1,\dots ,u_d))\label{expression of bias for a multilinear form}\end{equation} for any arbitrary non-trivial character $\chi$ of $\mathbb{F}$.

\end{definition}

Indeed the following interpretation shows that the right-hand side of \eqref{expression of bias for a multilinear form} is independent of the non-trivial character $\chi$, and furthermore that $\bias(T)$ is always a positive real number. We can write \begin{align*}
\bias(T) & = \mathbb{E}_{u_1 \in \F^{Q_1}, \dots, u_{d-1} \in \F^{Q_{d-1}}} (\mathbb{E}_{u_d \in \F^{Q_d}} \chi(m_{(u_1, \dots, u_{d-1})}(T)(u_d)))\\
& = \mathbb{P}_{u_1 \in \F^{Q_1}, \dots, u_{d-1} \in \F^{Q_{d-1}}} (m_{(u_1, \dots, u_{d-1})}(T) \equiv 0)
\end{align*} where for each $u_1 \in \F^{Q_1}, \dots, u_{d-1} \in \F^{Q_d}$, the linear form $m_{(u_1, \dots, u_{d-1})}(T): \F^{Q_d} \rightarrow \F$ is defined by \[m_{(u_1, \dots, u_{d-1})}(T)(u_d) = m(T)(u_1, \dots, u_d)\] for each $u_d \in \F^{Q_d}$. The main property of the $\bias$ that we will use is that we can write \begin{equation} \label{averaging the bias} \bias(T) = \mathbb{E}_{u_1 \in \F^{Q_1}} \bias (u_1.T). \end{equation}

If $d=2$, then it follows from \eqref{averaging the bias} that $\bias T = \mathbb{F}^{-\rk T}$. Proving qualitatively that for a fixed integer $d \ge 2$ and a fixed finite field $\F$, the bias of an order-$d$ tensor $T$ over $\F$ tends to $0$ as the partition rank of $T$ tends to infinity and then quantifying this asymptotic relationship has been the topic of a significant line of research. The \emph{analytic rank} of a tensor $T$, introduced in \cite{Gowers and Wolf}, is defined to be the quantity \[ \ar T = - \log_{|\mathbb{F}|} \bias T.\] It is known since works of Lovett \cite[Theorem 1.7]{Lovett} and independently of Kazhdan and Ziegler \cite[Lemma 2.2]{Kazhdan and Ziegler 2} that \begin{equation} \label{ar at most pr} \ar T \le \pr T. \end{equation} At the time that the first version of this manuscript was written, the best bounds \begin{equation} \label{pr at most a function of ar} \pr T \le A_{d,\F}(\ar T) \end{equation} in the converse direction had been obtained in works of Janzer and Mili\'cevi\'c: Janzer showed \cite[Theorem 1.10]{Janzer},  that for all $r \ge 1$ we can take $A_{d,\F}(r) = (c \log |\mathbb{F}|)^{c'(d)} (r)^{c'(d)}$ for $c$ an absolute constant and $c'(d) = 4^{d^d}$ and Mili\'cevi\'c showed \cite[Theorem 3]{Milicevic} that we can take $A_{d,\F}(r) = 2^{d^{2^{O(d^2)}}} (r^{2^{2^{O(d^2)}}} + 1)$ for all $r \ge 0$. Since then these bounds have been improved further to $A_{d,\F}(r) \le K_1(d) (r \log^{d-1}(1+r))$ for some $K_1(d) > 0$ by Moshkovitz and Zhu \cite[Theorem 1]{Moshkovitz and Zhu}. It has been asked by Lovett \cite[Problem 1.9]{Lovett} whether $A_{d,\F}(r) \le K(d) r$ for some $K(d) > 0$, and several similar statements have been conjectured \cite[Conjecture 1.10]{Adiprasito Kazhdan Ziegler}, \cite[Conjecture 1.7]{Kazhdan and Ziegler 1}, \cite[Conjecture 1.4]{Lampert and Ziegler}. In the case $d=3$, this bound was shown to be true by Moshkovitz and Cohen \cite[Theorem 1]{Moshkovitz and Cohen} provided that $\F \neq \F_2$ and a similar result was established independently by Adiprasito, Kazhdan and Ziegler \cite[Corollary 1.14]{Adiprasito Kazhdan Ziegler}.

The averaging identity \eqref{averaging the bias} together with inequalities \eqref{ar at most pr} and \eqref{pr at most a function of ar} between the partition and analytic ranks provides a route for our inductive argument: indeed starting from \eqref{averaging the bias} we can rewrite \[ |\mathbb{F}|^{- \ar T} = \mathbb{E}_{u \in \mathbb{F}^{Q_1}} |\mathbb{F}|^{-\ar (u.T)} \] and hence obtain \begin{equation} \label{averaging inequality for analytic ranks} |\mathbb{F}|^{-A_{d,\F}^{-1}(\pr T)} \ge \mathbb{E}_{u \in \F^{Q_1}} |\mathbb{F}|^{-\pr (u.T)}. \end{equation}

We now begin the proof of our subtensors result. Our first step will be to use inequality \eqref{averaging inequality for analytic ranks} to show that, in a sense that we are about to make precise, if a tensor has all its projections over the first coordinate approximately spanned by a family of order-$(d-1)$ tensors that has bounded size, then the tensor has bounded partition rank.

\begin{proposition} \label{A tensor without a large set of pr-separated projections has bounded pr} Let $d \ge 3$, $q,l \ge 1$ be positive integers. If $T: \prod_{\a=1}^d Q_{\a} \rightarrow \mathbb{F}$ is an order-$d$ tensor over a finite field $\F$ and $u_1,\dots ,u_l \in \F^{Q_1}$ are such that for every $u \in \F^{Q_1}$ there exists $ v \in \mathbb{F}^l \setminus \{0\}$ satisfying \[ \pr (u.T - \sum_{h=1}^l v_h u_h.T) \le q,\] then \[ \pr T \le A_{d,\F}(l+q).\] \end{proposition}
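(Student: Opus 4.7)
The plan is to combine the averaging identity \eqref{averaging the bias} with the sandwich $\ar T \le \pr T \le A_{d,\F}(\ar T)$ coming from \eqref{ar at most pr} and \eqref{pr at most a function of ar}; this reduces the problem to showing $\bias T \ge |\F|^{-(l+q)}$, equivalently $\ar T \le l+q$.

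First I would exploit the linearity of the map $u \mapsto u.T$ to inject an auxiliary averaging variable $v \in \F^l$ at no cost. For every fixed $v$ the translation $u \mapsto u + \sum_{h=1}^l v_h u_h$ is a bijection of $\F^{Q_1}$, so \eqref{averaging the bias} combined with the linearity of $u \mapsto u.T$ gives
\[ \bias T \;=\; \E_{u \in \F^{Q_1}} \bias(u.T) \;=\; \E_{u \in \F^{Q_1}} \bias\bigl(u.T + \sum_h v_h\, u_h.T\bigr). \]
Averaging further over $v \in \F^l$ uniformly then yields the identity $\bias T = \E_{u \in \F^{Q_1}} \E_{v \in \F^l} \bias\bigl(u.T + \sum_h v_h\, u_h.T\bigr)$.

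The hypothesis then supplies a pointwise lower bound on the inner $v$-average. For each $u$, choose $v(u) \in \F^l \setminus \{0\}$ with $\pr(u.T - \sum_h v(u)_h u_h.T) \le q$; by \eqref{ar at most pr}, $\bias(u.T - \sum_h v(u)_h u_h.T) \ge |\F|^{-q}$. Since the bias is always a non-negative real number, keeping only the single term $v = -v(u)$ in the $v$-average gives $\E_{v \in \F^l} \bias\bigl(u.T + \sum_h v_h\, u_h.T\bigr) \ge |\F|^{-l} \cdot |\F|^{-q} = |\F|^{-l-q}$, uniformly in $u$. Averaging over $u$ produces $\bias T \ge |\F|^{-l-q}$, whence $\ar T \le l+q$, and \eqref{pr at most a function of ar} concludes $\pr T \le A_{d,\F}(l+q)$.

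The only non-routine step is the averaging manipulation in the second paragraph: once one spots that the linearity of $u \mapsto u.T$ permits the free insertion of the dummy variable $v$, the hypothesis directly delivers a lower bound for the single term $v = -v(u)$, and the factor $|\F|^{-l}$ charged by averaging over $v$ accounts precisely for the additive $l$ in the final analytic-rank bound. Incidentally, the non-vanishing of $v(u)$ plays no role in the argument; the same proof would go through if $v = 0$ were admissible.
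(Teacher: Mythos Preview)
Your proof is correct and follows essentially the same approach as the paper's: both arguments rest on the averaging identity $\bias T = \E_{u}\bias(u.T)$, the translation $u \mapsto u + \sum_h v_h u_h$, and the inequality $\ar \le \pr$ applied to the order-$(d-1)$ tensors $u.T$. The only cosmetic difference is that the paper invokes pigeonhole to fix a single $v$ that works for an $|\F|^{-l}$ proportion of $u$ and then changes variables, whereas you average over $v$ and keep the single good term $v=-v(u)$; the resulting arithmetic $\bias T \ge |\F|^{-l-q}$ is identical.
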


\begin{proof} There are only at most $ |\mathbb{F}|^l$ linear combinations of $ u_1,\dots ,u_l$, so by the assumption and the pigeonhole principle there is some $ v \in \mathbb{F}^l$ such that for a proportion at least $ |\mathbb{F}|^{-l}$ of the $u \in \mathbb{F}^{Q_1}$, \[ \pr (u.T - \sum_{h=1}^l v_h u_h.T) \le q. \] The change of variables $u \mapsto u - \sum_{h=1}^l v_h u_h$ therefore shows that a proportion at least $|\mathbb{F}|^{-l}$ of the $u \in \F^{Q_1}$ is such that $ \pr (u.T) \le q$. Hence, \[\mathbb{E}_{u \in \F^{Q_1}} |\mathbb{F}|^{-\pr (u.T)} \ge |\mathbb{F}|^{-l} |\mathbb{F}|^{-q}, \] so by the inequality \eqref{averaging inequality for analytic ranks} we have \[|\mathbb{F}|^{-A_{d,\F}^{-1}(\pr T)} \ge |\mathbb{F}|^{-l} |\mathbb{F}|^{-q}, \] from which it follows that $\pr T \le A_{d,\F}(l+q)$.\end{proof}

We can without much effort deduce from Proposition \ref{A tensor without a large set of pr-separated projections has bounded pr} that a tensor with high partition rank has a large separated family of projections over the first coordinate.

\begin{corollary} \label{A high-rank tensor has a large pr-separated family of projections}

Let $q,l \ge 1$ be positive integers. Let $T: \prod_{\a=1}^d Q_{\a} \rightarrow \mathbb{F}$ be an order-$d$ tensor over a finite field $\F$ such that $\pr T \ge A_{d,\F}(l+q)$. Then there exist $u_1,\dots ,u_l \in \mathbb{F}^{Q_1}$ such that \[ \pr (\sum_{h=1}^l v_h u_h.T) \ge q\]  for every $v \in \mathbb{F}^l \setminus \{0\}$. \end{corollary}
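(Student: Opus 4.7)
The plan is to deduce the Corollary from the preceding Proposition by a greedy construction, building the family $u_1, \ldots, u_l$ one vector at a time. Explicitly, I would prove by induction on $j \in \{0, 1, \ldots, l\}$ that there exist $u_1, \ldots, u_j \in \F^{Q_1}$ such that $\pr(\sum_{h=1}^{j} v_h u_h . T) \ge q$ for every $v \in \F^{j} \setminus \{0\}$; the case $j = 0$ is vacuous and the target is $j = l$.

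For the inductive step from $j$ to $j+1$ (with $j < l$), I would argue by contradiction: suppose no choice of $u_{j+1}$ preserves the property. Then for every $u \in \F^{Q_1}$ there is a nonzero $v \in \F^{j+1}$ with $\pr(\sum_{h=1}^{j} v_h u_h . T + v_{j+1} u . T) \le q - 1$, and $v_{j+1}$ must be nonzero, or else the tuple $(v_1, \ldots, v_j)$ would be a nonzero element of $\F^j$ contradicting the inductive hypothesis at step $j$ (for $j = 0$ this is automatic, since a nonzero element of $\F^1$ has nonzero sole coordinate). Rescaling by $v_{j+1}^{-1}$ and relabeling then gives: for every $u \in \F^{Q_1}$ there exists $w \in \F^{j}$ (now possibly zero) with $\pr(u . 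T - \sum_{h=1}^{j} w_h u_h . T) \le q - 1$.

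To apply Proposition \ref{A tensor without a large set of pr-separated projections has bounded pr} in the form stated, whose existential $v$ must be nonzero, I would augment the family with a dummy vector $u_0 := 0$ and take $v = (1, w_1, \ldots, w_j) \in \F^{j+1} \setminus \{0\}$; this vector witnesses the Proposition's hypothesis for the family $(u_0, u_1, \ldots, u_j)$ with parameters $l' = j+1$ and $q' = q - 1$, since $\sum_{h=0}^{j} v_h u_h . T = \sum_{h=1}^{j} w_h u_h . T$. The Proposition then yields $\pr T \le A_{d,\F}(j + q) \le A_{d,\F}(l + q - 1)$, which contradicts $\pr T \ge A_{d,\F}(l + q + 1)$ provided $A_{d,\F}$ is strictly increasing on the relevant arguments (as holds for the explicit bounds of Janzer and Mili\'cevi\'c quoted earlier). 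I expect the dummy-vector bookkeeping that converts the ``possibly zero $w$'' condition into the nonzero-$v$ form required by the Proposition to be the only genuine subtlety; everything else is a routine chain of contradictions.
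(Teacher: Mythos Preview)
Your proposal is correct and follows essentially the same greedy inductive construction as the paper's proof, which also builds $u_1,\dots,u_l$ one at a time via the contrapositive of the Proposition. Your dummy-vector trick to handle the nonzero-$v$ constraint is a careful detail the paper glosses over (its proof in fact works because the pigeonhole step in the Proposition's proof does not actually require $v\neq 0$), but otherwise the two arguments are the same.
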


\begin{proof} Using the contrapositive of Proposition \ref{A tensor without a large set of pr-separated projections has bounded pr} we can construct the $u_h$ by induction on $h=1,\dots ,l$ as follows: we find $u_1 \in \F^{Q_1}$ such that $\pr u_1.T \ge q$ and more generally at the $h$th step we find $u_h \in \F^{Q_1}$ such that \[ \pr ((u_h - \sum_{h'=1}^{h-1} v_{h'} u_{h'}).T) \ge q\] for every $v \in \mathbb{F}^{h-1} \setminus \{0\}$.\end{proof}

In our next lemma we show that, conversely, if a tensor has a large separated set of projections then it has high partition rank. Unlike the proof of Proposition \ref{A tensor without a large set of pr-separated projections has bounded pr}, the proof of the following lemma does not resort to the connection between partition rank and analytic rank (and hence works in an arbitrary field).

\begin{lemma} \label{If a tensor has a large separated set of projections then it has high partition rank} Let $l \ge 1$ be a positive integer, and let $T: \prod_{\a=1}^d Q_{\a} \rightarrow \mathbb{F}$ be an order-$d$ tensor. Suppose that there exist $u_1,\dots ,u_{l} \in \mathbb{F}^{Q_1}$ such that \[ \pr (\sum_{h=1}^{l} v_h u_h.T )\ge l \] for every $h \in \mathbb{F}^l \setminus \{0\}$. Then $\pr T \ge l$. \end{lemma}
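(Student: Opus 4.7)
The plan is to prove the contrapositive: assuming $\pr T \le l-1$, I will exhibit a non-zero $v \in \F^l$ with $\pr(\sum_{h=1}^l v_h u_h.T) \le l-1$, contradicting the hypothesis. Writing $T = \sum_{i=1}^{l-1} S_i$ with each $\pr S_i \le 1$, I choose for each $i$ a bipartition $\{I_i, J_i\}$ of $\lbrack d \rbrack$ into non-empty parts and functions $a_i, b_i$ with $S_i(x) = a_i(x(I_i)) b_i(x(J_i))$. Bilinearity of the contraction operation then gives
\[ \sum_{h=1}^l v_h u_h.T \;=\; w_v.T \;=\; \sum_{i=1}^{l-1} w_v.S_i, \qquad \text{where } w_v = \sum_{h=1}^l v_h u_h. \]

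The central step is to analyse each $w_v.S_i$. Without loss of generality assume $1 \in I_i$ throughout. If $|I_i| \ge 2$, then
\[w_v.S_i(x_2,\dots,x_d) \;=\; \Bigl(\sum_{x_1 \in Q_1} w_v(x_1) a_i(x_1, x(I_i \setminus \{1\}))\Bigr) \cdot b_i(x(J_i))\]
factors over the bipartition $\{I_i \setminus \{1\}, J_i\}$ of $\{2, \dots, d\}$ into non-empty parts, so $\pr(w_v.S_i) \le 1$ for every $v$. If on the other hand $I_i = \{1\}$, then $a_i$ is a function of $x_1$ alone and $w_v.S_i = \phi_i(w_v) \cdot b_i$, where $\phi_i: \F^{Q_1} \to \F$ is the linear functional $\phi_i(u) = \sum_{x_1} u(x_1) a_i(x_1)$; in particular $w_v.S_i$ vanishes as soon as $\phi_i(w_v) = \sum_{h=1}^l v_h \phi_i(u_h) = 0$, which is a single homogeneous linear condition on $v$.

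Let $K \subset \lbrack l-1 \rbrack$ be the set of indices $i$ for which $\{1\}$ is one of the two parts of the bipartition $\{I_i, J_i\}$. The system of $|K| \le l-1$ homogeneous linear equations $\sum_{h=1}^l v_h \phi_i(u_h) = 0$, $i \in K$, in the $l$ unknowns $v_1, \dots, v_l$ admits a non-trivial solution $v \in \F^l \setminus \{0\}$ since the number of constraints is strictly less than the number of unknowns. For such $v$ we have $w_v.S_i = 0$ for every $i \in K$ and $\pr(w_v.S_i) \le 1$ for every $i \in \lbrack l-1 \rbrack \setminus K$, so subadditivity of the partition rank gives $\pr(w_v.T) \le |\lbrack l-1 \rbrack \setminus K| \le l-1$, the desired contradiction.

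The argument is essentially a short piece of linear algebra once one isolates the key observation: contracting a partition rank $1$ tensor in the first coordinate preserves partition rank at most $1$, unless $\{1\}$ is itself one of the two parts of the bipartition, in which case the contraction reduces to a linear functional of $u$ times a fixed tensor and can be killed by a single linear equation on $v$. I do not anticipate a significant obstacle.
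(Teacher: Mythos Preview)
Your proof is correct and follows essentially the same approach as the paper's: both argue the contrapositive by splitting a partition-rank decomposition of $T$ according to whether $\{1\}$ is one of the two parts, use the pigeonhole/linear-algebra fact that at most $l-1$ linear conditions on $v\in\F^l$ admit a nonzero solution to annihilate the terms where it is, and observe that contracting each remaining term preserves partition rank at most $1$. The only cosmetic difference is that you package the contraction as $w_v = \sum_h v_h u_h$ and the linear conditions as functionals $\phi_i$, whereas the paper writes out the coefficients $u_h.a_i$ explicitly.
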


\begin{proof} Assume for a contradiction that $\pr T \le l-1$. Then there exist nonnegative integers $r,s$ with $r+s \le l-1$, for each $i \in \lbrack r \rbrack$ two functions $a_i: Q_1 \rightarrow \mathbb{F}$, $b_i: \prod_{\a=2}^d Q_{\a} \rightarrow \mathbb{F}$, for each $i \in \lbrack s \rbrack$ a bipartition $\{I_i,J_i\}$ of $\lbrack d \rbrack$ with $I_i,J_i \neq \emptyset$, $1 \in I_i$ and $I_i \neq \{1\}$, and for each $i \in \lbrack s \rbrack$ two functions $c_i: \prod_{i \in I_i} Q_i \rightarrow \mathbb{F}$, $d_i: \prod_{i \in J_i} Q_i \rightarrow \mathbb{F}$ such that \[T(x_1,\dots ,x_d) = \sum_{i=1}^{r} a_i(x_1) b_i(x_2,\dots ,x_d) + \sum_{i=1}^{s} c_i(x(I_i)) d_i(x(J_i)) \] for every $x_1 \in Q_1, \dots, x_d \in Q_d$. Let $u_1,\dots ,u_l$ be arbitrary fixed elements of $\mathbb{F}^{Q_1}$. For each $h \in \lbrack l \rbrack$, \[ u_h.T = \sum_{i=1}^{r} (u_h.a_i) b_i(x_2,\dots ,x_d) + \sum_{i=1}^{s} (u_h.c_i)(x(I_i \setminus \{1\})) d_i(x(J_i)) \] where \[u_h.a_i = \sum_{x_1 \in Q_1} u_h(x_1) a_i(x_1) \in \F \] and \[(u_h.c_i): x(I \setminus \{1\}) \mapsto \sum_{x_1 \in Q_1} u_h(x_1) c_i(x(I)).\] Because $l \ge r+1$, there exists $v \in \mathbb{F}^l \setminus \{0\}$ such that $\sum_{h=1}^l v_h (u_h.a_i) = 0$ for all $i \in \lbrack r \rbrack$. Hence \[ \sum_{h=1}^l v_h (u_h.T) = \sum_{i=1}^{s} \left( \sum_{h=1}^l v_h (u_h.c_i) (x(I_i \setminus \{1\})) \right) d_i(x(J_i)). \] The right-hand side has partition rank at most $s \le l-1$, a contradiction. \end{proof}

For $d\geq 2$ we define two families of functions $F_{d, \pr}^{|\mathbb{F}|}: \mathbb{N} \rightarrow \mathbb{N}$ and $G_{d, \pr}^{|\mathbb{F}|}: \mathbb{N} \rightarrow \mathbb{N}$ inductively as follows. We set \begin{align*} F_{2, \pr}^{|\mathbb{F}|}(l) &= l\text{ and for each }d \ge 3\text{, }F_{d,\pr}^{|\mathbb{F}|}(l) = (|\mathbb{F}|^l F_{d-1, \pr}^{|\mathbb{F}|}(l))^{d-1}\\ G_{2, \pr}^{|\mathbb{F}|}(l) &= l\text{ and for each }d \ge 3\text{, } G_{d,\pr}^{|\mathbb{F}|}(l) = A_{d,\F}(G_{d-1,pr}^{|\mathbb{F}|}(l) + l).\end{align*}

The following pair of results, Proposition \ref{Obtaining from a separated set a separated set where furthermore the projection vectors have bounded support} and Proposition \ref{Subtensors for partition rank over finite fields}, will be proved by induction on $d$. The base case is Proposition \ref{Subtensors for partition rank over finite fields} for $d=2$. Then, for every $d \ge 3$, Proposition \ref{Subtensors for partition rank over finite fields} in order $d-1$ implies Proposition \ref{Obtaining from a separated set a separated set where furthermore the projection vectors have bounded support} in order $d$, and Proposition \ref{Obtaining from a separated set a separated set where furthermore the projection vectors have bounded support} in order $d$ implies Proposition \ref{Subtensors for partition rank over finite fields} in order $d$. 

The statements that we have gathered so far in this section allow us to do the following: starting with an order-$d$ tensor $T$ with high partition rank, we find a large separated set of projections of $T$, then apply Proposition \ref{Subtensors for partition rank over finite fields} in order $d-1$ to obtain sets $X_2, \dots, X_d$ with bounded size such that the set of projections is still separated when restricted to the order-$(d-1)$ subtensor $X_2 \times \dots \times X_d$, which then ensures that $T(Q_1 \times X_2 \times \dots \times X_d)$ has high partition rank. In order to also be able to restrict the first set of coordinates we furthermore want to be able to assume that the projections have bounded support: this step will constitute an important part of the proof of Proposition \ref{Obtaining from a separated set a separated set where furthermore the projection vectors have bounded support}.

\begin{proposition}\label{Obtaining from a separated set a separated set where furthermore the projection vectors have bounded support}

Let $d \ge 3$, $q,l \ge 1$ be positive integers. Let $T: \prod_{\a=1}^d Q_{\a} \rightarrow \mathbb{F}$ be an order-$d$ tensor over a finite field $\F$. If there exist $u_1,\dots,u_l \in \mathbb{F}^{Q_1}$ such that \[ \pr \sum_{h=1}^l v_h (u_h.T) \ge G_{d-1, \pr}^{|\mathbb{F}|}(q) \] for every $v \in \mathbb{F}^l \setminus \{0\}$, then there exist $X_2 \subset Q_2$, \dots, $X_d \subset Q_d$ with size at most $|\mathbb{F}|^l F_{d-1, \pr}^{|\mathbb{F}|}(q)$ and $ u_1',\dots ,u_l' \in \mathbb{F}^{Q_1}$ all supported inside a subset $X_1 \subset Q_1$ with size at most $(|\mathbb{F}|^l F_{d-1, \pr}^{|\mathbb{F}|}(q))^{d-1}$ such that \[ \pr \sum_{h=1}^l v_h (u_h'.T(X_2 \times \dots  \times X_d)) \ge q \] for every $v \in \mathbb{F}^l \setminus \{0\}$. \end{proposition}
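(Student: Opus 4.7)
The plan is to split the construction of $X_1, \dots, X_d$ into two stages: first I would obtain common sets $X_2, \dots, X_d$ by applying the inductive hypothesis (Theorem \ref{Minors for partition rank over finite fields} in order $d-1$) to each nonzero linear combination separately and then unioning the outputs; then I would obtain $X_1$ together with the new vectors $u_h'$ by a matrix-rank argument on the already restricted tensor.

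For the first stage, let $V = \mathbb{F}^l \setminus \{0\}$. For each $v \in V$, the order-$(d-1)$ tensor $\sum_{h=1}^l v_h (u_h.T)$ has partition rank at least $G_{d-1,\pr}^{|\mathbb{F}|}(q)$ by hypothesis, so by the induction hypothesis there exist $X_2(v) \subset Q_2, \dots, X_d(v) \subset Q_d$ each of size at most $F_{d-1,\pr}^{|\mathbb{F}|}(q)$ such that
\[ \pr \Bigl( \sum_{h=1}^l v_h (u_h.T) \Bigr)(X_2(v) \times \dots \times X_d(v)) \ge q. \]
Setting $X_\alpha = \bigcup_{v \in V} X_\alpha(v)$ for each $\alpha \in \{2,\dots,d\}$ yields sets of size at most $|V| \cdot F_{d-1,\pr}^{|\mathbb{F}|}(q) \le |\mathbb{F}|^l F_{d-1,\pr}^{|\mathbb{F}|}(q)$. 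Since passing to a larger restriction cannot decrease the partition rank, each $\sum_h v_h (u_h.T)$ restricted to $X_2 \times \dots \times X_d$ still has partition rank at least $q$.

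For the second stage, I would write $T' = T(Q_1 \times X_2 \times \dots \times X_d)$ and view $T'$ as a matrix $M$ with rows indexed by $Q_1$ and columns by $X_2 \times \dots \times X_d$, so that for every $u \in \mathbb{F}^{Q_1}$ the order-$(d-1)$ tensor $u.T(X_2 \times \dots \times X_d)$ is (after identification) the linear combination $u^\top M$ of rows of $M$. The matrix $M$ has at most $\prod_{\alpha=2}^d |X_\alpha| \le (|\mathbb{F}|^l F_{d-1,\pr}^{|\mathbb{F}|}(q))^{d-1}$ columns, so by the standard fact that row rank equals column rank there exists $X_1 \subset Q_1$ of size at most $(|\mathbb{F}|^l F_{d-1,\pr}^{|\mathbb{F}|}(q))^{d-1}$ whose rows span the full row space of $M$. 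Consequently, for each $h \in \lbrack l \rbrack$ there exists $u_h' \in \mathbb{F}^{Q_1}$ supported inside $X_1$ with $u_h'.T' = u_h.T'$, and then by linearity
\[ \sum_{h=1}^l v_h (u_h'.T(X_2 \times \dots \times X_d)) = \sum_{h=1}^l v_h (u_h.T(X_2 \times \dots \times X_d)) \]
for every $v \in \mathbb{F}^l \setminus \{0\}$, with partition rank at least $q$ by the first stage.

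The main obstacle is the second stage: one has to shrink $Q_1$ without destroying the separation property of the projections, and doing so coordinate-by-coordinate seems not to work directly because the separation is a joint condition on the $u_h$. The key point that makes the argument clean is that once $X_2, \dots, X_d$ are already small, the linear map $u \mapsto u.T'$ has image in a space of dimension at most $\prod_{\alpha=2}^d |X_\alpha|$, so a bounded-size subset of rows of $T'$ suffices to realise every projection—in particular each $u_h.T'$—and the replacement $u_h \leadsto u_h'$ preserves all linear combinations simultaneously.
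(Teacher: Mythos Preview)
Your proof is correct and follows essentially the same approach as the paper: first union the minor sets $X_\alpha(v)$ obtained from the order-$(d-1)$ inductive hypothesis over all $v \in \mathbb{F}^l \setminus \{0\}$, then use the bound on $|X_2 \times \dots \times X_d|$ to extract a spanning set of rows $X_1$ and replace each $u_h$ by a $u_h'$ supported in $X_1$ realising the same projection. The only cosmetic difference is that the paper phrases the second stage in terms of spans of the restricted slices $T'_{x_1}$ rather than the row space of a flattened matrix, but the content is identical.
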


\begin{proposition} \label{Subtensors for partition rank over finite fields}

Let $d \ge 2$ be an integer. If $ T: \prod_{\a=1}^d Q_{\a} \rightarrow \F$ is an order-$d$ tensor over a finite field $\mathbb{F}$ with $ \pr T \ge G_{d, \pr}^{|\mathbb{F}|}(l)$, then there exist sets $X_1 \subset Q_1, \dots, X_d \subset Q_d$ of size at most $F_{d, \pr}^{|\mathbb{F}|}(l)$ such that \[ \pr T(X_1 \times \dots \times X_d) \ge l. \] \end{proposition}

\begin{proof} [Proof of Proposition \ref{Obtaining from a separated set a separated set where furthermore the projection vectors have bounded support}] For each $v \in \mathbb{F}^l \setminus \{0\}$ we can by Proposition \ref{Subtensors for partition rank over finite fields} in dimension $ d-1$ find sets $ X_{2,v} \subset Q_2 \dots, X_{d,v} \subset Q_d$ each with size at most $ F_{d-1, \pr}^{|\mathbb{F}|}(q)$ such that \[ \pr (\sum_{h=1}^l v_h (u_h.T))(X_{2,v} \times \dots \times X_{d,v}) \ge q. \] Let $ X_{\a} = \bigcup_{v \in \mathbb{F}^l \setminus \{0\}} X_{\a,v}$ for each $ \a=2,\dots ,d$. We obtain \[ \pr (\sum_{h=1}^l v_h (u_h.T))(X_2 \times \dots \times X_d) \ge q \] for all $v \in \mathbb{F}^l \setminus \{0\}$ and the sets $ X_2,\dots ,X_d$ each have size at most $ |\mathbb{F}|^l F_{d-1, \pr}^{|\mathbb{F}|}(q)$. The family $ \{T_{x_1}: x_1 \in Q_1 \}$ of slices $ (x_2,\dots ,x_d) \mapsto T(x_1,x_2, \dots,x_d)$ spans $ \{u.T: u \in \mathbb{F}^{Q_1}\}$, so in particular spans all linear combinations $\sum_{h=1}^l v_h (u_h.T)$ with $v \in \F^l \setminus \{0\}$. For each $x_1 \in Q_1$ let $T_{x_1}'$ be the restriction \[T_{x_1}(X_2 \times \dots  \times X_d).\] The family of restrictions $ \{T_{x_1}': x_1 \in Q_1 \}$ spans all linear combinations \[ \sum_{h=1}^l v_h (u_h.T)(X_2 \times \dots \times X_d)\] with $v \in \F^l \setminus \{0\}$. Because $ X_2 \times \dots \times X_d$ has size at most $ (|\mathbb{F}|^l F_{d-1, \pr}^{|\mathbb{F}|}(q))^{d-1}$, there exists a set $ X_1 \subset Q_1$ with size at most $(|\mathbb{F}|^l F_{d-1, \pr}^{|\mathbb{F}|}(q))^{d-1}$ such that \[ \langle \{T'_{x_1}: x_1 \in X_1\} \rangle = \langle \{T'_{x_1}: x_1 \in Q_1\} \rangle. \] For each $x_1 \in Q_1$ there exist coefficients $a_{x_1'}(x_1) \in \mathbb{F}$ for every $x_1' \in X_1$ such that \[ T'_{x_1} = \sum_{x_1' \in X_1} a_{x_1'}(x_1) T'_{x_1'}. \] For each $h \in \lbrack l \rbrack$ we define $u_h' \in \F^{Q_1}$ by \[ u_h'(x_1') = \sum_{x_1 \in Q_1} u_i(x_1) a_{x_1'}(x_1)\] for each $x_1' \in X_1$ and by $u_i'(x_1') = 0$ for each $x_1' \in Q_1 \setminus X_1$.  We can check that \[u_h'.T' = \sum_{x_1' \in X_1} u_i'(x_1') T_{x_1'}' = \sum_{x_1 \in Q_1} u_i(x_1) (\sum_{x_1' \in X_1} a_{x_1'}(x_1) T_{x_1'}') = \sum_{x_1 \in Q_1} u_i(x_1) T_{x_1}'= u_h.T' \] for each $h \in \lbrack l \rbrack$. By linearity, \[\sum_{h=1}^l v_h u_h'.T' = \sum_{h=1}^l v_h u_h.T'\] for each $v \in \mathbb{F}^l \setminus \{0\}$. Since taking restrictions cannot increase the partition rank, the result follows. \end{proof}

\begin{proof}[Proof of Proposition \ref{Subtensors for partition rank over finite fields}] Let $T$ be an order-$d$ tensor with $\pr T \ge A_{d,\F}(G_{d-1,\pr}^{|\mathbb{F}|}(l) + l)$. By Corollary \ref{A high-rank tensor has a large pr-separated family of projections} there exist $u_1,\dots ,u_{l} \in \F^{Q_1}$ such that \[ \pr ((\sum_{h=1}^{l} v_h u_h).T) \ge G_{d-1,\pr}^{|\mathbb{F}|}(l) \] for every $v \in \mathbb{F}^l \setminus \{0\}$. By Proposition \ref{Obtaining from a separated set a separated set where furthermore the projection vectors have bounded support} there exist $X_2 \subset Q_2,\dots ,X_d \subset Q_d$ with size at most $|\mathbb{F}|^l F_{d-1, \pr}^{|\mathbb{F}|}(l)$, a set $X_1$ with size at most $(|\mathbb{F}|^l F_{d-1, \pr}^{|\mathbb{F}|}(l))^{d-1}$ and $u_1',\dots ,u_{l+1}' \in \F^{Q_1}$ all supported inside $X_1$ such that \[ \pr ((\sum_{h=1}^{l} v_h u_h').T(X_2 \times \dots  \times X_d)) \ge l \] for every $v \in \mathbb{F}^l \setminus \{0\}$. By Lemma \ref{If a tensor has a large separated set of projections then it has high partition rank} we conclude that \[ \pr T(X_1 \times \dots  \times X_d) \ge l. \qedhere \] \end{proof}

We would like to make a pair of remarks on the bounds in Theorem \ref{Partition rank subtensors for finite fields} arising from our argument. Firstly, it follows from the inductive expression of $G_{d, \pr}^{|\F|}$ that if $A_d(r) \le K(d)r$ is indeed true, then the bound on $G_{d, \pr}^{|\F|}$ in Theorem \ref{Partition rank subtensors for finite fields} can be taken to be linear in $l$ for any fixed $d \ge 2$. Secondly, the exponential bound in $l$ on the function $F_{d, \pr}^{|\F|}$ comes from the construction, for every $\a =2, \dots, d$, of $X_{\a}$ each as a union of the $|\F|^l$ sets $X_{\a,v}$; this can be avoided by using Proposition \ref{Multidimensional Rrk subtensors} instead, which then leads to the inductive expressions $F_{d,\pr}^{|\mathbb{F}|}(l) = (l F_{d-1, \pr}^{|\mathbb{F}|}(l^2))^{d-1}$ and $G_{d,\pr}^{|\mathbb{F}|}(l) = A_{d,\F}(G_{d-1,\pr}^{|\mathbb{F}|}(l^2) + l)$ for each $d \ge 3$. These improve the bound on $F_{d, \pr}^{|\F|}(l)$ to a power bound $F_{d, \pr}^{|\F|}(l) \le l^{2^d d!}$, but at the cost of worsening the quasi-linear bound on $G_{d, \pr}^{|\F|}(l)$ to a power bound $G_{d, \pr}^{|\F|}(l) = O_d (l^{2^d})$.

Using several of the earlier ingredients of the proof of Theorem \ref{Partition rank subtensors for finite fields}, we may obtain a qualitatively weaker variant of Theorem \ref{Partition rank subtensors for finite fields} involving ``coordinate-free" restrictions of tensors where we allow for linear transformations of the associated multilinear forms, but for which we obtain quasi-linear bounds for both functions, which would improve further to linear bounds if the inequality $A_d(r) \le K(d) r$ were proved. More precisely, let $A'_{d, \F}(l) = A_{d,\F}(2l)$ and for each nonnegative integer $i \ge 0$ let $A_{d, \F}^{'\circ i}$ be the $i$-fold iteration of $A'_{d, \F}$; the bound $A_{d, \F}^{'\circ d}(l)$ that we obtain in Proposition \ref{Coordinate-free subtensors for partition rank over finite fields} for both functions is at most $O_d(l \log^{d}(l))$, and would be at most $(2K(d))^dl$ for all $l \ge 1$ provided that $A_{d,\F}(r) \le K(d)r$.

For $T: \prod_{\a=1}^d Q_{\a} \rightarrow \F$ an order-$d$ tensor, for linear subspaces $U_{\a} \subset \F^{Q_{\a}}$, subsets $Q_{\a}'$ of $\mathbb{N}$ with size $\dim U_{\a}$ and linear isomorphisms $L_{\a}: \F^{Q_{\a}'} \rightarrow U_{\a}$ for each $\a \in [d]$, we write $T(L_1, \dots, L_d): \prod_{\a=1}^d Q_{\a}' \rightarrow \F$ for the tensor defined by \[m(T(L_1, \dots, L_d))(u_1, \dots, u_d) = m(T)(L_1(u_1), \dots, L_d(u_d))\] for every $u_1 \in \F^{Q_1'}, \dots, u_d \in \F^{Q{d}'}$.

\begin{proposition} \label{Coordinate-free subtensors for partition rank over finite fields} Let $d \ge 2$ be a positive integer, and let $l \ge 1$ be a positive integer. If $T: \prod_{\a=1}^d Q_{\a} \rightarrow \F$ is an order-$d$ tensor such that $\pr T \ge A_{d, \F}^{'\circ d}(l)$, then there exist for each $\a \in [d]$ a linear subspace $U_{\a} \subset \F^{Q_{\a}}$ with dimension at most $A_{d, \F}^{'\circ d}(l)$ and a linear isomorphism $L_{\a}: \F^{\dim U_{\a}} \rightarrow U_{\a}$ such that \[\pr T(L_1, \dots, L_d) \ge l.\] \end{proposition}

\begin{proof}

Since $\pr T \ge A_{d, \F}^{'\circ d}(l)$, by Corollary \ref{A high-rank tensor has a large pr-separated family of projections} we can find $u_1, \dots, u_{A_{d, \F}^{'\circ (d-1)}(l)} \in \F^{Q_1}$ such that \[\pr ((\sum_{h=1}^l v_h u_h).T) \ge l\] for all $v \in \F^{A_{d, \F}^{'\circ (d-1)}(l)} \setminus \{0\}$. We select $U_1 = \langle u_1, \dots, u_{A_{d,\F}^{'\circ (d-1)}(l)} \rangle$ and $L_1: \F^{\dim U_1} \rightarrow U_1$ the linear isomorphism which for every $i \in [A_{d,\F}^{'\circ (d-1)}(l)]$ sends the $i$th canonical basis vector of $\F^{\dim U_1}$ to $u_i$. Using Lemma \ref{If a tensor has a large separated set of projections then it has high partition rank} we obtain \[\pr T(L_1, \Id_{\F^{Q_2}}, \dots, \Id_{\F^{Q_d}}) \ge A'^{\circ (d-1)}(l).\] Iterating with the coordinates $\a=2,\dots,d$ successively we obtain subspaces $U_\a \subset \F^{Q_{\a}}$ with $\dim U_{\a} \le A_{d,\F}'^{\circ (d-\a)}(l)$ and linear isomorphisms $L_{\a}: \F^{\dim U_{\a}} \rightarrow U_{\a}$ such that \[\pr T(L_1, \dots, L_\a, \Id_{\F^{Q_{\a+1}}}, \dots, \Id_{\F^{Q_d}}) \ge A_{d,\F}'^{\circ (d-\a)}(l).\] In particular, all linear subspaces $U_1, \dots, U_d$ have dimension at most $A_{d,\F}'^{\circ d}(l)$ and \[\pr T(L_1, \dots, L_d) \ge l. \qedhere\]

\end{proof}

\section{Disjoint rank in the matrix case}\label{section: Disjoint rank for matrices}

We thank Lisa Sauermann for a sketch that led to Proposition \ref{Disjoint rank for matrices} and which we use here with her permission.

\begin{proposition} \label{Disjoint rank for matrices} Let $A: Q_1 \times Q_2 \rightarrow \mathbb{F}$ be a matrix. We have $\drk A \ge (\erk A)/3$. \end{proposition}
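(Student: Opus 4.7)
The plan is to prove the equivalent inequality $\erk A \le 3\,\drk A$. Write $r = \erk A$ and $s = \drk A$.

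First, I would choose a diagonal matrix $V^*$ realising the minimum $\rk(A + V^*) = r$ and set $B = A + V^*$; since $B$ and $A$ coincide off the diagonal of $D = Q_1 \cap Q_2$ we have $\drk B = \drk A = s$ while $\rk B = r$. Because $\rk B = r$, there exist $X_1^* \subset Q_1$, $X_2^* \subset Q_2$ each of size $r$ with $B(X_1^* \times X_2^*)$ a full-rank $r \times r$ submatrix; among all such choices I would pick the pair so that the intersection $I = X_1^* \cap X_2^* \subset D$ has smallest possible size, and secondarily so that $\rk B(I \times I)$ is as small as possible. Setting $A' = X_1^* \setminus I$ and $C' = X_2^* \setminus I$, the three submatrices $B(A' \times I)$, $B(A' \times C')$, $B(I \times C')$ each have their row and column index sets disjoint as subsets of $\mathbb{N}$, so each has rank at most $s$.

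Applying subadditivity of rank to the $2 \times 2$ block decomposition of $B(X_1^* \times X_2^*)$ along the partitions $X_1^* = A' \sqcup I$ and $X_2^* = I \sqcup C'$ then yields
\[
r \;=\; \rk B(X_1^* \times X_2^*) \;\le\; \rk B(A' \times I) + \rk B(A' \times C') + \rk B(I \times C') + \rk B(I \times I) \;\le\; 3s + \rk B(I \times I).
\]
Hence to conclude $r \le 3s$ it suffices to establish $\rk B(I \times I) = 0$, and this is the step where I expect the main obstacle to lie.

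My plan for the vanishing of $B(I \times I)$ is a basis-exchange / diagonal-modification argument. Supposing for contradiction that $B(I \times I)$ has a nonzero entry at some $(d_1, d_2) \in I \times I$, I would argue that one of two moves is available and contradicts the lexicographic minimality of the pair $(|I|, \rk B(I \times I))$: when $d_1 = d_2$, one can adjust $V^*$ on the single coordinate $d_1$ so as to zero out the entry $B(d_1, d_1)$, and then use the extremality $\rk B = \erk A$ to check that the overall rank of $B$ is not reduced, contradicting the secondary minimality; when $d_1 \ne d_2$, the nonzero entry of $B(I \times I)$ serves as a pivot to swap $d_1$ out of $X_1^*$ (or $d_2$ out of $X_2^*$) with an element chosen from $Q_1 \setminus X_2^*$ (or $Q_2 \setminus X_1^*$), preserving full rank of $B(X_1^* \times X_2^*)$ while strictly decreasing $|I|$, contradicting primary minimality. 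The delicate part will be step (ii): showing that under the extremality hypotheses $\rk B = \rk B(X_1^* \times X_2^*) = r = \erk A$ a replacement outside the overlap always exists. Once $\rk B(I \times I) = 0$ is established, the displayed inequality collapses to $r \le 3s$, equivalent to $\drk A \ge \erk A / 3$.
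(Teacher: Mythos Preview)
Your basic setup and the block inequality
\[
r \;\le\; \rk B(A'\times I)+\rk B(A'\times C')+\rk B(I\times C')+\rk B(I\times I)\;\le\;3s+\rk B(I\times I)
\]
are correct, but the crucial claim $\rk B(I\times I)=0$ does not follow from your argument, and in fact fails for the particular optimal $B$ you have fixed. Take $Q_1=Q_2=\{1,2,3\}$ and
\[
B=\begin{pmatrix}1&1&0\\0&1&1\\1&2&1\end{pmatrix}.
\]
Here $\rk B=2$ and the off-diagonal $2\times 2$ minor on rows $\{1,2\}$, columns $\{2,3\}$ already forces $\erk A=2$, so $B$ is an optimal choice of $A+V^*$. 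Every full-rank $2\times 2$ minor has $|I|\ge 1$, and for each minor with $|I|=1$ the single entry $B(i,i)$ equals $1$. Your diagonal fix --- zeroing $B(i,i)$ --- raises the rank to $3$ in every case (e.g.\ setting $B(2,2)=0$ gives determinant $-1$), so the modified matrix is no longer optimal and no contradiction to your minimality follows. Since $|I|=1$ there is no off-diagonal entry of $B(I\times I)$ to use as a pivot either. Thus for this fixed $B$ your optimisation over $(X_1^*,X_2^*)$ cannot force $\rk B(I\times I)=0$; at best one must also vary $V^*$, but then the whole minimality framework has to be reformulated and reproved, which you have not done. You yourself flag the exchange step as ``the delicate part'' without supplying the argument.

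The paper's proof proceeds in the opposite direction and sidesteps this difficulty entirely: it starts from a rank-$k$ \emph{disjoint} minor $A(X\times Y)$ with $k=\drk A$ and $|X|=|Y|=k$, uses maximality of $k$ to show that for every $z\in Q_1\setminus(X\cup Y)$ and $w\in Q_2\setminus(X\cup Y)$ with $z\neq w$ the entry $A(z,w)$ is determined by $A(\{z\}\times Y)\,A(X\times Y)^{-1}\,A(X\times\{w\})$, and \emph{defines} the diagonal correction $D$ so that the same formula holds when $z=w$. This yields $\rk(A+D)((Q_1\setminus Y)\times(Q_2\setminus X))=k$, and subadditivity over the at most $k$ remaining rows in $Y$ and $k$ remaining columns in $X$ gives $\erk A\le\rk(A+D)\le 3k$.
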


\begin{proof} Let $k= \drk A$. There exist disjoint subsets $X \subset Q_1$, $Y \subset Q_2$ such that $\rk A(X \times Y) = k$. By the standard result on full-rank submatrices of matrices we can furthermore require $X,Y$ to have size $k$. Let $z \in Q_1 \setminus (X \cup Y)$ and $w \in Q_2 \setminus (X \cup Y)$. If $z,w$ are distinct then $X \cup \{z\}$ and $Y \cup \{w\}$ are disjoint, so by definition of the disjoint rank, \[ \rk A((X \cup \{z\}) \times (Y \cup \{w\})) = \rk A(X \times Y)\] whence \[ A(z,w) = A(\{z\} \times Y) A(X \times Y)^{-1} A(X \times \{w\}). \] If $z=w$ then there exists a unique element $d(z) \in \F$ such that \[A(z,z) + d(z) = A(\{z\} \times Y) A(X \times Y)^{-1} A(X \times \{w\}).\] Let $D: Q_1 \times Q_2 \rightarrow \mathbb{F}$ be the matrix defined by $D(z,w) = 0$ if $z \neq w$ and $D(z,w) = d(z)$ if $z=w$. We get that for all $z \in Q_1 \setminus (X \cup Y)$ and all $w \in Q_2 \setminus (X \cup Y)$, \begin{align*}
(A+D)(z,w) & = A(\{z\} \times Y) A(X \times Y)^{-1} A(X \times \{w\}) \\
& = (A+D)(\{z\} \times Y) (A+D)(X \times Y)^{-1} (A+D)(X \times \{w\})
\end{align*} where the last equality holds since $D$ is identically $0$ on all three sets $\{z\} \times Y$, $X \times Y$, $X \times \{w\}$. Therefore \[\rk (A+D)((X \cup (Q_1 \setminus (X \cup Y))) \times (Y \cup (Q_2 \setminus (X \cup Y))) = \rk (A+D)(X \times Y).\] Simplifying both sides we get \[ \rk (A+D)((Q_1 \setminus Y) \times (Q_2 \setminus X)) = \rk A(X \times Y) = k. \] Moreover by subadditivity \[\rk(A+D) \le \rk (A+D)((Q_1 \setminus Y) \times (Q_2 \setminus X)) + \rk (A+D)(Y \times (U_2 \setminus X)) + \rk (A+D)((Q_1 \setminus Y) \times X).\] The first term of the right-hand side is at most $k$ as we have just shown, and the second and third terms are each at most $k$ since $X,Y$ have size $k$. So $\rk(A+D) \le 3k$, whence $\erk A \le 3 \drk A$ as desired. \end{proof}

\begin{proposition} \label{Multidimensional matrix disjoint rank} Let $s \ge 1$ be a positive integer, let $A_1,\dots ,A_s: Q_1 \times Q_2 \rightarrow \F$ be matrices, and let $\Lambda \subset \F^s$. Assume that \[ \erk a.A \ge s(s+1)l + l \] for every $a \in \Lambda$. Then there exist disjoint subsets $X \subset B_1$, $Y \subset B_2$ such that \[ \rk (a.A)(X \times Y) \ge l \] for every $a \in \Lambda$. \end{proposition}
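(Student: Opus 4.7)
The plan is to adapt the strategy behind Proposition \ref{Disjoint rank for matrices} to the multidimensional setting, combining it with Proposition \ref{Multidimensional matrix rank} to handle all elements of $\Lambda$ uniformly. The key estimate I would establish at the outset is that for any $R \subset Q_1$ and $C \subset Q_2$ with $R \cap C = \emptyset$,
\[\rk((a.A)(R \times C)) \ge \erk(a.A) - |Q_1 \setminus R| - |Q_2 \setminus C|,\]
which holds because $R \times C$ has empty diagonal (so the essential rank of the restriction agrees with its rank) while $\erk(T(R \times C)) \ge \erk(T) - |Q_1 \setminus R| - |Q_2 \setminus C|$ follows from a zero-extension argument for diagonal perturbations.

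The first case I would handle is when $|Q_1 \cap Q_2| \le s(s+1)l$. Setting $R = Q_1 \setminus Q_2$ and $C = Q_2$, the key inequality gives $\rk((a.A)(R \times C)) \ge l$ for every $a \in \Lambda$. Applying Proposition \ref{Multidimensional matrix rank} with parameter $l$ twice---first on rows of $R$, then on columns of $C$---yields $X \subset Q_1 \setminus Q_2$ and $Y \subset Q_2$ of sizes at most $sl$ each with $\rk(a.A)(X \times Y) \ge l$ for every $a \in \Lambda$, and $X \cap Y \subset (Q_1 \setminus Q_2) \cap Q_2 = \emptyset$ comes for free.

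In the general case, I would construct the disjoint pair $(X, Y)$ iteratively. Starting with $X_0 = Y_0 = \emptyset$, at each stage $t$ the residual matrices on $(Q_1 \setminus Y_t) \times (Q_2 \setminus X_t)$ have essential rank at least $\erk(a.A) - |X_t| - |Y_t|$ by the key inequality, so I can extend $(X_t, Y_t)$ by applying either Proposition \ref{Multidimensional matrix rank} (to cover all of $\F^s$ non-disjointly) or Proposition \ref{Disjoint rank for matrices} (to handle a single violating $a$ disjointly), whichever is more convenient at that stage. The factor $s(s+1) + 1$ in the hypothesis provides enough budget for $O(s^2)$ such iterations while keeping the residual essential rank above the threshold needed to continue.

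\textbf{The main obstacle} I expect is to ensure that a single disjoint pair $(X, Y)$ handles every $a \in \Lambda$ simultaneously, even when $\Lambda$ is infinite: Proposition \ref{Disjoint rank for matrices} handles only one $a$ at a time, while Proposition \ref{Multidimensional matrix rank} covers all of $\F^s$ at once but produces non-disjoint $X, Y$. Reconciling these two tools in a single extension step is the heart of the proof, and the $s(s+1)$ rather than $s$ in the hypothesis seems to be exactly the slack needed to pay for both the multi-dimensionality and the disjointness.
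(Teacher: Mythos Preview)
Your special case (when $|Q_1 \cap Q_2| \le s(s+1)l$) is correct and clean, but the general case is where the content lies, and there you have not actually given an argument. You correctly identify the obstacle---Proposition \ref{Disjoint rank for matrices} handles one $a$ at a time while Proposition \ref{Multidimensional matrix rank} handles all $a$ but not disjointly---and then say that ``reconciling these two tools in a single extension step is the heart of the proof'' without saying how to do it. An iteration that treats elements of $\Lambda$ one at a time cannot terminate when $\Lambda$ is infinite, and you give no mechanism for reducing to finitely many $a$'s.

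The missing idea is a \emph{dimension reduction on $\Lambda$}, not an iteration over its elements. The paper proceeds by induction on $s$: pick any $a^0 \in \Lambda$, apply Proposition \ref{Disjoint rank for matrices} to $a^0.A$ to get disjoint $X', Y'$ of size at most $sl$ with $\rk(a^0.A)(X' \times Y') \ge sl$. Now the set of $a \in \F^s$ with $\rk(a.A)(X' \times Y') < l$ cannot span $\F^s$, since otherwise subadditivity would force $\rk(a^0.A)(X' \times Y') < sl$. Hence this bad set lies in a proper subspace $W$, and you recurse on $\Lambda \cap W$ inside $(Q_1 \setminus Y') \times (Q_2 \setminus X')$, where the essential-rank hypothesis survives with the bound $(s-1)s\,l + l$ after losing at most $2sl$. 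This is exactly why the threshold is $s(s+1)l + l$: it telescopes as $\sum_{j=1}^{s} 2jl + l$. Your proposal never isolates this subspace step, which is what turns an a-priori infinite problem into $s$ finite ones.
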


\begin{proof} We proceed by induction on $s \ge 1$. Proposition \ref{Disjoint rank for matrices} proves the $s=1$ case. Let $s \ge 2$ be a positive integer and let us assume that Proposition \ref{Multidimensional matrix disjoint rank} holds up to $s-1$. If $\Lambda$ is empty then we are done. Let us assume that this is not the case. We choose $a \in \Lambda$. By Proposition \ref{Disjoint rank for matrices} there exist disjoint subsets $X^0 \subset Q_1$, $Y^0 \subset Q_2$ such that $\rk (a.A)(X^0 \times Y^0) \ge sl$. By the standard result on full-rank submatrices of matrices we can furthermore find subsets $X' \subset X_0$, $Y' \subset Y_0$ both with size at most $sl$ such that $\rk (a.A)(X' \times Y') \ge sl$. By subadditivity of the rank there exists a strict linear subspace $W$ of $\F^s$ such that \[ \rk (a.A)(X' \times Y') \ge l \] for every $a \in \Lambda \setminus W$. By the assumption \[ \rk (a.A) \ge s(s+1)l + l \] for every $a \in \Lambda \cap W$ so (since a single row of column has rank at most $1$) by subadditivity \[ \rk (a.A)((Q_1 \setminus Y') \times (Q_2 \setminus X')) \ge s(s-1)l + l \] for every $a \in \Lambda \cap W$. By the inductive hypothesis there exist disjoint subsets $X'' \subset Q_1 \setminus Y'$, $Y'' \subset Q_2 \setminus X'$ such that \[ \rk (a.A)(X'' \times Y'') \ge l \] for every $a \in \Lambda \cap W$. Letting $X = X' \cup X''$ and $Y = Y' \cup Y''$ the sets $X$ and $Y$ are disjoint by construction and \[ \rk (a.A)(X \times Y) \ge l \] for every $a \in \Lambda$ as desired. \end{proof}

\section{Disjoint tensor rank using the flattening rank}\label{section: Disjoint tensor rank}

Let $d \ge 2$ be a positive integer. For $I \subset \lbrack d \rbrack$, we write $E(I)$ for the set \[\{x(I) \in \prod_{\alpha \in I} Q_{\a}: x_{\a'} = x_{\a''} \text{ for some distinct }\a', \a'' \in I\}.\] In particular, the set $E(\lbrack d \rbrack)$ is the set $E$ of elements that do not have pairwise distinct coordinates.

\subsection{The order-3 case}

Our proof will make use of the following notion of rank.

\begin{definition}

For $T: Q_1 \times Q_2 \times Q_3 \rightarrow \F$ an order-$3$ tensor, let the \emph{1-flattening rank} of $T$, which we write as $\frank_1 T$, be the rank of the matrix $A: Q_1 \times (Q_2 \times Q_3) \rightarrow \F$ defined by \[A(x,(y,z)) = T(x,y,z).\] In other words $\frank_1 T$ is defined as $\Rrk T$ for $R = \{\{\{1\},\{2,3\}\}\}$. Likewise we define the \emph{essential 1-flattening rank} for this same $R$ and denote it by $\efrank_1 T=\eRrk T$. \end{definition}

The structure of our proof will be as follows: starting with an order-$3$ tensor $T$ with high essential tensor rank, either the tensor $T$ has an order-$2$ slice with high essential rank, in which case we conclude using Proposition \ref{Disjoint rank for matrices}, or all order-$2$ slices of $T$ have bounded essential rank, in which case the essential tensor rank can be shown to be equivalent to the essential $1$-flattening rank, and we will then show (using again our assumption that all order-$2$ slices of $T$ have bounded essential rank) that because $T$ has high essential $1$-flattening rank, it must have high disjoint $1$-flattening rank, which suffices to ensure that $T$ has high disjoint tensor rank.

We begin by proving our equivalence statement between the essential ranks.

\begin{proposition} \label{Equivalence between the 3D flattening rank and tensor rank} Let $T: Q_1 \times Q_2 \times Q_3 \rightarrow \F$ be an order-$3$ tensor such that $\erk T_x \le m$ for every $x \in Q_1$. If $\efrank_1 T \le l$ then $\etr T \le (m+2)l^2$. \end{proposition}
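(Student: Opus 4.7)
The plan is to start from a witness $V$ to $\efrank_1 T \le l$ and modify it in a controlled way to a new tensor $V'$, still supported inside $E$, so that $T + V'$ admits a tensor-rank decomposition whose size is roughly $l(m+2)$. The strategy mirrors the textbook bound ``tensor rank $\le$ (1-flattening rank) $\times$ (worst slice rank)'', but carried out entirely in the essential category.

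First, I would pick $V$ supported in $E$ with $\frank_1(T+V) \le l$; equivalently the flattened matrix of $T+V$ has rank at most $l$. The row space of that matrix is exactly the linear span of the slices $\{(T+V)_x : x \in Q_1\}$, so I can select at most $l$ indices $x_1, \dots, x_{l'} \in Q_1$ (with $l' \le l$) whose slices form a basis of this span, and then write
\[
(T+V)(x,y,z) = \sum_{i=1}^{l'} \alpha_i(x)\, (T+V)_{x_i}(y,z)
\]
for suitable coefficient functions $\alpha_i : Q_1 \to \F$.

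Next, I would apply the hypothesis $\erk T_{x_i} \le m$ to each selected slice: there exist matrices $D_i$ supported on the diagonal $\{(q,q) : q \in Q_2 \cap Q_3\}$ with $\rk(T_{x_i} + D_i) \le m$. Decomposing, $(T+V)_{x_i} = (T_{x_i} + D_i) + (V_{x_i} - D_i)$ is a rank-at-most-$m$ matrix plus a matrix supported inside $\{y = x_i\} \cup \{z = x_i\} \cup \{y = z\}$, which is the slice of $E$ at first coordinate $x_i$. This residual in turn splits as a single-row matrix (rank $\le 1$), a single-column matrix (rank $\le 1$), and a matrix supported on the diagonal.

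Finally, I would substitute this decomposition back into the expression for $T + V$ and collect terms. For each $i$, tensoring $\alpha_i$ with the rank-$\le m$ piece contributes at most $m$ rank-one tensors, while tensoring $\alpha_i$ with each of the single-row and single-column pieces contributes a single rank-one tensor. Tensoring $\alpha_i(x)$ with the diagonal part of $V_{x_i}-D_i$ produces a 3-tensor whose support lies in $\{y=z\} \subseteq E$, and this can be absorbed, together with $V$ itself, into a new tensor $V'$ supported on $E$. Transferring $V'$ to the other side gives $\tr(T + V') \le l'(m+2) \le l(m+2)$, which is well within the stated bound $(m+2)l^2$. The main bookkeeping obstacle is the final absorption step: I need to verify that the pieces I attribute to $V'$ really are supported on $E$ viewed as a subset of $Q_1 \times Q_2 \times Q_3$, which reduces to the observation that the diagonal $\{y = z\}$ and the single-coordinate hyperplanes $\{y = x_i\}$, $\{z = x_i\}$ do not naively lie in $E$ -- so only the diagonal part may be moved into $V'$, whereas the row and column parts must be kept on the low-rank side of the identity.
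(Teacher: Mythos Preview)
Your argument is correct and in fact proves the sharper bound $\etr T \le l(m+2)$, which is stronger than the stated $(m+2)l^2$. The paper's proof follows the same broad outline but makes one less efficient choice: after writing $(T-V)(x,y,z)=\sum_{i=1}^r a_i(x)b_i(y,z)$ with abstract $b_i$, the paper uses dual functionals $a_i^*$ supported on a set $U\subset Q_1$ of size $r$ to express $b_i=\sum_{x\in U}a_i^*(x)(T-V)_x$, and then bounds $\erk b_i\le r(m+2)$ by subadditivity over the $r$ summands; summing over $i$ yields $r^2(m+2)$. You instead choose the $b_i$ to be actual slices $(T+V)_{x_i}$ from the start, so each already has essential rank at most $m+2$ with no intervening sum, saving a factor of $l$. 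Your closing remark about which pieces may be absorbed into $V'$ is exactly right: the diagonal part $\alpha_i(x)\Delta_i(y,z)$ is supported in $\{y=z\}\subset E$ and can be absorbed, while the row and column parts $\alpha_i(x)R_i(y,z)$ and $\alpha_i(x)C_i(y,z)$ are not supported in $E$ (their first coordinate ranges over all of $Q_1$) and correctly stay on the low-rank side.
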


\begin{proof} Let $r=\efrank_1 T$. There exists a tensor $V$ supported inside $E$, functions $a_1, \dots, a_r: Q_1 \rightarrow \F$ and functions $b_1, \dots, b_r: Q_2 \times Q_3 \rightarrow \F$ such that we can write the decomposition \begin{equation} (T-V)(x,y,z) = \sum_{i=1}^r a_i(x) b_i(y,z) \label{essential 3D flattening rank decomposition}. \end{equation} The functions $a_1, \dots, a_r$ are linearly independent, so by Gaussian elimination there exists a subset $U \subset Q_1$ with size $r$ and functions $a_1^*, \dots, a_r^*: Q_1 \rightarrow \F$ such that $a_{i'}^*.a_i = \delta_{i,i'}$ for all $i,i \in \lbrack r \rbrack$. For a fixed $i \in \lbrack r \rbrack$, applying $a_i^*$ to both sides of \ref{essential 3D flattening rank decomposition} we get $b_i = a_i^*.(T-V)$, i.e.

\begin{equation} b_i = \sum_{x \in U} a_i^*(x) (T-V)_x \label{expression of b_i using T-V in 3D}. \end{equation} For each $x \in U$, the support of $V_x$ is contained in the union of the row $\{y=x\}$, the column $\{z=x\}$ and the diagonal $\{y=z\}$, so (using that a row or column has rank at most $1$) $\erk V_x \le 2$ and hence $\erk (T-V)_x \le m+2$. Using \eqref{expression of b_i using T-V in 3D} and subadditivity $\erk b_i \le r(m+2)$, so by the decomposition \ref{essential 3D flattening rank decomposition} we have $\etr (T-V) \le r^2(m+2) \le l^2(m+2)$, so $\etr T \le l^2(m+2)$. \end{proof}

We will use the following upper bound in the proof of Proposition \ref{Disjoint $frank_1$ assuming all $rk$ of slices of all sizes are bounded}.

\begin{remark} \label{dimension of columns at most tensor rank for 3D tensors} Let $T: Q_1 \times Q_2 \times Q_3 \rightarrow \F$ be an order-$3$ tensor. Then it follows from writing a tensor-rank decomposition of $T$ with minimal length that \[\dim \langle T_{(y,z)}: (y,z) \in Q_2 \times Q_3 \rangle \le \tr T. \] \end{remark}

Throughout the next proposition and its proof we will write $Q_{2,3}$ for $Q_2 \times Q_3 \setminus E(\{2,3\})$. We show that if an order-$3$ tensor has high essential $1$-flattening rank and all its order-$2$ slices have bounded essential rank then it has high disjoint $1$-flattening rank.

\begin{proposition} \label{Disjoint $frank_1$ assuming all $rk$ of slices of all sizes are bounded}

Let $l,m \ge 1$ be positive integers, and let $T: Q_1 \times Q_2 \times Q_3 \rightarrow \mathbb{F}$ be an order-$3$ tensor such that $\erk T_x, \erk T_y, \erk T_z \le m$ for every $x \in Q_1$, every $y \in Q_2$, and every $z \in Q_3$, respectively. Let $a,b: Q_{2,3} \rightarrow \F$ be functions such that \[ \dim \langle (T_{y,z} + a(y,z) 1_{x=y} + b(y,z) 1_{x=z}): (y,z) \in Q_{2,3} \rangle \ge 10ml. \] Then there exist pairwise disjoint subsets $ X \subset Q_1, Y \subset Q_2, Z \subset Q_3$ such that \begin{equation} \dim \langle T_{(y,z}(X): (y,z) \in Y \times Z \rangle \ge l. \label{disjoint frank_1 expression in 3D} \end{equation}  \end{proposition}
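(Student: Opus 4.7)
The plan is to combine a bipartition argument for the pair $(Y,Z)$ with a careful use of the three essential-rank hypotheses to control the cost of a restriction in the first coordinate. Writing $v_{(y,z)}:=T_{(y,z)}+a(y,z)1_{x=y}+b(y,z)1_{x=z}\in\F^{Q_1}$, the first step is to use the hypothesis to extract pairs $(y_1,z_1),\dots,(y_k,z_k)\in Q_{2,3}$ with $k=10ml$ for which $v_{(y_1,z_1)},\dots,v_{(y_k,z_k)}$ are linearly independent.

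Next I form the oriented graph on the vertex set $Q_2\cup Q_3\subset\mathbb{N}$ whose edges are the pairs $(y_i,z_i)$, and apply Proposition~\ref{Graph bipartition proposition} to obtain an ordered bipartition $(A,B)$ of the vertices such that at least $k/4$ of the edges cross with tail in $A$ and head in $B$. Setting $Y=A\cap Q_2$ and $Z=B\cap Q_3$ yields $Y\subset Q_2$ and $Z\subset Q_3$ that are disjoint as subsets of $\mathbb{N}$, together with an index set $I\subset[k]$ of size at least $k/4=2.5ml$ such that $(y_i,z_i)\in Y\times Z$ for every $i\in I$ and the vectors $v_{(y_i,z_i)}$ for $i\in I$ remain linearly independent.

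I then take $X:=Q_1\setminus(Y\cup Z)$, making $X,Y,Z$ pairwise disjoint. For $x\in X$ and $i\in I$ one has $x\ne y_i$ and $x\ne z_i$, so both correction terms vanish and $v_{(y_i,z_i)}(x)=T_{(y_i,z_i)}(x)$; consequently the conclusion \eqref{disjoint frank_1 expression in 3D} reduces to $\dim\langle v_{(y_i,z_i)}|_X:i\in I\rangle\ge l$. Writing $S=Q_1\cap(Y\cup Z)=S_Y\sqcup S_Z$ with $S_Y=Q_1\cap Y$ and $S_Z=Q_1\cap Z$, this is in turn equivalent to bounding the ``drop'' $\dim\langle v_{(y_i,z_i)}|_S:i\in I\rangle$ by at most $|I|-l$. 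For $x\in S_Y$ the $b$-correction vanishes since $Y\cap Z=\emptyset$, and $\erk T_{y_i}\le m$ together with $(x,z_i)$ lying off the diagonal $\{x=z\}$ yields a rank-$m$ decomposition $T_{y_i}(x,z_i)=\sum_{j=1}^{m}p_{y_i,j}(x)q_{y_i,j}(z_i)$. Partitioning $I$ along the fibres $I_y=\{i\in I:y_i=y\}$ indexed by $y\in Y_I:=\{y_i:i\in I\}$, on each fibre the relevant sub-row is seen to lie in an $(m+1)$-dimensional subspace of $\F^{I_y}$; a symmetric argument using $\erk T_{z_i}\le m$ handles $S_Z$.

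The main obstacle lies in this final dimension count: the naive fibre-wise combination only gives drop at most $(m+1)(|Y_I|+|Z_I|)$, which in the worst case is of order $(m+1)|I|$ and therefore insufficient. Overcoming this will probably require either refining the subfamily $I$ after the bipartition (for instance by pigeonholing on the multiplicities of the $y_i$ and $z_i$ so that $|Y_I|+|Z_I|$ becomes small compared to $|I|$) or bringing in $\erk T_x\le m$ to control the row space on $S$ uniformly in $x$. The factor $10m$ in the hypothesis should be calibrated so that after the factor-$4$ loss from the bipartition and the $m$-controlled loss from the restriction to $X$, at least $l$ linearly independent directions remain.
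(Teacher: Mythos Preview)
Your approach has a genuine gap, which you yourself identify but do not close. The difficulty is structural: once you set $X = Q_1 \setminus (Y \cup Z)$, you must bound the rank of the restriction matrix $(v_{(y_i,z_i)}(x))_{x\in S,\,i\in I}$, and the slice hypotheses do not give adequate control. Your fibre-wise argument is correct as far as it goes, but in the worst case all $y_i$ and all $z_i$ are distinct, so $|Y_I|+|Z_I|$ is of order $|I|$ and the bound $(m+1)(|Y_I|+|Z_I|)$ is useless. Pigeonholing on multiplicities cannot force both $|Y_I|$ and $|Z_I|$ to be small simultaneously while keeping $|I|$ large, and invoking $\erk T_x\le m$ only gives a rank-$m$ decomposition that varies with $x$, so it does not bound the row space on $S$ uniformly. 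The correction terms $a(y_i,z_i)1_{x=y_i}$ contribute an essentially arbitrary block on top of this. I do not see a way to finish along these lines.

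The paper proceeds by an entirely different extremal argument, and in fact proves the contrapositive of a slightly weaker statement --- the one actually needed in Proposition~\ref{Disjoint rank for order-3 tensors}, where the dimension bound is assumed for \emph{every} choice of $a,b$ (equivalently, the essential $1$-flattening rank on $Q_1\times Q_{2,3}$ is at least $10ml$). One lets $l_0$ be the \emph{largest} integer for which there exist pairwise distinct $x_j,y_j,z_j$, $j\in[l_0]$, with $T_{(y_j,z_j)}(\{x_1,\dots,x_{l_0}\})$ linearly independent, and then \emph{constructs} functions $a,b$ witnessing that the dimension is at most $10ml_0$. Maximality of $l_0$ forces every $(l_0+1)\times(l_0+1)$ minor obtained by adjoining a fresh row $x$ and column $(y,z)$ to have rank $l_0$; hence away from the coincidences $x=y$, $x=z$ the new entry is determined by a Schur-complement identity, and at the coincidences one simply \emph{defines} $a(y,z)$ or $b(y,z)$ to be the discrepancy. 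The corrected matrix then has rank $\le l_0$ on the bulk $W_1\times W_{2,3}$ (each $W_\alpha$ excludes the $3l_0$ witness values), and the boundary contributes at most $3l_0$ from the first coordinate and at most $3l_0\cdot m$ from each of the other two via $\erk T_y,\erk T_z\le m$, for a total of $4l_0+6ml_0\le 10ml_0$. Thus the role of $a,b$ is reversed from your attempt: rather than being given and used to locate $X,Y,Z$, they are manufactured from the extremal configuration to certify that the dimension is small.
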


\begin{proof}

Let $l$ be the largest integer such that there exist $x_1, \dots, x_l \in Q_1$, $y_1, \dots, y_l \in Q_2$, $z_1, \dots, z_l \in Q_3$ all pairwise distinct such that the restrictions \[ T_{(y_1,z_1)}(\{x_{1},\dots ,x_{l}\}),\dots ,T_{(y_l,z_l)}(\{x_{1},\dots ,x_{l}\}) \] are linearly independent. Taking $X=\{x_1, \dots, x_l\}$, $Y=\{y_1, \dots, y_l\}$, $Z=\{z_1, \dots, z_l\}$ then ensures that \eqref{disjoint frank_1 expression in 3D} is satisfied. We define the following sets, which we will use throughout the remainder of the proof: \begin{align*}
X_1 &= \{x_1, \dots, x_l\}\\
W_{\a} &= Q_{\a} \setminus \{x_1, \dots, x_l, y_1, \dots, y_l, z_1, \dots, z_l \} \text{ for each } \a \in \{1,2,3\} \\
U_1 &= \{(y_1,z_1), \dots, (y_l,z_l)\} \\
W_{2,3} &= (W_2 \times W_3) \setminus E(\{2,3\}). \end{align*} We first show that we can define functions $a,b: W_{2,3} \rightarrow \F$ such that \[\dim \langle (T_{(y,z)} + a(y,z) 1_{x=y} + b(y,z) 1_{x=z})(W_1): (y,z) \in W_{2,3} \rangle \le l. \] In other words, we want to show that the matrix $A: Q_1 \times Q_{2,3} \rightarrow \F$ defined by \[ A(x,(y,z)) = T(x,y,z) \] is such that the rank of the restriction $A(W_1 \times W_{2,3})$ is at most $ l$ after some modifications of the entries of the type \[ A(u,(u,z)), A(u,(y,u)). \] We construct $a,b$ as follows. Let $x \in W_1$ and let $(y,z) \in W_{2,3} $. We distinguish two situations.

\begin{enumerate}

\item If $ x \neq y$ and $x \neq z$ then by maximality of $ l$, the $ (l+1) \times (l+1)$ submatrix \[ A((X_1 \cup \{x\}) \times (U_1 \cup \{(y,z)\})) \] has the same rank as the full-rank matrix $ A(X_1 \times U_1)$, so we can write \[ A(x,(y,z)) = A(\{x\} \times U_1)  A(X_1 \times U_1)^{-1} A(X_1 \times \{(y,z)\}). \]

\item If $ x=y$ or $x=z$ then because $A((X_1 \cup \{x\}) \times (U_1 \cup \{(y,z)\}))$ is an $(l+1) \times (l+1)$ matrix and $A(X_1 \times U_1)$ has full rank $l$, there exists if $x=y$ a unique $a(y,z) \in \F$ such that \[ A(x,(y,z)) + a(y,z) = A(\{x\} \times U_1) A(X_1 \times U_1)^{-1} A(X_1 \times \{(y,z)\}) \] and if $x=z$ a unique $b(y,z) \in \F$ such that \[ A(x,(y,z)) + b(y,z) = A(\{x\} \times U_1) A(X_1 \times U_1)^{-1} A(X_1 \times \{(y,z)\}). \]

\end{enumerate}

The values $a(y,z)$, $b(y,z)$ obtained in the second situation define functions $a,b: W_{2,3} \rightarrow \F$. Let $B: (X_1 \cup W_1) \times (U_1 \cup W_{2,3}) \rightarrow \F$ be the matrix defined for all $(x,(y,z)) \in W_1 \times W_{2,3}$ by \[ B(x,(y,z)) = A(x,(y,z)) + a(y,z) 1_{x=y} + b(y,z) 1_{x=z} \] and by $B(x,(y,z)) = A(x,(y,z))$ for all other $(x,(y,z))$. By construction of $a,b$ we have \[ B(W_1 \times W_{2,3}) = B(W \times U_1) B(X_1 \times U_1)^{-1} B(X_1 \times W_{2,3}) \] whence $\rk B (W_1 \times W_{2,3}) \le l$. Defining the slices \begin{align*}
S_{1,j} &= \{x = j\} \setminus (Q_1 \times E(\{2,3\}))\\
S_{2,j} &= \{y = j\} \setminus (Q_1 \times E(\{2,3\}))) \cap \{x \in W_1\}\\
S_{3,j} &= \{z = j\} \setminus (Q_1 \times E(\{2,3\})) \cap \{x \in W_1\} \cap \{y \in W_2\} \end{align*} for each $j \in Q_1 \setminus W_1$, each $j \in Q_2 \setminus W_2$, each $j \in Q_3 \setminus W_3$, respectively, the set $Q_1 \times Q_{2,3}$ can be written as the disjoint union of the four sets \[(W_1 \times W_{2,3}) \cup (\bigcup_{j \in Q_1 \setminus W_1} S_{1,j}) \cup (\bigcup_{j \in Q_2 \setminus W_2} S_{2,j}) \cup (\bigcup_{j \in Q_3 \setminus W_3} S_{3,j}).\] By definition of the sets $W_1$, $W_2$, $W_3$ the sets $Q_1 \setminus W_1$, $Q_2 \setminus W_2$, $Q_3 \setminus W_3$ each have size at most $3l$. We can arbitrarily define $a,b$ on $\bigcup_{j \in Q_1 \setminus W_1} S_{1,j} = (Q_1 \setminus W_1) \times Q_{2,3}$ and obtain \[ \dim \langle (T_{(y,z)} + a(y,z) 1_{x=y} + b(y,z) 1_{x=z})(Q_1 \setminus W_1): (y,z) \in Q_{2,3} \rangle \] to be at most $|Q_1 \setminus W_1| \le 3l$. By assumption, for each $j \in Q_2 \setminus W_2$ we have $\erk T_{y=j} \le m$, so $a,b$ can be defined on $S_{j,2}$ such that \[ \dim \langle (T_{(y,z)} + a(y,z) 1_{x=y} + b(y,z) 1_{x=z})(W_1): (y,z) \in Q_{2,3}, y = j \rangle \le m. \] Similarly, for each $j \in Q_3 \setminus W_3$ we can define $a,b$ on $S_{j,3}$ such that \[ \dim \langle (T_{(y,z)} + a(y,z) 1_{x=y} + b(y,z) 1_{x=z})(W_1): (y,z) \in Q_{2,3}, y \in W_2, z = j \rangle \le m. \] The result follows by subadditivity. \end{proof}

We are now ready to prove Theorem \ref{Disjoint rank subtensors theorem} for the tensor rank of order-$3$ tensors.

\begin{proposition} \label{Disjoint rank for order-3 tensors}

Let $T$ be an order-$3$ tensor, and let $l \ge 1$ be a positive integer. If $\etr T \ge 17500 l^3$ then $\dtr T \ge l$. \end{proposition}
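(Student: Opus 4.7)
The plan is to run a dichotomy on the maximum essential rank attained by an order-$2$ slice of $T$ across the three coordinate directions, with threshold $m$ of order $3l$. The two regimes use, respectively, Proposition~\ref{Disjoint rank for matrices} alone and the combination of Proposition~\ref{Equivalence between the 3D flattening rank and tensor rank} with Proposition~\ref{Disjoint $frank_1$ assuming all $rk$ of slices of all sizes are bounded}.

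In the first case, some slice, say $T_x$ for some $x\in Q_1$, satisfies $\erk T_x \ge m$, and the other two coordinate directions are handled symmetrically. To arrange that the singleton $\{x\}$ is disjoint from the eventual choices of $Y \subset Q_2$ and $Z \subset Q_3$, I first restrict $T_x$ to $(Q_2 \setminus \{x\}) \times (Q_3 \setminus \{x\})$; this drops at most one row and one column from the matrix $T_x$ and hence decreases the essential rank by at most $2$. Applying Proposition~\ref{Disjoint rank for matrices} to this sub-matrix produces disjoint $Y, Z$ with $\rk T_x(Y \times Z) \ge l$. Then $\{x\}, Y, Z$ are pairwise disjoint in $\mathbb{N}$ by construction, so $\dtr T \ge \tr T(\{x\} \times Y \times Z) = \rk T_x(Y \times Z) \ge l$.

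In the complementary case every order-$2$ slice in all three directions has essential rank at most $m-1$. The contrapositive of Proposition~\ref{Equivalence between the 3D flattening rank and tensor rank} then gives $\efrank_1 T \ge \sqrt{\etr T/(m+1)}$, which can be forced to exceed $10ml$ when $\etr T$ is large enough. Since $\efrank_1 T$ is exactly the minimum over $a, b: Q_{2,3} \to \F$ of the quantity $\dim \langle T_{(y,z)} + a(y,z)\mathds{1}_{x=y} + b(y,z) \mathds{1}_{x=z} : (y,z) \in Q_{2,3} \rangle$, any choice of $a, b$ (for instance $a = b = 0$) produces a dimension at least $10ml$, so the hypothesis of Proposition~\ref{Disjoint $frank_1$ assuming all $rk$ of slices of all sizes are bounded} is satisfied. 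That proposition then furnishes pairwise disjoint $X, Y, Z$ with $\dim \langle T_{(y,z)}(X) : (y,z) \in Y \times Z \rangle \ge l$; this dimension equals $\frank_1 T(X \times Y \times Z) \le \tr T(X \times Y \times Z)$, and hence $\dtr T \ge l$.

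The main obstacle will be tuning the threshold $m$ so that both regimes fit under the single hypothesis $\etr T \ge 17500 l^3$: Case 1 forces $m$ to be at least $3l + O(1)$, while Case 2 requires $\etr T$ to dominate $100 m^2 (m+2) l^2$ in order for $\efrank_1 T$ to reach $10ml$, and the numerical constant in the statement emerges from this trade-off.
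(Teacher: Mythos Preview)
Your approach is essentially identical to the paper's: the same dichotomy on the maximum essential rank of an order-$2$ slice with threshold $3l+2$, the same use of Proposition~\ref{Disjoint rank for matrices} in the first regime after excising the row and column indexed by $x$, and the same chain Proposition~\ref{Equivalence between the 3D flattening rank and tensor rank} $\to$ Proposition~\ref{Disjoint $frank_1$ assuming all $rk$ of slices of all sizes are bounded} in the second regime. Your identification of $\efrank_1 T$ with the minimum over $a,b$ of the dimension appearing in Proposition~\ref{Disjoint $frank_1$ assuming all $rk$ of slices of all sizes are bounded} is exactly the mechanism the paper relies on (implicitly) to pass from $\efrank_1 T \ge 10ml$ to that proposition's hypothesis.

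One remark on the numerics you flag at the end: with $m = 3l+2$ the bound needed in Case~2 is $(m+2)(10ml)^2 = (3l+4)\bigl(10(3l+2)l\bigr)^2$, which is of order $l^5$, not $l^3$; the constant $17500$ matches only at $l=1$. This is a discrepancy in the paper's stated constant rather than in your strategy, so your caution about the tuning is well placed.
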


\begin{proof} We distinguish two cases.
\medskip

\noindent \textbf{Case 1}: There exists $x \in Q_1$ or $y \in Q_2$ or $z \in Q_3$ such that $\erk T_x$ or $\erk T_y$ or $\erk T_z$ is at least $3l+2$. Without loss of generality we can assume that this occurs for some $x \in Q_1$. Then \[ \erk T_x((Q_2 \setminus \{x\}) \times (Q_3 \setminus \{x\})) \ge 3l.\]  By Proposition \ref{Disjoint rank for matrices} there exist disjoint subsets $Y \subset Q_2 \setminus \{x\}$ and $Z \subset Q_3 \setminus \{x\}$ such that $\rk T_x(Y \times Z) \ge l$. Letting $X = \{x\}$ we obtain $\tr T(X \times Y \times Z) \ge l$.
\medskip

\noindent \textbf{Case 2}: We have $\erk T_x, \erk T_y, \erk T_z \le 3l+2$ for respectively all $x \in Q_1$, all $y \in Q_2$, all $z \in Q_3$. Since $k \ge (3l+4)(10(3l+2)l)^2$, by Proposition \ref{Equivalence between the 3D flattening rank and tensor rank} we have $\efrank_1 T \ge 10(3l+2)l$. By Proposition \ref{Disjoint $frank_1$ assuming all $rk$ of slices of all sizes are bounded} there exist pairwise disjoint subsets $X \subset Q_1$, $Y \subset Q_2$, $Z \subset Q_3$ such that $\frank_1 T(X \times Y \times Z) \ge l$, so in particular $\tr T(X \times Y \times Z) \ge l$. \end{proof}

\subsection{The general case}

We now generalise the proof to order-$d$ tensors for an arbitrary fixed $d \ge 3$. The structure of the proof will be similar to that of the $d=3$ case, although there will be some additional complications.

\begin{definition} \label{Flattening rank definition}

For $T: \prod_{\a=1}^d Q_{\a} \rightarrow \mathbb{F}$ an order-$d$ tensor, let the \emph{1-flattening rank}, denoted $\frank_1 T$, be the rank of the matrix $A: Q_1  \times (\prod_{\a=2}^d Q_{\a}) \rightarrow \mathbb{F}$ defined by \[ A(x_1,(x_2,\dots ,x_d)) = T(x_1,x_2,\dots ,x_d).\]
In other words, $\frank_1 T$ is equal to $\Rrk T$ for $R = \{\{\{1\},\{2,\dots ,d\}\}\}$. Likewise we define the \emph{essential 1-flattening rank} for this same $R$ and denote it $\efrank_1 T=\eRrk T$.

\end{definition}

As we have not managed to find a simpler argument we prove the following two propositions by recognising them as special cases of Corollary \ref{Equivalence of all essential ranks}, a common generalisation of them which we will prove and then use in full in Section \ref{section: General case}. The proof of Corollary \ref{Equivalence of all essential ranks} only uses Proposition \ref{Essential equivalence}, which in turn has a self-contained proof.

\begin{proposition} \label{Equivalence between $efrank_1$ and $etr$, assuming that $epr$ is bounded for slices of all sizes}

Let $d \ge 2, l \ge 1$ be positive integers, and let $T: \prod_{\a=1}^d Q_{\a} \rightarrow \mathbb{F}$ be an order-$d$ tensor. Assume that $\epr T_y \le l$ for each $I \subset \lbrack d \rbrack$ with $|I| \in \{1,\dots ,d-2\}$ and each $y \in (\prod_{\a \in I} Q_\a) \setminus E(I^c)$. If $\efrank_1 T \le l$ then $\etr T \le (4l^3)^{2^{d}}$. \end{proposition}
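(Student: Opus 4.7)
The plan is to deduce this proposition as a special case of the forthcoming Corollary \ref{Equivalence of all essential ranks}, which the excerpt signals will be proved in Section \ref{section: General case} and which establishes a general polynomial equivalence between the essential $R$-ranks of an order-$d$ tensor for various families of partitions $R$, under the hypothesis that the essential partition ranks of all lower-dimensional slices are bounded. Applied to the families corresponding to $\efrank_1$ on the one hand and to $\etr$ on the other, the corollary yields the bound $\etr T \le (4l^3)^{2^d}$; the $l^3$ base reflects the cubic bound already present in the order-$3$ model, and the $2^d$ exponent arises from iterating the equivalence across $d$ coordinates.

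As a model for the direct approach that this reduction hides, I would start by revisiting the order-$3$ case (Proposition \ref{Equivalence between the 3D flattening rank and tensor rank}). Given $\efrank_1 T \le l$, one writes $T - V = \sum_{i=1}^r a_i \otimes b_i$ with $r \le l$ and $V$ supported inside $E(\lbrack d \rbrack)$. Gaussian elimination on the $a_i$ produces a set $X_1 \subset Q_1$ of size $r$ and dual functions $a_i^* \in \F^{Q_1}$ with $a_i^* . a_j = \delta_{ij}$, so that each $b_i = \sum_{x \in X_1} a_i^*(x)(T-V)_x$. For $x \in X_1$, the support of $V_x$ sits inside the union of the $d-1$ coincidence slices $\{x_\a = x\}$ for $\a \in \{2, \dots, d\}$ (each contributing partition rank $1$) together with $E(\{2, \dots, d\})$, so $\epr V_x \le d-1$; in the order-$3$ case, where partition rank and tensor rank coincide on the order-$2$ pieces, this is already enough to obtain the cubic bound directly.

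For general $d$ the plan is an induction on $d$ along this same template: extract the flattening decomposition, absorb the correction $V_x$ via the $d-1$ coincidence slices, and apply the inductive hypothesis to the order-$(d-1)$ tensor $(T-V)_x$ for each $x \in X_1$. The main obstacle is that the inductive hypothesis requires a bound on $\efrank_1$ of the restricted tensor $(T-V)_x$, whereas our starting assumption controls the flattening with respect to coordinate $1$ of the ambient tensor only, and these are not the same notion once we pass to the order-$(d-1)$ subtensor. Closing this gap is precisely the role of Proposition \ref{Essential equivalence}, which converts a bound on one essential rank into a bound on essentially any other at the price of a polynomial blow-up: this is the source of the $4l^3$ base, while iterating the conversion across the $d$ layers of the induction produces the $2^d$ exponent. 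Granting this, the substantive work lies entirely in the proof of Proposition \ref{Essential equivalence} and its bootstrap into Corollary \ref{Equivalence of all essential ranks}; once those are in hand, deducing the present proposition amounts to setting the parameters.
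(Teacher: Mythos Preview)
Your proposal is correct and matches the paper's own approach exactly: the paper explicitly states that it proves this proposition by recognising it as a special case of Corollary \ref{Equivalence of all essential ranks}, which in turn rests solely on Proposition \ref{Essential equivalence}. Your additional discussion of the order-$3$ template and the role of the iterated conversion (explaining the $4l^3$ base and the $2^d$ exponent) is accurate commentary on how that machinery works.
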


We shall also apply the following result to order-$(d-1)$ tensors in the proof of Proposition \ref{Disjoint $frank_1$ assuming all $epr$ of slices of all sizes are bounded}.

\begin{proposition} \label{A tensor with bounded essential partition ranks of all slices of all sizes has bounded essential tensor rank}

Let $d \ge 2, l \ge 1$ be positive integers and let $T: \prod_{\a=1}^d Q_{\a} \rightarrow \mathbb{F}$ be an order-$d$ tensor. Assume that $\epr T_y \le l$ for each $I \subset \lbrack d \rbrack$ with $|I| \in \{0,\dots ,d-2\}$ and each $y \in (\prod_{\a \in I} Q_{\a}) \setminus E(I^c)$. Then $\etr T \le (4l^3)^{2^d}$. \end{proposition}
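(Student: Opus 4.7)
The plan is to prove this proposition as a special case of Corollary \ref{Equivalence of all essential ranks}, the common generalisation explicitly referenced in the text and to be established in Section \ref{section: General case}. I expect that corollary to assert, under the slice assumption $\epr T_y \le l$ for $|I| \in \{1, \ldots, d-2\}$, a polynomial equivalence among several essential ranks (in particular $\epr$, $\efrank_1$, and $\etr$), of the form $\etr T \le (4\max(l,\epr T)^3)^{2^d}$ or some comparable quantitative shape.

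Given this corollary, the present proposition is immediate. The hypothesis $\epr T_y \le l$ for $|I| \in \{0, \ldots, d-2\}$ supplies both the intermediate slice bounds (the cases $|I| \in \{1, \ldots, d-2\}$) fed into the corollary's slice assumption, and through the $I = \emptyset$ case the input bound $\epr T \le l$; substituting yields $\etr T \le (4l^3)^{2^d}$. This is the same reduction by which Proposition \ref{Equivalence between $efrank_1$ and $etr$, assuming that $epr$ is bounded for slices of all sizes} would follow from Corollary \ref{Equivalence of all essential ranks}, except that there the substituted input bound is $\efrank_1 T \le l$ rather than $\epr T \le l$. Crucially, one cannot directly reduce the present proposition to Proposition \ref{Equivalence between $efrank_1$ and $etr$, assuming that $epr$ is bounded for slices of all sizes}, because although $\epr T \le \efrank_1 T$ always holds, the reverse inequality fails in general (for instance the tensor $T(x,y,z) = a(y) b(x,z)$ has $\epr T \le 1$ while $\efrank_1 T$ can equal $\rk b$), so the required $\efrank_1$-bound is not available under our hypotheses alone.

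The substantive work is therefore the proof of Corollary \ref{Equivalence of all essential ranks} itself. I expect this to rest on Proposition \ref{Essential equivalence} (the self-contained ingredient mentioned in the text) and to generalise the argument of Proposition \ref{Equivalence between the 3D flattening rank and tensor rank} from order $3$ to arbitrary $d$: starting from an essential partition-rank decomposition $T - V = \sum_{i=1}^{l} a_i(x(I_i)) b_i(x(J_i))$ with $V$ supported on $E$, one would bound the essential tensor rank of each factor $a_i, b_i$ using the slice assumptions, and then combine these via the product formula $\tr(a \cdot b) \le \tr(a) \cdot \tr(b)$ for products on disjoint coordinate sets. The main obstacle I anticipate is that the factors $a_i, b_i$ in such a decomposition are not themselves slices of $T$, so the slice hypotheses must be transferred to them through a recursive argument across the coordinates of $[d]$; the doubly-exponential bound $(4l^3)^{2^d}$ reflects the cost of this iteration.
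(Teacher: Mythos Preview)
Your reduction is correct and matches the paper exactly: the proposition is obtained from Corollary \ref{Equivalence of all essential ranks} by taking $R$ to be the partition-rank family, with the $|I|=0$ case of the hypothesis supplying the input bound $\epr T \le l$ and the cases $|I|\in\{1,\dots,d-2\}$ supplying the slice bounds. Your observation that one cannot instead reduce to Proposition \ref{Equivalence between $efrank_1$ and $etr$, assuming that $epr$ is bounded for slices of all sizes} is also correct, for the reason you give.

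Your sketch of how Corollary \ref{Equivalence of all essential ranks} is proved is slightly off in mechanism, though right in spirit and in identifying Proposition \ref{Essential equivalence} as the engine. The paper does not bound $\etr a_i$ and $\etr b_i$ directly and then multiply. Instead, Proposition \ref{Essential equivalence} shows that if $\eRrk T \le l$ and the relevant slices have $\epr \le m$, then $\eRdashrk T$ is polynomially bounded, where $R'$ is the \emph{down-shadow} of $R$ obtained by replacing, in every partition of $R$ containing a chosen largest part $C$, that part by all possible splittings of $C$ into two nonempty pieces. The corollary then iterates this step at most $2^d$ times: each iteration permanently forbids one subset of $[d]$ from appearing as a part, and the sizes of the forbidden sets are nonincreasing, so the process terminates at $R_{\tr}$. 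The bound $(4l^3)^{2^d}$ comes from compounding a cubic loss over at most $2^d$ iterations, not from a recursion over coordinate subsets of $[d]$.
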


We shall use the following upper bound in the proof of Proposition \ref{Disjoint $frank_1$ assuming all $epr$ of slices of all sizes are bounded}.

\begin{remark} \label{dimension of columns at most tensor rank} Let $d \ge 2$, $T: \prod_{\a=1}^d Q_{\a} \rightarrow \mathbb{F}$ be an order-$d$ tensor. Then it follows from writing a tensor-rank decomposition of $T$ with minimal length that \[\dim \langle T_{(x_2,\dots,x_d)}: (x_2,\dots,x_d) \in \prod_{\a=2}^d Q_{\a} \rangle \le \tr T.\] \end{remark}

In the following proposition and its proof we will write $Q_{2, \dots, d}$ for \[ (\prod_{\a=2}^d Q_{\a}) \setminus E(\{2, \dots, d\}). \]

\begin{proposition} \label{Disjoint $frank_1$ assuming all $epr$ of slices of all sizes are bounded} 

Let $d \ge 2, l \ge 1, m \ge 1$ be positive integers, and let $T: \prod_{\a=1}^d Q_{\a} \rightarrow \mathbb{F}$ be an order-$d$ tensor such that $\epr T_y \le m$ for every $I \subset \lbrack d \rbrack$ with $|I| \in \{1,\dots ,d-2\}$ and each $y \in (\prod_{\a \in I} Q_{\a}) \setminus E(I^c)$. Let $a_2,\dots ,a_d: Q_{2, \dots, d} \rightarrow \F$ be functions such that \[ \dim \langle (T_{(x_2,\dots ,x_d)} + \sum_{i=2}^d a_i(x_2,\dots,x_d) 1_{x_1=x_i}): (x_2,\dots ,x_d) \in Q_{2, \dots, d} \rangle \ge d^2(4m^3)^{2^{d-1}} l.\] Then there exist pairwise disjoint $ X_1 \subset Q_1,\dots ,X_d \subset Q_d$ such that \begin{equation} \dim \langle T_{(x_2,\dots ,x_d)}(X_1): (x_2,\dots ,x_d) \in X_2 \times \dots \times X_d \rangle \ge l. \label{disjoint frank_1 expression} \end{equation} \end{proposition}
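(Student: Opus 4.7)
The plan is to extend the argument of Proposition \ref{Disjoint $frank_1$ assuming all $rk$ of slices of all sizes are bounded} from the $d = 3$ case to general $d$, replacing the bound on essential rank of order-$2$ slices with the bounds on essential partition ranks of sub-slices given by the hypothesis, and invoking Proposition \ref{A tensor with bounded essential partition ranks of all slices of all sizes has bounded essential tensor rank} to lift the latter into essential tensor rank bounds on each order-$(d-1)$ slice of $T$.

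First I would let $l^*$ be the largest integer for which there exist tuples $(x_{1, i}, \dots, x_{d, i})$ for $i = 1, \dots, l^*$ with all $d l^*$ coordinates pairwise distinct and with the vectors $T_{(x_{2, i}, \dots, x_{d, i})}(\{x_{1, 1}, \dots, x_{1, l^*}\})$ linearly independent.  Setting $X_\a = \{x_{\a, 1}, \dots, x_{\a, l^*}\}$ produces pairwise disjoint sets $X_1, \dots, X_d$ witnessing \eqref{disjoint frank_1 expression} at level $l^*$, so it suffices to show $l^* \ge l$.  Writing $W_\a = Q_\a \setminus \bigcup_{\a'} X_{\a'}$, $U_1 = \{(x_{2, i}, \dots, x_{d, i}) : i = 1, \dots, l^*\}$ and $W_{2, \dots, d} = (W_2 \times \dots \times W_d) \setminus E(\{2, \dots, d\})$, the maximality of $l^*$ implies that for every $x_1 \in W_1$ and $(x_2, \dots, x_d) \in W_{2, \dots, d}$ with $x_1, x_2, \dots, x_d$ all pairwise distinct, the entry $T(x_1, \dots, x_d)$ is forced by the rank-$l^*$ block $T(X_1 \times U_1)$ via Gaussian elimination; in the diagonal cases $x_1 = x_i$ for some $i \in \{2, \dots, d\}$ I would accommodate this by introducing correction scalars $\alpha_i(x_2, \dots, x_d)$ to impose the same Gaussian relation, producing functions $\alpha_2, \dots, \alpha_d$ on $W_{2, \dots, d}$ for which the modified matrix has rank at most $l^*$ on $W_1 \times W_{2, \dots, d}$.

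I would then extend the $\alpha_i$'s to all of $Q_{2, \dots, d}$ and decompose $Q_1 \times Q_{2, \dots, d}$, as in the order-$3$ case, into $W_1 \times W_{2, \dots, d}$, a $(Q_1 \setminus W_1) \times Q_{2, \dots, d}$ piece contributing at most $|Q_1 \setminus W_1| \le d l^*$ rows, and slices $\{x_j = j_0\}$ for $j \in \{2, \dots, d\}$ and $j_0 \in Q_j \setminus W_j$.  For each such slice, $T_{x_j = j_0}$ is an order-$(d-1)$ tensor whose sub-slices still satisfy the essential partition rank bound $m$, so Proposition \ref{A tensor with bounded essential partition ranks of all slices of all sizes has bounded essential tensor rank} applied in dimension $d-1$ gives $\etr T_{x_j = j_0} \le (4m^3)^{2^{d-1}}$.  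The corrections $\alpha_i 1_{x_1 = x_i}$ for $i \ne j$ are supported in $E(\lbrack d \rbrack \setminus \{j\})$ of the sub-tensor and can be absorbed into the witness $V$ for $\etr$, while $\alpha_j 1_{x_1 = j_0}$ adds at most one row, so by Remark \ref{dimension of columns at most tensor rank} the dimension contribution of each slice is at most $(4m^3)^{2^{d-1}} + 1$.  Since there are at most $(d - 1) \cdot d l^*$ slices of this type, summing by subadditivity gives a total of order $d^2 l^* (4m^3)^{2^{d-1}}$, which against the hypothesis forces $l^* \ge l$.

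The main obstacle will be the slice step: I will need to verify that the diagonal corrections really can be absorbed into an essential-tensor-rank witness for the sub-slice (so that their supports stay inside the correct $E$), and that the choice of $\alpha_i$'s on each slice does not conflict with the values fixed earlier on $W_{2, \dots, d}$ --- for which the disjointness of $W_{2, \dots, d}$ from the slices $\{x_j = j_0\}$ with $j_0 \in Q_j \setminus W_j$ will be used.  A secondary bookkeeping issue will be tracking the constants precisely enough that the sum matches the hypothesis's threshold $d^2 (4m^3)^{2^{d-1}} l$.
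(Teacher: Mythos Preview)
Your proposal is correct and follows essentially the same route as the paper's proof: maximal pairwise-distinct configuration, Gaussian elimination on the core block to define the corrections on $W_1\times W_{2,\dots,d}$, then a slice decomposition where each boundary slice is controlled via the essential tensor rank bound from Proposition~\ref{A tensor with bounded essential partition ranks of all slices of all sizes has bounded essential tensor rank} in dimension $d-1$ together with Remark~\ref{dimension of columns at most tensor rank}. One simplification you can make: since $j_0\in Q_\alpha\setminus W_\alpha\subset\{x_{i',j'}\}$ and hence $j_0\notin W_1$, the term $\alpha_\alpha 1_{x_1=j_0}$ vanishes on $W_1$, so the ``$+1$'' is not needed and the slice contribution is exactly $(4m^3)^{2^{d-1}}$ as in the paper.
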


\begin{proof}

Let $l$ be the largest integer such that there exist pairwise distinct $x_{i,j} \in Q_i$ for each $i \in \lbrack d \rbrack, j \in \lbrack l \rbrack$ such that the restrictions \[ T_{(x_{2,1},\dots ,x_{d,1})}(\{x_{1,1},\dots ,x_{1,l}\}),\dots ,T_{(x_{2,l},\dots ,x_{d,l})}(\{x_{1,1},\dots ,x_{1,l}\}) \] are linearly independent. Taking $X_1=\{x_{1,l}, \dots, x_{1,l}\}, \dots, X_d=\{x_{d,l}, \dots, x_{d,l}\}$ then ensures that \eqref{disjoint frank_1 expression} is satisfied. We define the following sets which we shall use throughout the remainder of the proof: \begin{align*}
W_i&= Q_i \setminus \{x_{i',j'}, i' \in \lbrack d \rbrack, j' \in \lbrack s \rbrack \} \text{ for each } i \in \lbrack d \rbrack \\
U&= \{(x_{2,1},\dots ,x_{d,1}),\dots ,(x_{2,l},\dots ,x_{d,l})\} \\
W_{2, \dots, d}&= (\prod_{\a = 2}^d W_{\a}) \setminus E(\{2, \dots, d\}). \end{align*}

We first show that we can define functions $a_2,\dots,a_d: W_{2, \dots, d} \rightarrow \F$ such that \[\dim \langle (T_{(x_2,\dots ,x_d)} + \sum_{i=2}^d a_i(x_2,\dots,x_d) 1_{x_1=x_i})(W_1): (x_2,\dots ,x_d) \in W_{2, \dots, d} \rangle \le l. \] In other words we want to show that the matrix $A: Q_1 \times Q_{2,\dots,d} \rightarrow \F$ defined by \[A(x_1,(x_2,\dots, x_d)) = T(x_1,x_2, \dots,x_d)\] is such that the rank of the restriction $A(W_1 \times W_{2,\dots,d})$ is at most $l$ after some modifications of the entries of the type \[ A(u,(u,x_3,\dots ,x_d)), A(u,(x_2,u,x_4,\dots ,x_d)),\dots ,A(u,(x_2,\dots ,x_{d-1},u)). \] We construct $a_2,\dots ,a_d$ as follows. Let $x_1 \in W_1$ and $(x_2,\dots ,x_d) \in W_{2, \dots, d}$. We distinguish two situations.

\begin{enumerate}

\item If $ x_1 \notin \{x_2,\dots ,x_d\}$ then by maximality of $ l$, the $ (l+1) \times (l+1)$ submatrix \[ A((X_1 \cup \{x_1\}) \times (U_1 \cup \{(x_2,\dots x_d)\})) \] has the same rank as the full rank matrix $ A(X_1 \times U_1)$, so we can write \[ A(x_1, (x_2,\dots ,x_d)) = A(\{x_1\} \times U_1)  A(X_1 \times U_1)^{-1} A(X_1 \times \{(x_2,\dots x_d)\}). \]

\item If $ x_1 = x_i$ for some $i \in \{2,\dots, d \}$ then because $A((X_1 \cup \{x_1 \}) \times (U_1 \cup \{(x_2,\dots x_d)\}))$ is an $(l+1) \times (l+1)$ matrix and $A(X_1 \times U_1)$ has full rank $l$, there exists a unique $a_i(x_2,\dots,x_d) \in \F$ such that \[ A(x_1, (x_2,\dots,x_d)) + a_i(x_2,\dots,x_d) = A(\{x_1\} \times U_1) A(X_1 \times U_1)^{-1} A(X_1 \times \{(x_2,\dots x_d)\}). \]

\end{enumerate}

The values $a_2(x_2, \dots, x_d), \dots, a_d(x_2, \dots, x_d)$ obtained in the second situation define functions $a_2,\dots,a_d: W_{2, \dots, d} \rightarrow \F$.  Let $B: (X_1 \cup W_1) \times (U_1 \cup W_{2, \dots, d}) \rightarrow \F$ be the matrix defined for all $(x_1,(x_2,\dots,x_d)) \in W_1 \times W_{2,\dots,d}$ by \[ B(x_1,(x_2,\dots,x_d)) = A(x_1,(x_2,\dots,x_d)) + \sum_{i=2}^d a_i(x_2,\dots,x_d) 1_{x_1=x_i} \] and by $B(x_1,(x_2,\dots,x_d)) = A(x_1,(x_2,\dots,x_d))$ for all other $(x_1,(x_2,\dots,x_d))$. By construction of $a_2,\dots,a_d$ we have \[ B(W_1 \times W_{2, \dots, d}) = B(W \times U_1) B(X_1 \times U_1)^{-1} B(X_1 \times W_{2, \dots, d}) \] and therefore $\rk B (W_1 \times W_{2, \dots, d}) \le l$. For each $\a \in \lbrack d \rbrack$, we define the slices \[ S_{\a,j} = \{x_{\a} = j \} \setminus (Q_1 \times E(\{2, \dots, d\})) \cap \{x_1 \in W_1 \} \cap \dots \cap \{x_{\a-1} \in W_{\a-1} \} \] for each $j \in Q_{\a} \setminus W_{\a}$. Then the set $Q_1 \times Q_{2, \dots, d}$ can be written as the disjoint union \[ (W_1 \times W_{2, \dots, d}) \cup \bigcup_{1 \le \alpha \le d} \bigcup_{j \in Q_{\alpha} \setminus W_{\alpha}} S_{\a,j}. \] By definition all the sets $Q_{\alpha} \setminus W_{\alpha}$ have size at most $dl$.  We can arbitrarily define $a_2,\dots,a_d$ on $\bigcup_{j \in Q_{1} \setminus W_{1}} S_{1,j} = (Q_1 \setminus W_1) \times Q_{2, \dots, d}$ and obtain \[ \dim \langle (T_{(x_2,\dots ,x_d)} + \sum_{i=2}^d a_i(x_2,\dots,x_d) 1_{x_1=x_i})(Q_1 \setminus W_1): (x_2,\dots, x_d) \in Q_{2, \dots, d}\rangle \] to be at most $|Q_1 \setminus W_1| \le dl$. For each $\alpha \in \{2,\dots,d\}$ and each $j \in Q_j \setminus W_j$, by Proposition \ref{Equivalence between $efrank_1$ and $etr$, assuming that $epr$ is bounded for slices of all sizes} we have $\etr T_j \le (4m^3)^{2^{d-1}}$, so using Remark \ref{dimension of columns at most tensor rank}, $a_2,\dots,a_d$ can be defined on $S_{\a,j}$ such that \begin{multline} \dim \langle (T_{(x_2,\dots ,x_d)} + \sum_{i=2}^d a_i(x_2,\dots,x_d) 1_{x_1=x_i})(W_1): (x_2,\dots, x_d) \in Q_{2, \dots, d} ,\\ x_2 \in W_2, \dots, x_{\a - 1} \in W_{\a - 1}, x_{\alpha} = j \rangle \le (4m^3)^{2^{d-1}}. \nonumber \end{multline} The result follows by subadditivity.\end{proof}

Let $G_{d, \tr}'(l)$ be defined by $G_{2,\tr}'(l) = l$ and for each $d \ge 3$, \[G_{d, \tr}'(l) = (2.10^6)^{2^d} G_{d-1, tr}'(l)^{(3.2^{d-1})(3.2^d)}.\]

\begin{proposition} \label{Disjoint tensor rank}

Let $T$ be an order-$d$ tensor, and let $l \ge d^2$ be a positive integer. If $\etr T \ge G_{d,\tr}'(l)$, then $\dtr T \ge l$. \end{proposition}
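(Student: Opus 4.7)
The plan is to argue by induction on $d$. The base case $d = 2$ is essentially Proposition~\ref{Disjoint rank for matrices} and the base case $d = 3$ is Proposition~\ref{Disjoint rank for order-3 tensors}. For the inductive step $d \ge 4$, I would mirror the two-case template of the order-$3$ proof, replacing the matrix essential rank condition on slices by an essential partition rank condition on slices of arbitrary lower order.

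Given $T$ with $\etr T \ge G_{d,\tr}'(l)$, the first case is that there exist $I \subsetneq \lbrack d \rbrack$ with $|I| \in \{1,\dots,d-2\}$ and $y \in (\prod_{\a \in I} Q_\a) \setminus E(I)$ (pairwise distinct coordinates) for which $\epr T_y \ge M$, with $M$ taken slightly above $G_{d-1,\tr}'(l)$. I would then replace each $Q_{\a'}$, $\a' \in I^c$, by $Q'_{\a'} := Q_{\a'} \setminus \{y_\beta : \beta \in I\}$. Each single-point deletion drops the partition rank of any tensor by at most one (the corresponding single-coordinate slice has partition rank at most one), so $\epr T_y$ loses at most $|I|(d-|I|) \le d^2$, and the restricted slice $T_y^{\ast}$ on $\prod_{\a' \in I^c} Q'_{\a'}$ still satisfies $\etr T_y^{\ast} \ge \epr T_y^{\ast} \ge G_{d-|I|,\tr}'(l)$. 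The inductive hypothesis (or Proposition~\ref{Disjoint rank for matrices} when $|I| = d-2$) then produces pairwise disjoint $X_{\a'} \subset Q'_{\a'}$, $\a' \in I^c$, with $\tr T_y^{\ast}(\prod_{\a' \in I^c} X_{\a'}) \ge l$. Declaring $X_\a := \{y_\a\}$ for $\a \in I$, the condition $y \notin E(I)$ together with the avoidance built into the restriction ensures that $\{X_\a\}_{\a \in \lbrack d \rbrack}$ is pairwise disjoint, and $\tr T(X_1 \times \dots \times X_d) = \tr T_y^{\ast}(\prod_{\a' \in I^c} X_{\a'}) \ge l$.

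In the opposite case, every such slice satisfies $\epr T_y \le M$. Set $L := d^2 (4 M^3)^{2^{d-1}} l$, which in particular satisfies $L \ge M$. From $\etr T \ge G_{d,\tr}'(l) > (4 L^3)^{2^d}$ and $\epr T_y \le L$, the contrapositive of Proposition~\ref{Equivalence between $efrank_1$ and $etr$, assuming that $epr$ is bounded for slices of all sizes} applied with parameter $L$ forces $\efrank_1 T > L$. A standard adjustment of the optimising modification $V$---zeroing it out on the part of $E$ where the projection indices $(x_2,\dots,x_d)$ themselves coincide, and keeping only the $\{x_1 = x_i\}$ parts---then turns this into functions $a_2,\dots,a_d$ making the dimension in the hypothesis of Proposition~\ref{Disjoint $frank_1$ assuming all $epr$ of slices of all sizes are bounded} exceed $L$. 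That proposition then delivers pairwise disjoint $X_1,\dots,X_d$ with $\frank_1 T(X_1 \times \dots \times X_d) \ge l$, and since $\frank_1 \le \tr$, we conclude $\tr T(X_1 \times \dots \times X_d) \ge l$, i.e.\ $\dtr T \ge l$.

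The main obstacle is calibrating $M$ so that both cases go through simultaneously: it must be large enough (at least $G_{d-1,\tr}'(l) + O(d^2)$) for the inductive hypothesis to apply in Case~1 after the $O(d^2)$ loss from coordinate deletions, while the induced value $L = d^2 (4 M^3)^{2^{d-1}} l$ must satisfy $(4 L^3)^{2^d} \le G_{d,\tr}'(l)$ so that the Equivalence contrapositive fires in Case~2. Propagating $M \approx G_{d-1,\tr}'(l)$ through the compound exponents $3 \cdot 2^{d-1}$ (from Equivalence's embedding) and $3 \cdot 2^d$ (from the outer Disjoint $\frank_1$ step) then reproduces the stated recursion $G_{d,\tr}'(l) = (2 \cdot 10^6)^{2^d} G_{d-1,\tr}'(l)^{3 \cdot 2^{d-1} \cdot 3 \cdot 2^d}$.
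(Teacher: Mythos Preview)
Your proposal is correct and follows essentially the same two-case inductive structure as the paper's proof: Case~1 locates a lower-order slice $T_y$ with large essential partition rank and invokes the inductive hypothesis after trimming the $y$-coordinates from the remaining axes; Case~2 uses the contrapositive of Proposition~\ref{Equivalence between $efrank_1$ and $etr$, assuming that $epr$ is bounded for slices of all sizes} to force $\efrank_1 T$ large and then applies Proposition~\ref{Disjoint $frank_1$ assuming all $epr$ of slices of all sizes are bounded}. The only cosmetic difference is that the paper treats $d=2$ as the sole base case (its inductive step already covers $d=3$), whereas you single out $d=3$ separately; and your passage from $\efrank_1 T \ge L$ to the hypothesis of Proposition~\ref{Disjoint $frank_1$ assuming all $epr$ of slices of all sizes are bounded} is phrased somewhat informally, though the underlying observation---that any $E$-supported modification, restricted to columns with pairwise distinct $(x_2,\dots,x_d)$, is exactly of the form $\sum_i a_i 1_{x_1=x_i}$, so the universal bound on that dimension follows from $\efrank_1 T \ge L$---is correct.
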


\begin{proof} We prove the result by induction on $d$. The result holds for $d=2$. Let $d \ge 3$.

We distinguish two cases.
\medskip

\noindent \textbf{Case 1:} There exists $I \subset \lbrack d \rbrack$ with $|I| \in \{1,\dots ,d-2\}$ and $y \in \prod_{\a \in I} Q_{\a}$ with the $y_i$, $i \in I$ pairwise distinct and such that $\epr T_y \ge G'_{d-1,\tr}(l) + d^2$. Let $d'=|I|$. Without loss of generality we can assume that $I = \{1,\dots ,d'\}$. Hence \[ \epr T_y(\prod_{\a=d'+1}^d (Q_{\a} \setminus \{y_1,\dots ,y_{d'}\})) \ge G_{d-1,\tr}'(l). \] By the inductive hypothesis applied to the previous tensor there exist pairwise disjoint sets $X_{\a} \subset Q_{\a} \setminus \{y_1,\dots ,y_{d'}\}$ for each $\a \in \{d'+1,\dots ,d\}$ such that \[ \tr T_y(\prod_{\a=d'+1}^d X_{\a}) \ge l. \] Letting $X_{\a}= y_{\a}$ for each $\a \in \{1,\dots ,d'\}$, the sets $X_1,\dots ,X_d$ are pairwise disjoint and \[ \tr T(\prod_{\a=1}^d X_{\a}) \ge l. \]

\noindent \textbf{Case 2:} For all $I \subset \lbrack d \rbrack$ with $|I| \in \{1,\dots ,d-2\}$ and $y \in \prod_{\a \in I} Q_{\a}$ with the $y_{\a}$, $\a \in I$ pairwise distinct we have $\epr T_y \le G'_{d-1, \tr}(l) + d^2$. Since \[\etr T \ge G'_{d,\tr}(l) = (2.10^6)^{2^d} G'_{d-1,\tr}(l)^{(3.2^{d-1})(3.2^d)},\] applying Proposition \ref{Equivalence between $efrank_1$ and $etr$, assuming that $epr$ is bounded for slices of all sizes} we get $\efrank_1 T \ge 3000 F_{d-1}(l)^{3.2^{d-1}}$. By Proposition \ref{Disjoint $frank_1$ assuming all $epr$ of slices of all sizes are bounded} there exist $X_1,\dots ,X_d$ pairwise disjoint such that \[\frank_1 T(\prod_{\a=1}^d X_{\a}) \ge l\] and hence \[\tr T(\prod_{\a=1}^d X_{\a}) \ge l.\]\end{proof}

\section{Proofs for the slice rank of order-3 tensors} \label{section: Slice rank for order-3 tensors}

\subsection{Proof for order-3 slice rank subtensors} \label{subsection: Slice rank subtensors for order-3 tensors}

We here prove Theorem \ref{Subtensors theorem} in the case of the slice rank of order-3 tensors. Our proof can be summarised as follows: given an order-$3$ tensor $T$ with high slice rank we distinguish two cases: either there exists a large separated set of slices $T_x$, in which case we can find sets $Y,Z$ such that this is still the case after we restrict these slices to $Y \times Z$, which suffices to guarantee a high rank subtensor, or there does not exist such a large separated set, in which case we construct a projected tensor for which the slice rank and tensor rank are equivalent in the sense that they are large simultaneously, and conclude using our subtensors result in the tensor rank case.

We begin by proving a lemma showing that having a large separated set of slices guarantees a high slice rank.

\begin{lemma} \label{Spanning lemma for order-3 slice rank subtensors} Let $T: Q_1 \times Q_2 \times Q_3 \rightarrow \mathbb{F}$ be an order-$3$ tensor, and $l \ge 1$ be an integer. If there exist $x_1 \dots,x_l \in Q_1$ such that \[ \rk (\sum_{i=1}^l a_i T_{x_i}) \ge l \] for every $(a_1, \dots,a_l) \in \F^l \setminus \{0\}$, then $\sr T \ge l$. \end{lemma}

\begin{proof} We show the contrapositive. Assume that $\sr T \le l-1$. Then there exist nonnegative integers $r,s,t$ with $r+s+t = l$ and functions $a_i: Q_1 \rightarrow \mathbb{F}$, $b_i: Q_2 \times Q_3 \rightarrow \mathbb{F}$, $i \in \lbrack r \rbrack$, $c_j: Q_2 \rightarrow \mathbb{F}$, $d_j: Q_1 \times Q_3 \rightarrow \mathbb{F}$, $j \in \lbrack s \rbrack$, $e_k: Q_3 \rightarrow \mathbb{F}$, $ f_k: Q_1 \times Q_2 \rightarrow \mathbb{F}$, $k \in \lbrack t \rbrack$ such that \[ T(x,y,z) = \sum_{i=1}^r a_i(x) b_i(y,z) + \sum_{j=1}^s c_j(y) d_j(x,z) + \sum_{k=1}^t e_k(z) f_k(x,y) \] Let $x_1, \dots, x_l \in Q_1$. Then we can write \[ T_{x_h}(y,z) = \sum_{i=1}^r a_i(x_h) b_i + \sum_{j=1}^s c_j(y) d_j(x_h,z) + \sum_{k=1}^t e_k(z) f_k(x_h,y) \] for each $h \in \lbrack l \rbrack$. Because $l \ge r+1$ there exists a function $a: Q_1 \rightarrow \mathbb{F}$ supported inside $\{x_1, \dots, x_l \}$ such that $a \neq 0$ but $\sum_{h=1}^l a(x_h) a_i(x_h) = 0$ for each $i \in \lbrack r \rbrack$, so we can write \[\sum_{h=1}^l a(x_h)T_{x_h}(y,z)  = \sum_{j=1}^s c_j(y) (\sum_{h=1}^l a(x_h) d_j(x_h,z)) + \sum_{k=1}^t e_k(z) (\sum_{h=1}^l a(x_h) f_k(x_h,y)).\] The right-hand side has rank at most $s+t \le l-1$, a contradiction.\end{proof}

We next show a partial converse to the inequality $\sr T \le \tr T$ which holds in the situation where all slices of $T$ of all three kinds have bounded rank.

\begin{proposition} \label{Equivalence between slice rank and tensor rank}

Let $T: Q_1 \times Q_2 \times Q_3 \rightarrow \mathbb{F}$ be an order-$3$ tensor. Let $m \ge 1$ be an integer. Assume that for all $x \in Q_1, y \in Q_2, z \in Q_3$ we have $\rk T_x, \rk T_y, \rk T_z \le m$.
Then $\tr T \le m (\sr T)^2$. \end{proposition}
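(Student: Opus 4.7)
The plan is to start from a minimal slice rank decomposition of $T$, and to bound the tensor rank contribution of each rank-$1$ slice-rank summand separately, using the hypothesis that all order-$2$ slices of $T$ have matrix rank at most $m$.

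Concretely, I would write $T(x,y,z) = \sum_{i=1}^{r} a_i(x) b_i(y,z) + \sum_{j=1}^{s} c_j(y) d_j(x,z) + \sum_{k=1}^{t} e_k(z) f_k(x,y)$ with $r+s+t = \sr T =: l$. By minimality of this decomposition the families $(a_i)_{i \in [r]}$, $(c_j)_{j \in [s]}$ and $(e_k)_{k \in [t]}$ are each linearly independent (otherwise one could reduce $r$, $s$, or $t$). Hence we can choose $x_1, \dots, x_r \in Q_1$ so that the matrix $A = (a_i(x_{i'}))_{i,i' \in [r]}$ is invertible, and similarly choose $y_1, \dots, y_s \in Q_2$ and $z_1, \dots, z_t \in Q_3$ making the corresponding matrices in $c_j$ and $e_k$ invertible.

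Evaluating $T$ at each $x_{i'}$ and multiplying by $A^{-1}$ then expresses $b_i$ as a linear combination of slices and error terms: there exist functions $D_{i,j}: Q_3 \rightarrow \F$ and $F_{i,k}: Q_2 \rightarrow \F$ such that
\[ b_i(y,z) = \sum_{i'=1}^{r} (A^{-1})_{i,i'} T_{x_{i'}}(y,z) - \sum_{j=1}^{s} c_j(y) D_{i,j}(z) - \sum_{k=1}^{t} e_k(z) F_{i,k}(y). \]
By subadditivity of the matrix rank and by the assumption $\rk T_{x_{i'}} \le m$, this gives $\rk b_i \le rm + s + t$. Symmetric arguments using the invertible systems in the $y$- and $z$-variables yield $\rk d_j \le r + sm + t$ and $\rk f_k \le r + s + tm$.

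To conclude, I would use that a slice of the form $a_i(x) b_i(y,z)$ has tensor rank equal to $\rk b_i$ (a rank-$1$ matrix decomposition of $b_i$ produces a tensor rank decomposition of the same length for $a_i(x)b_i(y,z)$), and likewise for the other two types of slices. Summing the bounds gives
\[ \tr T \le r(rm + s + t) + s(r + sm + t) + t(r + s + tm) = m(r^2 + s^2 + t^2) + 2(rs + rt + st). \]
Since $r^2 + s^2 + t^2 \le l^2$ and $2(rs+rt+st) = l^2 - (r^2+s^2+t^2)$, this is at most $(m-1)l^2 + l^2 = m l^2$, which is the desired inequality. There is no real obstacle to the argument; the only subtle point is to justify the linear independence of the $a_i$, $c_j$, $e_k$ from the minimality of the slice rank decomposition, so that the inverse matrices used to solve for $b_i$, $d_j$, $f_k$ actually exist.
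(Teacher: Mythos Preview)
Your proof is correct and follows essentially the same approach as the paper: both start from a minimal slice rank decomposition, use linear independence of the $a_i$ (and $c_j$, $e_k$) to recover each $b_i$ as a linear combination of at most $r$ order-$2$ slices of $T$ plus $s+t$ rank-one correction terms, and then bound the tensor rank by subadditivity. The paper phrases the inversion step as choosing dual functionals $a_i^*$ via Gaussian elimination rather than as inverting an explicit evaluation matrix, but this is the same computation.
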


\begin{proof}There exists a decomposition

\begin{equation} \sum_{i=1}^r a_i(x) b_i(y,z) + \sum_{j=1}^s c_j(y) d_j(x,z) + \sum_{k=1}^t e_k(z) f_k(x,y) \label{decomposition for 3D slice rank} \end{equation} of $ T$, with $r,s,t$ nonnegative integers such that $r+s+t = \sr T$. The family $\{a_1, \dots, a_r\}$ is necessarily linearly independent. By Gaussian elimination there exists a set $X \subset Q_1$ of size at most $ r$ and for each $i \in \lbrack r \rbrack$ a function $a_i^*: Q_1 \rightarrow \mathbb{F}$ supported inside $X$ such that $ a_i^*.a_{i'} = \delta_{i,i'}$ for all $i,i' \in \lbrack r \rbrack$. For a given $i \in \lbrack r \rbrack$, applying $ a_i^*$ to both sides of \eqref{decomposition for 3D slice rank} we get \[ b_i(y,z) = (a_i^*.T)(y,z) - \sum_{j=1}^s c_j(y)(a_i^*.d_j)(z) - \sum_{k=1}^t e_k(z)(a_i^*.f_k)(y). \] Because $\rk T_x \le m$ for each $x \in Q_1$ and $a_i^*$ is supported in a set of size at most $ r$ we get by subadditivity of the rank that $\rk (a_i^*.T) \le mr$ and hence $ \rk b_i \le mr + s + t$. Similarly $ \rk d_j \le ms + r + t$ for every $ j \in \lbrack s \rbrack$ and $ \rk f_k \le mt + r + s$ for every $k \in \lbrack t \rbrack$. By subadditivity \[ \tr T \le r(mr + s + t) + s(ms + r + t) + t(mt + r + s) \le m(\sr T)^2. \qedhere\] \end{proof}

We are now ready to prove our subtensors result.

\begin{proposition} \label{Order-3 slice rank subtensors} Let $T: Q_1 \times Q_2 \times Q_3 \rightarrow \mathbb{F}$ be an order-$3$ tensor, and let $l \ge 1$ be a positive integer. If $\sr T \ge 51 l^3$ then there exist $X \subset Q_1$, $Y \subset Q_2$, $Z \subset Q_3$ with size at most $48l^3$ such that $\sr T(X \times Y \times Z) \ge l$. \end{proposition}

\begin{proof} Assume $\sr T \ge 51l^3$. We distinguish two cases.

\medskip
\noindent \textbf{Case 1}: For at least one of the three coordinates $x,y,z$, which without loss of generality we can take to be the first coordinate $x$, there exist $x_1,\dots ,x_l \in Q_1$ such that \[ \rk (\sum_{i=1}^l a_i T_{x_i}) \ge l \] for every $(a_1,\dots,a_l) \in \mathbb{F}^l \setminus \{0\}$.
By multidimensional order-$2$ subtensors (Proposition \ref{Multidimensional matrix rank}) there are sets $Y \subset Q_2$, $Z \subset Q_3$ with size at most $l^2$ such that \[ \rk (\sum_{i=1}^l a_i T_{x_i})(Y \times Z) \ge l \] for every $(a_1,\dots,a_l) \in \mathbb{F}^l \setminus \{0\}$. By Lemma \ref{Spanning lemma for order-3 slice rank subtensors}, taking $X = \{x_1,\dots ,x_l\}$ we get $\sr T(X \times Y \times Z) \ge l$.

\noindent \textbf{Case 2}: We are not in Case 1. Then we construct a decomposition \[ T = S^1 + S^2 + S^3 + U \] with $S^1,S^2,S^3,U$ order-$3$ tensors $Q_1 \times Q_2 \times Q_3 \rightarrow \mathbb{F}$ as follows. Because we are not in Case 1, there exist $r \le l$, $x_1,\dots,x_r \in Q_1$ such that for every $x \in Q_1$ there exist coefficients $a_1(x),\dots,a_{r}(x) \in \mathbb{F}$ satisfying \[ \rk ( T_x - \sum_{i=1}^{r} a_i(x) T_{x_i} ) \le l. \]
The tensor $S^1$ defined by $S^1(x,y,z) = \sum_{i=1}^{r} a_i(x) T_{x_i}(y,z)$ hence satisfies $\rk (T_x - S^1_x) \le l$ for every $x \in Q_1$. Similarly we can find $s, t \le l$, $y_1,\dots,y_{s} \in Q_2$, $z_1,\dots,z_t \in Q_3$ and functions $c_1,\dots,c_s: Q_2 \rightarrow \mathbb{F}$, $e_1,\dots,e_t: Q_3 \rightarrow \mathbb{F}$ such that \[ \rk ( T_y - \sum_{j=1}^{s} c_j(y) T_{y_j} ) \le l \] for every $y \in Q_2$ and \[ \rk ( T_z - \sum_{k=1}^{t} e_k(z) T_{z_k} ) \le l \] for every $z \in Q_3$. We define the tensors $S^2$ and $S^3$ by \[ S^2(x,y,z) = \sum_{j=1}^{s} c_j(y) T_{y_j}(x,z) \text{ and } S^3(x,y,z) = \sum_{k=1}^{t} e_k(z) T_{z_k}(x,y).\] Let $U= T - (S^1 + S^2 + S^3)$. For each $x \in Q_1$, $\rk T_x - S^1_x \le l$, and $\rk S^2_x, \rk S^3_x \le l$, so by subadditivity, $\rk U_x \le 3l$. Similarly for each $y \in Q_2$, $\rk U_y \le 3l$ and for each $z \in Q_3$, $\rk U_z \le 3l$. Since $\sr T \ge (3l)(4l)^2 + 3l$ and $\sr S^1, \sr S^2, \sr S^3 \le l$, by subadditivity $\sr U \ge (3l)(4l)^2$. Since $\tr U \ge \sr U$, we have $\tr U \ge (3l)(4l)^2$. By Proposition \ref{Subtensors for order-$d$ tensor rank} in the order-3 case we can find $X \subset Q_1$, $Y \subset Q_2$, $Z \subset Q_3$ each with size at most $(3l)(4l)^2$ such that $\tr U(X \times Y \times Z) \ge (3l)(4l)^2$. As taking subtensors cannot increase the rank, it is still the case that $\rk U(X \times Y \times Z)_x, \rk U(X \times Y \times Z)_y, \rk U(X \times Y \times Z)_z \le 3l$ for all $x \in X$, $y \in Y$, $z \in Z$. By Proposition \ref{Equivalence between slice rank and tensor rank}, $\sr U(X \times Y \times Z) \ge 4l$, and hence (since $\sr (S^1+S^2+S^3)(X \times Y \times Z) \le 3l$) we conclude that $\sr T(X \times Y \times Z) \ge l$. \end{proof}

\subsection{Proof for order-3 disjoint slice rank} \label{subsection:Proof of order-3 disjoint slice rank}

The previous proof can be adapted to a proof for order-$3$ disjoint slice rank, using the same Lemma \ref{Spanning lemma for order-3 slice rank subtensors} and Proposition \ref{Equivalence between slice rank and tensor rank} as the previous proof did.

\begin{proposition} \label{Order-3 disjoint slice rank} Let $T: Q_1 \times Q_2 \times Q_3 \rightarrow \mathbb{F}$ be an order-$3$ tensor. If $\esr T \ge 17500 ((l^2+l+5)l(4l)^2)^3 + 3l$ then there exist $X \subset Q_1$, $Y \subset Q_2$, $Z \subset Q_3$ pairwise disjoint such that $\sr T(X \times Y \times Z) \ge l$. \end{proposition}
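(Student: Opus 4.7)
The plan is to mirror the proof of Proposition \ref{Order-3 slice rank minors}, substituting Proposition \ref{Multidimensional matrix disjoint rank} for Proposition \ref{Multidimensional matrix rank} in the separated-family case and Proposition \ref{Disjoint rank for order-3 tensors} for Proposition \ref{Minors for order-$d$ tensor rank} in the decomposition case. Both Lemma \ref{Spanning lemma for order-3 slice rank minors} and Proposition \ref{Equivalence between slice rank and tensor rank} are reused unchanged, as indicated in the remark preceding the statement.

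Set $N = l^3 + l^2 + 3l$ and distinguish two cases. In the separated-family case, some direction (say the first, without loss of generality) admits $x_1, \dots, x_l \in Q_1$ with $\erk(\sum_{i=1}^l a_i T_{x_i}) \ge N$ for every nonzero $a \in \mathbb{F}^l$. Setting $X = \{x_1, \dots, x_l\}$, restricting to $(Q_2 \setminus X) \times (Q_3 \setminus X)$ removes at most $l$ rows and $l$ columns, so essential rank drops by at most $2l$, and each nonzero combination still has essential rank at least $N - 2l = l(l+1)l + l$ on the sub-product. Proposition \ref{Multidimensional matrix disjoint rank} with $s = l$ and target $l$ then gives disjoint $Y \subset Q_2 \setminus X$ and $Z \subset Q_3 \setminus X$ on which every nonzero combination has rank at least $l$, and Lemma \ref{Spanning lemma for order-3 slice rank minors} yields $\sr T(X \times Y \times Z) \ge l$; the triple $(X, Y, Z)$ is pairwise disjoint by construction.

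In the remaining case no direction admits a separated family of size $l$, and an iterative extraction in each of the three directions (thresholded at essential rank $N$, in place of the rank $l$ used in the original proof) produces at most $l$ ``basis'' slices per direction together with coefficient functions, yielding a decomposition $T = S^1 + S^2 + S^3 + U$ with $\sr S^i \le l$ and with $\erk U_x, \erk U_y, \erk U_z \le (N-1) + 2l \le l(l^2 + l + 5)$ by subadditivity of $\erk$. Setting $m = l(l^2 + l + 5)$ and $k = m (4l)^2 = (l^2 + l + 5) l (4l)^2$, subadditivity of $\esr$ combined with the bound $\esr \le \etr$ gives $\etr U \ge \esr U \ge \esr T - 3l \ge 17500 k^3$, so Proposition \ref{Disjoint rank for order-3 tensors} delivers pairwise disjoint $X, Y, Z$ with $\tr U(X \times Y \times Z) \ge k$. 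Because the three sets are pairwise disjoint, each two-dimensional diagonal relevant to a slice of the restriction is empty, which promotes the ambient bound $\erk U_x \le m$ to $\rk U(X \times Y \times Z)_x \le m$, and analogously for slices in the other two directions. Proposition \ref{Equivalence between slice rank and tensor rank} then gives $\sr U(X \times Y \times Z) \ge \sqrt{k/m} = 4l$, and subtracting $\sr(S^1 + S^2 + S^3)(X \times Y \times Z) \le 3l$ yields $\sr T(X \times Y \times Z) \ge l$.

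The main obstacle is the essential-rank bookkeeping that ties the two cases together: one needs the fact that restricting a matrix to a sub-product decreases essential rank by at most the number of removed rows and columns (used to feed Proposition \ref{Multidimensional matrix disjoint rank}), and the parallel observation that when one restricts to pairwise disjoint product sets the relevant two-dimensional diagonals vanish, so ambient essential-rank bounds on slices become ordinary-rank bounds on the restricted slices, which is exactly what Proposition \ref{Equivalence between slice rank and tensor rank} requires. Once these two facts are in place, the stated constants are forced by matching parameters to Proposition \ref{Multidimensional matrix disjoint rank} (with $s = l$ and target $l$) in the separated case and to Proposition \ref{Disjoint rank for order-3 tensors} (with target $k$) in the decomposition case.
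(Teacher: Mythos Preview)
Your proof is correct and follows essentially the same approach as the paper's own proof: the same two-case split at the same threshold $(l^2+l+3)l$, the same substitutions of Proposition~\ref{Multidimensional matrix disjoint rank} and Proposition~\ref{Disjoint rank for order-3 tensors} for their non-disjoint counterparts, and the same concluding application of Proposition~\ref{Equivalence between slice rank and tensor rank}. If anything, you are more explicit than the paper about why the pairwise-disjointness of $X,Y,Z$ converts the ambient essential-rank bounds on the slices $U_x,U_y,U_z$ into genuine rank bounds on the restricted slices, which is indeed the point that makes Proposition~\ref{Equivalence between slice rank and tensor rank} applicable.
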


\begin{proof} Assume that $\esr T \ge 17500 ((l^2+l+5)l(4l)^2)^3 + 3l$. We distinguish two cases.
\medskip

\noindent \textbf{Case 1}: For at least one of the three coordinates $x,y,z$ which without loss of generality we can take to be the first coordinate $x$, there exist $x_1,\dots,x_l \in Q_1$ such that \[ \erk (\sum_{i=1}^l a_i T_{x_i}) \ge (l^2+l+3)l  \] for every $(a_1,\dots,a_l) \in \mathbb{F}^l \setminus \{0\}$. Using that a single row or column has rank at most $1$, by subadditivity \[ \erk (\sum_{i=1}^l a_i T_{x_i})((Q_2 \setminus \{x_1,\dots,x_l\}) \times (Q_3 \setminus \{x_1,\dots,x_l\})) \ge (l^2+l+1)l \] so by Proposition \ref{Multidimensional matrix disjoint rank} there exist disjoint subsets $Y \subset Q_2 \setminus \{x_1,\dots ,x_l\}$, $Z \subset Q_3 \setminus \{x_1,\dots ,x_l\}$ such that \[ \rk (\sum_{i=1}^l a_i T_{x_i})(Y \times Z) \ge l \] for every $(a_1,\dots,a_l) \in \mathbb{F}^l \setminus \{0\}$. Taking $X = \{x_1,\dots ,x_l\}$, the sets $X,Y,Z$ are pairwise disjoint and by Lemma \ref{Spanning lemma for order-3 slice rank subtensors} we get $\sr T(X \times Y \times Z) \ge l$.
\medskip

\noindent \textbf{Case 2}: We are not in Case 1. Then we construct a decomposition \[ T = S^1 + S^2 + S^3 + U \] as follows. Because we are not in Case 1, there exists $r \le l$ and $x_1,\dots ,x_r \in Q_1$ such that for every $x \in Q_1$ there exist coefficients $a_1(x),\dots ,a_{r}(x)$ such that \[ \erk ( T_x - \sum_{i=1}^{r} a_i(x) T_{x_i} ) \le (l^2+l+3)l. \] The tensor $S^1$ defined by $S^1(x,y,z) = \sum_{i=1}^{r} a_i(x) T_{x_i}(y,z)$ therefore satisfies $\erk (T_x - S^1_x) \le (l^2+l+3)l$ for every $x \in Q_1$. Similarly we can find $s, t \le l$, $y_1,\dots ,y_{s} \in Q_2$, $z_1,\dots ,z_t \in Q_3$ and functions $c_1,\dots ,c_s: \mathbb{F}^{n_2} \rightarrow \mathbb{F}$, $e_1,\dots ,e_t: \mathbb{F}^{n_3} \rightarrow \mathbb{F}$ such that \[ \erk (T_y - \sum_{j=1}^{s} c_j(y) T_{y_j} ) \le (l^2+l+3)l \] for every $y \in Q_2$ and \[ \erk ( T_z - \sum_{k=1}^{t} e_k(z) T_{z_k} ) \le (l^2+l+3)l \] for every $z \in Q_3$. We define the tensors $S^2$ and $S^3$ by \[ S^2(x,y,z) = \sum_{j=1}^{s} c_j(y) T_{y_j}(x,z)\text{ and }S^3(x,y,z) = \sum_{k=1}^{t} e_k(z) T_{z_k}(x,y). \] Let $U= T - (S^1 + S^2 + S^3)$. For each $x \in Q_1$, $\erk (T_x - S^1_x) \le (l^2+l+3)l$ and $\rk S^2_x$, $\rk S^3_x \le l$, so by subadditivity, $\erk U_x \le (l^2+l+5)l$. Similarly for each $y \in Q_2$, $\erk U_y \le (l^2+l+5)l$ and for each $z \in Q_3$, $\erk U_z \le (l^2+l+5)l$. Since \[\esr T \ge 17500 ((l^2+l+5)l(4l)^2)^3 + 3l\] and $\sr S^1, \sr S^2, \sr S^3 \le l$, by subadditivity and then using that the essential slice rank is at most the essential tensor rank we obtain \[\etr U \ge \esr U \ge 17500 ((l^2+l+5)l(4l)^2)^3.\] By Proposition \ref{Disjoint rank for order-3 tensors} we can find $X \subset Q_1$, $Y \subset Q_2$, $Z \subset Q_3$ pairwise disjoint such that $\tr U(X \times Y \times Z) \ge (l^2+l+5)l(4l)^2$. As taking subtensors cannot decrease the essential rank, we still have $\rk U(X \times Y \times Z)_x, \rk U(X \times Y \times Z)_y, \rk U(X \times Y \times Z)_z \le (l^2+l+5)l$ for all $x \in X$, $y \in Y$, $z \in Z$. By Proposition \ref{Equivalence between slice rank and tensor rank}, $\sr U(X \times Y \times Z) \ge 4l$, and hence (since $\sr (S^1+S^2+S^3)(X \times Y \times Z) \le \sr (S^1+S^2+S^3) \le 3l$) we conclude $\sr U(X \times Y \times Z) \ge l$. \end{proof}

\section{Deducing subtensors and disjoint rank for several tensors from the one-tensor case} \label{section: Multidimensional statements}

Throughout this section, we fix $d \ge 2$ a positive integer and $R$ a non-empty family of partitions of $\lbrack d \rbrack$. We assume that Theorem \ref{Subtensors theorem} and Theorem \ref{Disjoint rank subtensors theorem} hold for this choice of pair $(d,R)$ and deduce from them generalisations to several tensors for this same choice of pair $(d,R)$. In Section \ref{section: General case} we will use these multidimensional generalisations as an important part of the inductive argument that proves Theorem \ref{Subtensors theorem} and Theorem \ref{Disjoint rank subtensors theorem}.

\subsection{Multidimensional subtensors}

Let $d \ge 2$ be an integer and let $R$ be a non-empty family of partitions of $\lbrack d \rbrack$. For every positive integer $s \ge 1$, we define the functions $F_{d,R,s}: \mathbb{N} \rightarrow \mathbb{N}$ and $G_{d,R,s}: \mathbb{N} \rightarrow \mathbb{N}$ by \begin{align*} 
F_{d,R,1}(l) &= F_{d,R}(l) \textit{ and for each } s \ge 2 \textit{, }F_{d,R,s}(l) = F_{d,R}(sl) + F_{d,R,s-1}(l)\\
G_{d,R,1}(l) &= G_{d,R}(l) \textit{ and for each } s \ge 2 \textit{, }G_{d,R,s}(l) = G_{d,R}(sl).
\end{align*}

\begin{proposition} \label{Multidimensional Rrk subtensors} Let $d,s \ge 1$ be positive integers and let $R$ be a non-empty family of partitions of $\lbrack d \rbrack$. Then whenever $\Lambda$ is a subset of $\mathbb{F}^s \setminus \{0\}$, if $T_1,\dots ,T_s: \prod_{\a=1}^d Q_{\a} \rightarrow \mathbb{F}$ are order-$d$ tensors such that \[ \Rrk (\sum_{i=1}^s a_i T_i) \ge G_{d,R,s}(l) \] for every $(a_1,\dots ,a_s) \in \Lambda$, then there exist $X_1 \subset Q_1,\dots ,X_d \subset Q_d$, each with size at most $F_{d,R,s}(l)$, such that \[ \Rrk (\sum_{i=1}^s a_i T_i)(\prod_{\a=1}^d X_{\a}) \ge l \] for every $(a_1,\dots ,a_s) \in \Lambda$.

\end{proposition}

\begin{proof} We proceed by induction on $s$. If $s=1$ then the result holds by the one-tensor case. Let $s \ge 2$ and assume that the result holds for $s-1$. If $\Lambda$ is empty then we are done. If $\Lambda$ is not empty, then letting $a^0$ be a fixed arbitrary element of $\Lambda$, by the one-tensor case there exist $X_1' \subset Q_1$, $\dots$, $X_d' \subset Q_d$ all with size at most $F_{d,R}(sl)$ such that \[ \Rrk (a^0.T)(X_1' \times \dots  \times X_d') \ge sl. \] By subadditivity of the $R$-rank, there exists a strict linear subspace $W$ of $\mathbb{F}^s$ such that \[ \Rrk (a.T)(X_1' \times \dots  \times X_d') \ge l \] for every $a \in \Lambda \setminus W$. Applying Proposition \ref{Multidimensional Rrk subtensors} for $s-1$ and $\Lambda \cap W$, there exist $X_1'' \subset Q_1, \dots, X_d'' \subset Q_d$ all with size at most $F_{d,R,s-1}(l)$ such that \[ \Rrk (a.T)(X_1'' \times \dots  \times X_d'') \ge l \] for every $a \in \Lambda \cap W$. The sets $X_1, \dots X_d$ defined by $X_1= X_1' \cup X_1'', \dots, X_d= X_d' \cup X_d''$ all have size at most $F_{d,R}(sl) + F_{d,R,s-1}(l)$, and furthermore \[ \Rrk (a.T)(X_1 \times \dots  \times X_d) \ge l \] for every $a \in \Lambda$.\end{proof}

\subsection{Multidimensional disjoint rank}

Is it the case that for every $l \ge 1$ there exists $g(l)$ such that if $T_1$ and $T_2$ are two order-$3$ tensors such that for all $(a,b) \in \mathbb{F}^2 \setminus \{0\}$, $\etr a_1 T_1 + a_2 T_2 \ge g(l)$ then there exist $X,Y,Z$ pairwise disjoint such that for all $(a,b) \in \mathbb{F}^2 \setminus \{0\}$, $\tr (a_1 T_1 + a_2 T_2)(X \times Y \times Z) \ge l$? Without an additional assumption this is false, as the following counterexample shows.

\begin{example}\label{No multidimensional disjoint tensor rank for 3D} Let $k \ge 1$ be a positive integer, $b_1: Q_2 \times Q_3 \rightarrow \F$, $b_2: Q_1 \times Q_3 \rightarrow \F$, $b_2$ be bilinear forms with $\erk b_1, \erk b_2 \ge k+3$ and let $T_1$, $T_2$ be the order-$3$ tensors defined by $ T_1(x,y,z) = 1_{x=1} b_1(y,z)$, $ T_2(x,y,z) = 1_{y=1} b_2(x,z)$. For any $(a_1,a_2) \in \mathbb{F}^2 \setminus \{0\}$, assuming without loss of generality that $a_1 \neq 0$ we have \begin{align*}
\etr (a_1 T_1 + a_2 T_2) & \ge \erk (a_1 T_1 + a_2 T_2)_{x=1}((Q_2 \setminus \{1\}) \times (Q_3 \setminus \{1\}))\\
& \ge \erk (a_1 T_1 + a_2 T_2)_{x=1} - 2\\
&\ge \erk (b_1) - 3\\
& \ge k. \end{align*} where the third inequality comes from the fact that the order-$2$ slice $(T_2)_{x=1}$ has support contained inside the row $y=1$. On the other hand whenever $ X \subset Q_1, Y \subset Q_2, Z \subset Q_3$ are pairwise disjoint the element $ 1$ cannot be contained in both of the sets $ X$ and $ Y$. If $ 1$ is outside $ X$, then $ T_1(X \times Y \times Z) = 0$ and similarly if $ 1$ is outside $ Y$, then $ T_2(X \times Y \times Z) = 0$, so there exists some $ (a_1,a_2) \in \mathbb{F}^2 \setminus \{0\}$ such that $ (a_1T_1 + a_2T_2)(X \times Y \times Z) = 0$. \end{example}

This kind of counterexample does not show up if we consider matrices instead of order-$3$ tensors, because for matrices it is not possible to fit a  matrix of arbitrarily large essential rank into a single row or column, whereas it is possible to fit an order-$3$ tensor of arbitrarily large essential tensor rank into a single order-$2$ slice. However, the statement becomes true if we require in addition that the high essential rank assumption on the linear combinations $a.T$ holds even after removing sufficiently many order-$(d-1)$ slices of each of the $d$ kinds.

Let $d \ge 2$ be an integer and let $R$ be a non-empty family of partitions of $\lbrack d \rbrack$. For every positive integer $s \ge 1$, we define the functions $H_{d,R,s}: \mathbb{N} \rightarrow \mathbb{N}$ and $G_{d,R,s}': \mathbb{N} \rightarrow \mathbb{N}$ by \begin{align*} 
H_{d,R,1}(l) &= 0 \text{ and for each }s \ge 2 \text{, } H_{d,R,s}(l) = F_{d,R}(sl) + H_{d,R,s-1}(l)\\
G_{d,R,1}'(l) &= G_{d,R}'(l) \text{ and for each }s \ge 2 \text{, } G_{d,R,s}'(l) = G_{d,R}'(G_{d,R}(sl)).
\end{align*} For each $d \ge 2$, we can deduce multidimensional order-$d$ disjoint $R$-rank from one-dimensional order-$d$ disjoint $R$-rank and one-dimensional order-$d$ $R$-rank subtensors.

\begin{proposition} \label{Multidimensional disjoint $Rrk$} Let $d \ge 2, s \ge 1$ be positive integers, let $R$ be a non-empty family of partitions of $\lbrack d \rbrack$, let $\mathbb{F}$ be a field and let $\Lambda$ be a subset of $\mathbb{F}^s \setminus \{0\}$. Let $T_1,\dots ,T_s: \prod_{\a=1}^d Q_{\a} \rightarrow \mathbb{F}$ be order-$d$ tensors and suppose that for all $Y_1 \subset Q_1$, \dots , $Y_d \subset Q_d$ all with size at most $H_{d,R,s}(l)$, \[ \eRrk (a.T)(\prod_{\a=1}^d (Q_{\a} \setminus Y_{\a})) \ge G'_{d,s,R}(l) \] for every $(a_1,\dots ,a_s) \in \Lambda$. Then there exist $X_1 \subset Q_1$, $\dots$, $X_d \subset Q_d$ pairwise disjoint such that \[ \Rrk (a.T)(\prod_{\a=1}^d X_{\a}) \ge l \]  for every $(a_1,\dots ,a_s) \in \Lambda$. \end{proposition}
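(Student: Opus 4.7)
The plan is to argue by induction on $s$. The base case $s=1$ is a direct application of Theorem \ref{Disjoint rank minors theorem}: since $H_{d,R,1}(l) = 0$, the hypothesis reduces to $\eRrk(a.T) \ge G_{d,R}'(l)$ for any $a \in \Lambda$, and Theorem \ref{Disjoint rank minors theorem} then directly supplies pairwise disjoint $X_1,\dots,X_d$ on which $\Rrk(a.T) \ge l$; since $\Lambda \subset \F \setminus \{0\}$ and scaling a tensor by a nonzero scalar does not change its $R$-rank, the same sets work for every other element of $\Lambda$. The inductive step mimics the strategy of Proposition \ref{Multidimensional Rrk minors}, but at each stage precedes the use of Theorem \ref{Minors theorem} by an application of Theorem \ref{Disjoint rank minors theorem} in order to enforce disjointness.

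For the inductive step with $s \ge 2$ and $\Lambda$ non-empty, I would fix $a^0 \in \Lambda$ and use the hypothesis with $Y_{\alpha}=\emptyset$ to obtain $\eRrk(a^0.T) \ge G_{d,R,s}'(l) = G_{d,R}'(G_{d,R}(sl))$. Theorem \ref{Disjoint rank minors theorem} then produces pairwise disjoint $X_\alpha' \subset Q_\alpha$ with $\Rrk(a^0.T)(\prod_\alpha X_\alpha') \ge G_{d,R}(sl)$, and applying Theorem \ref{Minors theorem} inside $\prod_\alpha X_\alpha'$ shrinks these to $X_\alpha'' \subset X_\alpha'$ of size at most $F_{d,R}(sl)$ on which $\Rrk(a^0.T) \ge sl$, while preserving pairwise disjointness. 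A subadditivity argument exactly as in the proof of Proposition \ref{Multidimensional Rrk minors} then shows that the set $\{a \in \F^s : \Rrk(a.T)(\prod_\alpha X_\alpha'') \le l-1\}$ does not span $\F^s$ --- otherwise $a^0$ would be a linear combination of at most $s$ such vectors, forcing $\Rrk(a^0.T)(\prod_\alpha X_\alpha'') \le s(l-1) < sl$ --- and therefore it lies in a strict linear subspace $W \subsetneq \F^s$. Consequently $\Rrk(a.T)(\prod_\alpha X_\alpha'') \ge l$ holds automatically for every $a \in \Lambda \setminus W$.

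For the remaining directions $a \in \Lambda \cap W$, I would apply the inductive hypothesis to the restrictions of $T_1,\dots,T_s$ to the smaller domain $\prod_\alpha (Q_\alpha \setminus \bigcup_\gamma X_\gamma'')$, after reparameterizing $\Lambda \cap W$ inside $\F^{s'}$ for some $s' \le s-1$. Verification of the smaller hypothesis reduces to observing that for every $Z_\alpha \subset Q_\alpha \setminus \bigcup_\gamma X_\gamma''$ with $|Z_\alpha| \le H_{d,R,s-1}(l)$, applying the original hypothesis with $Y_\alpha = Z_\alpha \cup (\bigcup_\gamma X_\gamma'' \cap Q_\alpha)$ furnishes the required essential rank lower bound on the further restriction, with the monotonicity inequality $G_{d,R,s}'(l) \ge G_{d,R,s-1}'(l)$ ensuring the bound is strong enough. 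The output sets $X_\alpha''' \subset Q_\alpha \setminus \bigcup_\gamma X_\gamma''$ are pairwise disjoint among themselves and automatically disjoint from every $X_\delta''$, so $X_\alpha := X_\alpha'' \cup X_\alpha'''$ is pairwise disjoint and satisfies the desired rank bound for every $a \in \Lambda$. The main obstacle lies in this bookkeeping: verifying that the budget $H_{d,R,s}(l)$ accommodates both $H_{d,R,s-1}(l)$ and the intersections $\bigcup_\gamma X_\gamma'' \cap Q_\alpha$, each of size up to $F_{d,R}(sl)$ per coordinate (potentially amplified by a factor of $d$ when the $Q_\alpha$ overlap as subsets of $\mathbb{N}$). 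Example \ref{No multidimensional disjoint tensor rank for 3D} clarifies why the essential rank in the hypothesis must be required to survive a positive budget of removals: without such a requirement, arbitrarily large essential rank can be concentrated in a single slice of one coordinate, and no disjoint minor can extract it.
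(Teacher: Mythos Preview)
Your approach matches the paper's exactly: induct on $s$, apply Theorem \ref{Disjoint rank minors theorem} followed by Theorem \ref{Minors theorem} to a fixed $a^0 \in \Lambda$ to obtain small pairwise disjoint sets that handle $\Lambda \setminus W$ via subadditivity, then recurse on $\Lambda \cap W$ over the complement of the union of those sets. The bookkeeping concern you raise about the factor of $d$ is well-taken and in fact pinpoints a minor slip in the paper's stated recursion for $H_{d,R,s}$ (it should read $H_{d,R,s}(l) = d\cdot F_{d,R}(sl) + H_{d,R,s-1}(l)$, since one removes the full union $\bigcup_\gamma X_\gamma''$ from each $Q_\alpha$), but this does not affect the argument or its conclusion.
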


\begin{proof} We proceed by induction on $s$. If $s = 1$ then the result holds by the one-tensor case. Let $s \ge 2$ and assume that the result holds for $s-1$. If $\Lambda$ is empty then the result holds. We assume that this is not the case and choose an arbitrary $a \in \Lambda$. By the one-tensor case there exist $X_1^0,\dots ,X_d^0$ pairwise disjoint such that \[ \Rrk (a^0.T)(\prod_{\a=1}^d X_{\a}^0) \ge G_{d,R}(sl).\] By the one-tensor case of order-$d$ $R$-rank subtensors we can find subsets $X_{\a}'$ of $X_{\a}^0$ for each $\a=1,\dots ,d$ with size at most $F_{d,R}(sl)$ such that \[ \Rrk (a^0.T)(\prod_{\a=1}^d X_{\a}') \ge sl. \] By subadditivity of the $R$-rank, there exists a strict linear subspace $W$ of $\mathbb{F}^s$ such that \[ \Rrk (a.T)(\prod_{\a=1}^d X_{\a}') \ge l \] for every $a \in \Lambda \setminus W$. Taking $Y_{\a} = \bigcup_{1 \le \a \le d} X_{\a}'$ for each $\a=1,\dots ,d$ and applying the current proposition for $s-1$ there exist subsets $X_{\a}'' \subset Q_{\a} \setminus X_{\a}'$ with the $X_{\a}''$ all pairwise disjoint and such that \[ \Rrk (a.T)(\prod_{\a=1}^d X_{\a}'') \ge l \] for every $a \in \Lambda \cap W$. Letting $X_{\a} = X_{\a}' \cup X_{\a}''$ for each $\a = 1,\dots ,d$ the sets $X_{\a}$ are pairwise disjoint, and \[ \Rrk (a.T)(\prod_{\a=1}^d X_{\a}) \ge l \] for every $a \in \Lambda$ as desired.\end{proof}

\begin{remark}\label{We can assume $H_{d,R,s} = 0$ in multidimensional disjoint rank} We note that Example \ref{No multidimensional disjoint tensor rank for 3D} occurs neither for the slice rank nor for the partition rank: this is because for these notions of rank for order-$d$ tensors, every tensor supported inside an order-$(d-1)$ slice has rank at most 1. For these notions of rank we can replace $(G_{d,R,s}', H_{d,R,s})$ by $(G_{d,R,s}' + dH_{d,R,s}, 0)$ in the statement of Proposition \ref{Multidimensional disjoint $Rrk$}. Throughout the remainder of the present paper, all applications of Proposition \ref{Multidimensional disjoint $Rrk$} will involve only the partition rank and we will assume this replacement whenever we use Proposition \ref{Multidimensional disjoint $Rrk$}.\end{remark}

\section{Extending the proof for order-3 slice rank subtensors to order-4 partition rank subtensors} \label{section: Additional difficulties in order 4 and higher}

In this section we adapt the proof of Proposition \ref{Order-3 slice rank subtensors} to a proof for order-$4$ partition rank subtensors. Compared with the proof of Proposition \ref{Order-3 slice rank subtensors} the main novelty in the proofs will be how to navigate between the different notions of rank, but many of the central ideas will be the same as for the proof of Proposition \ref{Order-3 slice rank subtensors}. The following notions of rank will be relevant to the argument. We will write
\begin{align*}
\pr T & \text{ for the order-4 partition rank of } T \\
\pr_{(2,2)} T & \text{ for} \Rrk T \text{ with } R = \{\{\{1,2\},\{3,4\}\},\{\{1,3\},\{2,4\}\},\{\{1,4\},\{2,3\}\}\} \\
(1 \times \sr) T & \text{ for} \Rrk T \text{ with } R = \{\{\{1\},\{2\},\{3,4\}\},\{\{1\},\{3\},\{2,4\}\}, \{\{1\},\{4\},\{2,3\}\}. 
\end{align*}We will refer to the second notion as the \emph{$(2,2)$-partition rank} and to the third notion as the \emph{$1$-enhanced slice rank}.

The overall structure of the proof will be as follows: starting from any of the first two previously listed notions of $R$-rank, either we can identify a simple lower-dimensional structure that guarantees high $R$-rank, in which case we can conclude by Proposition \ref{Multidimensional Rrk subtensors} applied to slices of strictly smaller order, or we cannot, in which case the task at hand reduces to proving subtensors for the notion of rank listed immediately thereafter. If we reduce twice and end up considering the $1$-enhanced slice rank then we will able to conclude in rather short order due to the particular form of $R$ in this case and the fact that we already know how to prove Theorem \ref{Subtensors theorem} for the slice rank of order-$3$ tensors, but in the light of the proof of the case of general $R$ discussed in Section \ref{section: General case} it is conceptually worthwhile to point out that had we not had this simple concluding argument, then another possibility would have been to try to iterate our proof techniques one more time to reduce to the order-$4$ tensor rank, which would have led to some additional complications in the proof similar to those of Section \ref{section: Additional difficulties for the order-$4$ tripartition rank}.

We begin by deducing Theorem \ref{Subtensors theorem} for the order-$4$ partition rank from Proposition \ref{Multidimensional Rrk subtensors} for the order-$3$ slice rank and from Theorem \ref{Subtensors theorem} for the order-$4$ $(2,2)$-partition rank. The proof will distinguish two cases depending on the existence of a large set of order-$3$ slices that is sufficiently separated for the notion of order-3 slice rank. The first lemma that we will use is an analogue of Lemma \ref{Spanning lemma for order-3 slice rank subtensors}.

\begin{lemma} \label{Separation lemma for 4D pr} Let $T: Q_1 \times Q_2 \times Q_3 \times Q_4 \rightarrow \mathbb{F}$ be an order-$4$ tensor, and let $l \ge 1$ be an integer. Suppose that there exist $x_1,\dots ,x_l \in Q_1$ such that \[ \sr (\sum_{i=1}^l a_i T_{x_i} )\ge l \] for every $a \in \mathbb{F}^l \setminus \{0\}$. Then $\pr T \ge l$. \end{lemma}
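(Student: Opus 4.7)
The plan is to argue by contrapositive, following the template of the proof of Lemma~\ref{Spanning lemma for order-3 slice rank minors}. Assume $\pr T \le l-1$. Then $T$ decomposes as $T = \sum_{i=1}^{m} T^i$ with $m \le l - 1$ and each $T^i$ of partition rank at most $1$, so $T^i(x_1,\dots,x_4) = a_i(x(I_i)) b_i(x(J_i))$ for some bipartition $\{I_i, J_i\}$ of $\lbrack 4 \rbrack$ into non-empty parts.

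I would split the summands into two types according to how the bipartition interacts with coordinate $1$. Call $T^i$ of \emph{type A} if $\{I_i, J_i\} = \{\{1\}, \{2,3,4\}\}$; equivalently, $T^i = \alpha_i(x_1) B_i(x_2, x_3, x_4)$ for a scalar function $\alpha_i$ and a fixed order-$3$ tensor $B_i$. Call $T^i$ of \emph{type B} in any of the six remaining cases. A short case check shows that in every type-B case, $T^i_{x_1}$ has slice rank at most $1$ as an order-$3$ tensor in $(x_2, x_3, x_4)$, and the corresponding decomposition into two factors has exactly one of these two factors depending on $x_1$. Concretely, for a $(2,2)$-shape partition the $x_1$-dependence lives in the singleton-coordinate factor of the slice decomposition, whereas for a $(1,3)$-shape partition with singleton not equal to $\{1\}$ it lives in the order-$2$ factor. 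Let $r$ and $s$ denote the numbers of type-A and type-B summands, so $r + s \le l - 1$.

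Evaluating at $x_1 = x_h$ for each $h \in \lbrack l \rbrack$ yields
\[ T_{x_h} = \sum_{i=1}^{r} \alpha_i(x_h) B_i + \sum_{j=1}^{s} (T^j)_{x_h}. \]
Because $r \le l - 1 < l$, there exists a non-zero $a \in \mathbb{F}^l$ with $\sum_{h=1}^l a_h \alpha_i(x_h) = 0$ for each $i \in \lbrack r \rbrack$, which kills the type-A contribution to $\sum_{h=1}^l a_h T_{x_h}$. For each type-B summand, the case analysis above shows that $\sum_{h=1}^l a_h (T^j)_{x_h}$ remains of slice rank at most $1$: the factor independent of $x_1$ is unchanged, while the other factor is replaced by a linear combination of its values at the $x_h$. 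Subadditivity of slice rank then gives $\sr \sum_{h=1}^l a_h T_{x_h} \le s \le l - 1$, contradicting the hypothesis.

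The main obstacle is essentially organizational: verifying in each of the six type-B bipartitions that the slice-rank-$1$ decomposition of $T^i_{x_1}$ does confine the $x_1$-dependence to exactly one of its two factors, so that averaging over $h$ with the coefficients $a_h$ preserves slice rank at most $1$. Once this case-checking is in place, the argument parallels exactly the dimension-count step from the order-$3$ slice rank case in Lemma~\ref{Spanning lemma for order-3 slice rank minors}.
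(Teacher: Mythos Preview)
Your proof is correct and follows essentially the same approach as the paper's. The paper organizes the decomposition slightly differently---separating out the four $(1,3)$-shape terms and the $(2,2)$ part $U$ individually rather than lumping all six non-$\{\{1\},\{2,3,4\}\}$ bipartitions into a single ``type B''---but the core argument (kill the $\{\{1\},\{2,3,4\}\}$ terms with a nonzero linear relation among the $\alpha_i(x_h)$, then observe each remaining term contributes slice rank at most $1$ to the linear combination) is identical.
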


\begin{proof} Assume for a contradiction that $\pr T \le l-1$. Then there exist nonnegative integers $r_1,r_2,r_3,r_3,r'$ with $r_1+r_2+r_3+r_4+r' = \pr T$ such that we can write a decomposition \begin{align} T(x,y,z,w) & = \sum_{i=1}^{r_1} a_{1,i}(x) b_{1,i}(y,z,w) + \sum_{i=1}^{r_2} a_{2,i}(y) b_{2,i}(x,z,w) \nonumber \\
& + \sum_{i=1}^{r_3} a_{3,i}(z) b_{3,i}(x,y,w) + \sum_{i=1}^{r_4} a_{4,i}(w) b_{4,i}(x,y,z) + U(x,y,z,w) \label{partition rank equation} \end{align} for some order-$4$ tensor $U$ with $\pr_{(2,2)} U = r'$. Since $r_1 \le l-1$ there exists a function $a: Q_1 \rightarrow \F$ supported inside $\{x_1,\dots ,x_l \}$ such that $a.a_{1,i} = 0$ for each $i \in \lbrack r_1 \rbrack$. Applying $a$ to both sides of the decomposition \eqref{partition rank equation} we obtain $\sr a.T \le l-1$, a contradiction. \end{proof}

Our next statement is an analogue of Proposition \ref{Equivalence between slice rank and tensor rank}. It provides us with an equivalence between the partition rank and the (2,2)-partition rank of an order-4 tensor under the assumption that all order-3 slices of the tensor have bounded slice rank.

\begin{proposition} \label{Equivalence between order-$4$ $pr$ and order-$4$ $(2-2)-pr$} Let $k,m \ge 1$ be two positive integers, let $T: Q_1 \times Q_2 \times Q_3 \times Q_4 \rightarrow \mathbb{F}$ be an order-$4$ tensor, and suppose that $\sr T_x \le m$, $\sr T_y \le m$, $\sr T_z \le m$, $\sr T_w \le m$ for each $x \in Q_1$, $y \in Q_2$, $z \in Q_3$, $w \in Q_4$. If $\pr T \le l$, then $\pr_{(2,2)} T \le ml^2$. \end{proposition}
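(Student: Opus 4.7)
The plan is to adapt the proof of Proposition \ref{Equivalence between slice rank and tensor rank} to order $4$, with the order-$3$ partition rank (which coincides with the slice rank) replaced by the order-$4$ partition rank and the order-$3$ tensor rank (the finer rank) replaced by the order-$4$ $(2,2)$-partition rank. The key observation enabling this is that for any function $a:Q_1\to\mathbb{F}$ and any order-$3$ tensor $b:Q_2\times Q_3\times Q_4\to\mathbb{F}$, the order-$4$ tensor $(x,y,z,w)\mapsto a(x)b(y,z,w)$ has $(2,2)$-partition rank at most $\sr b$: a slice-rank decomposition of $b$ as a sum of terms of the form $a'(y)c(z,w)$, $a'(z)c(y,w)$, or $a'(w)c(y,z)$ lifts to a decomposition of $a(x)b(y,z,w)$ as a sum of terms $(a(x)a'(y))c(z,w)$, $(a(x)a'(z))c(y,w)$, or $(a(x)a'(w))c(y,z)$, each of $(2,2)$-partition rank $1$.

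First I would fix a partition-rank decomposition of $T$, collecting for each $\alpha\in\lbrack 4\rbrack$ the $r_{\alpha}$ summands of type $\{\alpha\}|\{\alpha\}^c$ into a tensor $A_{\alpha}$, and the remaining $r_5$ summands (of types $\{1,2\}|\{3,4\}$, $\{1,3\}|\{2,4\}$, $\{1,4\}|\{2,3\}$) into a tensor $V$ with $\pr_{(2,2)}V\le r_5$, where $r_1+r_2+r_3+r_4+r_5\le l$. Since $\pr_{(2,2)}T\le \sum_{\alpha}\pr_{(2,2)}A_{\alpha}+r_5$, it suffices to bound each $\pr_{(2,2)}A_{\alpha}$.

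For $A_1=\sum_{i=1}^{r_1}a_{1,i}(x)b_{1,i}(y,z,w)$, I would assume without loss of generality that $\{a_{1,i}\}_{i=1}^{r_1}$ is linearly independent and, by Gaussian elimination, obtain a set $X_1\subset Q_1$ with $|X_1|=r_1$ together with functions $a_{1,i}^*:Q_1\to\mathbb{F}$ supported on $X_1$ satisfying $a_{1,i}^*\cdot a_{1,j}=\delta_{ij}$. Applying $a_{1,i}^*$ to the full decomposition of $T$ and solving for $b_{1,i}$ expresses it as the difference between $a_{1,i}^*.T$ and four correction terms arising from the other summands. By the hypothesis $\sr T_x\le m$ and subadditivity, $\sr(a_{1,i}^*.T)\le mr_1$; the three correction terms coming from the slice-rank parts $A_2,A_3,A_4$ are sums of at most $r_2,r_3,r_4$ slice-rank-$1$ order-$3$ tensors respectively; and for the correction $a_{1,i}^*.V$ coming from $V$, each $(2,2)$-partition-rank-$1$ summand of $V$ collapses under the projection $a_{1,i}^*.$ to a slice-rank-$1$ order-$3$ tensor (for example $f(x,y)g(z,w)\mapsto (a_{1,i}^*.f)(y)\,g(z,w)$), so $\sr(a_{1,i}^*.V)\le r_5$. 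Combining these estimates gives $\sr b_{1,i}\le mr_1+r_2+r_3+r_4+r_5\le (m-1)r_1+l\le ml$.

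By the key observation in the first paragraph, $\pr_{(2,2)}(a_{1,i}(x)b_{1,i}(y,z,w))\le \sr b_{1,i}\le ml$, so $\pr_{(2,2)}A_1\le r_1\cdot ml$, and symmetric arguments applied to the other three coordinates yield $\pr_{(2,2)}A_{\alpha}\le r_{\alpha}\cdot ml$ for $\alpha=2,3,4$. Summing,
\[ \pr_{(2,2)}T \le ml(r_1+r_2+r_3+r_4)+r_5 = ml(l-r_5)+r_5 = ml^2-(ml-1)r_5 \le ml^2. \]
The main obstacle I anticipate is the bookkeeping at the duality step, and in particular verifying that the $(2,2)$-part $V$ contributes only $r_5$ rather than something larger to $\sr b_{1,i}$: this crucially uses that each summand of $V$ already bundles one of $\{y,z,w\}$ with $x$, so the projection over $x$ leaves a tensor that decomposes manifestly as a slice-rank-$1$ order-$3$ tensor in three variables. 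The rest of the argument is then essentially the arithmetic of subadditivity.
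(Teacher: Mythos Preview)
Your proposal is correct and follows essentially the same approach as the paper: dualise against the linearly independent family $\{a_{1,i}\}$ to extract each $b_{1,i}$, bound $\sr b_{1,i}$ via the slice hypothesis plus the contributions of the other four blocks, and finish by subadditivity of $\pr_{(2,2)}$. Your write-up is in fact more explicit than the paper's (terse) proof, particularly in checking that the $(2,2)$-block $V$ contributes only $r_5$ to $\sr b_{1,i}$ after projection, and in the final arithmetic $ml(l-r_5)+r_5\le ml^2$.
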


\begin{proof} Starting from the decomposition \eqref{partition rank equation} we can find a subset $X \subset Q_1$ with size at most $r_1$ and functions $a_{1,i}^*: Q_1 \rightarrow \F$ such that $a_{1,i}^*.a_i = \delta_{i,i'}$ for all $i,i' \in \lbrack r \rbrack$. For a fixed $i \in \lbrack r \rbrack$, applying $a_i^*$ to both sides of \eqref{partition rank equation} yields the bound $\sr b_{1,i} \le mr_1+r_2+r_3+r_4+r'$. We obtain similar bounds for the quantities $\sr b_{2,i}, \sr b_{3,i}, \sr b_{4,i}$ and conclude the desired inequality by subadditivity of the $(2,2)$-partition rank. \end{proof}

We are now ready to show that to obtain order-$4$ partition rank subtensors it suffices to show order-$4$ $(2,2)$-partition rank subtensors together with multidimensional order-$3$ slice rank subtensors.

\begin{proposition} \label{Up to pr} If Theorem \ref{Subtensors theorem} holds for $(d,R) = (4,\pr_{(2,2)})$ and Proposition \ref{Multidimensional Rrk subtensors} holds for $(d,R) =  (3, \sr)$ then Theorem \ref{Subtensors theorem} holds for $(d,R) = (4,\pr)$ with \[F_{4,\pr}(l) = \max(F_{3,\sr,l}(l), F_{4,\pr_{(2,2)}}((G_{3,\sr,l}(l)+4l)(5l)^2))\] \[G_{4,\pr}(l) = G_{4,\pr_{(2,2)}}((G_{3,\sr,l}(l)+4l)(5l)^2) + 4l.\] \end{proposition}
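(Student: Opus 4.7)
The plan is to follow the template of the proof of Proposition \ref{Order-3 slice rank minors}, splitting into two cases according to whether some coordinate direction admits a large family of order-3 slices of $T$ that is separated with respect to the slice rank.

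\textbf{Case 1.} Suppose that for some coordinate $\alpha \in \{1,2,3,4\}$, which by symmetry I may take to be $\alpha=1$, there exist $x_1, \dots, x_l \in Q_1$ such that $\sr\bigl(\sum_{i=1}^l a_i T_{x_i}\bigr) \ge G_{3,\sr,l}(l)$ for every $a \in \F^l \setminus \{0\}$. Applying Proposition \ref{Multidimensional Rrk minors} with $(d,R,s) = (3,\sr,l)$ to the order-$3$ tensors $T_{x_1}, \dots, T_{x_l}$ and $\Lambda = \F^l \setminus \{0\}$ yields sets $X_2, X_3, X_4$ of size at most $F_{3,\sr,l}(l)$ such that $\sr\bigl(\sum_i a_i T_{x_i}(X_2 \times X_3 \times X_4)\bigr) \ge l$ for all $a \ne 0$. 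Setting $X_1 = \{x_1, \dots, x_l\}$, Lemma \ref{Separation lemma for 4D pr} applied to the restricted tensor $T(X_1 \times X_2 \times X_3 \times X_4)$ gives $\pr T(X_1 \times X_2 \times X_3 \times X_4) \ge l$.

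\textbf{Case 2.} Otherwise, for every coordinate $\alpha \in \{1,2,3,4\}$ no such separated family of size $l$ exists. The standard maximality argument from Case 2 of Proposition \ref{Order-3 slice rank minors} then produces, for each $\alpha$, an integer $r_\alpha \le l$, elements $x_1^\alpha, \dots, x_{r_\alpha}^\alpha \in Q_\alpha$, and coefficient functions $a_i^\alpha : Q_\alpha \to \F$ such that $\sr\bigl(T_x - \sum_i a_i^\alpha(x) T_{x_i^\alpha}\bigr) \le G_{3,\sr,l}(l)$ for every $x \in Q_\alpha$. I define order-$4$ tensors $S^\alpha$ by the projection onto the slices $T_{x_i^\alpha}$ in the $\alpha$-th direction (so that each $S^\alpha$ has partition rank at most $l$) and set $U = T - S^1 - S^2 - S^3 - S^4$. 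A direct slice-by-slice check shows that every order-$3$ slice of $U$ in every one of the four directions has slice rank at most $G_{3,\sr,l}(l) + 3l$, hence in particular at most $G_{3,\sr,l}(l) + 4l$, since the contribution of $T - S^\alpha$ to a slice in direction $\alpha$ is bounded by $G_{3,\sr,l}(l)$ and each of the other three tensors $S^\beta$ (for $\beta \ne \alpha$) contributes at most $r_\beta \le l$ to the slice rank of the slice.

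From $\pr T \ge G_{4,\pr}(l)$ and $\pr(S^1 + \cdots + S^4) \le 4l$, subadditivity gives $\pr U \ge G_{4,\pr_{(2,2)}}\bigl((G_{3,\sr,l}(l)+4l)(5l)^2\bigr)$, and since $\pr_{(2,2)} U \ge \pr U$, Theorem \ref{Minors theorem} for $(4,\pr_{(2,2)})$ applied to $U$ yields sets $X_1, \dots, X_4$ of size at most $F_{4,\pr_{(2,2)}}\bigl((G_{3,\sr,l}(l)+4l)(5l)^2\bigr)$ with $\pr_{(2,2)} U(X_1 \times \cdots \times X_4) \ge (G_{3,\sr,l}(l)+4l)(5l)^2$. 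Since taking restrictions does not increase slice ranks, every order-$3$ slice of $U(X_1 \times \cdots \times X_4)$ still has slice rank at most $G_{3,\sr,l}(l) + 4l$, so the contrapositive of Proposition \ref{Equivalence between order-$4$ $pr$ and order-$4$ $(2-2)-pr$} yields $\pr U(X_1 \times \cdots \times X_4) \ge 5l$. Combined with the bound $\pr(S^1 + \cdots + S^4)(X_1 \times \cdots \times X_4) \le 4l$ from subadditivity, this gives $\pr T(X_1 \times \cdots \times X_4) \ge l$, as desired. The main obstacle is the bookkeeping: in Case 2 the four projections $S^\alpha$ must be organised so that all order-$3$ slices of $U$ are \emph{simultaneously} controlled in every one of the four coordinate directions, and the thresholds for the applications of Theorem \ref{Minors theorem} for $(4,\pr_{(2,2)})$ and of Proposition \ref{Equivalence between order-$4$ $pr$ and order-$4$ $(2-2)-pr$} must be calibrated so that the final transfer from $\pr_{(2,2)}$ back to $\pr$ leaves enough margin to absorb the $4l$ loss coming from the $S^\alpha$.
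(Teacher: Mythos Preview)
Your proof is correct and follows essentially the same approach as the paper's: the same Case 1 / Case 2 split according to the existence of an $\sr$-separated family of order-$3$ slices, the same use of Lemma \ref{Separation lemma for 4D pr} in Case 1, and in Case 2 the same decomposition $T = S^1 + S^2 + S^3 + S^4 + U$ followed by Theorem \ref{Minors theorem} for $(4,\pr_{(2,2)})$ and Proposition \ref{Equivalence between order-$4$ $pr$ and order-$4$ $(2-2)-pr$}. Your slice-rank bound $G_{3,\sr,l}(l)+3l$ on the order-$3$ slices of $U$ is in fact slightly sharper than the paper's stated $G_{3,\sr,l}(l)+4l$, but you correctly weaken it to match the constants in the statement.
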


\begin{proof}
Let $T$ be an order-$4$ tensor with $\pr T \ge G_{4,\pr_{(2,2)}}((G_{3,\sr,l}(l)+4l)(5l)^2) + 4l$. We distinguish two cases.
\medskip

\noindent \textbf{Case 1}: For one of the possible four coordinates, which without loss of generality we can assume to be the first coordinate, there exist $x_1,\dots ,x_l \in Q_1$ such that \[ \pr (\sum_{i=1}^l b_i T_{x_i}) \ge G_{3,\sr,l}(l) \] for every $b \in \mathbb{F}^l \setminus \{0\}$. Then by Proposition \ref{Multidimensional Rrk subtensors} for $(3, \sr)$ there exist $Y \subset Q_2, Z \subset Q_3,W \subset Q_4$ each with size at most $F_{3,sr,l}(l)$ such that \[ \pr (\sum_{i=1}^l b_i T_{x_i})(Y \times Z \times W) \ge l \] for every $b \in \mathbb{F}^l \setminus \{0\}$ and hence $T(X \times Y \times Z \times W) \ge l$.
\medskip

\noindent \textbf{Case 2}: If we are not in Case 1, then, proceeding as in the proof of Proposition \ref{Order-3 slice rank subtensors}, we can write \[ T = S^1 + S^2 + S^3 + S^4 + U \] with \[ S^1(x,y,z,w) = \sum_{i=1}^{r_1} a_{i,1}(x) T_{x_i}(y,z,w) \] for some $r_1 \le l$ and $x_1, \dots x_l \in Q_1$, $a_1, \dots, a_l: Q_1 \rightarrow \F$, with $S^2, S^3, S^4$ order-$4$ tensors with similar expressions (with the singled out coordinate being $y,z,w$ respectively) and with $U$ an order-$4$ tensor such that every order-$3$ slice of $U$ (of any of the four types) has slice rank at most $G_{3,\sr,l}(l)+4l$. In particular $S^1,\dots ,S^4$ have partition rank at most $l$, so by subadditivity $\pr U \ge k-4l$, so $\pr_{(2,2)} U \ge k-4l$. Since $k = G_{4,\pr_{(2,2)}}((G_{3,\sr,l}(l)+4l)(5l)^2) + 4l$, by Theorem \ref{Subtensors theorem} for $(4, \pr_{(2,2)})$ there exist $X,Y,Z,W$ with size at most $F_{4,\pr_{(2,2)}}((G_{3,\sr,l}(l)+4l)(5l)^2)$ such that \[ \pr_{(2,2)} U(X \times Y \times Z \times W) \ge (G_{3,\sr,l}(l)+4l)(5l)^2. \] Moreover every order-$3$ slice of $U$ (of any of the four types) has slice rank at most $G_{3,\sr,l}(l) + 4l$ (so in particular, this is still true for the order-3 slices of the restriction $U(X \times Y \times Z \times W)$). By Proposition \ref{Equivalence between order-$4$ $pr$ and order-$4$ $(2-2)-pr$} we have \[ \pr U(X \times Y \times Z \times W) \ge 5l \] and hence \[ \pr T(X \times Y \times Z \times W) \ge l \] by subadditivity.\end{proof}

We now begin the second phase of the arguments of this section, where we deduce Theorem \ref{Subtensors theorem} for $(d,R) = (4,\pr_{(2,2)})$ from Proposition \ref{Multidimensional Rrk subtensors} for $(d,R) = (2,\rk)$ and from Theorem \ref{Subtensors theorem} for $(d,R) = (4, 1 \times \sr)$. Before the deduction we will again prove a separation lemma and an equivalence. Their bounds will be slightly worse than those of the respective corresponding statements that we have encountered so far and we will highlight the relevant computations.

\begin{lemma} \label{Spanning lemma for $(2-2)-pr$} Let $T: Q_1 \times Q_2 \times Q_3 \times Q_4 \rightarrow \F$ be an order-$4$ tensor, and let $l \ge 1$ be an integer. If there exist $(x_1,y_1), \dots, (x_l,y_l) \in Q_1 \times Q_2$ such that \[ \rk (\sum_{i=1}^l a_i T_{(x_i,y_i)}) \ge l(l-1)+1 \] for every $(a_1, \dots, a_l) \in \F^l \setminus \{0\}$ then $\pr_{(2,2)} T \ge l$. \end{lemma}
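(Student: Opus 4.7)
The plan is to follow exactly the contradiction pattern of Lemma \ref{Separation lemma for 4D pr}, adapted to the $(2,2)$-partition rank. Assume for contradiction that $\pr_{(2,2)} T \le l-1$. Then there exist nonnegative integers $r_1, r_2, r_3$ with $r_1 + r_2 + r_3 \le l-1$ together with functions such that
\[ T(x,y,z,w) = \sum_{j=1}^{r_1} a_{1,j}(x,y) b_{1,j}(z,w) + \sum_{j=1}^{r_2} a_{2,j}(x,z) b_{2,j}(y,w) + \sum_{j=1}^{r_3} a_{3,j}(x,w) b_{3,j}(y,z). \]
The key structural observation is that among the three $(2,2)$-bipartitions of $\lbrack 4 \rbrack$, only the partition $\{\{1,2\},\{3,4\}\}$ has the property that the block $\{1,2\}$ coincides with the pair of coordinates we are fixing in the slices $T_{(x_i,y_i)}$; the other two partitions split $\{1,2\}$ across both parts, which will make them degenerate into rank-$1$ matrices once $(x,y)$ is fixed.

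For an arbitrary $\alpha \in \mathbb{F}^l$, I would substitute the decomposition above into $\sum_{i=1}^l \alpha_i T_{(x_i,y_i)}(z,w)$ and swap the orders of summation. The first group of terms becomes
\[ \sum_{j=1}^{r_1} \left( \sum_{i=1}^l \alpha_i a_{1,j}(x_i, y_i) \right) b_{1,j}(z,w), \]
so to kill this entire contribution it suffices to choose $\alpha$ orthogonal to the $r_1$ vectors $(a_{1,j}(x_i,y_i))_{i \in \lbrack l \rbrack}$ in $\mathbb{F}^l$; since $r_1 \le l-1$, such a nonzero $\alpha$ exists by standard linear algebra.

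The second group of terms reads $\sum_{i=1}^l \sum_{j=1}^{r_2} \alpha_i a_{2,j}(x_i,z) b_{2,j}(y_i,w)$; once $i$ and $j$ are fixed, the function $\alpha_i a_{2,j}(x_i,z) b_{2,j}(y_i,w)$ of $(z,w)$ is a product of a function of $z$ and a function of $w$, hence has rank at most $1$. By subadditivity this whole group has rank at most $l r_2$, and symmetrically the third group has rank at most $l r_3$. Therefore, for the $\alpha \neq 0$ chosen above,
\[ \rk \Bigl( \sum_{i=1}^l \alpha_i T_{(x_i,y_i)} \Bigr) \le l(r_2 + r_3) \le l(l-1), \]
contradicting the hypothesis that this rank is at least $l(l-1)+1$ for every nonzero $\alpha$.

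I do not anticipate any serious obstacle: the only delicate point is the arithmetic check that $l(l-1)$ is strictly smaller than $l(l-1)+1$, which is precisely why the hypothesis is stated with the bound $l(l-1)+1$. The bookkeeping of the three $(2,2)$-terms is a direct analogue of the $r_1, r_2, r_3, r_4$ bookkeeping in the proof of Lemma \ref{Separation lemma for 4D pr}, and the fact that two of the three types of terms contribute rank-$1$ matrices in $(z,w)$ when $(x,y)$ is fixed is the exact analogue of how, in the partition-rank case, three of the four types of terms in a slice contribute rank-$1$ slices.
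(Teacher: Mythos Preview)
Your proposal is correct and follows essentially the same approach as the paper's proof: both assume $\pr_{(2,2)} T \le l-1$, write out the decomposition into the three $(2,2)$-types, choose a nonzero vector in $\F^l$ annihilating the (at most $l-1$) coefficients coming from the $\{\{1,2\},\{3,4\}\}$ terms, and then bound the rank of the remaining two groups by $l(r_2+r_3)\le l(l-1)$ via the observation that each summand is rank~$1$ in $(z,w)$.
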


\begin{proof} We prove the contrapositive. Assume $\pr_{(2,2)} T \le l-1$. Then there exist nonnegative integers $r,s,t \ge 0$ with $r+s+t=\pr_{(2,2)} T$ such that we can write a decomposition \[T(x,y,z,w) = \sum_{i=1}^r a_i(x,y) b_i(z,w) + \sum_{j=1}^s c_j(x,z) d_j(y,w) + \sum_{k=1}^t e_k(x,z) f_k(y,w).\] For each $h \in \lbrack l \rbrack$ we can write \[T_{x_h,y_h}(z,w) = \sum_{i=1}^r a_{i,x_h,y_h} b_i(z,w) + \sum_{j=1}^s c_{j,x_h}(z) d_{j,y_h}(w) + \sum_{k=1}^t e_{k,x_h}(z) f_{k,y_h}(w).\] Because $l \ge r+1$ there exists a function $a: Q_1 \rightarrow \F$ supported inside $\{(x_1, y_1) \dots, (x_l,y_l) \}$ such that $a \neq 0$ but $\sum_{h=1}^l a(x_h,y_h) a_i(x_h,y_h) = 0$ for each $i \in \lbrack r \rbrack$. Hence, \[\sum_{h=1}^l a(x_h,y_h) T_{x_h,y_h}(z,w) = \sum_{j=1}^s (\sum_{h=1}^l c_{j,x_h}(z) d_{j,y_h}(w)) + \sum_{k=1}^t (\sum_{h=1}^l e_{k,x_h}(z) f_{k,y_h}(w)).\] The right-hand side has rank at most $sl+tl \le (l-1)l$, a contradiction. \end{proof}

We next prove our equivalence statement.

\begin{proposition} \label{Equivalence between 4D (2-2)-pr and 1 x sr} Let $T: Q_1 \times Q_2 \times Q_3 \times Q_4 \rightarrow \mathbb{F}$ be an order-$4$ tensor, and let $m \ge 1$ be an integer. Assume that $\rk T_{(z,w)}, \rk T_{(y,w)}, \rk T_{(y,z)} \le m$ for respectively all $(z,w) \in Q_3 \times Q_4$, all $(y,w) \in Q_2 \times Q_4$, and all $(y,z) \in Q_2 \times Q_3$. If $\pr_{(2,2)} T \le l$, then $(1 \times \sr) T \le (m+l)l^2$. \end{proposition}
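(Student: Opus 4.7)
The approach will mirror very closely the pattern already established by Proposition \ref{Equivalence between slice rank and tensor rank} and Proposition \ref{Equivalence between order-$4$ $pr$ and order-$4$ $(2-2)-pr$}: start from a minimal $(2,2)$-partition rank decomposition, use Gaussian elimination to obtain dual functionals, and then bound the appropriate factor in each term using the hypothesis on the order-$2$ slices of $T$.

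Concretely, write a decomposition
\[ T(x,y,z,w) = \sum_{i=1}^{r_1} a_i(x,y) b_i(z,w) + \sum_{j=1}^{r_2} c_j(x,z) d_j(y,w) + \sum_{k=1}^{r_3} e_k(x,w) f_k(y,z) \]
with $r_1+r_2+r_3 \le l$, and after discarding redundant terms inside each of the three families assume without loss of generality that $\{b_i\}_{i \in [r_1]}$, $\{d_j\}_{j \in [r_2]}$, $\{f_k\}_{k \in [r_3]}$ are each linearly independent. For the first family, Gaussian elimination produces a set $Z \subset Q_3 \times Q_4$ of size at most $r_1$ and functions $b_i^* : Q_3 \times Q_4 \to \F$ supported on $Z$ with $b_i^* \cdot b_{i'} = \delta_{i,i'}$. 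Applying $b_i^*$ in the $(z,w)$ coordinates to both sides expresses $a_i(x,y)$ as $\sum_{(z,w) \in Z} b_i^*(z,w) T_{(z,w)}(x,y)$ minus a correction coming from the other two families.

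Now I use the hypothesis: the first piece is a combination of at most $r_1$ matrices of the form $T_{(z,w)}$, each of matrix rank at most $m$, so it has matrix rank on $Q_1 \times Q_2$ at most $m r_1$. The correction coming from the $j$-th family contributes terms of the form $\sum_{(z,w) \in Z} b_i^*(z,w) c_j(x,z) d_j(y,w)$; since $|Z| \le r_1$, each such expression is a sum of at most $r_1$ rank-$1$ matrices in $(x,y)$, giving rank at most $r_1 r_2$ after summing over $j$. Similarly, the contribution from the $k$-th family has rank at most $r_1 r_3$. Thus $\rk a_i \le r_1(m + r_2 + r_3) \le r_1 (m+l)$. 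Writing $a_i(x,y) = \sum_h \alpha_{i,h}(x) \beta_{i,h}(y)$ gives a $(1 \times \sr)$ decomposition of $\sum_i a_i b_i$ with at most $r_1 \cdot r_1(m+l) = r_1^2(m+l)$ terms of the shape $\alpha(x)\beta(y)\gamma(z,w)$.

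Applying the symmetric procedure to the second family (dualising $d_j$ in $(y,w)$ to bound $\rk c_j \le r_2(m+l)$) and the third family (dualising $f_k$ in $(y,z)$ to bound $\rk e_k \le r_3(m+l)$) yields analogous contributions of $(1 \times \sr)$-rank at most $r_2^2(m+l)$ and $r_3^2(m+l)$, respectively. Adding these via subadditivity of the $(1 \times \sr)$-rank and using $r_1^2 + r_2^2 + r_3^2 \le (r_1+r_2+r_3)^2 \le l^2$ gives the desired bound $(m+l)l^2$. The only point that requires a little care is verifying, for each of the three families, that the slice rank bound supplied by the hypothesis is the right one: indeed, dualising out the second factor in each term leaves precisely the slices $T_{(z,w)}$, $T_{(y,w)}$, $T_{(y,z)}$ respectively, which are exactly the three types of slices assumed to have rank at most $m$; no symmetric assumption on $T_{(x,\cdot)}$ slices is needed, which is consistent with the fact that in $(1 \times \sr)$ the coordinate $x$ is singled out.
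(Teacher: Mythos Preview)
Your proof is correct and follows essentially the same approach as the paper: take a minimal $(2,2)$-decomposition, dualise the factor not containing $x$ in each of the three families, and bound the rank of the remaining factor using the slice hypothesis together with the cross-term contributions. Your care in always decomposing the factor that contains the coordinate $x$ (so that the resulting rank-$1$ pieces lie in the $1\times\sr$ family) is exactly the right point, and your closing remark that no bound on the $T_{(x,\cdot)}$ slices is needed is both correct and a useful sanity check.
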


\begin{proof} There exists a decomposition

\[ \sum_{i=1}^r a_i(x,y) b_i(z,w) + \sum_{j=1}^s c_j(x,z) d_j(y,w) + \sum_{k=1}^t e_k(x,z) f_k(y,w)\] of $T$ with $r+s+t = \pr_{(2,2)} T$. The family $\{ b_1, \dots, b_r \}$ is linearly independent. By Gaussian elimination there exist functions $b_i^*: Q_3 \times Q_4 \rightarrow \F$ all supported inside a set $U \subset Q_3 \times Q_4$ of size $r$ such that $b_i^*.b_i = \delta_{i,i'}$ for all $i,i' \in \lbrack r \rbrack$. For a given $i \in \lbrack r \rbrack$, applying $b_i^*$ to $T$ we get \[ a_i = b_i^*.T - \sum_{j=1}^s b_i^*.(c_j d_j) - \sum_{k=1}^t b_i^*.(e_k f_k). \] Because for each $(z,w) \in U$, the order-$2$ slice $T_{(z,w)}$ has rank at most $m$ and $b_i^*$ is supported inside a set of size at most $r$, by subadditivity of the rank we have $\rk b_i^*.T \le mr$. For each $j \in \lbrack s \rbrack$, \[(b_i^*.(c_j d_j))(z,w) = \sum_{h=1}^l b_i^*(z_h,w_h) c_j(x,z_h) d_j(y,w_h) \] so $\rk (b_i^*.(c_j d_j)) \le l$. Similarly for each $k \in \lbrack t \rbrack$, $\rk b_i^*.(e_k f_k) \le l$. Therefore by subadditivity $\rk a_i \le mr + ls+lt$. Similarly $\rk d_j \le lr + ms+lt$ and $\rk f_k \le lr + ls+mt$, so \[(1 \times \sr) T \le r(mr + ls+lt) + s(lr + ms+lt) + t(lr + ls+mt) \le (m+l)l^2. \qedhere\] \end{proof}

We are again ready to write the step leading to Theorem \ref{Subtensors theorem} for order-$4$ $(2,2)$-partition rank subtensors.

\begin{proposition} \label{Up to (2-2)-pr} If Theorem \ref{Subtensors theorem} holds for $(d,R) = (4,1 \times \sr)$ then Theorem \ref{Subtensors theorem} holds for $(d,R) = (4, \pr_{(2,2)})$ with \[ F_{4, \pr_{(2,2)}}(l) = F_{4,1 \times \sr}(112 l^4)\] \[G_{4, \pr_{(2,2)}}(l) = G_{4,1 \times \sr}(112 l^4) + 3l.\] \end{proposition}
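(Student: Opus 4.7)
My plan is to follow the blueprint of Propositions \ref{Order-3 slice rank minors} and \ref{Up to pr}, splitting into two cases based on whether or not there exists, for one of the three bipartitions defining $\pr_{(2,2)}$, a family of $l$ order-$2$ slices satisfying the rank condition of Lemma \ref{Spanning lemma for $(2-2)-pr$}.

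In the first case, without loss of generality for the bipartition $\{\{1,2\},\{3,4\}\}$, there exist $(x_1, y_1), \dots, (x_l, y_l) \in Q_1 \times Q_2$ with $\rk \sum_{i=1}^l a_i T_{(x_i, y_i)} \ge l(l-1)+1$ for every $a \in \F^l \setminus \{0\}$. Applying Proposition \ref{Multidimensional matrix rank} to the $l$ matrices $T_{(x_i, y_i)}$ with $k = l(l-1)+1$ gives $Z \subset Q_3$ and $W \subset Q_4$ of size at most $l \cdot (l(l-1)+1) \le l^3$ preserving the rank lower bound after restriction, and Lemma \ref{Spanning lemma for $(2-2)-pr$} then yields $\pr_{(2,2)} T(X \times Y \times Z \times W) \ge l$ for $X = \{x_i\}, Y = \{y_i\}$ of size at most $l$, all four sets having size at most $l^3 \le F_{4, 1 \times \sr}(112 l^4)$.

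In the second case, by maximality of a separated family for each of the three bipartitions I extract, exactly as in Proposition \ref{Order-3 slice rank minors}, tensors $S^1, S^2, S^3$ each of $\pr_{(2,2)}$-rank at most $l-1$, where $S^i$ is built to control one of the three matrix slices types so that if $U = T - S^1 - S^2 - S^3$ then $(T - S^i)$ has the corresponding slice rank at most $l(l-1)$ for every choice of the fixed pair of coordinates. A direct computation shows that each $S^j$ with $j \ne i$, being an explicit sum of at most $l-1$ rank-$1$ tensors in a different bipartition, contributes slice rank at most $l - 1$ to the slice type controlled by $S^i$; hence $\rk U_{(z,w)}, \rk U_{(y,w)}, \rk U_{(y,z)} \le l(l-1) + 2(l-1) \le l^2 + l$. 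Subadditivity of $\pr_{(2,2)}$ together with $\pr_{(2,2)}(S^1 + S^2 + S^3) \le 3(l-1)$ and the hypothesis gives $\pr_{(2,2)} U \ge G_{4, 1 \times \sr}(112 l^4)$, and since every $(1 \times \sr)$-rank-$1$ tensor has $\pr_{(2,2)}$-rank $1$ we also have $(1 \times \sr) U \ge G_{4, 1 \times \sr}(112 l^4)$. Applying the assumed Theorem \ref{Minors theorem} for $(4, 1 \times \sr)$ produces $X, Y, Z, W$ of size at most $F_{4, 1 \times \sr}(112 l^4)$ with $(1 \times \sr) U(X \times Y \times Z \times W) \ge 112 l^4$; the three relevant slice ranks remain at most $l^2 + l$ after restriction, so the contrapositive of Proposition \ref{Equivalence between 4D (2-2)-pr and 1 x sr} gives $\pr_{(2,2)} U(X \times Y \times Z \times W) \ge 4l$ provided $(l^2 + l + 4l - 1)(4l - 1)^2 < 112 l^4$, which holds since the left-hand side is at most $(l^2 + 5l)(4l)^2 = 16 l^4 + 80 l^3 \le 96 l^4$. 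One final application of subadditivity together with $\pr_{(2,2)}(S^1 + S^2 + S^3) \le 3l$ then gives $\pr_{(2,2)} T(X \times Y \times Z \times W) \ge l$.

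The main obstacle is Case 2: I must simultaneously bound all three types of order-$2$ slice rank of $U$ required by the equivalence of Proposition \ref{Equivalence between 4D (2-2)-pr and 1 x sr}, which forces a careful choice of which side of each bipartition the construction of $S^i$ operates on, so that each $S^j$ with $j \ne i$ contributes only $l - 1$ to the slice type controlled by $S^i$. The constant $112$ is calibrated so that the losses from subadditivity ($3l$) and from the residual matrix slice rank ($\le l^2 + l$) absorbed in the equivalence step can be afforded.
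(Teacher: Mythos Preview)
Your approach is essentially the same as the paper's and is correct in substance, but there is a labeling inconsistency between your Case~1 and Case~2 that you should fix. In Case~1 you fix the pair $(x,y)\in Q_1\times Q_2$, i.e.\ the side $\{1,2\}$ of the bipartition $\{\{1,2\},\{3,4\}\}$; but in Case~2 you need ``not in Case~1'' to give the approximation property for the slices $T_{(z,w)}$, $T_{(y,w)}$, $T_{(y,z)}$ (the sides \emph{not} containing coordinate~1), since those are the slices appearing in the hypothesis of Proposition~\ref{Equivalence between 4D (2-2)-pr and 1 x sr}. As written, failure of Case~1 for the $(x,y)$ type tells you nothing about the $(z,w)$ type. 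The fix is simply to state Case~1 for the three types $(z,w)$, $(y,w)$, $(y,z)$ from the start (exactly as the paper does), or equivalently to check all six types in Case~1. Your final paragraph shows you are aware this choice of side matters; you just need the two cases to agree on it.

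With that corrected, your argument matches the paper's: same two-case split, same construction $T=S^{34}+S^{24}+S^{23}+U$, same use of Proposition~\ref{Multidimensional matrix rank} in Case~1 and of the $(4,1\times\sr)$ minors plus Proposition~\ref{Equivalence between 4D (2-2)-pr and 1 x sr} in Case~2. Your numerics are slightly tighter than the paper's (you use threshold $l(l-1)+1$ and bound $r\le l-1$, yielding slice rank $\le l^2+l-2$ versus the paper's $l^2+2l$), and your verification that $(l^2+5l)(4l)^2\le 96l^4<112l^4$ is correct.
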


\begin{proof}

Let $T$ be an order-$4$ tensor such that $\pr_{(2,2)} T \ge G_{4,1 \times \sr}(112 l^4) + 3l$. 
\medskip

\noindent \textbf{Case 1}: For at least one of the three types of slices $T_{(z,w)}, T_{(y,w)}, T_{(y,z)}$, which without loss of generality we can assume to be the direction with $z,w$ as fixed coordinates there exist $(z_1,w_1),\dots ,(z_l,w_l) \in Q_3 \times Q_4$ such that \[ \rk (\sum_{i=1}^l a_i T_{(z_i,w_i)}) \ge l^2 \] for every $a \in \mathbb{F}^l \setminus \{0\}$. Then by Proposition \ref{Multidimensional matrix rank} there exist $X \subset Q_1$, $Y \subset Q_2$ with size at most $l^2$ such that \[ \rk (\sum_{i=1}^l a_i T_{(z_i,w_i)})(X \times Y) \ge l^2 \] for every $a \in \mathbb{F}^l \setminus \{0\}$. Letting $Z = \{z_i: 1 \le i \le l\}$, $W = \{w_i: 1 \le i \le l\}$, by Lemma \ref{Spanning lemma for $(2-2)-pr$} we get $\pr_{(2,2)} T(X \times Y \times Z \times W) \ge l$.
\medskip

\noindent \textbf{Case 2}: We are not in Case 1. Then there exist $(z_i,w_i)$, $i=1,\dots ,r$ with $r \le l$ such that for all $(z,w) \in Q_3 \times Q_4$, there exist $A_1(z,w),\dots, A_{r}(z,w)$ such that \[ \rk (T_{(z,w)} - \sum_{i=1}^{r} A_i(z,w) T_{(z_i,w_i)}) \le l^2. \]

We write $S^{34}(x,y,z,w)= \sum_{i=1}^{r} A_i(z,w) T_{(z_i,w_i)}(x,y)$. Similarly we define two other tensors $S^{24}$ and $S^{23}$ (constructed using linear combinations of slices of the types $T_{(y,w)}$ and $T_{(y,z)}$ respectively), and write \[ T = S^{34} + S^{24} + S^{23} + U. \] The tensors $S^{34}, S^{24}, S^{23}$ have $(2,2)$-partition rank at most $l$, so $\pr_{(2,2)} U \ge k-3l$ and hence $(1 \times \sr) U \ge k - 3l$. Since $k \ge G_{4,1 \times \sr}(112 l^4) + 3l$, by $1 \times \sr$ subtensors there exist $X \subset Q_1, Y \subset Q_2, Z \subset Q_3, W \subset Q_4$ of size at most $F_{4,1 \times \sr}(112 l^4)$ such that \[ (1 \times \sr ) U(X \times Y \times Z \times W) \ge 112 l^4 \ge ((l^2+2l)+4l)(4l)^2. \] The order-2 slices of $U$ with pairs of fixed coordinates $(z,w)$, $(y,w)$, and $(y,z)$ all have rank at most $l^2+2l$, so this is still the case for the corresponding order-2 slices of $U(X \times Y \times Z \times W)$. By Proposition \ref{Equivalence between 4D (2-2)-pr and 1 x sr} \[ \pr_{(2,2)} U(X \times Y \times Z \times W) \ge 4l \] and hence \[ \pr_{(2,2)} T(X \times Y \times Z \times W) \ge l. \qedhere\] \end{proof}

Since the proof of Proposition \ref{Up to pr} had as its second case a reduction to the notion of $(2-2)$-partition rank, where all parts of all partitions of the corresponding family $R$ have size at most $2$, we can ask whether the proof of Proposition \ref{Up to (2-2)-pr} could not have reduced to a family where these parts have size at most $1$, in other words reduced to the tensor rank. What stands in the way of doing so is that given $l$ a fixed nonnegative integer and $S^{12}$, $S^{34}$ order-$4$ tensors of the type \[S^{12}(x,y,z,w) = \sum_{i=1}^{r} a_i^{12}(x,y) T_{x_i,x_i}(z,w)\] \[S^{34}(x,y,z,w) = \sum_{i=1}^{s} a_i^{34}(z,w) T_{z_i,w_i}(x,y)\] with $r,s \le l$ such that for each $(x,y) \in Q_1 \times Q_2$, $\rk (T - S^{12})_{(x,y)} \le l$ and for each $(z,w) \in Q_3 \times Q_4$, $\rk (T - S^{34})_{(z,w)} \le l$, it is not necessarily true that the ranks of the slices of the type $(T-S^{12}-S^{34})_{(x,y)}$ and $(T-S^{12}-S^{34})_{(z,w)}$ are bounded: to conclude this we would also want to  know that the functions $a_i^{12}$ and $a_i^{34}$ have bounded rank. This difficulty led us to to the structure of the proof in Section \ref{section: Additional difficulties for the order-$4$ tripartition rank}.

We finish by proving Theorem \ref{Subtensors theorem} for the $1$-enhanced slice rank using its product structure.

\begin{proposition} \label{Subtensors for $1$-enhanced slice rank} Theorem \ref{Subtensors theorem} holds for $(d,R) = (4, 1 \times \sr)$, with \[F_{4, 1 \times \sr}(l) = F_{3,sr}(l)\] \[G_{4, 1 \times \sr}(l) = l G_{3, sr}(l).\] \end{proposition}

\begin{proof} Let $T$ be an order-$4$ tensor with $(1 \times \sr) T \ge k$. Let $A: Q_1 \times (Q_2 \times Q_3 \times Q_4) \rightarrow \F$ be the matrix defined by $A(x,(y,z,w)) = T(x,y,z,w)$. We distinguish two cases depending on the value of $\frank_1 T = \rk A$.
\medskip

\noindent \textbf{Case 1}: $\frank_1 T \ge l$. Then by the standard subtensors result on matrices there exist subsets $X \subset Q_1$ and $U \subset Q_2 \times Q_3 \times Q_4$, both with size $l$, such that $\rk A(X_1 \times U) = l$. Then the canonical projections $Y,Z,W$ of $U$ on the second, third and fourth coordinate axes respectively have size at most $l$, and we have $\frank_1 T(X \times Y \times Z \times W) \ge l$ whence $({1} \times \sr) T(X \times Y \times Z \times W) \ge l$.
\medskip

\noindent \textbf{Case 2}: $\frank_1 T \le l$. Letting $l'=\frank_1 T$ there exist $x_1, \dots, x_{l'} \in Q_1$ and functions $a_1,\dots,a_{l'}: Q_1 \rightarrow \F$ such that \[T(x,y,z,w) = \sum_{i=1}^{l'} a_i(x) T_{x_i}. \] By this decomposition we have $(1 \times \sr) T \le \sum_{i=1}^{l'} \sr T_{x_i}$, so there exists $i \in \lbrack l' \rbrack$ such that $\sr T_{x_i} \ge G_{3,sr}(l)$. By Proposition \ref{Order-3 slice rank subtensors} there exist $Y \subset Q_2, Z \subset Q_3, W \subset Q_4$ with size at most $F_{3,sr}(l)$ such that $\sr T_{x_i}(Y \times Z \times W) \ge l$. Letting $X = \{x_i\}$ we obtain $(1 \times \sr) T(X \times Y \times Z \times W) \ge l$. \end{proof}

\section{Additional difficulties for order-4 tripartition rank subtensors} \label{section: Additional difficulties for the order-$4$ tripartition rank}

For $T$ an order-$4$ tensor, let the \emph{tripartition rank} $\trp T$ of $T$ be the value of $\Rrk T$ for $R$ the set of partitions of $\{1,2,3,4\}$ into three (non-empty) parts.

Throughout this section we prove Theorem \ref{Subtensors theorem} for $(d,R) = (4, \trp)$. The structure of the proof will be as follows. Let $T$ be an order-$4$ tensor with large tripartition rank. Rather than the two cases involved in the proofs of the previous section, the proof will distinguish between three cases (the second of our steps below is a preparation for the last two cases).

\begin{enumerate}

\item If the tensor $T$ has a large separated set of order-$2$ slices, without loss of generality of the type $T_{(x_i,y_i)}$, then we can restrict them to a product $Z \times W$ with $Z,W$ of bounded size such that the restrictions are still separated, and containing the $(x_i,y_i)$ in a box $X \times Y$ with bounded size ensures that the tripartition rank of $T(X \times Y \times Z \times W)$ is large.

\item If the tensor $T$ does not have such a large separated set, then we find a bounded number of $(x_i,y_i)$ and of functions $A_i: Q_1 \times Q_2 \rightarrow \F$ such that we can approximate each slice $T_{x,y}$ by \[ \sum_{i} A_i(x,y) T_{(x_i,y_i)}(z,w) \] and similarly for the five other kinds of order-$2$ slices. Using a sequence of scales to define the approximation we can ensure that the functions $A_i$ can to be thought of as independent coordinates.

\item If at least one of the functions $A_i$ has high rank, or similarly for their analogues for one of the five other kinds of order-$2$ slices, then taking subtensors $X \times Y$ and $Z \times W$ such that $A_i(X \times Y)$ has high rank and the slices $T_{(x_i,y_i)}(Z \times W)$ are still separated with thresholds in a sequence of scales ensures that the tripartition rank of $T(X \times Y \times Z \times W)$ is large.

\item If all the functions $A_i$ have bounded rank, and similarly for their analogues for all five other kinds of order-$2$ slices, then we can decompose $T = S + U$ with $S$ a tensor with bounded tripartition rank and $U$ a tensor such that all slices $U_{(x,y)}$ with $(x,y) \in Q_1 \times Q_2$ have bounded rank. Using an equivalence between the tripartition rank and the (order-$4$) tensor rank we are then able to conclude by applying order-$4$ tensor rank subtensors.

\end{enumerate}

We begin as usual with the relevant separation result. Because the tripartition rank is at least as big as the $(2,2)$-partition rank, the following lemma follows from Lemma \ref{Spanning lemma for $(2-2)-pr$}.

\begin{lemma} \label{Separation lemma for 4D trp} Let $T$ be an order-$4$ tensor, and suppose that there exist $(x_1,y_1),\dots,(x_l,y_l) \in Q_1 \times Q_2$ such that \[\rk (\sum_{i=1}^l a_i T_{(x_i,y_i)}) \ge l(l-1)+1\] for every $a \in \mathbb{F}^l \setminus \{0\}$. Then $\trp T \ge l$. \end{lemma}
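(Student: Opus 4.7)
The plan is to deduce the lemma directly from the analogous Lemma \ref{Spanning lemma for $(2-2)-pr$} that was already established for the $(2,2)$-partition rank. The key observation is the inequality $\pr_{(2,2)} T \le \trp T$ for every order-$4$ tensor $T$. To see this, recall that every partition of $\lbrack 4 \rbrack$ into three non-empty parts necessarily consists of two singletons together with a pair, so it has the form $\{\{i\},\{j\},\{k,l\}\}$. Any tensor of tripartition rank $1$ with respect to such a partition factorises as $a(x_i) b(x_j) c(x_k,x_l)$, which we can regroup as $(a(x_i) b(x_j)) \cdot c(x_k,x_l)$: this exhibits it as a $(2,2)$-partition rank $1$ tensor with respect to the coarser bipartition $\{\{i,j\},\{k,l\}\}$. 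Summing and applying subadditivity, any tripartition-rank decomposition of length $r$ yields a $(2,2)$-partition-rank decomposition of length at most $r$, which is the claimed inequality.

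Once this inequality is in hand, the lemma is immediate. The hypothesis on the ranks of the linear combinations $\sum_{i=1}^l a_i T_{(x_i,y_i)}$ is exactly the hypothesis of Lemma \ref{Spanning lemma for $(2-2)-pr$}, so that lemma yields $\pr_{(2,2)} T \ge l$. Combining this with $\pr_{(2,2)} T \le \trp T$ gives $\trp T \ge l$, as required.

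There is no real obstacle to this plan: both steps amount to bookkeeping, and the only conceptual content is the comparison between the two notions of rank, which is precisely what the sentence preceding the statement of the lemma flags. In particular, no new separation-of-projections argument needs to be carried out, since such an argument has already been performed in the proof of Lemma \ref{Spanning lemma for $(2-2)-pr$} at the (slightly stronger) threshold $l(l-1)+1$ that appears in the hypothesis.
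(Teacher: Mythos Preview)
Your proposal is correct and is essentially the same argument the paper gives: the paper simply remarks that the tripartition rank is at least the $(2,2)$-partition rank and invokes Lemma~\ref{Spanning lemma for $(2-2)-pr$}. Your justification of the inequality $\pr_{(2,2)} T \le \trp T$ via the regrouping $a(x_i)b(x_j)\cdot c(x_k,x_l)$ is exactly the right observation.
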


We next want to obtain another condition which guarantees that a tensor has high tripartition rank and will allow us to conclude in the second of the three cases of the proof. To prepare for the proof of this condition we first prove a lemma that states that the order-$2$ slices of a tensor with bounded tripartition rank can be well-approximated by a linear combination of a bounded number of order-$2$ tensors.

\begin{lemma} \label{Spanning for trp} Let $l \ge 1$ be a positive integer and let $T$ be an order-$4$ tensor with $\trp T \le l$. Then there exist functions $A_1, \dots, A_l$: $Q_1 \times Q_2 \rightarrow \F$, each with rank at most $1$, and functions $B_1, \dots, B_l: Q_3 \times Q_4 \rightarrow \F$ such that \[\rk (T_{(x,y)} - \sum_{i=1}^l A_i(x,y) B_i) \le l \] for every $(x,y) \in Q_1 \times Q_2$. \end{lemma}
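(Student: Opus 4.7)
The plan is to argue directly from a minimal tripartition-rank decomposition. Since $\trp T \le l$, we can write $T = \sum_{j=1}^{l} T_j$ where each $T_j$ is a rank-$1$ tripartition term, i.e., a product $f_j(x(I_j)) g_j(x(J_j)) h_j(x(K_j))$ for some partition $\{I_j, J_j, K_j\}$ of $\{1,2,3,4\}$ into three non-empty parts. Exactly one block of each such partition has size two, so there are six possibilities for the location of that pair, which I would split into two groups: the distinguished pair $\{3,4\}$ on one hand, and the remaining five possibilities $\{1,2\}, \{1,3\}, \{1,4\}, \{2,3\}, \{2,4\}$ on the other.

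The key observation is that the slice $(T_j)_{(x,y)}$, viewed as a matrix in the coordinates $(z,w)$, behaves very differently in these two cases. If the distinguished pair in the partition for $T_j$ is $\{3,4\}$, I can write $T_j(x,y,z,w) = \beta_j(x) \gamma_j(y) \alpha_j(z,w)$, so the slice is $A_j(x,y) B_j(z,w)$ with $A_j = \beta_j \otimes \gamma_j$ of rank at most one and $B_j = \alpha_j$ an arbitrary matrix in $(z,w)$. Otherwise the pair is one of $\{1,2\}, \{1,3\}, \{1,4\}, \{2,3\}, \{2,4\}$, and in each of these five cases the slice $(T_j)_{(x,y)}$ at any fixed $(x,y)$ factors, as a matrix in $(z,w)$, as a tensor product of a function of $z$ with a function of $w$ (with a scalar prefactor depending on $(x,y)$), so it has rank at most one. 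For instance, for the pair $\{1,3\}$ we have $T_j = a_j(x,z) b_j(y) c_j(w)$ and the slice equals $[b_j(y) a_j(x,\cdot)] \otimes c_j$; the remaining four cases are analogous one-line verifications.

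I would then assemble the conclusion as follows. Let $S_B \subset \lbrack l \rbrack$ index the terms of the first type and $S \subset \lbrack l \rbrack$ index the terms of the second type, and set $l_B = |S_B|$, so $|S| = l - l_B$. I use the first-type terms to fill in the slots $i = 1, \dots, l_B$ of the required functions $A_i, B_i$, and I pad with $A_i = 0$, $B_i = 0$ for $l_B < i \le l$, which is permitted since the zero function has rank zero. Then
\[ T_{(x,y)} - \sum_{i=1}^l A_i(x,y) B_i \;=\; \sum_{j \in S} (T_j)_{(x,y)}, \]
and by subadditivity of the rank the right-hand side has rank at most $|S| \le l$, giving the required bound for every $(x,y) \in Q_1 \times Q_2$. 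There is no serious obstacle in this plan: it amounts to a careful bookkeeping of the six possible shapes of a rank-$1$ tripartition term, exploiting the asymmetry induced by singling out the pair $\{3,4\}$ when we measure slice ranks over $(z,w)$.
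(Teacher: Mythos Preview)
Your proof is correct and follows essentially the same approach as the paper: both split a minimal tripartition decomposition according to whether the two-element block is $\{3,4\}$ (giving the rank-one $A_i$'s and arbitrary $B_i$'s) or one of the other five pairs (giving slices of rank at most one in $(z,w)$), then conclude by subadditivity. Your write-up is in fact slightly more explicit than the paper's, handling the padding with zeros and the case verification carefully.
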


\begin{proof} There exists a decomposition \[T(x,y,z,w) = \sum_{i=1}^r a_i(x) b_i(y) c_i(z,w) + \sum_{i=1}^{r'} F_i(x,y,z,w)\] for some nonnegative integers $r,r'$ with $r+r' = \trp T \le l$, where for each $i \in \lbrack r' \rbrack$, the function $F_i$ is of one of the five types \begin{align*} a(x) b(z) c(y,w)\text{, } a(x) b(w) c(y,z) \text{, } a(y) b(z) c(x,w) \text{, } a(y) b(w) c(x,z) \text{, } a(z) b(w) c(x,y). \end{align*} We take $A_i = a_i$ and $B_i = c_i d_i$ for each $i \in \lbrack r \rbrack$. For any given $(x,y) \in Q_1 \times Q_2$ and each $i \in \lbrack r' \rbrack$ the rank of $(F_i)_{(x,y)}$ is at most $1$, so the desired inequality follows by subadditivity. \end{proof}

We can now deduce our second condition that guarantees that an order-$4$ tensor has high tripartition rank.

\begin{proposition}\label{Case 2 proposition} Let $M,m,l',l \ge 1$ be four integers, let $T: Q_1 \times Q_2 \times Q_3 \times Q_4 \rightarrow \mathbb{F}$ be an order-$4$ tensor, and let functions $A_1,\dots ,A_{l'}: Q_1 \times Q_2 \rightarrow \mathbb{F}$, functions $B_1,\dots ,B_{l'}: Q_3 \times Q_4 \rightarrow \mathbb{F}$ be such that the three following conditions hold.
\begin{enumerate}[(i)]
\item For all $a \in \mathbb{F}^{l'} \setminus \{0\}$,  $\rk (\sum_{i=1}^{l'} a_i B_i) \ge M$.
\item There exists $j \in \lbrack l' \rbrack$ such that $\rk A_j \ge l$.
\item For all $y \in Q_1 \times Q_2$,  $\rk (T_{(x,y)} - \sum_{i=1}^{l'} A_i(x,y) B_i) \le m$.
\end{enumerate}
If $M \ge (m+l)l$, then $\trp T \ge l$.
\end{proposition}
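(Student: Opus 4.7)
The plan is to argue by contradiction: suppose $\trp T \le l-1$, and derive that $M \le l(m+l-1)$, which contradicts the hypothesis $M \ge (m+l)l$.

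First I would apply Lemma~\ref{Spanning for trp} with the parameter $l-1$ to the tensor $T$: from $\trp T \le l-1$, I obtain functions $A'_1,\dots,A'_{l-1} : Q_1 \times Q_2 \to \F$ each of rank at most $1$, and functions $B'_1,\dots,B'_{l-1} : Q_3 \times Q_4 \to \F$, such that $\rk(T_{(x,y)} - \sum_{j=1}^{l-1} A'_j(x,y) B'_j) \le l-1$ for every $(x,y)$. Writing $A'_j(x,y) = \alpha_j(x) \beta_j(y)$ and combining with the approximation hypothesis (iii) by the triangle inequality, I get, for every $(x,y) \in Q_1 \times Q_2$,
\[ \rk\Bigl(\sum_{i=1}^{l'} A_i(x,y) B_i \;-\; \sum_{j=1}^{l-1} \alpha_j(x) \beta_j(y) B'_j\Bigr) \le m+l-1. \]

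Next, using (ii) I fix $j_0$ with $\rk A_{j_0} \ge l$, and I pick $x_1,\dots,x_l \in Q_1$, $y_1,\dots,y_l \in Q_2$ such that the $l \times l$ matrix $(A_{j_0}(x_k, y_{k'}))_{k,k'}$ is invertible (a standard consequence of $\rk A_{j_0} \ge l$). The key combinatorial step is a dimension count on a single row of this grid: the $l$ rows $(A_{j_0}(x_k, y_{k'}))_{k'=1}^l$ span $\F^l$, whereas the $l-1$ vectors $(\beta_j(y_{k'}))_{k'=1}^l$ span a subspace of dimension at most $l-1$. Hence there is some $k_0$ such that $(A_{j_0}(x_{k_0}, y_{k'}))_{k'}$ lies outside this subspace, and therefore a vector $c \in \F^l$ such that $\sum_{k'=1}^l c_{k'} \beta_j(y_{k'}) = 0$ for every $j = 1,\dots,l-1$ while $\sum_{k'=1}^l c_{k'} A_{j_0}(x_{k_0}, y_{k'}) \neq 0$.

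Finally I would sum the displayed approximation at the points $(x_{k_0}, y_{k'})$ with coefficients $c_{k'}$. The terms involving the $B'_j$ vanish by the choice of $c$, while the right-hand side has rank at most $l(m+l-1)$ by subadditivity of the rank (only $l$ slices contribute, each of rank at most $m+l-1$). What remains on the left-hand side is $\sum_{i=1}^{l'} \tilde a_i B_i$ where $\tilde a_i = \sum_{k'} c_{k'} A_i(x_{k_0}, y_{k'})$, and by construction $\tilde a_{j_0} \neq 0$, so $\tilde a \neq 0$. Hypothesis (i) then forces $M \le l(m+l-1) < (m+l)l$, contradicting the assumption on $M$.

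The main obstacle, which is really the point of the argument, is the localization of the support of the testing coefficients $c$ to a single row of the grid: the naive approach that takes a general $c$ on the full $l \times l$ grid would introduce a quadratic loss, yielding only $M \le l^2(m+l-1)$ rather than the sharp linear bound $M \le l(m+l-1)$. The row restriction is possible precisely because the rank-$1$ structure of the $A'_j$ makes $\sum_{k'} c_{k'} A'_j(x_{k_0}, y_{k'}) = \alpha_j(x_{k_0}) \sum_{k'} c_{k'} \beta_j(y_{k'})$ factor, so that killing it only costs the $l-1$ linear conditions $\sum_{k'} c_{k'} \beta_j(y_{k'}) = 0$ on an $l$-dimensional space of candidates.
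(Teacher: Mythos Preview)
Your proof is correct and follows essentially the same approach as the paper's: both invoke Lemma~\ref{Spanning for trp} to obtain rank-$1$ functions $A'_j$ (the paper's $C_i$), construct a testing functional supported on at most $l$ points of $Q_1 \times Q_2$ that annihilates all the $A'_j$ but not $A_{j_0}$, and then sum to reach the contradiction $M \le l(m+l-1)$. The only difference is that the paper obtains this functional abstractly (from $A_{j_0} \notin \langle C_1,\dots,C_{l-1}\rangle$, which holds since $\rk A_{j_0} \ge l$ exceeds the rank of any element of that span), whereas you build it explicitly as a row of an invertible $l\times l$ minor together with a vector $c$ orthogonal to the $\beta_j$'s---these are two realizations of the same linear-algebra fact.
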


\begin{proof}

Assume for contradiction that $\trp T < l$. By Lemma \ref{Spanning for trp} there exist functions $C_1,\dots ,C_{l-1}: Q_1 \times Q_2 \rightarrow \mathbb{F}$ and functions $D_1,\dots ,D_{l-1}: Q_3 \times Q_4 \rightarrow \mathbb{F}$ such that $\rk C_i = 1$ for each $i \in \lbrack l-1 \rbrack$ and such that \begin{equation} \rk (T_{(x,y)} - \sum_{i=1}^{l-1} C_i(x,y) D_i) \le l-1 \label{rank approximation} \end{equation} for every $(x,y) \in Q_1 \times Q_2$. Then $\rk C_i \le 1$ for each $i \in \lbrack l-1 \rbrack$, but by assumption (ii) $\rk A_j \ge l$, so by subadditivity of the rank, $A_j$ does not belong to the linear span of $C_1,\dots ,C_{l-1}$. It follows that there exists a function $u: Q_1 \times Q_2 \rightarrow \mathbb{F}$ supported in a subset $U$ of $Q_1 \times Q_2$ with size at most $l$ such that $u.C_i = 0$ for each $i \in \lbrack l-1 \rbrack$ but $u.A_j \neq 0$. We can write $u.T = \sum_{(x,y) \in U} u(x,y) T_{(x,y)}$. On the one hand, applying \eqref{rank approximation}, subadditivity, and the fact that $u.C_i = 0$ for each $i \in \lbrack l-1 \rbrack$ we obtain \begin{equation} \rk (u.T) \le (l-1) |U| \le (l-1)l \label{bound on rk u.T}. \end{equation} On the other hand, by assumption (iii) and subadditivity of the rank, we have \[ \rk (u.T - \sum_{i=1}^{l'} (u.A_i) B_i) \le m |U| \le ml. \] Since $u.A_j \neq 0$, by applying assumption (i) to $(a_1,\dots,a_{l'})= (u.A_1,\dots,u.A_{l'})$ we have $\rk \sum_{i=1}^{l'} (u.A_i) B_i \ge M$, and therefore by subadditivity \[ \rk u.T \ge M - ml. \] From this inequality and \eqref{bound on rk u.T} we obtain $(l-1)l \ge M - ml$ and therefore $M \le (m+l-1)l$, a contradiction. \end{proof}

In order to conclude in the third case of our argument we prove an equivalence between the tripartition rank and the tensor rank for order-$4$ tensors.

\begin{proposition} \label{Equivalence between trp and 4D tr subtensors} Let $T$ be an order-$4$ tensor such that $\rk T_{(x,y)} \le m$, $\rk T_{(x,z)} \le m$, $\rk T_{(x,w)} \le m$, $\rk T_{(y,z)} \le m$, $\rk T_{(y,w)} \le m$, $\rk T_{(z,w)} \le m$ for respectively all $(x,y) \in Q_1 \times Q_2$, all $(x,z) \in Q_1 \times Q_3$, all $(x,w) \in Q_1 \times Q_4$, all $(y,z) \in Q_2 \times Q_3$, all $(y,w) \in Q_2 \times Q_4$, all $(z,w) \in Q_3 \times Q_4$. If $\trp T \le l$, then $\tr T \le (m+l) l^2$. \end{proposition}

\begin{proof} We start with a decomposition \begin{align*}
T(x,y,z,w) & = \sum_{i=1}^{r_{12}} A_{12,i}(x,y) B_{12,i}(z,w) + \sum_{i=1}^{r_{13}} A_{13,i}(x,z) B_{12,i}(y,w) + \sum_{i=1}^{r_{14}} A_{14,i}(x,w) B_{12,i}(y,z) \\
& + \sum_{i=1}^{r_{23}} A_{23,i}(y,z) B_{12,i}(x,w) + \sum_{i=1}^{r_{24}} A_{24,i}(y,w) B_{12,i}(x,z) + \sum_{i=1}^{r_{34}} A_{34,i}(z,w) B_{12,i}(x,y)
\end{align*} where the functions $A_{12,i},A_{13,i},A_{14,i},A_{23,i},A_{24,i},A_{34,i}$ all have rank $1$, and $r_{12} + r_{13} + r_{14} + r_{23} + r_{24} + r_{34} = \trp T$.

The functions $A_{12,i}$, $i \in \lbrack r_{12} \rbrack$ are linearly independent, so we can find functions $A_{12,i}^*, i \in \lbrack r_{12} \rbrack$ all supported inside a subset $U \subset Q_1 \times Q_2$ such that $A_{12,i'}^*.A_{12,i} = \delta_{i,i'}$ for every $i,i' \in \lbrack r_{12} \rbrack$. Applying $A_i^*$ to the decomposition we obtain that for each $i \in \lbrack r \rbrack$, $\rk B_{12,i} \le r_{12}m+ (r_{13} + r_{14} + r_{23} + r_{24} + r_{34})l$, and similar bounds for the ranks of the functions $B_{13,i}, B_{14,i},B_{23,i},B_{24,i},B_{34,i}$. The result follows by subadditivity. \end{proof}

We are now ready to deduce Theorem \ref{Subtensors theorem} for the tripartition rank of order-$4$ tensors.

\begin{proposition} \label{Up to trp} Theorem \ref{Subtensors theorem} holds for $(d,R) = (4, \trp)$, and for each $l \ge 2$ we may take \[F_{4,\trp}(l) = G_{4,\trp}(l) = 300 l^{3l+9}.\] \end{proposition}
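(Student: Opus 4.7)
The plan is to execute the four-step strategy described at the start of the section, treated symmetrically across the six pairs $(\alpha,\beta)$ of distinct elements of $\lbrack 4 \rbrack$. Let $T$ be an order-$4$ tensor with $\trp T \ge 300 l^{3l+9}$. First I would fix a decreasing sequence $N_0 > N_1 > \cdots > N_{l} \ge 1$ of scales, with each ratio $N_{k-1}/N_k$ a fixed polynomial in $l$ dictated by the subsequent applications of Propositions \ref{Multidimensional matrix rank}, \ref{Case 2 proposition} and \ref{Equivalence between trp and 4D tr minors}; the constraint $N_{k-1} \ge (N_k + l) l$ (needed for Proposition \ref{Case 2 proposition}) combined with the $l$ levels of the hierarchy gives a top scale $N_0 = l^{O(l)}$, matching the target $300 l^{3l+9}$.

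For each pair $(\alpha, \beta)$ I would run a greedy construction: at step $k$ I try to choose $(x_{\alpha,k}, x_{\beta,k}) \in Q_\alpha \times Q_\beta$ such that $\rk(T_{(x_{\alpha,k}, x_{\beta,k})} - \sum_{i<k} c_i T_{(x_{\alpha,i}, x_{\beta,i})}) \ge N_k$ for every choice of coefficients $c_i$. If this succeeds all the way to $k = l-1$, then a direct linear-algebra computation shows that $\rk(\sum_i a_i T_{(x_{\alpha,i}, x_{\beta,i})}) \ge N_{l-1}$ for every nonzero $a$; Proposition \ref{Multidimensional matrix rank} applied along the two remaining coordinates extracts $X_\gamma, X_\delta$ of size polynomial in $l$ preserving this bound, and Lemma \ref{Separation lemma for 4D trp} closes Case 1 with $X_\alpha = \{x_{\alpha,i}\}$, $X_\beta = \{x_{\beta,i}\}$.

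Otherwise, for each pair $(\alpha, \beta)$ the procedure terminates at some length $l'_{(\alpha,\beta)} < l$ together with coefficient functions $A_{\alpha\beta,i} : Q_\alpha \times Q_\beta \to \F$ satisfying
\[T_{(x_\alpha, x_\beta)} = \sum_{i=1}^{l'_{(\alpha,\beta)}} A_{\alpha\beta,i}(x_\alpha, x_\beta) \, B_{\alpha\beta,i} + R_{(x_\alpha, x_\beta)}, \quad \rk R_{(x_\alpha,x_\beta)} \le N_{l'_{(\alpha,\beta)}},\]
where $B_{\alpha\beta,i} := T_{(x_{\alpha,i}, x_{\beta,i})}$. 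The scale hierarchy forces $\rk(\sum_i a_i B_{\alpha\beta,i}) \ge N_{l'_{(\alpha,\beta)} - 1}$ for every nonzero $a$, and the coefficients $A_{\alpha\beta,i}(x_\alpha, x_\beta)$ are uniquely determined, hence behave as genuine coordinate functions. If for some $(\alpha,\beta)$ and $j$ we have $\rk A_{\alpha\beta,j} \ge l$ (Case 3), I apply Proposition \ref{Multidimensional matrix rank} once on $Q_\alpha \times Q_\beta$ to extract $X_\alpha, X_\beta$ preserving this high rank, and once on $Q_\gamma \times Q_\delta$ with threshold $N_{l'-1}$ to extract $X_\gamma, X_\delta$ preserving the rank hierarchy of the $B_{\alpha\beta,i}$. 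The restriction $T(X_1 \times X_2 \times X_3 \times X_4)$ then satisfies all three hypotheses of Proposition \ref{Case 2 proposition} (with $m = N_{l'}$ and $M = N_{l'-1}$), yielding $\trp T(X_1 \times \cdots \times X_4) \ge l$.

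If instead every $A_{\alpha\beta,i}$ has rank $\le l-1$ (Case 4), I decompose $T = S + U$ with $S = \sum_{(\alpha,\beta)} \sum_i A_{\alpha\beta,i} B_{\alpha\beta,i}$. Expanding each $A_{\alpha\beta,i}$ as a sum of at most $l-1$ rank-$1$ matrices exhibits each summand of $S$ as a sum of at most $l-1$ tripartition-rank-$1$ tensors, so $\trp S \le 6(l-1)^2$. Hence $\tr U \ge \trp U \ge \trp T - \trp S$ is still of order $l^{O(l)}$, while every order-$2$ slice of $U$ of each of the six kinds has rank at most $N_{l-1}$ (polynomial in $l$) by construction and this property survives passage to minors. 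Proposition \ref{Minors for order-$d$ tensor rank} applied to $U$ yields $X_1, \dots, X_4$ of size polynomial in $l$ with $\tr U(X_1 \times \cdots \times X_4)$ large enough that Proposition \ref{Equivalence between trp and 4D tr minors}, applied to the restriction, forces $\trp U(X_1 \times \cdots \times X_4) \ge l + 6(l-1)^2$; subtracting off $\trp S$ gives $\trp T(X_1 \times \cdots \times X_4) \ge l$. The main obstacle is the quantitative coordination of the four cases through a single scale sequence $N_0, \ldots, N_l$: each transition costs a polynomial-in-$l$ factor, and ensuring that the compounded loss across $l$ levels still fits inside $l^{3l+9}$ is what drives the choice of exponent.
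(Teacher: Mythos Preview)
Your approach is essentially the paper's: the same three cases (a separated family of order-$2$ slices, a high-rank coefficient $A_j$ handled by Proposition \ref{Case 2 proposition}, and the residual case via $T=S+U$ together with Proposition \ref{Equivalence between trp and 4D tr minors} and tensor-rank minors), driven by the same geometric scale hierarchy.

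One correction is needed in your Case 4. You assert that every order-$2$ slice of $U$ has rank at most $N_{l-1}$, ``polynomial in $l$''. That is not what the construction gives: the $(\alpha,\beta)$-slice of $T-S^{\alpha\beta}$ has rank at most the approximation error $N_{l'_{(\alpha,\beta)}+1}$, and since $l'_{(\alpha,\beta)}$ can be as small as $0$ this is in the worst case $N_1 = l^{O(l)}$, not $N_{l-1}$. (The contributions from the other five $S^{\gamma\delta}$ are indeed polynomial, but the dominant term is this one.) This is precisely why the paper bounds the slices of $U$ by $6l^{l+3}$ and why the final exponent comes out as $3l+9$ rather than a constant. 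Once you make this fix your numerics line up with the paper's; your sharper observation $\trp S\le 6(l-1)^2$ is correct but does not change the order of magnitude once the slice-rank contribution is accounted for properly.
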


\begin{proof}

Let $l \ge 2$ and let $T$ be an order-$4$ tensor wth $\trp T \ge 300 l^{3l+6}$. We distinguish three cases, as follows.
\medskip

\noindent \textbf{Case 1}: For at least one of the six types of order-$2$ slices $T_{(x,y)}$, $T_{(x,z)}$, $T_{(x,w)}$, $T_{(y,z)}$, $T_{(y,w)}$, $T_{(z,w)}$, which without loss of generality we can assume to be of the first type, there exist $(x_1,y_1), \dots, (x_l,y_l) \in Q_1 \times Q_2$ such that \begin{equation} \rk (\sum_{i=1}^l a_i T_{(x_i,y_i)}) \ge l^2 \label{rk separated set equation} \end{equation} for every $a \in \mathbb{F}^l \setminus \{0\}$. By Proposition \ref{Multidimensional matrix rank} we can find $Z \subset Q_3$, $W \subset Q_4$ with size at most $l^3$ such that \[ \rk (\sum_{i=1}^l a_i T_{(x_i,y_i)})(Z \times W) \ge l^2 \] for every $a \in \mathbb{F}^l \setminus \{0\}$. Letting $X = \{x_i: 1 \le i \le l\}$ and $Y = \{y_i: 1 \le i \le l\}$, the sets $X,Y$ have size at most $l$ and by Lemma \ref{Separation lemma for 4D trp} we get \[\trp T(X \times Y \times Z \times W) \ge l. \] 
\bigskip

Let us now assume that we are not in Case 1. We here prepare Cases 2 and 3. For each of the six types of order-$2$ slices we can define a process as follows: we will here describe it for the first and second coordinates taken to be the fixed coordinates. If there exists $(x,y)$ such that \[\rk T_{(x,y)} > l^2 (l^{l}-1)/(l-1)\] then we let $(x_1,y_1) = (x,y)$, otherwise we stop the process. Thereafter, if there exists $(x,y)$ such that for all $a_1 \neq 0$, \[ \rk (T_{(x,y)} - a_1 T_{(x_1,y_1)}) > l^2 (l^{l-1}-1)/(l-1)\] then we let $(x_2,y_2) = (x,y)$, and otherwise we stop. More generally, at the $i$th step of the process, if there exists $(x,y)$ such that for all $(a_1,\dots ,a_{i-1}) \neq 0$, \[\rk (T_{(x,y)} - (a_1 T_{(x_1,y_1)} + \dots + a_{i-1} T_{x_{i-1},y_{i-1}})) > l^2 (l^{l-i+1}-1)/(l-1)\] then we let $(x_i,y_i) = (x,y)$, and otherwise we stop.

The process must necessarily stop after at most $l-1$ iterations: if it did not, then we would have $(x_i,y_i)$, $i=1,\dots ,l$ satisfying \eqref{rk separated set equation} for all $(a_1,\dots ,a_l) \in \mathbb{F}^l \setminus \{0\}$; in other words we would be in Case 1, which we have assumed that we are not. Let $l' \le l-1$ be the number of iterations after which the process stops. The family $\{T_{(x_i,y_i)}: 1 \le i \le l' \}$ satisfies the following two properties: the inequality \begin{equation} \rk (\sum_{i=1}^l a_i T_{(x_i,y_i)}) \ge l^2 (l^{l-l'+1}-1)/(l-1) \label{trp property 1} \end{equation} holds for every $a \in \F^{l'} \setminus \{0\}$, and for each $(x,y) \in Q_1 \times Q_2$ there exists a (unique) element $(A_1(x,y),\dots ,A_{l'}(x,y)) \in \mathbb{F}^{l'}$ such that \begin{equation}\rk (T_{(x,y)} - \sum_{i=1}^{l'} A_i(x,y) T_{(x_i,y_i)}) \le l^2 (l^{l-l'}-1)/(l-1). \label{trp property 2} \end{equation}

We now distinguish two further cases (Cases 2 and 3 below) depending on whether for at least one of the six types of order-$2$ slices, there exists $j \in \lbrack l' \rbrack$ such that $A_j$ has high rank.
\medskip

\noindent \textbf{Case 2}: For at least one of the six types of order-$2$ slices $T_{(x,y)}$, $T_{(x,z)}$, $T_{(x,w)}$, $T_{(y,z)}$, $T_{(y,w)}$, $T_{(z,w)}$, which without loss of generality we can assume to be the first type, there exists $j \in \{1,\dots ,l'\}$ such that $\rk A_j \ge l$. By the standard result on full-rank submatrices of matrices there exist $X,Y$ with size at most $l$ such that $\rk A_j(X \times Y) \ge l$. Moreover, by \eqref{trp property 1} and the standard result on full-rank submatrices of matrices there exist $Z,W$ with size at most $ll' \le l^2$ such that \begin{equation} \pr (\sum_{i=1}^{l'} a_i T_{(x_i,y_i)})(Z \times W) \ge l^2 (l^{l-l'+1}-1)/(l-1) \label{trp property 1 restricted} \end{equation} for every $(a_1,\dots ,a_{l'}) \in \mathbb{F}^{l'} \setminus \{0\}$. Let $T'=T(X \times Y \times Z \times W)$. Applying Proposition \ref{Case 2 proposition} to the tensor $T(X \times Y \times Z \times W)$, the functions $A_i(X \times Y)$ for each $i \in \lbrack l' \rbrack$, the functions $B_i(Z \times W)$ for each $i \in \lbrack l' \rbrack$, and the parameters $m = l^2 (l^{l-l'}-1)/(l-1)$ and $M = l(m+l) = l^2 (l^{l-l'+1}-1)/(l-1)$, we obtain that $\trp T(X \times Y \times Z \times W) \ge l$.
\medskip

\noindent \textbf{Case 3}: We are not in Case 1 or Case 2. We define a decomposition \[ T = S^{12} + S^{13} + S^{14} + S^{23} + S^{24} + S^{34} + U \] as follows. We define the tensor $S^{12}$ by \[ S^{12}(x,y,z,w) = \sum_{i=1}^{l'^{12}} A_{i,12}(x,y) T_{(x_i,y_i)}(z,w) \] where $l'^{12}$ is the value of $l'$ obtained in the process above for the slices of the type $T_{(x,y)}$, and we define the tensors $S^{13}, S^{14}, S^{23}, S^{24}, S^{34}$ similarly for the slices of the type $T_{(x,z)}$, $T_{(x,w)}$, $T_{(y,z)}$, $T_{(y,w)}$, $T_{(z,w)}$ respectively.

For each $i \in \{1,\dots, l'^{12}\}$ we have $\rk A_i \le l$, so by \eqref{trp property 2}, $\trp S^{12} \le l l^2 (l^l-1)/(l-1) \le l^{l+3}$. We similarly show that this upper bound holds for the tripartition ranks of the tensors $S^{12}, S^{13}, S^{14}, S^{23}, S^{24}, S^{34}$.

Let $(x,y) \in Q_1 \times Q_2$ be fixed. Since $l^2 (l^l-1)/(l-1) \le l^{l+2}$ we have $\rk (T-S^{12})_{x,y} \le l^{l+2}$; moreover, $\rk S^{13}_{(x,y)}, \rk S^{14}_{(x,y)}, \rk S^{23}_{(x,y)}, \rk S^{24}_{(x,y)}, \rk S^{34}_{(x,y)}$ are at most $\trp S^{13}$, $\trp S^{14}$, $\trp S^{23}$, $\trp S^{24}$, $\trp S^{34}$ respectively, so are each at most $l^{l+3}$, so by subadditivity $\rk U_{x,y} \le 6 l^{l+3}$. We similarly show that this upper bound holds for the ranks of the five other types of order-$2$ slices of $U$.

Since $\trp T \ge 300 l^{3l+9}$, by subadditivity $\trp U \ge 300 l^{3l+9} - 6l^{l+3} \ge 294 l^{3l+9}$, so $\tr U \ge k - 294 l^{3l+9}$. Applying Proposition \ref{Subtensors for order-$d$ tensor rank} to $U$ we obtain that for $k$ large enough we can find $X,Y,Z,W$ with size at most $294 l^{3l+9}$ such that \[ \tr U(X \times Y \times Z \times W) \ge (7l^{l+3})^2 (6 l^{l+3}). \] The slices $U(X \times Y \times Z \times W)_{(x,y)}$ all have rank at most $6 l^{l+3}$, so applying Proposition \ref{Equivalence between trp and 4D tr subtensors} we obtain that \[ \trp U (X \times Y \times Z \times W) \ge 7l^{l+3}, \] and hence that \[ \trp T(X \times Y \times Z \times W) \ge l. \qedhere\] \end{proof}

\section{The inductive proof in the general case}\label{section: General case}

\subsection{Definitions}

In this section we prove Theorem \ref{Subtensors theorem} in full. For $d \ge 2$ an integer and $R$ a non-empty family of partitions of $\lbrack d \rbrack$, we shall say that a set $C \subset \lbrack d \rbrack$ is a \emph{largest part} for $R$ if there exists $P \in R$ with $C \in P$ and if moreover $|C| = \max_{P \in R, I \in P} |I|$. We shall write $R_{\tr}=\{\{\{1\}, \dots, \{d\}\}\}$ for the family of partitions corresponding to the tensor rank.

Our proof strategy is to proceed by induction, with the base case of the induction being the case $R=R_{\tr}$, for which we already know by Proposition \ref{Subtensors for order-$d$ tensor rank} that Theorem \ref{Subtensors theorem} holds. Otherwise, we choose a largest part $C \subset \lbrack d \rbrack$ for $R$ that will remain fixed throughout the inductive step. The following notions of rank will be relevant to the proof: \begin{align*}
R_+ & = \{P \in R: C \in P\} \\
R_- & = \{P \in R: C \notin P\} \\
\Rcomp & = \{P \setminus \{C\}: P \in R_+\} \\
\Rnew & = \{P \cup \{I,J\}: P \in \Rcomp, \{I,J\} \text{ a bipartition of }C \} \\
R' & = R_- \cup \Rnew.
\end{align*} We emphasize that although $R_+,R_-, R', \Rnew$ are notions of rank for order-$d$ tensors $\prod_{\a=1}^d Q_{\a} \rightarrow \F$, $\Rcomp$ is a notion of rank for tensors $\prod_{\a \in C^c} Q_{\a} \rightarrow \F$. We will refer to $R'$ as the \emph{down-shadow} of $R$ (with respect to $C$). Informally, the set $R'$ is the set $R$, modified such that whenever $C$ appears in a partition $P$ of $R$, the partition $P$ is replaced by all partitions which are identical to $P$ in all ways except that the part $C$ is split into two non-empty parts. Starting from a non-empty family $R$ of partitions of $\lbrack d \rbrack$, choosing a largest part $C$ for $R$, building a down-shadow $R^1$ of $R$ with respect to $C$, then iterating by again choosing a largest part $C^1$ for $R^1$, building a down-shadow $R^2$ of $R^1$ with respect to $C^1$, and so forth we end up at the family $R_{\tr}$ corresponding to the tensor rank after at most $2^{d}$ iterations, because every iteration forbids one additional subset of $\lbrack d \rbrack$ (the selected largest part) from belonging to any later family $R^i$, and the subsets of $\lbrack d \rbrack$ are forbidden by decreasing size.

We now summarise the structure of the inductive step, which can largely be viewed as a generalisation of the structure of the proof in Section \ref{section: Additional difficulties for the order-$4$ tripartition rank}. Let $R \neq R_{\tr}$ be a non-empty family of partitions of $\lbrack d \rbrack$, and let $T$ be an order-$d$ tensor with large $R$-rank.

\begin{enumerate}

\item If the tensor $T$ has a large separated set of slices $T_{y_1}, \dots, T_{y_l}$ with $y_1, \dots, y_l \in \prod_{\a \in C^c} Q_{\a}$, then by applying Proposition \ref{Multidimensional Rrk subtensors} for $(|C|,\pr)$ we can restrict this set of slices to a product $\prod_{\a \in C} X_{\a}$ such that the restrictions of the slices to this product are still separated and the sets $X_{\a}$, $\a \in C$ have bounded size. Containing $\{y_1, \dots, y_l\}$ in a box $\prod_{\a \in C} X_{\a}$ with bounded size ensures that the $R$-rank of $T(\prod_{\a = 1}^d X_{\a})$ is large.

\item If the tensor $T$ does not have such a large separated set, then we can find a bounded number of $y_i \in \prod_{\a \in C^c} Q_{\a}$ and functions $A_i: \prod_{\a \in C^c} Q_{\a} \rightarrow \F$ such that for each $y \in \prod_{\a \in C^c} X_{\a}$ we can approximate $T_y$ by \[ \sum_{i} A_i(y) T_{y_i} \] Using a sequence of scales to define the approximation we can ensure that the functions $A_i$ can  be thought of as independent coordinates.

\item If at least one of the functions $A_i$ has high $\Rcomp$-rank, then by applying Theorem \ref{Subtensors theorem} for $(d-|C|, \Rcomp)$ and Proposition \ref{Multidimensional Rrk subtensors} for $(|C|,\pr)$ we can find $\prod_{\a \in C^c} X_{\a}$ and $\prod_{\a \in C} X_{\a}$, respectively, such that $A_i(\prod_{\a \in C^c} X_{\a})$ has high $\Rcomp$-rank and the slices $T_{y_i}(\prod_{\a \in C} X_{\a})$ are still separated with thresholds in a sequence of scales. This ensures that the $R$-rank of $T(\prod_{\a = 1}^d X_{\a})$ is large.

\item If all the functions $A_i$ have bounded $\Rcomp$-rank, then we have a decomposition $T = S+U$ where $S$ is a tensor with bounded $R$-rank and $U$ is a tensor such that every slice $U_y$ with $y \in \prod_{\a \in C^c} X_{\a}$ has bounded partition rank. We can then conclude using an equivalence statement between the $R$-rank and the $R'$-rank together with Theorem \ref{Subtensors theorem} for $R'$-rank.

\end{enumerate}

Whenever $T: \prod_{\a=1}^d Q_{\a} \rightarrow \F$ is an order-$d$ tensor with $\Rrk T = k$, there exist nonnegative integers $r,r'$ with $r+r'=k$, functions $A_i: \prod_{\a \in C^c} Q_{\a} \rightarrow \F$, $B_i: \prod_{\a \in C} Q_{\a} \rightarrow \F$ with $\Rcomprk A_i = 1$ for each $i \in \lbrack r \rbrack$, and for each $i \in \lbrack r' \rbrack$ a function $F_i: \prod_{\a=1}^d Q_{\a} \rightarrow \F$ with $\Rminusrk F_i = 1$ such that for all $(x_1,\dots,x_d) \in \prod_{\a=1}^d Q_{\a}$ we can write

\begin{equation} T(x_1,\dots,x_d) = \sum_{i=1}^r A_i(x(C)) B_i(x(C)) + \sum_{i=1}^{r'} F_i(x). \label{general form} \end{equation} Furthermore the family $\{A_1, \dots, A_r \}$ is linearly independent: if this were not the case then without loss of generality $A_r$ would be a linear combination of $A_1, \dots, A_{r-1}$ and we would be able to rewrite \[ \sum_{i=1}^r A_i(x(C)) B_i(x(C)) = \sum_{i=1}^{r-1} A_i(x(C)) B_i'(x(C)) \] for some functions $B_i': \prod_{\a \in C} Q_{\a} \rightarrow \F$ and hence obtain an $R$-rank decomposition of $T$ with length strictly smaller than $k$.

\subsection{The general case for $R$-rank subtensors}

In the case of a general non-empty family $R$ of partitions, the formulation of the separation statement that we shall use will be the following.

\begin{lemma} \label{separation lemma} Let $d \ge 2$ be a positive integer, let $R \neq R_{\tr}$ be a non-empty family of partitions of $\lbrack d \rbrack$, and let $C$ be a largest part for $R$. Let $l \ge 1$ be a positive integer. If $T$ is an order-$d$ tensor and $y_1,\dots,y_l \in \prod_{\a \in C^c} Q_{\a}$ are such that \[\pr (\sum_{i=1}^l a_i T_{y_i}) \ge l(l-1)+1 \] for every $a \in \F^l \setminus \{ 0 \}$, then $\Rrk T \ge l$. \end{lemma}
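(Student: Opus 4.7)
The plan is to prove the contrapositive: assuming $\Rrk T \le l-1$, I will produce some $a \in \F^l \setminus \{0\}$ with $\pr(\sum_{h=1}^l a_h T_{y_h}) \le l(l-1)$, contradicting the hypothesis. This follows the same blueprint as Lemma \ref{Spanning lemma for order-3 slice rank minors}, Lemma \ref{Separation lemma for 4D pr}, and Lemma \ref{Spanning lemma for $(2-2)-pr$}, suitably generalised.

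First, I would write $T$ in the canonical form \eqref{general form} (with the typo corrected so that the first sum reads $A_i(x(C^c)) B_i(x(C))$) as $T = \sum_{i=1}^r A_i(x(C^c)) B_i(x(C)) + \sum_{i=1}^{r'} F_i(x)$, where $r+r' = \Rrk T \le l-1$ and each $F_i$ has $R_-$-rank $1$. Restricting each side at $y_h$ gives $T_{y_h} = \sum_{i=1}^r A_i(y_h) B_i + \sum_{i=1}^{r'} (F_i)_{y_h}$. Since $r \le l-1 < l$, the $l$ vectors $(A_i(y_h))_{i \in \lbrack r \rbrack} \in \F^r$ are linearly dependent, so I can choose $a \in \F^l \setminus \{0\}$ with $\sum_{h=1}^l a_h A_i(y_h)=0$ for every $i \in \lbrack r \rbrack$, killing the first sum and leaving $\sum_{h=1}^l a_h T_{y_h} = \sum_{i=1}^{r'} \sum_{h=1}^l a_h (F_i)_{y_h}$.

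The key step, and the one place where the hypothesis that $C$ is a largest part for $R$ is used, is showing that $\pr (F_i)_{y_h} \le 1$ for every $i \in \lbrack r' \rbrack$ and every $h \in \lbrack l \rbrack$. Since $F_i$ has $R_-$-rank $1$, there exist a partition $P_i \in R_-$ and functions $c_I^{(i)}$ such that $F_i(x) = \prod_{I \in P_i} c_I^{(i)}(x(I))$. By definition of $R_-$ we have $C \notin P_i$, and by maximality of $|C|$ every part $I \in P_i$ satisfies $|I| \le |C|$; in particular no $I \in P_i$ can contain $C$ as a subset, because this would force $I = C$ and hence $C \in P_i$. Therefore the non-empty sets $\{I \cap C : I \in P_i\}$ form a partition of $C$ with at least two parts. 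Fixing the coordinates outside $C$ at $y_h$, the restriction $(F_i)_{y_h}$ then factors as a product of functions on these pieces of $C$, and bundling the factors into any bipartition of $C$ exhibits $(F_i)_{y_h}$ as a tensor of partition rank at most $1$.

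Combining this bound with subadditivity of the partition rank yields
\[ \pr \Bigl(\sum_{h=1}^l a_h T_{y_h}\Bigr) \le \sum_{i=1}^{r'} \sum_{h=1}^l \pr\bigl(a_h (F_i)_{y_h}\bigr) \le r' l \le (l-1)l, \]
which contradicts the assumption. The only real subtlety lies in correctly extracting the partition of $C$ from $P_i$; everything else is a routine linear-algebraic cancellation of the $A_i$ factors combined with subadditivity.
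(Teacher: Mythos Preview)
Your proof is correct and follows essentially the same approach as the paper's. The paper phrases the cancellation step in terms of a function $u$ supported on $\{y_1,\dots,y_l\}$ with $u.A_i=0$ rather than coefficients $a_h$, but this is only a notational difference; your argument for why $\pr (F_i)_{y_h}\le 1$ is the same as the paper's (indeed slightly more explicit about why the induced partition of $C$ has at least two parts).
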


\begin{proof} Assume that $\Rrk T \le l-1$. We consider an $R$-rank decomposition of $T$ as in \eqref{general form}. Because the family $\{A_1, \dots A_r \}$ is linearly independent and $l \ge r+1$, there exists a function $u: \prod_{\a \in C^c} Q_{\a} \rightarrow \F$ supported inside $\{y_1, \dots, y_l \}$ such that $u \neq 0$ but $u.A_i = 0$ for each $i \in \lbrack r \rbrack$. Applying $u$ to the decomposition \eqref{general form} we find that \[ u.T = \sum_{i=1}^{r'} u.F_i(x). \] Let $i \in \lbrack r' \rbrack$. There exists $P \in R_-$ such that we can write \[F_i(x) = \prod_{I \in P} a_{I}(x(I))\] for some functions $a_{I}: \prod_{\a \in I} Q_{\a} \rightarrow \F$. Because $C$ is a largest part for $R$ and $\Rminusrk F_i = 1$, $|C|$ is an upper bound on the sizes of all sets $I \in P$; since $C \notin P$, there exist distinct $I_1, I_2 \in P$ such that $C \cap I_1$ and $C \cap I_2$ are both non-empty, so we have $\pr (F_i)_y \le 1$ for each $y \in \prod_{\a \in C^c} Q_{\a}$. Hence $\pr u.F_i \le l$ for each $i \in \lbrack r' \rbrack$, and therefore $\pr u.T \le lr' \le l(l-1)$.\end{proof}

To prepare for the proof of Lemma \ref{Case 2, general case}, we write the following approximation of the slices of a tensor with bounded $R$-rank.

\begin{lemma} \label{Spanning result, general case} Let $d \ge 2$ be a positive integer, let $R \neq R_{\tr}$ be a non-empty family of partitions of $\lbrack d \rbrack$, and let $C$ be a largest part for $R$. Let $l \ge 1$ be a positive integer. If $T$ is an order-$d$ tensor with $\Rrk T \le l$ then there exist functions $A_1,\dots ,A_l: \prod_{\a \in C^c} Q_{\a} \rightarrow \mathbb{F}$ with $\Rcomp$-rank at most $1$ and functions $B_1,\dots ,B_l: \prod_{\a \in C} Q_{\a} \rightarrow \mathbb{F}$ such that \[ \pr (T_{y} - \sum_{i=1}^l A_i(y) B_i) \le l \] for every $y \in \prod_{\a \in C^c} Q_{\a}$.\end{lemma}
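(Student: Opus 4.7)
The plan is to read off the required functions directly from an optimal $R$-rank decomposition of $T$ as described in equation~\eqref{general form}. Since $\Rrk T \le l$, there exist nonnegative integers $r,r'$ with $r+r' \le l$, functions $A_1,\dots,A_r: \prod_{\a \in C^c} Q_{\a} \rightarrow \mathbb{F}$ each with $\Rcomp$-rank at most $1$, functions $B_1,\dots,B_r: \prod_{\a \in C} Q_{\a} \rightarrow \mathbb{F}$, and functions $F_1,\dots,F_{r'}$ each with $R_-$-rank $1$ such that
\[ T(x) = \sum_{i=1}^r A_i(x(C^c)) B_i(x(C)) + \sum_{i=1}^{r'} F_i(x). \]
I would then extend to a family of $l$ functions simply by setting $A_{r+1} = \dots = A_l = 0$ and $B_{r+1} = \dots = B_l = 0$, which are admissible since the zero function has $\Rcomp$-rank $0 \le 1$. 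Substituting $y \in \prod_{\a \in C^c} Q_{\a}$ for $x(C^c)$ and subtracting then yields
\[ T_y - \sum_{i=1}^l A_i(y) B_i = \sum_{i=1}^{r'} (F_i)_y. \]

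The main step is to show that each individual slice $(F_i)_y$ has partition rank at most $1$, after which subadditivity of the partition rank finishes the argument, since $r' \le l$. To do this, fix $i \in \lbrack r' \rbrack$ and write $F_i(x) = \prod_{I \in P} a_I(x(I))$ for some partition $P \in R_-$ and functions $a_I: \prod_{\a \in I} Q_{\a} \rightarrow \mathbb{F}$. Here I would use the twin facts that $C$ is a largest part for $R$ (so every $I \in P$ satisfies $|I| \le |C|$) and that $C \notin P$. Combined, these imply that no single $I \in P$ can contain $C$, so the collection of non-empty sets among $\{I \cap C : I \in P\}$ forms a partition of $C$ into at least two non-empty parts $C_1,\dots,C_s$ with $s \ge 2$.

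Substituting $y$ for $x(C^c)$ gives
\[ (F_i)_y(x(C)) = \prod_{I \in P} a_I\bigl(y(I \cap C^c), x(I \cap C)\bigr), \]
which expresses $(F_i)_y$ as a product of functions depending only on the coordinates in $C_1,\dots,C_s$ respectively. Grouping all factors indexed by sets meeting $C_1$ into one function on $\prod_{\a \in C_1} Q_{\a}$ and the remaining factors into a function on $\prod_{\a \in C \setminus C_1} Q_{\a}$ (both non-empty since $s \ge 2$) exhibits $(F_i)_y$ as a product of two functions over a bipartition of $C$ with both parts non-empty, so $\pr (F_i)_y \le 1$.

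The main obstacle, modest as it is, will be verifying carefully that the hypothesis that $C$ is a \emph{largest} part of $R$ (rather than merely a part of $R$) is what forces at least two of the sets $I \cap C$ to be non-empty; this is exactly the step that fails if $C$ could be strictly contained in some larger part of $P$. Once this is in place, the conclusion $\pr (T_y - \sum_{i=1}^l A_i(y) B_i) \le r' \le l$ is immediate from subadditivity of the partition rank.
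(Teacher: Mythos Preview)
Your proposal is correct and follows essentially the same approach as the paper: start from the decomposition \eqref{general form}, observe that each $(F_i)_y$ has partition rank at most $1$, and conclude by subadditivity. The paper's proof is terser only because it refers back to the proof of Lemma~\ref{separation lemma} for the fact $\pr (F_i)_y \le 1$, whereas you spell out that argument (using that $C$ is a largest part and $C \notin P$ to force $C$ to meet at least two parts of $P$) directly.
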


\begin{proof} We consider an $R$-rank decomposition of $T$ as in \eqref{general form}. As shown in the proof of Lemma \ref{separation lemma}, for each $i \in \lbrack r' \rbrack$ and each $y \in \prod_{\a \in C^c} Q_{\a}$ we have $\pr (F_i)_y \le 1$. The result follows by subadditivity of the partition rank. \end{proof}

We now deduce the following condition, which ensures that a tensor has high $R$-rank and which we will use in the second of the three main cases of our proof.

\begin{lemma}\label{Case 2, general case} Let $d \ge 2$ be a positive integer, let $R \neq R_{\tr}$ be a non-empty family of partitions of $\lbrack d \rbrack$, let $C$ be a largest part for $R$, and let $M,m,l',l \ge 1$ be four positive integers such that $M\ge(m+l)l$. Let $T: \prod_{\a=1}^d Q_{\a} \rightarrow \mathbb{F}$ be an order-$d$ tensor, and suppose that $A_1,\dots ,A_{l'}: \prod_{\a \in C^c} Q_{\a} \rightarrow \mathbb{F}$ and $B_1,\dots ,B_{l'}: \prod_{\a \in C} Q_{\a} \rightarrow \mathbb{F}$ are functions such that the three following conditions hold.
\begin{enumerate}[(i)]
\item For all $a \in \mathbb{F}^{l'} \setminus \{0\}$,  $\pr (\sum_{i=1}^{l'} a_i B_i) \ge M$.
\item There exists $j \in \lbrack l' \rbrack$ such that $\Rcomprk (A_j) \ge l$.
\item For all $y \in \prod_{\a \in C^c} Q_{\a}$,  $\pr (T_y - \sum_{i=1}^{l'} A_i(y) B_i) \le m$.
\end{enumerate}
Then $\Rrk T \ge l$.

\end{lemma}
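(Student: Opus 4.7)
The plan is to mimic the proof of Proposition \ref{Case 2 proposition}, with ordinary matrix rank replaced by the partition rank on order-$|C|$ slices $T_y$, and with ordinary rank of $A_j$ replaced by $\Rcomp$-rank. I would argue by contradiction, assuming $\Rrk T \le l-1$. By Lemma \ref{Spanning result, general case} applied to $T$, there exist functions $C_1, \dots, C_{l-1}: \prod_{\a \in C^c} Q_{\a} \rightarrow \F$ of $\Rcomp$-rank at most $1$ each, together with $D_1, \dots, D_{l-1}: \prod_{\a \in C} Q_{\a} \rightarrow \F$, such that
\[ \pr \bigl( T_y - \sum_{i=1}^{l-1} C_i(y) D_i \bigr) \le l-1 \]
for every $y \in \prod_{\a \in C^c} Q_{\a}$.

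By subadditivity of the $\Rcomp$-rank, every element of $\langle C_1, \dots, C_{l-1} \rangle$ has $\Rcomp$-rank at most $l-1$, whereas $\Rcomprk A_j \ge l$ by assumption (ii); hence $A_j \notin \langle C_1, \dots, C_{l-1} \rangle$. I would then construct a function $u: \prod_{\a \in C^c} Q_{\a} \rightarrow \F$ supported on a set $U$ with $|U| \le l$, satisfying $u.C_i = 0$ for every $i \in \lbrack l-1 \rbrack$ and $u.A_j \ne 0$. Such a $u$ is furnished by a standard duality argument on the finite-dimensional space $V = \langle C_1, \dots, C_{l-1}, A_j \rangle$, whose dimension is at most $l$: since the evaluation functionals $\mathrm{ev}_y: V \rightarrow \F$ span $V^*$, picking $\dim V$ linearly independent ones gives a basis of $V^*$, and the non-zero functional sending each $C_i$ to $0$ and $A_j$ to $1$ can be expressed as a linear combination of these, supported on a set $U$ of size exactly $\dim V \le l$.

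The conclusion would then come from estimating $\pr u.T = \pr \sum_{y \in U} u(y) T_y$ in two incompatible ways. Using the approximation from the first step and the vanishing $u.C_i = 0$, one obtains $u.T = \sum_{y \in U} u(y) \bigl( T_y - \sum_i C_i(y) D_i \bigr)$, so subadditivity of the partition rank gives $\pr u.T \le |U|(l-1) \le l(l-1)$. Using (iii) instead, one rewrites $u.T = \sum_{i=1}^{l'} (u.A_i) B_i + \sum_{y \in U} u(y) \bigl( T_y - \sum_i A_i(y) B_i \bigr)$, where the second sum has partition rank at most $m|U| \le ml$ by subadditivity, while the first sum has partition rank at least $M$ by assumption (i), since $u.A_j \ne 0$ makes the coefficient vector $(u.A_1, \dots, u.A_{l'}) \in \F^{l'}$ non-zero. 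Hence $\pr u.T \ge M - ml$. Combining the two bounds, $M \le l(m+l-1) < l(m+l)$, contradicting $M \ge l(m+l)$.

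The main subtlety lies in the duality construction of $u$ with small support $|U| \le l$, which is precisely what forces the upper bound $l(l-1)$ on $\pr u.T$ to be incompatible with the lower bound $M - ml$; absent the $|U| \le l$ control, the two estimates would be consistent and the argument would collapse.
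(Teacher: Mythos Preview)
Your proof is correct and follows essentially the same route as the paper's own argument: contradiction via Lemma~\ref{Spanning result, general case}, the observation that $A_j$ lies outside $\langle C_1,\dots,C_{l-1}\rangle$ by subadditivity of $\Rcomp$-rank, construction of a dual functional $u$ with support of size at most $l$, and the two incompatible estimates on $\pr(u.T)$. The only difference is that you spell out the duality construction of $u$ in more detail than the paper does.
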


\begin{proof}

Assume for a contradiction that $\Rrk T < l$. Then by Lemma \ref{Spanning result, general case} there exist functions $C_1,\dots ,C_{l-1}: \prod_{\a \in C^c} Q_{\a} \rightarrow \mathbb{F}$ and $D_1,\dots ,D_{l-1}: \prod_{\a \in C} Q_{\a} \rightarrow \mathbb{F}$ such that $\Rcomprk C_i = 1$ for each $i \in \lbrack l-1 \rbrack$ and such that \begin{equation} \pr (T_y - \sum_{i=1}^{l-1} C_i(y) D_i )\le l-1 \label{approximation used in proof of case 2 in general proof} \end{equation} for every $y \in \prod_{\a \in C^c} Q_{\a}$. Let $j$ be given by assumption (ii). Because $\Rcomp C_i \le 1$ for each $i \in \lbrack l-1 \rbrack$, but $\Rcomp A_j \ge l$, the subadditivity of the $\Rcomp$-rank implies that $A_j$ does not belong to the linear span of $C_1,\dots ,C_{l-1}$, so there exists a function $u:\prod_{\a \in C^c} Q_{\a} \rightarrow \mathbb{F}$ supported in a subset $U$ of $\prod_{\a \in C^c} Q_{\a}$ with size at most $l$ such that $u.C_i = 0$ for each $i \in \lbrack l-1 \rbrack$ but $u.A_j \neq 0$. We can write $u.T = \sum_{y \in U} u(y) T_y$. On the one hand, applying \eqref{approximation used in proof of case 2 in general proof}, subadditivity, and the fact that $u.C_i = 0$ for each $i \in \lbrack l-1 \rbrack$ we obtain the bound \begin{equation} \pr (u.T) \le (l-1) |U| \le (l-1)l. \label{upper bound on projected rank in case 2 lemma in general case} \end{equation} On the other hand by assumption (iii) and by subadditivity of the partition rank, we have \[ \pr (u.T - \sum_{i=1}^{l'} (u.A_i) B_i) \le m |U| \le ml. \] Since $u.A_j \neq 0$, by applying assumption (i) to $(a_1,\dots,a_{l'})=(u.A_1,\dots,u.A_{l'})$ we have $\pr \sum_{i=1}^{l'} (u.A_i) B_i \ge M$, so by subadditivity \[ \pr (u.T) \ge M - ml. \] From this inequality and \eqref{upper bound on projected rank in case 2 lemma in general case} we find that $(l-1)l \ge M - ml$, so $M \le (m+l-1)l$, a contradiction.\end{proof}

We next formulate our equivalence statement between the $R$-rank and $R'$-rank.

\begin{proposition} \label{Equivalence} Let $d \ge 2$ be a positive integer and let $R \neq R_{\tr}$ be a non-empty family of partitions of $\lbrack d \rbrack$. Let $C$ be a largest part for $R$, let $R^\prime$ be the down-shadow of $R$ with respect to $C$, and let $l,m \ge 1$ be positive integers. Let $T: \prod_{\a=1}^d Q_{\a} \rightarrow \mathbb{F}$ be an order-$d$ tensor such that $\Rrk T \le l$. Assume that \[ \pr T_y \le m\] for every $y \in \prod_{\a \in C^c} Q_{\a}$. Then $\Rdashrk T \le l(lm+l^2+1)$.

\end{proposition}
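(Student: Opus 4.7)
I would start by writing $T$ in its $R$-rank decomposition \eqref{general form} of minimal length $l$:
\[ T(x) = \sum_{i=1}^r A_i(x(C^c)) B_i(x(C)) + \sum_{i=1}^{r'} F_i(x) \]
with $r+r' \le l$, the family $\{A_1,\dots,A_r\}$ linearly independent with each $\Rcomprk A_i = 1$, and each $F_i$ having $\Rminusrk F_i = 1$. The $F_i$ are cheap: each already lies in $R_- \subset R^\prime$, contributing at most $r' \le l$ to $\Rdashrk T$. The work is therefore to bound the $R^\prime$-rank contribution of each summand $A_i(x(C^c)) B_i(x(C))$, which will follow if I can bound the \emph{partition rank} of each $B_i$ regarded as a tensor on $\prod_{\alpha \in C} Q_\alpha$: each partition-rank-$1$ summand of $B_i$ corresponds to a bipartition of $C$, and multiplying by $A_i$ (whose underlying partition lies in $\Rcomp$) yields a tensor with underlying partition in $\Rnew \subset R^\prime$.

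To bound $\pr B_i$, I would use linear independence of the $A_i$ to extract them by duality. By Gaussian elimination there exist points $y_1,\dots,y_r \in \prod_{\alpha \in C^c} Q_\alpha$ and functions $A_1^*,\dots,A_r^*$ supported on $\{y_1,\dots,y_r\}$ satisfying $A_{i'}^*.A_i = \delta_{i,i'}$. Applying $A_i^*$ to the decomposition gives
\[ B_i = A_i^*.T - \sum_{j=1}^{r'} A_i^*.F_j. \]
The first term satisfies $\pr(A_i^*.T) \le \sum_{k=1}^r \pr T_{y_k} \le rm$ by subadditivity and the hypothesis $\pr T_y \le m$. For the second term, the crucial geometric observation is that each $(F_j)_y$ has partition rank at most $1$. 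Indeed, $F_j$ comes from some $P_j \in R_-$, and since $C$ is a \emph{largest} part for $R$ we have $|I| \le |C|$ for every $I \in P_j$; combined with $C \notin P_j$ this forces at least two parts of $P_j$ to intersect $C$, so that the partition $\{I \cap C : I \in P_j,\ I \cap C \neq \emptyset\}$ of $C$ has at least two blocks, and the product expression for $(F_j)_y$ has partition rank at most $1$. Hence $\pr(A_i^*.F_j) \le r$ by subadditivity, and altogether $\pr B_i \le rm + r r'$.

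Summing, I would obtain
\[ \Rdashrk T \le \sum_{i=1}^r \pr B_i + r' \le r(rm + rr') + r' \le l(lm + l^2) + l = l(lm + l^2 + 1), \]
which is exactly the desired bound. The only delicate point in the argument is the partition-rank-$1$ bound on $(F_j)_y$, which hinges entirely on $C$ being chosen of maximal size among parts appearing in $R$; without this maximality, a part $I \in P_j$ could strictly contain $C$, leaving only one block intersecting $C$ and producing a slice of arbitrary partition rank. Everything else is a routine bookkeeping exercise combining linear-algebraic duality with subadditivity of the partition rank.
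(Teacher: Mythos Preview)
Your proof is correct and follows essentially the same approach as the paper: take a minimal $R$-rank decomposition, use linear independence of the $A_i$ to find dual functionals supported on a set of size $r$, apply them to isolate each $B_i$, bound $\pr B_i$ via subadditivity using the hypothesis on the slices $T_y$ and the key observation that $(F_j)_y$ has partition rank at most $1$ (since $C$ being a largest part forces at least two parts of $P_j$ to meet $C$), and finally sum up. The paper's intermediate bound is $\pr B_i \le lm + l^2$ rather than your $rm + rr'$, but this is only a matter of when one replaces $r,r'$ by $l$, and the final bound $l(lm+l^2+1)$ is reached identically.
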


\begin{proof} We consider an $R$-rank decomposition of $T$ with minimal length as in \eqref{general form}. Because $\{A_1,\dots ,A_r\}$ is linearly independent we can find a subset $U$ of $\prod_{\a \in C^c} Q_{\a}$ with size $r \le l$ and functions $u_i: \prod_{\a \in C^c} Q_{\a} \rightarrow \mathbb{F}$ for each $i \in \lbrack r \rbrack$ such that $u_i.A_{i'} = 1_{i=i'}$ for all $i,i' \in \lbrack r \rbrack$. For each $i \in \lbrack r \rbrack$, applying $u_i$ to \eqref{general form} we obtain \begin{equation} B_i = u_i.T - \sum_{i'=1}^{r'} u_i.F_{i'} = \sum_{y \in U} u_i(y) T_{y} - \sum_{i'=1}^{r'} \sum_{y \in U} u_i(y) (F_{i'})_y. \label{expression of $B_i$ in general equivalence proof} \end{equation} By the assumption of the lemma we have that $\pr T_{y} \le l$ for each $y \in U$, so \[ \pr \sum_{y \in U} u_i(y) T_{y} \le |U|m \le lm. \] For each $i' \in \lbrack r' \rbrack$, because $\Rminusrk F_{i'} = 1$ we have $\pr (F_{i'})_y \le 1$ (as shown in the proof of Lemma \ref{separation lemma}), and hence \[ \pr \sum_{i'=1}^{r'} \sum_{y \in U} u_i(y) (F_{i'})_y \le r' |C| \le l^2. \] So by \eqref{expression of $B_i$ in general equivalence proof},  $\pr B_i \le lm + l^2$. For each $i \in \lbrack r \rbrack$ we have $\Rrk A_i B_i \le 1$ and furthermore $\pr B_i \le lm+l^2$, so $\Rdashrk A_i B_i \le lm+l^2$. For each $i' \in \lbrack r \rbrack$ we have $\Rminusrk F_{i'} \le 1$ so $\Rdashrk F_{i'} \le 1$. Using \eqref{general form} and subadditivity we obtain $\Rdashrk T \le r(lm + l^2) + r' \le l(lm+l^2) + l \le l(lm+l^2+1)$ as desired. \end{proof}

The next lemma encapsulates an approximation process that we shall use in the proof of Proposition \ref{subtensors inductive step, general case}.

\begin{lemma}\label{process with radii} Let $d' \ge 2, l \ge 1$ be positive integers, and let $(T_y)_{y \in Y}$ be a family of order-$d'$ tensors indexed by some finite set $Y$. Let $D: \lbrack l \rbrack \rightarrow \mathbb{R}_+$ be a decreasing function. Assume that there do not exist $y_1, \dots, y_l \in Y$ satisfying \[ \pr (\sum_{i=1}^{l} a_i T_{y_i}) \ge D(l) \] for all $a \in \F^l \setminus \{ 0 \}$. Then there exist $l' \in \{0, \dots, l-1\}$ and a subset $Y'=\{y_1,\dots ,y_{l'}\}$ of $Y$ such that the two following conclusions hold.
\begin{enumerate}[(i)]
\item For all $(a_1,\dots ,a_{l'}) \in \mathbb{F}^{l'} \setminus \{0\}$ we have $\pr (\sum_{i=1}^{l'} a_i T_{y_i}) \ge D(l')$.
\item For all $y \in Y$, there exist $a_1(y),\dots a_{l'}(y) \in \F$ such that $\pr (T_y - \sum_{i=1}^{l'} a_i(y) T_{y_i}) \le D(l'+1)$.
\end{enumerate}
\end{lemma}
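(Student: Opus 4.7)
The plan is to build the set $Y'=\{y_1,\dots,y_{l'}\}$ greedily, one element at a time, and let $l'$ be the index at which the greedy construction halts. Concretely, I would define the following process. Starting from $i=0$ with $Y'=\emptyset$, at each step $i$ I would ask: does there exist $y \in Y$ such that for every $(a_1,\dots,a_i) \in \F^i$ (including the zero vector) one has
\[
\pr\!\Bigl(T_y - \sum_{j=1}^{i} a_j T_{y_j}\Bigr) > D(i+1) ?
\]
If yes, pick any such $y$, set $y_{i+1}=y$, increment $i$, and continue. If no, stop and let $l'=i$. Conclusion (ii) is then exactly the negation of the extension condition at the point of stopping.

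To obtain (i) I would maintain, as an invariant through the process, that for every nonzero $b \in \F^{i}$ one has $\pr(\sum_{j=1}^{i} b_j T_{y_j}) \ge D(i)$; the base case $i=0$ is vacuous. When a new $y_{i+1}$ is added, the invariant for $i+1$ is checked by splitting on whether the last coordinate $b_{i+1}$ of the test vector $b \in \F^{i+1}\setminus\{0\}$ is zero or not. If $b_{i+1}=0$, the sum collapses to one of the previous step, and the old invariant gives a bound of $D(i) \ge D(i+1)$, where decreasing monotonicity of $D$ is used. If $b_{i+1}\neq 0$, after scaling by $b_{i+1}^{-1}$ (which preserves partition rank) the sum has the form $T_{y_{i+1}} - \sum_{j=1}^{i} a_j T_{y_j}$ for $a_j=-b_j/b_{i+1}$, and the extension condition applied to the chosen $y=y_{i+1}$ gives a bound strictly greater than $D(i+1)$.

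Finally, the process cannot reach $i=l$: if it did, the invariant at step $l$ would produce $y_1,\dots,y_l \in Y$ with $\pr(\sum_{i=1}^{l} a_i T_{y_i}) \ge D(l)$ for every nonzero $a \in \F^l$, directly contradicting the hypothesis of the lemma. Hence the process halts at some $l' \in \{0,\dots,l-1\}$, yielding both (i) and (ii). The only real subtlety, and what one must be a little careful about, is the extension-step analysis: one has to include $a=0$ in the extension condition (so that the case $b_{i+1}\neq 0$ with $b_1=\dots=b_i=0$ is handled, giving $\pr(T_{y_{i+1}}) > D(i+1)$), and one has to invoke the monotonicity of $D$ precisely where the vector $b$ has $b_{i+1}=0$. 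Everything else is a direct bookkeeping argument.
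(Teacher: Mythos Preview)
Your proof is correct and follows essentially the same greedy construction as the paper. You are somewhat more explicit than the paper about how the invariant is maintained (the case split on $b_{i+1}$), and you phrase the extension condition with a strict inequality so that its negation yields (ii) cleanly, but these are minor stylistic differences rather than a different argument.
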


\begin{proof} We construct the set $Y'$ inductively as follows. If we can find $y_1 \in Y$ such that $\pr T_{y_1} \ge D(1)$ then we continue, and otherwise we stop. Thereafter, if we can find $y_2$ such that \[ \pr (a_1 T_{y_1} + a_2 T_{y_2}) \ge D(2) \] for every $(a_1,a_2) \in \mathbb{F}^2 \setminus \{0\}$ then we continue, and otherwise we stop. For each $j \in \lbrack l \rbrack$, at the $j$th step, provided that we have continued up to this step, if we can find $y_j$ such that \begin{equation} \pr (\sum_{i=1}^j a_i T_{y_i}) \ge D(j) \label{condition for the process with radii} \end{equation} for all $(a_1,\dots ,a_j) \in \mathbb{F}^j \setminus \{0\}$ then we continue, and otherwise we stop. The process stops after at most $l-1$ steps: if it did continue up to and including the $l$th step then since $D$ is decreasing the resulting family $\{y_1, \dots y_l\}$ would contradict the assumption. Let $l'$ be the total number of iterations completed. That (i) is satisfied follows from the fact that $D$ is decreasing, and that (ii) is satisfied follows from the criterion which led to stopping at step $l'+1$. \end{proof}

\begin{proposition} \label{subtensors inductive step, general case} Let $d \ge 2$ be a positive integer, let $R \neq R_{\tr}$ be a non-empty family of partitions of $\lbrack d \rbrack$, let $C$ be a largest part for $R$, and let $R'$ be the down-shadow of $R$ with respect to $C$. If Proposition \ref{Multidimensional Rrk subtensors} holds for $(|C|, \pr)$, Theorem \ref{Subtensors theorem} holds for $(d-|C|,\Rcomp)$ and Theorem \ref{Subtensors theorem} holds for $(d,R')$ then Theorem \ref{Subtensors theorem} holds for $(d,R)$. \end{proposition}

\begin{proof} Let $l$ be a fixed positive integer, and let $k$ be a large positive integer that we will fix later depending on $l$ and let $T$ be an order-$d$ tensor with $\Rrk T \ge k$.

We distinguish three cases.
\medskip

\noindent \textbf{Case 1}: There exist $y_1,\dots ,y_l \in \prod_{\a \in C^c} Q_{\a}$ such that the order-$|C|$ slices $T_{y_1},\dots ,T_{y_l}$ of $T$ satisfy \[ \pr (\sum_{i=1}^l a_i T_{y_i}) \ge G_{|C|,\pr, l}(l^2) \] for all $a \in \mathbb{F}^{l} \setminus \{0\}$. By Proposition \ref{Multidimensional Rrk subtensors} for $(|C|, \pr)$ we can find sets $X_{\a}: \a \in C$ all with size at most $F_{|C|,\pr, l}(l^2)$ such that \[ \pr (\sum_{i=1}^l a_i T_{y_i})(\prod_{\a \in C} X_{\a}) \ge l^2 \] for every $a \in \mathbb{F}^{l} \setminus \{0\}$. For each $\a \in C^c$, let the set $X_{\a}$ be the image of the canonical projection of $\{y_1,\dots ,y_l\}$ on to the $\a$th coordinate. Then we obtain sets $X_{\a}$, $\a \in C^c$ all with size at most $l$. By Lemma \ref{separation lemma} we conclude that $\Rrk T(\prod_{\a=1}^d X_{\a}) \ge l$.
\medskip

If we are not in Case 1 then there exist $y_1,\dots,y_{l} \in \prod_{\a \in C^c} Q_{\a}$ and for each $y \in \prod_{\a \in C^c} Q_{\a}$ coefficients $a_1(y),\dots,a_{l}(y) \in \F$, such that \begin{equation} \pr (T_y - \sum_{i=1}^{l} a_i(y) T_{y_i}) \le G_{|C|,\pr, l}(l^2). \label{approximation inequality in the general case} \end{equation} Let $D: \lbrack l \rbrack \rightarrow \mathbb{R}_+$ be the decreasing function defined by $D(l) = G_{|C|,\pr, l}(l^2)$ and for each $l' \in \lbrack l-1 \rbrack$, $D(l') = G_{|C|, \pr, l}(lD(l'+1) + l^2)$. By Lemma \ref{process with radii} applied to $Y= \prod_{\a \in C^c} Q_{\a}$ there exist $l' \in \{0,\dots, l-1\}$ and $y_1,\dots,y_{l'} \in Y$ such that the two following statements hold.
\begin{enumerate}[(i)]
\item For all $(a_1,\dots ,a_{l'}) \in \mathbb{F}^{l'} \setminus \{0\}$ we have the inequality $\pr (\sum_{i=1}^{l'} a_i T_{y_i}) \ge D(l')$.
\item For each $y \in \prod_{\a \in C^c} Q_{\a}$ there exist $A_1(y),\dots A_{l'}(y) \in \F$ satisfying the inequality $\pr (T_{y} - \sum_{i=1}^{l'} A_i(y) T_{y_i}) \le D(l'+1)$.
\end{enumerate}
\medskip

\noindent \textbf{Case 2}: There exists $j \in \lbrack l' \rbrack$ satisfying \[ \Rcomprk A_j \ge G_{d-|C|, \Rcomp}(l). \] By Theorem \ref{Subtensors theorem} for $(d-|C|, \Rcomp)$ subtensors, there exist $X_{\a}$, $\a \in C^c$ each with size at most $F_{d-|C|, \Rcomp}(l)$ such that \begin{equation} \Rcomprk A_j(\prod_{\a \in C^c} X_{\a}) \ge l. \label{subtensors for Rcomp in the general case} \end{equation} Moreover by (i) and Proposition \ref{Multidimensional Rrk subtensors} for $(|C|, \pr)$ there exist $X_{\a}: \a \in C$ each with size at most $F_{|C|, \Rcomp, l}(lD(l'+1) + l^2)$ such that \begin{equation} \pr (\sum_{i=1}^{l'} a_i T_{y_i})(\prod_{\a \in C} X_{\a}) \ge lD(l'+1) + l^2. \label{subtensor with separated set in case 2 of the general case} \end{equation} for every $(a_1,\dots ,a_{l'}) \in \mathbb{F}^{l'} \setminus \{0\}$. By \eqref{subtensor with separated set in case 2 of the general case}, \eqref{subtensors for Rcomp in the general case} and (ii) and applying Lemma \ref{Case 2, general case} to the tensor $T(\prod_{\a=1}^d X_{\a})$,  the functions $A_i(\prod_{\a \in C^c} X_{\a})$ for each $i \in \lbrack l' \rbrack$, the functions $B_i(\prod_{\a \in C^c} X_{\a})$ for each $i \in \lbrack l' \rbrack$, and the parameters $m = D(l'+1)$ and $M = l(m+l)$, we obtain $\Rrk T' \ge l$.
\medskip

\noindent \textbf{Case 3}: We are not in Case 1 and also not in Case 2. Let $S$ be the tensor defined by \[ S(x)= \sum_{i=1}^{l'} A_i(x(C^c)) T_{y_i}(x(C)). \] Since $l' \le l$ and for each $i \in \lbrack l' \rbrack$ we have $\Rcomprk A_i \le G_{d-|C|, \Rcomp}(l)$, we get $\Rrk S \le lG_{d-|C|, \Rcomp}(l)$. The tensor $U=T-S$ is such that for each $y \in \prod_{\a \in C^c} Q_{\a}$ the slice $U_y$ has partition rank at most $D(1)$. Moreover by subadditivity $\Rrk U \ge k - lG_{d-|C|, \Rcomp}(l)$. Since every tensor with $R'$-rank equal to $1$ also has $R$-rank equal to $1$, we have $\Rdashrk U \ge \Rrk U \ge k - lG_{d-|C|, \Rcomp}(l)$. Applying Theorem \ref{Subtensors theorem} for $(d, R')$ to $U$ we obtain that for \[ k \ge G_{d,R'}(4 (l + lG_{d-|C|, \Rcomp}(l))^3 D(1)) + lG_{d-|C|, \Rcomp}(l) \] we can find $X_1,\dots ,X_d$ with size at most \[ F_{d,R'}(4 (l + lG_{d-|C|, \Rcomp}(l))^3 D(1)) \] such that \[ \Rdashrk U(\prod_{\a=1}^d X_{\a}) \ge 4 (l + lG_{d-|C|, \Rcomp}(l))^2 D(1). \]  Because $U(\prod_{\a=1}^d X_{\a})$ is a restriction of $U$, it is still the case that the slices $U(\prod_{\a=1}^d X_{\a})_y$ with $y \in \prod_{\a \in C^c} X_{\a}$ of this tensor all have partition rank at most $D(1)$. Applying Proposition \ref{Equivalence} we obtain \[ \Rrk U (\prod_{\a=1}^d X_{\a}) \ge l + lG_{d-|C|, \Rcomp}(l) \] and hence \[ \Rrk T(\prod_{\a=1}^d X_{\a}) \ge l. \qedhere\] \end{proof}

\subsection{The general case for disjoint $R$-rank}

We begin by proving our equivalence result between the essential $R$-rank and the essential $R'$-rank.

\begin{proposition} \label{Essential equivalence}

Let $d \ge 2$ be a positive integer, let $R \neq R_{\tr}$ be a non-empty family of partitions of $\lbrack d \rbrack$, let $C$ be a largest part for $R$, and let $d'$ be the size of C. Let $l,m \ge 1$ be positive integers. Let $T: \prod_{\a=1}^d Q_{\a} \rightarrow \mathbb{F}$ be an order-$d$ tensor such that $\eRrk T \le l$. Assume that \[\epr T_y \le m\] for every $y \in (\prod_{\a \in C^c} Q_{\a}) \setminus E(C^c)$. Then $\eRdashrk T \le l^2(m + d'(d-d')) + l^3 + l$.

\end{proposition}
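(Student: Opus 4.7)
The plan is to essentialize the proof of Proposition \ref{Equivalence}, paying attention at each step to the tensors supported in $E$ that can be absorbed into the modification. Start by choosing a tensor $V$ supported in $E$ realising $\eRrk T \le l$, so that $\Rrk(T+V) \le l$, and fix a minimal $R$-rank decomposition
\[ (T+V)(x) = \sum_{i=1}^{r} A_i(x(C^c))\, B_i(x(C)) + \sum_{i'=1}^{r'} F_{i'}(x) \]
as in \eqref{general form}, with $r + r' \le l$, $\Rcomprk A_i = 1$, and $\Rminusrk F_{i'} = 1$. The goal is to construct $V'$ supported in $E$ such that $T + V'$ admits an $R'$-rank decomposition of length at most $l^2(m + d'(d-d')) + l^3 + l$.

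A preparatory reduction is needed so that the Gaussian elimination step can pick a dual set outside $E(C^c)$, where the essential hypothesis on the slices $T_y$ may be invoked. If the restrictions of $A_1, \dots, A_r$ to $(\prod_{\a \in C^c} Q_\a) \setminus E(C^c)$ are linearly dependent, there exist scalars $c_1, \dots, c_r$ with some $c_{i_0} \ne 0$ such that $H := \sum_i c_i A_i$ is supported in $E(C^c)$. Substituting the resulting expression for $A_{i_0}$ rewrites $\sum_i A_i B_i$ as a sum of $r-1$ terms of the form $A_i B_i'$ plus a residual $c_{i_0}^{-1} H B_{i_0}$ supported in $E$, which I absorb into a new $V$. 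Iterating, I may assume that the $A_i$'s remain linearly independent after restriction to $(\prod_{\a \in C^c} Q_\a) \setminus E(C^c)$, and then pick $U \subset (\prod_{\a \in C^c} Q_\a) \setminus E(C^c)$ of size $r$ with dual functions $u_i$ supported in $U$ satisfying $u_i.A_{i'} = \delta_{ii'}$.

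Applying $u_i$ to the decomposition gives
\[ B_i = \sum_{y \in U} u_i(y)\, T_y + \sum_{y \in U} u_i(y)\, V_y - \sum_{y \in U,\, i'} u_i(y)\, (F_{i'})_y, \]
and I would bound $\epr B_i$ term by term. The assumption $\epr T_y \le m$ provides $W_y$ supported in $E(C)$ with $\pr(T_y + W_y) \le m$; because $y \notin E(C^c)$, the support of $V_y$ lies in $E(C) \cup \bigcup_{\a \in C, \beta \in C^c} \{z : z_\a = y_\beta\}$, so I may split $V_y = V_y^{E(C)} + V_y^{\mathrm{slices}}$ with $V_y^{E(C)}$ supported in $E(C)$ and $V_y^{\mathrm{slices}}$ a sum of at most $d'(d-d')$ axis-aligned slice tensors, each of partition rank $\le 1$. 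Each $(F_{i'})_y$ has partition rank $\le 1$ by the argument from the proof of Lemma \ref{separation lemma}. Subadditivity then yields $B_i = S_i + E_i$ with $E_i$ supported in $E(C)$ and $\pr S_i \le l(m + d'(d-d')) + l^2$.

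Finally, $A_i E_i$ is supported in $E$ because $x(C) \in E(C)$ forces $x \in E$; setting $V' := V - \sum_i A_i E_i$ (still supported in $E$) produces $T + V' = \sum_i A_i S_i + \sum_{i'} F_{i'}$. Any $\pr$-decomposition of $S_i$ combines with the $\Rcomp$-product structure of $A_i$ to produce rank-$1$ summands with partitions in $\Rnew \subset R'$, so $\Rdashrk(A_i S_i) \le \pr S_i$; and $\Rdashrk F_{i'} \le 1$ since $R_- \subset R'$. Adding these contributions gives $\Rdashrk(T+V') \le r(l(m+d'(d-d'))+l^2) + r' \le l^2(m + d'(d-d')) + l^3 + l$. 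The main obstacle, absent from the non-essential version, is the slice contribution from $V_y$ for $y$ outside $E(C^c)$; dealing with it is what both forces the preparatory reduction (to secure $y \notin E(C^c)$) and accounts for the additional $d'(d-d')$ summand in the bound.
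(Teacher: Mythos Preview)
Your proposal is correct and follows essentially the same approach as the paper's proof: both start from a minimal $R$-rank decomposition of $T$ modified by a tensor supported in $E$, ensure the $A_i$ remain independent off $E(C^c)$ (you iterate the absorption explicitly, the paper invokes minimality), pick duals $u_i$ supported outside $E(C^c)$, split each $V_y$ into an $E(C)$-part and at most $d'(d-d')$ slice-supported pieces, and then reassemble. Your packaging $B_i = S_i + E_i$ is a mild reorganisation of the paper's $T-V = T_1 + T_2 + T_3 + T_4$, but the bookkeeping and final bound are identical.
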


\begin{proof} Because $\eRrk T \le l$ there exists a tensor $V$ supported inside $E$ such that $\Rrk (T - V) \le l$. We consider an $R$-rank decomposition of $T - V$ with minimal length, which by \eqref{general form} we can write \begin{equation} (T-V)(x) = \sum_{i=1}^{r} A_i(x(C^c)) B_i(x(C)) + \sum_{i=1}^{r'} F_{i}(x) \label{decomposition of T-V in essential equivalence proof} \end{equation} where $r+r' \le l$, with $\Rcomprk A_i = 1$ for each $i \in \lbrack r \rbrack$ and $\Rminusrk F_i = 1$ for each $i \in \lbrack r' \rbrack$. The restrictions $A_i((\prod_{\a \in C^c} Q_{\a}) \setminus E(C^c))$ are linearly independent. (If they were not, then \[ \Rrk (T - V - \sum_{i=1}^r A_i(E(C^c)) B_i) \le (r-1)+r' < l \] would hold and since for each $i \in \lbrack r \rbrack$ the support of the product $A_i(E(C^c)) B_i$ is contained in $E(C^c) \times \prod_{\a \in C} Q_{\a} \subset E$, we would have $eRrk T < l$.) Therefore, we can find a subset $U$ of $(\prod_{\a \in C} Q_{\a}) \setminus E(C^c)$ with size $r \le l$ and functions $u_i: \prod_{\a \in C^c} Q_{\a} \rightarrow \mathbb{F}$ for each $i \in \lbrack r \rbrack$ with supports all contained inside $U$ (so in particular, contained outside $E(C^c)$) such that $u_i.A_{i'} = 1_{i=i'}$ for every $i,i' \in \lbrack r \rbrack$. For each $i \in \lbrack r \rbrack$, applying $u_i$ to \eqref{decomposition of T-V in essential equivalence proof} we obtain \begin{equation} B_i = u_i.(T-V) - \sum_{i'=1}^{r'} u_i.F_{i'} = \sum_{y \in U} u_i(y) (T-V)_{y} - \sum_{i'=1}^{r'} \sum_{y \in U} u_i(y) (F_{i'})_y. \label{Expression of $B_i$ in the general essential equivalence proof} \end{equation} By our assumption, for each $y \in U$ we have $\epr T_{y} \le m$, so there exists an order-$d'$ tensor $V'^y$ supported in \[E(C) = \{x(C): x_{\a'} = x_{\a''} \text{ for some distinct }\a',\a'' \in C\}\] such that $\pr (T_y-V'^y) \le m$. The slice $V_y$ of $V$ has its support contained inside the union of $E(C)$ and of \[ \{x(C): \text{ there exist }\a' \in C \text{ and }\a'' \in C^c \text{ such that } x_{\a'} = y_{\a''}\}.\] We can write $V_y = V_{y,\mathrm{int}} + V_{y,\mathrm{ext}}$ where $V_{y,\mathrm{int}}$ and $V_{y,\mathrm{ext}}$ are the restrictions of $V_y$ to these two respective sets. The tensor $V_{y,\mathrm{ext}}$ has support contained in the union of $d'(d-d')$ order-$(d'-1)$ slices of $\prod_{\a \in C} Q_{\a}$, so  $\pr V_{y,\mathrm{ext}} \le d'(d-d')$. The tensor $V''^y= V'^y - V_{y, \mathrm{int}}$ is supported inside $E(C)$ and we have by subadditivity \[ \pr ((T-V)_y - V''^y) \le \pr (T_y - V'^y) + \pr (V_{y, \mathrm{int}} - V_y) = \pr (T_y - V'^y) + \pr (V_{y, \mathrm{ext}}) \le m +d'(d-d'). \] Hence, for each $y \in U$ there exists a tensor $U^y: \prod_{\a \in C} Q_{\a} \rightarrow \F$ such that we can write $(T-V)_y = V''^y + U^y$, where  $\pr U^y \le m + d'(d-d')$. For each $i \in \lbrack r \rbrack$, using \eqref{Expression of $B_i$ in the general essential equivalence proof} we get \[ B_i = \sum_{y \in U} u_i(y)(V''^y + U^y) - \sum_{i'=1}^{r'} \sum_{y \in U} u_i(y) (F_{i'})_y. \] Substituting into \eqref{decomposition of T-V in essential equivalence proof} we obtain \begin{align*} (T-V)(x) &= \sum_{i=1}^{r} A_i(x(C^c)) \left( \sum_{y \in U} u_i(y)(V''^y + U^y)(x(C)) - \sum_{i'=1}^{r'} \sum_{y \in U} u_i(y) F_{i'}(y,x(C)) \right) \\ &+ \sum_{i=1}^{r'} F_{i}(x) = T_1 + T_2 + T_3 + T_4, \end{align*}where

\begin{align*} T_1(x) &= \sum_{i=1}^{r} A_i(x(C^c)) \sum_{y \in U} u_i(y) V''^y(x(C)) \\
T_2(x) &= \sum_{i=1}^{r} A_i(x(C^c)) \sum_{y \in U} u_i(y) U^y(x(C))\\
T_3(x) &= - \sum_{i=1}^{r} A_i(x(C^c)) \sum_{i'=1}^{r'} \sum_{y \in U} u_i(y) F_{i'}(y,x(C))\\
T_4(x) &= \sum_{i=1}^{r'} F_{i}(x). \end{align*}

For each $y \in U$, the support of $V''^y$ is contained in $E(C)$, so the support of $T_1$ is contained in $(\prod_{\a \in C^c} Q_{\a}) \times E(C)$ and hence contained in $E$. For each $y \in U$ and each $i \in \lbrack r \rbrack$ we have $\Rcomp A_i \le 1$ and $\pr U^y \le m + d'(d-d')$, so $\Rdashrk T_2 \le r |U| (m + d'(d-d')) \le l^2(m + d'(d-d'))$. For each $y \in U$, each $i \in \lbrack r \rbrack$, and each $i' \in \lbrack r' \rbrack$ we have $\Rcomp A_i \le 1$ and $\pr (F_{i'})_y \le 1$ (since $\Rminusrk F_{i'} \le 1$, this last inequality holds for the same reason as that why it did in the the proof of Lemma \ref{separation lemma}), so $\Rdashrk(A_i(F_{i'})_y) \le 1$, so $\Rdashrk T_3 \le rr' |U| \le l^3$. For each $i' \in \lbrack r' \rbrack$ we have $\Rminusrk F_{i'} \le 1$ so $\Rdashrk F_{i'} \le 1$, and therefore $\Rdashrk T_4 \le r' \le l$. It follows that $T$ coincides outside of $E$ with a tensor $(T_2 + T_3 + T_4)$ which has $R'$-rank at most $l^2(m + d'(d-d')) + l^3 + l$, so $\eRdashrk T \le l^2(m + d'(d-d')) + l^3 + l$. \end{proof}

From Proposition \ref{Essential equivalence} we can deduce the following corollary, from which Proposition \ref{Equivalence between $efrank_1$ and $etr$, assuming that $epr$ is bounded for slices of all sizes} and Proposition \ref{A tensor with bounded essential partition ranks of all slices of all sizes has bounded essential tensor rank} from Section \ref{section: Disjoint tensor rank} can be deduced.

\begin{corollary} \label{Equivalence of all essential ranks} Let $d \ge 2$ be a positive integer, and let $R \neq R_{\tr}$ be a non-empty family of partitions of $\lbrack d \rbrack$. Let $l \ge d^2$ be a positive integer. If $T: \prod_{\a=1}^d Q_{\a} \rightarrow \F$ is an order-$d$ tensor such that $\eRrk T \le l$ and furthermore $\epr T_y \le l$ for every $I \subset \lbrack d \rbrack$ with $|I| \in \{1,\dots,d-2\}$ and for every $y \in (\prod_{\a \in I} Q_{\a}) \setminus E(I^c)$, then $\etr T \le (4l^3)^{2^d}$. \end{corollary}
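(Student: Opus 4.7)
The plan is to establish Corollary \ref{Equivalence of all essential ranks} by induction on a natural measure of complexity of $R$, namely the \emph{down-shadow depth}, defined as the minimum number of applications of the down-shadow construction needed to transform $R$ into $R_{\tr}$. Since each iteration forbids one specific subset of $\lbrack d \rbrack$ (namely the largest part $C$ selected at that step) from appearing as a part of any subsequent family, and since the subsets are eliminated in order of decreasing size, this depth is finite and bounded by the number of subsets of $\lbrack d \rbrack$ of size at least $2$, and in particular by $2^d$. The base case $R = R_{\tr}$ is immediate, since in that case $\etr T = \eRrk T \le l \le (4l^3)^{2^d}$.

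For the inductive step, suppose $R \neq R_{\tr}$, pick a largest part $C$ of $R$ with $|C| = d' \in \{2, \dots, d-1\}$, and form the down-shadow $R'$, which has strictly smaller down-shadow depth. The crucial observation is that the corollary's hypothesis $\epr T_y \le l$ for every $I \subset \lbrack d \rbrack$ with $|I| \in \{1, \dots, d-2\}$ and every $y \in (\prod_{\a \in I} Q_{\a}) \setminus E(I^c)$ is a property of $T$ alone and does not mention the rank notion $R$, so it is preserved under the inductive step. In particular, since $|C^c| = d - d' \in \{1, \dots, d-2\}$, this hypothesis contains the special case $I = C^c$ needed to feed into Proposition \ref{Essential equivalence}, which together with $\eRrk T \le l$ and $m = l$ yields
\[
\eRdashrk T \le l^2(l + d'(d-d')) + l^3 + l.
\]
Using $l \ge d^2 \ge d'(d-d')$, the cross term $l^2 d'(d-d')$ is absorbed into a small multiple of $l^3$, so the bound simplifies to $\eRdashrk T \le 4l^3$ for $l$ past a modest threshold.

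To close the induction we apply the inductive hypothesis to $R'$ with updated parameter $l' = 4l^3$; the slice-partition-rank hypothesis still holds in the form $\epr T_y \le l \le l'$, and the depth of $R'$ is one less than that of $R$. This bounds $\etr T$ in terms of $l'$ and the depth of $R'$, and chaining the recursion through at most $2^d$ down-shadow steps, then collecting constants at the end, produces the stated bound $(4l^3)^{2^d}$. Conceptually, the proof is a mechanical iteration of the single technical input, Proposition \ref{Essential equivalence}; the main work is (i) isolating the down-shadow depth as the correct inductive quantity, and (ii) noting that the slice partition-rank hypothesis is intrinsic to $T$ and hence survives the induction unchanged. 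The chief technical obstacle to watch for is the quantitative accounting of the constants through the chain of cubic recursions, since the bound degrades quickly and must be fit inside $(4l^3)^{2^d}$ after $2^d$ iterations rather than any conceptual difficulty.
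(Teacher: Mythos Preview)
Your argument is essentially the paper's: repeatedly apply Proposition \ref{Essential equivalence} along a chain of down-shadows $R = R_1, R_2, \ldots$ terminating at $R_{\tr}$ after at most $2^d$ steps, using that the slice essential-partition-rank hypotheses depend only on $T$ and therefore persist unchanged through the iteration. The paper is just as terse on the arithmetic you flag, recording only the inequality $l^2(m+d'(d-d')) + l^3 + l \le 4l^2m$ for $m \ge l \ge d^2$ and asserting that ``the claim follows''; your concern about fitting the recursion inside $(4l^3)^{2^d}$ is therefore a fair reflection of the level of detail in the original.
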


\begin{proof} We repeatedly apply Proposition \ref{Essential equivalence}: we let $R_1 = R$, then inductively define families $R_2, R_3, \dots$ of partitions of $\lbrack d \rbrack$ as follows: as long as $R_i \neq R_{\tr}$ we choose $C_i$ a largest part for $R_i$ and obtain the down-shadow $R_{i+1}=R_i'$ of $R_i$ with respect to $C_i$. We stop the process when $R_i = R_{\tr}$, which necessarily occurs after at most $2^d$ iterations, since the $i$th iteration rules out the set $C_i$ from all partitions of all $R_{i'}$ with $i' \ge i$, and the sets $C_i$ have decreasing size. Using the assumption $l \ge d^2$ it is simple to check that whenever $m \ge l$ we have $l^2(m + d'(d-d')) + l^3 + l \le 4l^2m$. The claim follows. \end{proof}

For $d \ge 2$ an integer and $T: Q_1 \times \dots \times Q_d \rightarrow \F$ an order-$d$ tensor let \[Z(T)= \{(x_1, \dots, x_d) \in Q_1 \times \dots \times Q_d: T(x_1, \dots, x_d) \neq 0 \} \] be the support of $T$, and let $eZ(T)= Z(T) \setminus E$. In the proof of Proposition \ref{disjoint rank, inductive step} we shall use the following generalisation of Proposition \ref{Graph bipartition proposition} to ordered $d$-uniform hypergraphs.

\begin{lemma} \label{Disjoint support size}

Let $T$ be an order-$d$ tensor such that $|eZ(T)| \ge k$. Then there exist $X_1,\dots ,X_d$ pairwise disjoint such that \[|Z(T(X_1 \times \dots \times X_d))| \ge k/d!.\] \end{lemma}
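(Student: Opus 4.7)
The plan is a linearity-of-expectation argument, one dimension up from the proof of Proposition \ref{Graph bipartition proposition}. Writing $Q = Q_1 \cup \dots \cup Q_d$ as a subset of $\mathbb{N}$, I would sample a uniformly random total ordering of $Q$ (equivalently, a uniform random bijection $\sigma \colon Q \to \lbrack |Q| \rbrack$) and extract pairwise disjoint $X_i \subset Q_i$ from it. The key exchangeability fact driving the proof is that for any $d$ pairwise distinct elements of $Q$, the relative order of their $\sigma$-ranks is a uniform random permutation of $\lbrack d \rbrack$, so any fixed relative order occurs with probability exactly $1/d!$.

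From $\sigma$ I would build the $X_i$'s so that for each $(x_1, \dots, x_d) \in eZ(T)$, the tuple lies in $X_1 \times \dots \times X_d$ whenever the $d$ pairwise distinct coordinates $x_1, \dots, x_d$ occur under $\sigma$ in a prescribed relative order (say $\sigma(x_1) < \sigma(x_2) < \dots < \sigma(x_d)$). Since by exchangeability this event has probability $1/d!$, linearity of expectation would yield
\[ \mathbb{E}\,|Z(T(X_1 \times \dots \times X_d))| \;\ge\; \frac{|eZ(T)|}{d!} \;\ge\; \frac{k}{d!}, \]
from which some realisation of $\sigma$ delivers the required pairwise disjoint $X_i$'s.

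The main obstacle is precisely the step of defining $X_1, \dots, X_d$ from $\sigma$ so that all three properties hold simultaneously: that the $X_i$ are pairwise disjoint subsets of the corresponding $Q_i$, and that the capture probability of each off-diagonal tuple reaches the sharper $1/d!$ rather than the cruder $1/d^d$ that a purely independent random $d$-colouring would give. The natural attempt is to partition $Q$ into $d$ consecutive blocks $B_1, \dots, B_d$ under $\sigma$ and set $X_i = Q_i \cap B_i$; one then has to couple the block sizes (or symmetrise over the $d!$ possible re-orderings of the blocks, using that the $d$ coordinates of a tuple are exchangeable) to exploit the full $1/d!$ factor coming from the permutation-of-$\lbrack d \rbrack$ statistic rather than merely the $1/d^d$ factor coming from $d$ independent draws from $\lbrack d \rbrack$. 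Beyond that single subtlety, the remainder of the proof is an unconditional application of linearity of expectation.
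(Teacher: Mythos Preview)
Your proposal has a genuine and unfillable gap. The ``main obstacle'' you flag---constructing pairwise disjoint $X_1,\dots,X_d$ from a random ordering so that each off-diagonal $d$-tuple is captured with probability exactly $1/d!$---cannot be overcome, because the bound $k/d!$ is in fact false. For $d=2$, take $Q_1=Q_2=\{1,2,3\}$ and let $T$ have support exactly the directed $3$-cycle $\{(1,2),(2,3),(3,1)\}$. Then $|eZ(T)|=3$, but a short case check shows that for every pair of disjoint subsets $X_1,X_2\subset\{1,2,3\}$ the box $X_1\times X_2$ meets this support in at most one point, so no choice attains $3/2!=3/2$. This is exactly why Proposition~\ref{Graph bipartition proposition}, to which you appeal as your base case, promises only a \emph{quarter} of the edges---that is, $1/d^d$ for $d=2$---and not a half.

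The paper's own argument does not use a random ordering: after reducing to $Q_1=\dots=Q_d=Q$, it assigns each $u\in Q$ independently and uniformly to one of $d$ colour classes and sets $X_i$ to be class $i$. For a tuple $(x_1,\dots,x_d)$ with pairwise distinct coordinates the capture probability under this scheme is $(1/d)^d=1/d^d$, not $1/d!$; the figure $1/d!$ appearing in the paper's proof is a slip. So the ``cruder'' independent random $d$-colouring you mention and set aside is precisely the method the paper employs, and it yields the (correct) bound $k/d^d$. Your attempt to upgrade to a random total order is chasing a constant that the lemma cannot deliver.
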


\begin{proof} Without loss of generality we may assume that $Q_1 = \dots  = Q_d$. Indeed, letting $Q= \bigcup_{1 \le \a \le d} Q_\a$ we can extend $T$ to a tensor $T'$ supported on $Q^d$ such that $Z(T') = Z(T)$ and $Z(T') \setminus E = Z(T) \setminus E$ by setting all new entries to take the value $0$. Provided that the claim holds for $T'$, with sets $X_1',\dots ,X_d'$ we then deduce it for $T$ by taking $X_1 = X_1' \cap Q_1$,\dots, $X_d = X_d' \cap Q_d$, since $Z(T(X_1 \times \dots  \times X_d)) = Z(T'(X_1 \times \dots  \times X_d))$.

We now assume that $Q_1 = \dots = Q_d = Q$. For each $u \in Q$ we send $u$ to a set $X_{\a(u)}$ by choosing the $\a(u) \in \lbrack d \rbrack$ independently and uniformly at random. For each $x \in Z(T) \setminus E$, the probability that $x \in Z(T(X_1 \times \dots \times X_d))$ is equal to $1/d!$, so the expected size of $Z(T(X_1 \times \dots  \times X_d))$ is $eZ(T) / d!$, and in particular for at least one of the choices of $u$ this is the case. \end{proof}

The following proof will involve applying Proposition \ref{Multidimensional disjoint $Rrk$}. However since it will be used only for the partition rank, we can assume as explained in Remark \ref{We can assume $H_{d,R,s} = 0$ in multidimensional disjoint rank} that for any positive integers $d \ge 2$, $s \ge 1$ we have $H_{d,\pr,s} = 0$ at the cost of increasing $G_{d,\pr,s}'$. Throughout the proof of the following proposition, the notation $G_{d,\pr,s}'$ will refer to the quantity obtained after the increase rather than before.

\begin{proposition} \label{disjoint rank, inductive step} Let $d \ge 2$ be a positive integer, let $R \neq R_{\tr}$ be a non-empty family of partitions of $\lbrack d \rbrack$, let $C$ be a largest part for $R$, and let $R'$ be the down-shadow of $R$ with respect to $C$. If Proposition \ref{Multidimensional disjoint $Rrk$} holds for $(|C|, \pr)$, Theorem \ref{Disjoint rank subtensors theorem} holds for $(d-|C|, \Rcomp)$, Theorem \ref{Subtensors theorem} holds for $(d-|C|, \Rcomp)$, and Theorem \ref{Disjoint rank subtensors theorem} holds for $(d,R')$, then Theorem \ref{Disjoint rank subtensors theorem} holds for $(d,R)$. \end{proposition}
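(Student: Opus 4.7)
The plan is to mirror the three-case structure of Proposition \ref{minors inductive step, general case}, replacing ordinary rank by essential rank throughout and each minor extraction by its pairwise-disjoint analogue. Essential analogues of Lemma \ref{separation lemma}, Lemma \ref{Spanning result, general case}, Lemma \ref{Case 2, general case} and Lemma \ref{process with radii} should follow by the same arguments (choosing coefficient functions $u$ supported outside the appropriate diagonal set $E(C^c)$), since the essential $R$-rank and the essential partition rank remain subadditive and non-increasing under restriction. The roles previously played by Proposition \ref{Equivalence} and Proposition \ref{Multidimensional Rrk minors} will now be played by Proposition \ref{Essential equivalence} and by Proposition \ref{Multidimensional disjoint $Rrk$} for $(|C|,\pr)$; for the latter I will use the strengthening from Remark \ref{We can assume $H_{d,R,s} = 0$ in multidimensional disjoint rank} saying that $H_{d,\pr,s}$ may be taken to be $0$, which is what will allow each successive call to avoid the coordinates already spent.

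Given $T$ with $\eRrk T$ sufficiently large, I first consider Case 1, where there exist $y_1,\dots ,y_l \in (\prod_{\a \in C^c} Q_{\a}) \setminus E(C^c)$ with coordinates pairwise distinct across the $y_i$ and such that $\epr (\sum_i a_i T_{y_i}) \ge G_{|C|,\pr,l}'(l^2)$ for every $a \in \F^l \setminus \{0\}$. Applying Proposition \ref{Multidimensional disjoint $Rrk$} for $(|C|,\pr)$, treating the coordinates appearing in the $y_i$ as reserved, yields pairwise disjoint sets $X_{\a}$ for $\a \in C$ avoiding those coordinates on which the linear combinations remain separated; taking $X_{\a}$ for $\a \in C^c$ to be the coordinate projections of the $y_i$ makes $X_1,\dots ,X_d$ pairwise disjoint, and the essential version of Lemma \ref{separation lemma} gives $\Rrk T(\prod_\a X_\a) \ge l$.

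If Case 1 does not hold, I apply the essential version of Lemma \ref{process with radii} to produce $y_1,\dots ,y_{l'}$ and coefficient functions $A_i$. In Case 2, some $A_j$ has $\eRcomprk A_j$ large; the inductive hypothesis for $(d-|C|,\Rcomp)$ produces pairwise disjoint $X_{\a}$ for $\a \in C^c$ on which $\Rcomprk A_j$ is at least $l$, and a subsequent call to Proposition \ref{Multidimensional disjoint $Rrk$} for $(|C|,\pr)$, reserving the just-used coordinates, produces pairwise disjoint $X_{\a}$ for $\a \in C$ on which the linear combinations of the $T_{y_i}$ remain separated with the right threshold; the essential version of Lemma \ref{Case 2, general case} then concludes. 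In Case 3, all $A_i$ have bounded $\Rcomp$-rank and the standard minors result for $(d-|C|,\Rcomp)$ lets us bound the $R$-rank of $S = \sum_i A_i T_{y_i}$. Writing $U = T - S$, every slice $U_y$ with $y$ outside $E(C^c)$ has partition rank at most $D(1)$, and by subadditivity of the essential $R$-rank together with the inequality $\eRdashrk \ge \eRrk$ the tensor $U$ has $\eRdashrk$ large. The inductive hypothesis for $(d,R')$ then produces pairwise disjoint $X_1,\dots ,X_d$ on which $\Rdashrk U(\prod_\a X_\a)$ is large; since the $X_\a$ are pairwise disjoint, their product avoids $E$ and essential ranks coincide with ordinary ranks on the restriction, so by the contrapositive of Proposition \ref{Essential equivalence} we obtain $\Rrk U(\prod_\a X_\a) \ge l + \Rrk S$, whence $\Rrk T(\prod_\a X_\a) \ge l$.

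The main obstacle I foresee is the bookkeeping in Case 2, where pairwise disjointness must be maintained across all $d$ coordinates while the sets $X_{\a}$ for $\a \in C^c$ come from a recursive disjoint-rank call on $A_j$ and the sets $X_{\a}$ for $\a \in C$ come from a separate call to Proposition \ref{Multidimensional disjoint $Rrk$}; the $H_{d,\pr,s}=0$ reformulation is exactly what enables the second call to sidestep the coordinates used by the first, so the procedure is feasible but propagating the threshold values through all three cases to define $G_{d,R}'$ explicitly will require some patience. A secondary technicality is to verify at each step that essential ranks coincide with ordinary ranks on restrictions to pairwise disjoint boxes, so that the arguments can freely switch between the two.
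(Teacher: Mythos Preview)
Your three-case plan is essentially the right one, and Cases 2 and 3 track the paper closely. But there is a real gap in how you set up Case 1.

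You take as the Case 1 hypothesis that there exist separated $y_1,\dots,y_l \in (\prod_{\a\in C^c}Q_\a)\setminus E(C^c)$ \emph{whose coordinates are pairwise distinct across the $y_i$}, so that the projections $X_\a=\{(y_i)_\a\}$ are automatically pairwise disjoint. But nothing guarantees this extra cross-distinctness: a priori you only know that each individual $y_i$ has distinct coordinates. If a separated family exists but every such family has a coordinate collision between two different $y_i$, you fall out of your Case 1, yet you are also unable to run Lemma \ref{process with radii}: the hypothesis of that lemma is that \emph{no} separated $l$-tuple exists, not merely that no cross-distinct one does. So the trichotomy fails to cover all possibilities.

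The paper closes this gap by asking for $d!l$ separated points in $\setminus E(C^c)$ rather than $l$, and then invoking Lemma \ref{Disjoint support size} (the random $d$-partition argument) to place at least $l$ of them inside some product $\prod_{\a\in C^c}X_\a$ with the $X_\a$ pairwise disjoint. Correspondingly, the process-with-radii is run for up to $d!l$ steps rather than $l$. This is the missing ingredient in your proposal.

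A smaller point in Case 2: after Theorem \ref{Disjoint rank minors theorem} for $(d-|C|,\Rcomp)$ hands you pairwise disjoint $X_\a$, $\a\in C^c$, on which $\Rcomprk A_j$ is large, you still need Theorem \ref{Minors theorem} for $(d-|C|,\Rcomp)$ to shrink these to bounded size before reserving them; otherwise removing their coordinates from the $Q_\a$, $\a\in C$, could destroy the essential partition rank of the $\sum a_i T_{y_i}$. You list this hypothesis but do not invoke it.
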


\begin{proof} Let $l$ be a fixed positive integer, let $k$ be a large positive integer that we shall fix later depending on $l$, and let $T$ be an order-$d$ tensor with $\eRrk T \ge k$. We distinguish three cases.
\medskip

\noindent \textbf{Case 1}: There exist $y_1,\dots,y_{d!l} \in (\prod_{\a \in C^c} Q_{\a}) \setminus E(C^c)$ such that the order-$|C|$ slices $T_{y_1},\dots ,T_{y_{d!l}}$ of $T$ satisfy \[ \epr (\sum_{i=1}^l a_i T_{y_i}) \ge G'_{|C|,\pr, l}(l^2) + d^2 l \] for every $a \in \mathbb{F}^{l} \setminus \{0\}$. By Lemma \ref{Disjoint support size} there exist pairwise disjoint $X_{\a}, \a \in C^c$ such that $\{y_1,\dots ,y_{d!l}\} \cap \prod_{\a \in C^c} X_{\a}$ contains at least $l$ elements, which without loss of generality we can assume to be $y_1, \dots, y_l$. Furthermore (by requiring for each $\a \in C^c$ the set $X_{\a}$ to be the image of $\{y_1,\dots ,y_l\}$ by the canonical projection on the $\a$th coordinate) we can require each of the $X_{\a}$, $\a \in C^c$ to have size at most $l$. By subadditivity \[ \epr (\sum_{i=1}^l a_i T_{y_i})(\prod_{\a=1}^d (Q_{\a} \setminus \cup_{\a' \in C^c} X_{\a'}) ) \ge G'_{|C|,\pr, l}(l^2) \] for every $a \in \mathbb{F}^l \setminus \{0\}$. By Proposition \ref{Multidimensional disjoint $Rrk$} for $(|C|, \pr)$ we can find pairwise disjoint subsets $X_{\a} \subset Q_{\a} \setminus \cup_{\a' \in C^c} X_{\a'}$ for each $\a \in C$ such that \[ \pr (\sum_{i=1}^l a_i T_{y_i})(\prod_{\a \in C} X_{\a}) \ge l^2 \] for every $a \in \mathbb{F}^l \setminus \{0\}$. By construction the sets $X_{\a}$, $\a \in \lbrack d \rbrack$ are pairwise disjoint and by Lemma \ref{separation lemma} we have $\Rrk T(\prod_{\a=1}^d X_{\a}) \ge l$.
\medskip

If we are not in Case 1 then there exist $y_1,\dots,y_{d!l} \in (\prod_{\a \in C^c} Q_{\a}) \setminus E(C^c)$ such that for each $y \in (\prod_{\a \in C^c} Q_i) \setminus E(C^c)$ there exist coefficients $a_1(y),\dots, a_{d!l}(y) \in \F$ with \begin{equation} \epr (T_y - \sum_{i=1}^{d!l} a_i(y) T_{y_i}) \le l^2. \label{approximation inequality for disjoint rank in the general case} \end{equation} Let $D: \lbrack d!l \rbrack \rightarrow \mathbb{R}_+$ be the decreasing function defined by $D(d!l) = G'_{|C|,\pr, l}(l^2) + d^2l$ and $D(l') = G'_{|C|,\pr, d!l}(lD(l'+1) + l^2) + d^2 F_{d-|C|, \Rcomp}(l)$ for each $l' \in \lbrack d!l-1 \rbrack$. By Lemma \ref{process with radii} (which also holds for the essential partition rank instead of the partition rank, as it suffices to replace the partition rank by the essential partition rank everywhere in its proof) applied to $Y= (\prod_{\a \in C^c} Q_{\a}) \setminus E(C^c)$ there exist $l' \in \{0,\dots, d!l-1\}$ and $\{y_1,\dots,y_{l'}\} \in (\prod_{\a \in C^c} Q_{\a}) \setminus E(C^c)$ such that the two following statements hold.
\begin{enumerate}[(i)]
\item For all $(a_1,\dots ,a_{l'}) \in \mathbb{F}^{l'} \setminus \{0\}$ we have the inequality $\epr (\sum_{i=1}^{l'} a_i T_{y_i}) \ge D(l')$.
\item For each $y \in (\prod_{\a \in C^c} Q_{\a}) \setminus E(C^c)$ there exist $A_1(y),\dots A_{l'}(y) \in \F$ such that $\epr (T_{y} - \sum_{i=1}^{l'} A_i(y) T_{y_i}) \le D(l'+1)$.
\end{enumerate}

\medskip

\noindent \textbf{Case 2}: There exists $j \in \lbrack l' \rbrack$ such that \[\eRcomprk A_j \ge G_{d-|C|, \Rcomp}'(G_{d-|C|, \Rcomp}(l)).\] By Theorem \ref{Disjoint rank subtensors theorem} for $(d-|C|, \Rcomp)$, there exist pairwise disjoint $X_{\a,pre} \subset Q_{\a}$ for each $\a \in C^c$ such that \[ \Rcomprk A_j(\prod_{\a \in C^c} X_{\a,pre}) \ge G_{d-|C|, \Rcomp}(l). \] By Theorem \ref{Subtensors theorem} for $(d-|C|, \Rcomp)$, there exist subsets $X_{\a} \subset X_{\a,pre}$ for each $\a \in C^c$ all with size at most $F_{d-|C|, \Rcomp}(l)$ such that \begin{equation} \Rcomprk A_j(\prod_{\a \in C^c} X_{\a}) \ge l. \label{subtensors for Rcomp in the general disjoint rank case} \end{equation} By (i) and subadditivity \[ \epr (\sum_{i=1}^{l'} a_i T_{y_i})(\prod_{\a \in C} (Q_{\a} \setminus \cup_{\a' \in C^c} X_{\a'})) \ge G'_{|C|,\pr, d!l}(lD(l'+1) + l^2) \] for every $(a_1,\dots ,a_{l'}) \in \mathbb{F}^{l'} \setminus \{0\}$. By Proposition \ref{Multidimensional disjoint $Rrk$} for $(|C|, \pr)$ there exist subsets $X_{\a} \subset Q_{\a} \setminus (\cup_{\a' \in C^c} X_{\a'})$ for each $\a \in C$ which are pairwise disjoint and such that \begin{equation} \pr (\sum_{i=1}^{l'} a_i T_{y_i})(\prod_{\a \in C} X_{\a}) \ge lD(l'+1) + l^2 \label{subtensor with separated case in case 2 of the general disjoint rank case} \end{equation} for every $(a_1,\dots ,a_{l'}) \in \mathbb{F}^{l'} \setminus \{0\}$. By \eqref{subtensor with separated case in case 2 of the general disjoint rank case}, \eqref{subtensors for Rcomp in the general disjoint rank case} and (ii) and applying Lemma \ref{Case 2, general case} to the tensor $T'=T(\prod_{\a=1}^d X_{\a})$, the functions $A_i'=A_i(\prod_{\a \in C^c} X_{\a})$ for each $i \in \lbrack l' \rbrack$, the functions $B_i'= B_i(\prod_{\a \in C^c} X_{\a})$ for each $i \in \lbrack l' \rbrack$ and the parameters $m = D(l'+1)$ and $M = l(m+l)$, we obtain $\Rrk T' \ge l$.

\medskip

\noindent \textbf{Case 3}: We are not in Case 1, and also not in Case 2. Let $S$ be the tensor defined by \[ S(x)= \sum_{i=1}^{l'} A_i(x(C^c)) T_{y_i}(x(C)). \] Since for each $i \in \lbrack l' \rbrack$ we have $\eRcomprk A_i \le G_{d-|C|, \Rcomp}'(G_{d-|C|, \Rcomp}(l))$, we get \[ \eRrk S \le d!lG_{d-|C|, \Rcomp}'(G_{d-|C|, \Rcomp}(l)). \] The tensor $U=T-S$ is such that for each $y \in (\prod_{\a \in C^c} X_{\a}) \setminus E(C^c)$ the $C$-slice $U_y$ satisfies $\epr U_y \le D(1)$. Moreover by subadditivity \[ \eRrk U \ge k - d!lG_{d-|C|, \Rcomp}'(G_{d-|C|, \Rcomp}(l)). \] Since every tensor with $R'$-rank equal to $1$ also has $R$-rank equal to $1$, we have $\eRdashrk U \ge \eRrk U \ge k - d!lG_{d-|C|, \Rcomp}'(G_{d-|C|, \Rcomp}(l))$. Applying Theorem \ref{Disjoint rank subtensors theorem} for $(d, R')$ to $U$ we obtain that for \[ k \ge G'_{d,R'}(4d^2(l + d!lG_{d-|C|, \Rcomp}'(G_{d-|C|, \Rcomp}(l)))^3D(1)) + d!lG_{d-|C|, \Rcomp}'(G_{d-|C|, \Rcomp}(l)) \] we can find pairwise disjoint $X_1,\dots ,X_d$ such that \[ \Rdashrk U(\prod_{\a=1}^d X_{\a}) \ge 4d^2(l + d!lG_{d-|C|, \Rcomp}'(G_{d-|C|, \Rcomp}(l)))^3D(1). \] Because $U(\prod_{\a=1}^d X_{\a})$ is a restriction of $U$, it is still the case that the $C$-slices of this tensor all have essential partition rank at most $D(1)$. Applying Proposition \ref{Essential equivalence} we obtain \[ \Rrk U (\prod_{\a=1}^d X_{\a}) \ge l + d!lG_{d-|C|, \Rcomp}'(G_{d-|C|, \Rcomp}(l)) \] and hence \[ \Rrk T(\prod_{\a=1}^d X_{\a}) \ge l. \qedhere\] \end{proof}

\section{A simple subtensors argument in the case of rank powers}

Before concluding we would like to devote a short section to discuss a technique which allows us to obtain reasonably good bounds for subtensors when the family of partitions can be factored in the sense that we will now define. (For instance, the tensor rank can be factored in a simple way in this sense, whereas the slice rank and the partition rank cannot.)

\begin{definition} \label{product of ranks definition} Let $D \ge 1$, $ d_1,\dots ,d_D \ge 2$ be positive integers and let $ R_1,\dots ,R_D$ be non-empty families of partitions of $ \lbrack d_1 \rbrack, \dots, \lbrack d_D \rbrack$, respectively. Let $ R= R_1 \times \dots  \times R_D$ be the family of partitions of $ \{(i,j): 1 \le i \le d, 1 \le j \le D_d\}$ defined by $ \{\{P_{i,1} \cup \dots  \cup P_{i,d}\}: P_{i,1} \in R_1,\dots ,P_{i,d} \in R_d \}$, where a partition $ P_{i,j}$ of $\lbrack d_i \rbrack$ is identified with the corresponding partition of $ \{i\} \times \lbrack d_i \rbrack$.

\end{definition}

We shall use the following notion of flattening rank.

\begin{definition} For $T: \prod_{\a = a}^{d_1} Q_{1,\a} \times \prod_{\a = a}^{d_2} Q_{2,\a} \rightarrow \F$ an order-$(d_1+d_2)$ tensor, let the \emph{flattening rank} of $T$, denoted by $ \matfrank T$, be the rank of the matrix $A: (\prod_{\a = 1}^{d_1} Q_{1,\a}) \times (\prod_{\a = 1}^{d_2} Q_{2,\a}) \rightarrow \mathbb{F}$ defined by \[ A((x_{1,1},\dots ,x_{1,d_1}),(x_{2,2},\dots ,x_{2,d_2}))= T(x_{1,1},\dots ,x_{1,d_1},x_{2,2},\dots ,x_{2,d_2}). \]  \end{definition}

We remark that if $ T$ is such that $ \Rprodrk T \le k$ then $ \matfrank T \le k$: this follows from checking that if $\Rprodrk T= 1$ then $ \matfrank T \le 1$. The proof that we are about to start can be viewed as a generalisation of the proof of Proposition \ref{Subtensors for $1$-enhanced slice rank}.

\begin{proposition}\label{product of ranks bound} Let $d_1, d_2 \ge 2$ be positive integers, and let $ R_1, R_2$ be non-empty families of partitions of respectively $ \lbrack d_1 \rbrack$, $ \lbrack d_2 \rbrack$. If Theorem \ref{Subtensors theorem} holds for $(d_1, R_1)$ with bounds $ F_{d_1, R_1}$ and $ G_{d_1,R_1}$ and for $(d_2, R_2)$ with bounds $ F_{d_2, R_2}$ and $ G_{d_2,R_2}$ then it holds for $(d_1+d_2,R_1 \times R_2)$ with bounds \[ F_{d_1 + d_2, R_1 \times R_2}(l) = \max(l, F_{d_1,R_1}(l), F_{d_2,R_2}(l))\] \[ G_{d_1 + d_2, R_1 \times R_2}(l) = l G_{d_1, R_1}(l) G_{d_2, R_2}(l).\] \end{proposition}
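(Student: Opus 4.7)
The plan is to generalise the two-case argument of Proposition \ref{Minors for $1$-enhanced slice rank}, splitting on the flattening rank $\matfrank T$. As a preliminary observation I would note that $\matfrank T \le \Rprodrk T$: any tensor with $(\mathit{R}_1 \times \mathit{R}_2)$-rank one factorises as $A(x_{1,\cdot})B(x_{2,\cdot})$ by splitting its defining partition into its $R_1$ and $R_2$ parts, hence has flattening rank at most $1$, and the general inequality follows by subadditivity.

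In Case 1, where $\matfrank T \ge l$, the standard result on minors of matrices gives subsets $X' \subset Q_1 = \prod_\a Q_{1,\a}$ and $Y' \subset Q_2 = \prod_\a Q_{2,\a}$ each of size $l$ such that $T(X' \times Y')$ has flattening rank $l$; projecting on each coordinate yields sets $X_{1,\a}, X_{2,\a}$ of size at most $l$ whose product has flattening rank at least $l$, hence $(\mathit{R}_1 \times \mathit{R}_2)$-rank at least $l$.

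In Case 2, where $\matfrank T \le l - 1$, I would use Gaussian elimination to pick $X_{\mathrm{flat}} = \{x_1,\dots,x_{l'}\}\subset Q_1$ and $Y_{\mathrm{flat}} = \{y_1,\dots,y_{l'}\}\subset Q_2$ with $l'=\matfrank T$ such that $M = T(X_{\mathrm{flat}} \times Y_{\mathrm{flat}})$ is invertible, and write $T(x,y) = \sum_{i=1}^{l'} a_i(x)\, T_{x_i}(y)$ where $a_i(x) = \sum_j (M^{-1})_{j,i} T(x,y_j)$. From the submultiplicative bound $\Rprodrk T \le \sum_i \Ronerk(a_i)\cdot \Rtwork(T_{x_i})$ and the hypothesis $\Rprodrk T \ge l G_{d_1,R_1}(l) G_{d_2,R_2}(l)$, averaging over the $l' \le l - 1$ terms forces some index $i$ with either $\Rtwork(T_{x_i}) \ge G_{d_2,R_2}(l)$ or $\Ronerk(a_i) \ge G_{d_1,R_1}(l)$.

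The former is the routine subcase, parallel to the treatment in Proposition \ref{Minors for $1$-enhanced slice rank}: apply Theorem \ref{Minors theorem} for $(d_2, R_2)$ to the slice $T_{x_i}$ to get $X_{2,\a}$ of size at most $F_{d_2,R_2}(l)$ with $\Rtwork(T_{x_i}(\prod_\a X_{2,\a})) \ge l$, and take each $X_{1,\a}$ to be the singleton $\{(x_i)_\a\}$; collapsing the singleton factors on the $Q_1$ side identifies the $(\mathit{R}_1 \times \mathit{R}_2)$-rank of the minor with its $R_2$-rank. The main obstacle is the opposite subcase, where $\Ronerk(a_i) \ge G_{d_1,R_1}(l)$, since $a_i$ is not itself a slice of $T$. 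I would handle it by first applying Theorem \ref{Minors theorem} for $(d_1,R_1)$ to $a_i$, producing $X_{1,\a}$ of size at most $F_{d_1,R_1}(l)$ with $\Ronerk(a_i(\prod_\a X_{1,\a})) \ge l$, and then taking each $X_{2,\a}$ to be the projection of $Y_{\mathrm{flat}}$ onto the $\a$-th coordinate (of size at most $l'$). With the linear functional $\phi$ on $\mathbb{F}^{\prod_\a X_{2,\a}}$ defined by $\phi(b) = \sum_j (M^{-1})_{j,i} b(y_j)$, the identity $(\phi \otimes \mathrm{id})(T(\prod_\a X_{1,\a} \times \prod_\a X_{2,\a}))(x) = a_i(x)$ holds by construction of $a_i$; combined with the elementary lemma that contracting one side of an $(\mathit{R}_1 \times \mathit{R}_2)$-rank one tensor by a linear functional yields an $R_1$-rank one tensor, subadditivity then gives $\Rprodrk(T(\prod_\a X_{1,\a} \times \prod_\a X_{2,\a})) \ge \Ronerk(a_i(\prod_\a X_{1,\a})) \ge l$. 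A final check confirms that every constructed set has size at most $\max(l, F_{d_1,R_1}(l), F_{d_2,R_2}(l))$.
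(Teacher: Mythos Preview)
Your proposal is correct and follows essentially the same two-case argument as the paper: split on the flattening rank, use the standard matrix minor result in the high case, and in the low case write a flattening-rank decomposition, pigeonhole onto a factor of large $R_1$- or $R_2$-rank, and bound the product rank of the minor from below via a contraction functional. The only cosmetic differences are that you build the decomposition explicitly from row slices $T_{x_i}$ and Gaussian-elimination coefficients $a_i$ (where the paper uses an abstract decomposition $\sum T_{1,i}T_{2,i}$ with both families linearly independent), and that in the subcase where a genuine slice $T_{x_i}$ has large $R_2$-rank you take singletons on the $Q_1$ side rather than invoking the symmetric contraction argument the paper covers by ``without loss of generality''.
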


\begin{proof}

Let $ T$ be a tensor with $\Rprodrk T \ge l G_{d_1, R_1}(l) G_{d_2,R_2}(l)$. We distinguish two cases.
\medskip

\noindent \textbf{Case 1}: We have $ \matfrank T \ge l$. Then letting $ A$ be as above and using the standard result on full-rank submatrices of matrices, there exist subsets $ X^1, X^2$ of $ \prod_{\a = 1}^{d_1} Q_{1,\a} $ and $\prod_{\a = 1}^{d_2} Q_{2,\a}$, respectively, with size at most $l$, such that $ \matfrank A(X^1 \times X^2) \ge l$. Let $X_{1,\a}$ be the projection of $ X^1$ on to the $(1,\a)$th coordinate axis for each $\a = 1,\dots ,d_1$, and similarly let $X_{2,\a}$ be the projection of $ X^2$ on  to the $(2,\a)$th coordinate axis for each $\a = 1,\dots ,d_2$. The tensor \[ T'=T(X_{1,1} \times \dots  \times X_{1,d_1} \times X_{2,1} \times \dots  \times X_{2,d_2}) \] satisfies $\matfrank T' \ge l$ and hence \[ \Rprodrk T' \ge l. \]
\medskip

\noindent \textbf{Case 2}: We have $ \matfrank T \le l$. We then let $ l'= \matfrank T$. There exist tensors $ T_{1,1},\dots ,T_{1,l'}: \prod_{\a = a}^{d_1} Q_{1,\a} \rightarrow \F $ and tensors $T_{2,1},\dots ,T_{2,l'}: \prod_{\a = a}^{d_2} Q_{2,\a} \rightarrow \F$ such that we can write \[ T(x_{1,1} \dots, x_{1,d_1}, x_{2,1}, \dots, x_{2,d_2}) = \sum_{i=1}^{l'} T_{1,i}(x_{1,1}, \dots, x_{1,d_1}) T_{2,i}(x_{2,1}, \dots, x_{2,d_2}) \] for all $x_{1,1} \in Q_{1,1}, \dots, x_{1,d_1} \in Q_{1,d_1}, x_{2,1} \in Q_{2,1}, \dots, x_{2,d_2} \in Q_{2,d_2}$. Moreover, the families of tensors $\{T_{1,1}, \dots T_{1,l'}\}$ and $\{T_{2,1}, \dots T_{2,l'}\}$ are both linearly independent (if they were not, then we would have $\matfrank T < l'$). For each $ i \in \lbrack l' \rbrack$, by definition of the $(R_1 \times R_2)$-rank we have \[ \Rprodrk (T_{1,i} T_{2,i}) \le \Ronerk T_{1,i} \Rtwork T_{2,i} \] so by subadditivity \[ \Rprodrk T \le \sum_{i=1}^{l'}\Ronerk T_{1,i} \Rtwork T_{2,i}. \] Since $ \Rprodrk T \ge l G_{d_1, R_1}(l) G_{d_2,R_2}(l)$ and $ l'\le l$, there exists $ i \in \lbrack l' \rbrack$ such that $\Ronerk T_{1,i} \ge G_{d_1,R_1}(l)$ or $\Rtwork T_{2,i} \ge G_{d_2,R_2}(l)$. Without loss of generality let us assume that $\Ronerk T_{1,i} \ge G_{d,R_1}(l)$. Because the family $\{T_{2,1}, \dots ,T_{2,l'}\}$ is linearly independent there exists a function $u: \prod_{\a=1}^{d_2} Q_{2,\a} \rightarrow \F$ supported inside a subset $U$ of $\prod_{\a=1}^{d_2} Q_{2,\a}$ with size at most $l' \le l$ such that $ u.T_{2,i'} = 1_{i'=i}$ for all $ i' \in \lbrack l' \rbrack$, so $ u.T = T_{1,i}$. For each $ \a \in \lbrack d_2 \rbrack$ let $ X_{2,\a}$ be the projection of $ U$ on the $ (2,\a)$ th coordinate axis. The sets $ X_{2,\a}$ all have size at most $ l' \le l$. By Theorem \ref{Subtensors theorem} for $(d_1,R_1)$ there exist sets $ X_{1,\a}, \a \in \lbrack d_1 \rbrack$ with size at most $ F_{d_1,R_1}(l)$ and such that \[ \Ronerk T_{1,i} (X_{1,1} \times \dots  \times X_{1,d_1}) \ge l. \] Letting \[ T'=T(X_{1,1} \times \dots  \times X_{1,d_1} \times X_{2,1} \times \dots  \times X_{2,d_2}) \] we have $ \Ronerk u.T' \ge l$. This ensures that $ \Rprodrk T \ge l$: indeed if we can write \[ T'(x_{1,1} \dots, x_{1,d_1}, x_{2,1}, \dots, x_{2,d_2}) = \sum_{i=1}^{r} T_{1,i}'(x_{1,1} \dots, x_{1,d_1}) T_{2,i}'(x_{2,1}, \dots, x_{2,d_2}) \] for some tensors $T_{1,i}': \prod_{\a = 1}^{d_1} Q_{1,\a} \rightarrow \F$, $T_{2,i}': \prod_{\a = 1}^{d_2} Q_{2,\a} \rightarrow \F$ and some positive integer $r$ then \[ u.T' = \sum_{i=1}^{r} T_{i,1}' (u.T_{i,2}') \] and hence $ \Ronerk u.T' \le r$.\end{proof}

Using induction on $ D$ by applying the previous proposition to $ R_1 \times \dots  \times R_{D-1}$ and $ R_D$ we obtain the following bounds.

\begin{corollary} Let $ D \ge 1$, $ d_1,\dots ,d_D \ge 2$ be positive integers, let $ R_1,\dots ,R_D$ be families of non-empty partitions of respectively $ \lbrack d_1 \rbrack$,\dots ,$ \lbrack d_D \rbrack$, and let $R = R_1 \times \dots \times R_D$. If for each $ j \in \lbrack D \rbrack$ Theorem \ref{Subtensors theorem} holds for $(d_j, R_j)$ with bounds $ F_{d_j, R_j}$ and $ G_{d_j, R_j}$ then Theorem \ref{Subtensors theorem} holds for $(d_1 + \dots + d_D, R_1 \times \dots  \times R_D)$ with the bounds \[ F_{d_1 + \dots  + d_D,R_1 \times \dots  \times R_D}(l) = \max (l, \max_{1 \le j \le D}F_{d_j,R_j}(l)) \] \[ G_{d_1 + \dots + d_D, R_1 \times \dots  \times R_D}(l) = l^{D-1} \prod_{j=1}^d G_{d_j, R_j}(l). \] \end{corollary}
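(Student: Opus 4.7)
The plan is to proceed by straightforward induction on $D$, using Proposition \ref{product of ranks bound} as the inductive engine.

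For the base case $D=1$, the family $R_1 \times \dots \times R_D$ reduces to $R_1$, and the stated bounds reduce exactly to $F_{d_1, R_1}(l)$ and $G_{d_1, R_1}(l)$, so Theorem \ref{Minors theorem} for $(d_1, R_1)$ is precisely the desired conclusion.

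For the inductive step, suppose $D \ge 2$ and the corollary holds for $D-1$. Then Theorem \ref{Minors theorem} holds for the pair $(d_1 + \dots + d_{D-1}, R_1 \times \dots \times R_{D-1})$ with bounds
\[
F(l) = \max\bigl(l, \max_{1 \le j \le D-1} F_{d_j, R_j}(l)\bigr), \qquad G(l) = l^{D-2} \prod_{j=1}^{D-1} G_{d_j, R_j}(l).
\]
I apply Proposition \ref{product of ranks bound} to the factorisation $(R_1 \times \dots \times R_{D-1}) \times R_D$, which is permissible because Definition \ref{product of ranks definition} is manifestly associative: the family of partitions on the disjoint union of coordinate index sets obtained by iterating the binary product agrees with the one-step product $R_1 \times \dots \times R_D$. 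Proposition \ref{product of ranks bound} then yields Theorem \ref{Minors theorem} for $(d_1 + \dots + d_D, R_1 \times \dots \times R_D)$ with bounds
\[
F_{\text{new}}(l) = \max\bigl(l, F(l), F_{d_D, R_D}(l)\bigr) = \max\bigl(l, \max_{1 \le j \le D} F_{d_j, R_j}(l)\bigr),
\]
\[
G_{\text{new}}(l) = l \cdot G(l) \cdot G_{d_D, R_D}(l) = l \cdot l^{D-2} \prod_{j=1}^{D-1} G_{d_j, R_j}(l) \cdot G_{d_D, R_D}(l) = l^{D-1} \prod_{j=1}^{D} G_{d_j, R_j}(l),
\]
which matches the claimed bounds exactly.

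There is essentially no obstacle here: the only mild point worth making explicit in the write-up is the associativity of the product operation $\times$ on families of partitions, which is needed to identify the iteratively built family with $R_1 \times \dots \times R_D$ as defined in Definition \ref{product of ranks definition}. Everything else is pure book-keeping of the recurrences $F_{\text{new}} = \max(l, F, F_{d_D,R_D})$ and $G_{\text{new}} = l \cdot G \cdot G_{d_D,R_D}$ supplied by Proposition \ref{product of ranks bound}.
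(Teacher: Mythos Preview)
Your proof is correct and follows exactly the approach the paper takes: induction on $D$, applying Proposition \ref{product of ranks bound} to the factorisation $(R_1 \times \dots \times R_{D-1}) \times R_D$. One very minor quibble: in the base case $D=1$ the stated $F$-bound is $\max(l, F_{d_1,R_1}(l))$ rather than $F_{d_1,R_1}(l)$, but since this is only larger the conclusion still holds trivially.
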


\begin{corollary} Let $ D \ge 1$, $ d \ge 2$ be positive integers and let $ R$ be a family of non-empty partitions of $ \lbrack d \rbrack$. If Theorem \ref{Subtensors theorem} holds with the bounds $ F_{d,R}$ and $ G_{d,R}$, then Theorem \ref{Subtensors theorem} holds with the bounds \[ F_{Dd, R^{\otimes D}}(l) = \max(l, F_{d,R}(l))\] \[ G_{Dd, R^{\otimes D}}(l) = l^{D-1} G_{d,R}^D(l). \] \end{corollary}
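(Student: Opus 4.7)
The plan is to observe that this corollary is a direct specialization of the immediately preceding corollary. Specifically, I would apply the previous corollary with the choices $d_1 = d_2 = \dots = d_D = d$ and $R_1 = R_2 = \dots = R_D = R$, which makes $R_1 \times \dots \times R_D$ precisely $R^{\otimes D}$ and makes $d_1 + \dots + d_D$ equal to $Dd$.

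Under these choices the bounds given by the previous corollary simplify as follows. For the size bound, the maximum $\max_{1 \le j \le D} F_{d_j, R_j}(l)$ collapses to the single value $F_{d,R}(l)$ since all the pairs $(d_j, R_j)$ are equal to $(d,R)$, giving $F_{Dd, R^{\otimes D}}(l) = \max(l, F_{d,R}(l))$ as required. For the threshold bound, the product $\prod_{j=1}^D G_{d_j, R_j}(l)$ becomes $G_{d,R}(l)^D$, and combined with the factor $l^{D-1}$ this yields $G_{Dd, R^{\otimes D}}(l) = l^{D-1} G_{d,R}(l)^D$ as stated.

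Since the hypothesis of the previous corollary (that Theorem \ref{Minors theorem} holds for each pair $(d_j, R_j)$) follows from the single hypothesis here (that it holds for $(d,R)$), no further work is needed. The corollary follows immediately. There is no real obstacle in this argument; it is purely a matter of unwinding the definitions and specializing the parameters.
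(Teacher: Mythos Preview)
Your proposal is correct and matches the paper's approach: the paper states this corollary immediately after the preceding one with no separate proof, treating it as the obvious specialization $d_1=\dots=d_D=d$, $R_1=\dots=R_D=R$. Your unwinding of the bounds is exactly what is intended.
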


\section{Open problems}

The results in this paper still leave open a number of related strengthenings. The bounds that we obtain for Theorem \ref{Subtensors theorem} are very probably suboptimal: the functions $F_{d,R}$ and $G_{d,R}$ that we obtain are merely those resulting from the current proof, and nothing particularly suggests that they are close to the optimal bounds. On the contrary, our guess would be that it is possible to take both $F_{d,R}(l)$ and $G_{d,R}(l)$ to be linear in $l$.

\begin{conjecture} \label{Linear bounds for subtensors} Let $d \ge 2$ be an integer and let $R$ be a non-empty family of partitions of $\lbrack d \rbrack$. Then there exist constants $A(d,R), B(d,R) > 0$ such that whenever $T: \prod_{\a=1}^d Q_{\a} \rightarrow \mathbb{F}$ is an order-$d$ tensor with $\Rrk T \ge l$, there exist $X_1 \subset Q_1, \dots, X_d \subset Q_d$ with size at most $A(d,R) l$ such that

\[ \Rrk T(X_1 \times \dots  \times X_d) \ge B(d,R) l. \] \end{conjecture}

Aside from an improvement of our current bounds to linear bounds, there are two additional statements involving subtensors that we would expect to be true. The first involves obtaining a subtensor of bounded size with the same rank as the original tensor, and the second involves obtaining a full-rank subtensor (that is, of rank equal to the sizes of each of the $d$ sets) of size tending to infinity with the rank of the original tensor.

\begin{conjecture} \label{Type 2 subtensors} Let $d \ge 2$ be an integer, and let $R$ be a non-empty family of partitions of $\lbrack d \rbrack$. Then there exists a function $F_{d,R,\mathrm{same}}: \mathbb{N} \rightarrow \mathbb{N}$ such that whenever $T: \prod_{\a=1}^d Q_{\a} \rightarrow \mathbb{F}$ is an order-$d$ tensor with $\Rrk T \ge l$, there exist $X_1 \subset Q_1, \dots, X_d \subset Q_d$ with size at most $F_{d,R,\mathrm{same}}(l)$ such that

\[ \Rrk T(X_1 \times \dots  \times X_d) \ge l. \] \end{conjecture}

\begin{conjecture} \label{Type 1 subtensors} Let $d \ge 2$ be an integer, and let $R$ be a non-empty family of partitions of $\lbrack d \rbrack$. There exists a function $G_{d,R,\mathrm{full}}: \mathbb{N} \rightarrow \mathbb{N}$ such that whenever $T: \prod_{\a=1}^d Q_{\a} \rightarrow \mathbb{F}$ is an order-$d$ tensor with $\Rrk T \ge G_{d,R,\mathrm{full}}(l)$, there exist $X_1 \subset Q_1, \dots, X_d \subset Q_d$ with size at most $l$ such that

\[ \Rrk T(X_1 \times \dots  \times X_d) \ge l. \] \end{conjecture}

It is worth noting that conjectures \ref{Linear bounds for subtensors}, \ref{Type 2 subtensors} and \ref{Type 1 subtensors} are still unproved for very simple cases, such as for the slice rank for order-$3$ tensors. As the methods in this paper involve losses rather quickly, we expect that new, more precise methods would be required to make progress on them: arguments with no losses at all in the direction where the bound is sharp will necessarily be involved in any attempt on conjectures \ref{Type 2 subtensors} and \ref{Type 1 subtensors}. Still on the topic of strengthening Theorem \ref{Subtensors theorem} but in another direction, we can ask about the dependence of the bounds in the case of several tensors. We would expect that in Proposition \ref{Multidimensional Rrk subtensors} for fixed $d,R,l$ we can take the dependence of $F_{d,R,s}(l)$ to be linear in $s$ and that we can take $G_{d,R,s}(l)$ to be independent of $s$, as was shown in Proposition \ref{Multidimensional tensor rank subtensors} to be the case for the tensor rank.

\begin{conjecture} \label{Linear bound in the number of tensors} Let $d \ge 2$ be a positive integer, and let $R$ be a non-empty family of partitions of $\lbrack d \rbrack$. For every positive integer $l \ge 1$, there exist quantities $F_{d,R}(l)$, $G_{d,R}(l)$ such that whenever $s \ge 1$ is a positive integer, if $T_1, \dots, T_s: \prod_{\a=1}^d Q_{\a} \rightarrow \mathbb{F}$ are order-$d$ tensors such that \[ \Rrk (a.T) \ge G_{d,R}(l)\] for every $a \in \F^s$ then there exist $X_1 \subset Q_1, \dots, X_d \subset Q_d$ each with size at most $sF_{d,R}(l)$ such that \[ \Rrk (a.T)(X_1 \times \dots \times X_d) \ge l\] for every $a \in \F^s$. \end{conjecture}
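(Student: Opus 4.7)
The plan is to adapt the inductive proof of Theorem \ref{Minors theorem} in Section \ref{section: General case} to the multi-tensor setting, aiming for a minor of size $s F_{d,R}(l)$ and a rank threshold $G_{d,R}(l)$ that is independent of $s$. The induction would proceed, as in Section \ref{section: General case}, on $d$ and then on the structure of $R$ via the down-shadow operation. The base case $R = R_{\tr}$ is essentially already contained in Proposition \ref{Multidimensional tensor rank minors}, which produces a minor of size $sk$ with $\tr(a.T)(\prod_{\a=1}^d X_{\a}) \ge \min(k, \tr(a.T))$ for every $a \in \F^s$; choosing $k = l$ yields the conjectured form with $F_{d,\tr}(l) = G_{d,\tr}(l) = l$.

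For the inductive step, let $C$ be a largest part for $R$ and let $R'$ be its down-shadow with respect to $C$. Mirroring the three-case structure of the proof of Proposition \ref{minors inductive step, general case}, I would first attempt to locate a fixed family $y_1,\dots ,y_m \in \prod_{\a \in C^c} Q_{\a}$ such that for every $a \in \F^s \setminus \{0\}$, some linear combination of the slices $(a.T)_{y_1},\dots ,(a.T)_{y_m}$ has high partition rank. Given such a family, an inductive multi-tensor minors result for $(|C|, \pr)$ would restrict the $C$-coordinates to sets of size linear in $s$ while preserving separation simultaneously across all $a$. If no such family exists, a multi-tensor analogue of the process in Lemma \ref{process with radii} would produce coefficient functions $A_i(a, \cdot)$ depending linearly on $a$, approximating each slice $(a.T)_y$ by $\sum_i A_i(a, y)(a.T)_{y_i}$, and the proof would split into the usual Case 2 (some $A_i$ has high $\Rcomp$-rank, handled by the inductive multi-tensor minors for $(d-|C|, \Rcomp)$) and Case 3 (all $A_i$ have bounded $\Rcomp$-rank, reducing to the $R'$-rank case via a multi-tensor version of Proposition \ref{Equivalence}).

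The main obstacle is preventing a factor of $s$ from entering the rank threshold at any stage. This is most delicate in the analogue of Proposition \ref{Equivalence}, whose current proof relies on inverting a linear system specific to each individual tensor via Gaussian elimination, extracting functionals dual to the $A_i$. In the multi-tensor setting these dual systems vary with $a$, and a naive pointwise application would lose a factor of $s$, much as the existing Proposition \ref{Multidimensional Rrk minors} loses such a factor in the rank threshold $G_{d,R,s}(l) = G_{d,R}(sl)$. The most promising route around this seems to be to work directly with the $s$-dimensional subspace $\langle T_1,\dots ,T_s \rangle$ inside the space of order-$d$ tensors, and to extract a single decomposition scheme valid for every $a \in \F^s$ simultaneously, via something resembling a simultaneous Gaussian elimination over the space of slicings.

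Even granted such a uniform decomposition, a careful accounting throughout the $2^d$-stage iteration of the down-shadow would be needed, verifying that $s$ enters only multiplicatively into the minor-size bound and never into the rank threshold. Given the apparent depth of the uniform analogue of Proposition \ref{Equivalence} required, I would expect Conjecture \ref{Linear bound in the number of tensors} to require a substantial reorganisation of the inductive argument of Section \ref{section: General case}, rather than merely an elaboration of its current form.
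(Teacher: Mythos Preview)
The statement you are addressing is presented in the paper as an open conjecture in the final section, not as a proved result; the paper gives no proof and explicitly flags that the bound $F_{d,R,s}(l) \le F_{d,R}(sl) + F_{d,R,s-1}(l)$ from Proposition \ref{Multidimensional Rrk minors} would yield only a quadratic dependence in $s$ even under Conjecture \ref{Linear bounds for minors}. Your proposal is therefore not to be compared against a proof in the paper, because none exists.

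That said, your discussion is a reasonable and accurate diagnosis of why the conjecture is open. You correctly identify the base case $R = R_{\tr}$ as settled by Proposition \ref{Multidimensional tensor rank minors}, and you correctly locate the central obstruction: in the inductive step the current arguments (notably the Gaussian-elimination step in Proposition \ref{Equivalence} and the pigeonhole-style reduction in Proposition \ref{Multidimensional Rrk minors}) are carried out one tensor at a time, so that the rank threshold acquires a dependence on $s$. Your suggestion of seeking a single decomposition scheme valid simultaneously across the subspace $\langle T_1,\dots,T_s\rangle$ is a natural direction, but as you yourself note, no concrete mechanism for this is supplied, and the proposal ends by conceding that a substantial reorganisation would be needed. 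This is consistent with the paper's own position: the conjecture is stated precisely because the methods of Section \ref{section: General case} do not obviously extend to give an $s$-independent threshold. Your write-up should thus be read as a discussion of obstacles rather than a proof, and on that reading it is sound.
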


We note that the bound $F_{d,R,s}(l) \le F_{d,R}(sl) + F_{d,R,s-1}(l)$ resulting from the proof of Proposition \ref{Multidimensional Rrk subtensors} would give only a quadratic bound in $s$ even if we assume that $F_{d,R}(l)$ can indeed be taken to be linear in $l$.

In the tensor rank case we were able to show that the optimistic bounds in the generalisation of the standard statement on full-rank submatrices of matrices holds indeed do hold. We can ask whether there is a simple characterisation of all non-empty families $R$ of partitions of $\lbrack d \rbrack$ for which this is the case.

\begin{question} \label{Strong subtensors} Let $d \ge 2$ be an integer. For which families $R$ of partitions of $\lbrack d \rbrack$ is it true that if $T: \prod_{\a=1}^d Q_{\a} \rightarrow \mathbb{F}$ is an order-$d$ tensor with $\Rrk T = l$, then there exist $X_1 \subset Q_1, \dots, X_d \subset Q_d$ all with size $l$ such that $\Rrk T(X_1 \times \dots  \times X_d) = l$? \end{question} Regarding Theorem \ref{Disjoint rank subtensors theorem} we can also ask for the optimal lower bounds on the value of the disjoint rank for a given essential rank. We believe that there again, linear bounds should hold.

\begin{conjecture} \label{Optimal bounds on disjoint rank} Let $d \ge 2$ be an integer, and let $R$ be a non-empty family of partitions of $\lbrack d \rbrack$. There exists a constant $D(d,R) > 0$ such that whenever $T: \prod_{\a=1}^d Q_{\a} \rightarrow \mathbb{F}$ is an order-$d$ tensor we have $\dRrk T \ge D(d,R) \eRrk T$. \end{conjecture}

In the case of the slice rank (and, similarly, of the partition rank) a better bound than $\dsr T \ge \esr T/d$ cannot hold, even for large values of $\esr T$: if $Q_1 = \dots = Q_d = \lbrack n \rbrack$ for some large positive integer $n$, then whenever $X_1,\dots,X_d$ are pairwise disjoint subsets of $\lbrack n \rbrack$ we have \[\sr T(X_1 \times \dots  \times X_d) \le \min(|X_1|,\dots , |X_d|) \le n/d,\] but a simple counting argument in any finite field $\F$ shows that there exist tensors $T: \lbrack n \rbrack^d \rightarrow \F$ with essential slice rank $n(1-o(1))$: there are $|\F|^{n^d}$ possible order-$d$ tensors and for every positive integer $k$ the number of possible slice rank decompositions with length at most $k$ is at most $k^3 |\F|^{k(n^{d-1}+n)}$. It however seems plausible that the essential slice (resp. partition) rank of an order-$d$ tensor is not more than approximately $d$ times greater than its disjoint slice (resp. partition) rank, at least in the regime where the essential and disjoint ranks are not both small.

In the case of the tensor rank, for any positive integer $d \ge 2$ and any finite field $\F$ there exists an order-$d$ tensor $[d+1]^d \rightarrow \F$ with disjoint tensor rank at most $1$, but with essential tensor rank at least $(d-1)!$. Indeed if $X_1, \dots, X_d$ are pairwise disjoint subsets of $[d+1]$ then all but at most one of these sets have size at most $1$, and a similar counting argument shows that the coefficients of $T$ can be taken such that its essential tensor rank is at least $(d-1)!$: there are $(d+1)!$ elements in $[d+1]^d \setminus E$ and for every positive integer $k$ the number of coefficients in a tensor rank decomposition with length at most $k$ is equal to $kd(d+1)$.

We have briefly attempted to prove that for $d=2$ we may take the constant to be equal to $2$ in Conjecture \ref{Optimal bounds on disjoint rank}, but found it to be, perhaps unexpectedly, a problem of at least moderate difficulty already.

\begin{conjecture} \label{Constant 1/2 in disjoint matrix rank} Let $A: Q_1 \times Q_2 \rightarrow \mathbb{F}$ be a matrix. Then $\drk A \ge (\erk A)/2$. \end{conjecture}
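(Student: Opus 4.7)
My plan is to strengthen the argument of Proposition \ref{Disjoint rank for matrices}, which gives the weaker bound $\drk A \ge \erk A/3$, by exploiting additional structural information coming from a minimizer of $\rk(A + V)$ over diagonal $V$. Setting $r = \erk A$, I would first replace $A$ with $A + V$ for a rank-minimizing diagonal $V$; since disjoint minors avoid the diagonal, this replacement leaves $\drk A$ unchanged, and the new $A$ satisfies $\rk A = r$ together with the multi-entry minimality property $\rk(A + V') \ge r$ for every diagonal $V'$. Specialising this minimality to rank-one diagonal updates $A + t E_{ii}$ at each $i \in Q_1 \cap Q_2$, the standard criterion for rank reduction under rank-one updates then forces that at every $i$ either $e_i \notin \operatorname{col}(A)$ or $e_i \notin \operatorname{row}(A)$ (the exceptional case $e_i \in \operatorname{col}(A) \cap \operatorname{row}(A)$ is ruled out by a constraint on the $i$-th coordinate of the solution to $A v = e_i$). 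Writing $L^* = \{i \in Q_1 \cap Q_2 : e_i \notin \operatorname{col}(A)\}$ and $R^* = \{i \in Q_1 \cap Q_2 : e_i \notin \operatorname{row}(A)\}$, I thus obtain the covering property $Q_1 \cap Q_2 \subseteq L^* \cup R^*$.

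I would then follow the scheme of Proposition \ref{Disjoint rank for matrices}: set $k := \drk A$, fix disjoint $X \subset Q_1$, $Y \subset Q_2$ of size $k$ with $\rk A(X \times Y) = k$, and construct a diagonal modification $D$ supported on the diagonal indexed by $(Q_1 \cap Q_2) \setminus (X \cup Y)$ so that $\rk(A + D)((Q_1 \setminus Y) \times (Q_2 \setminus X)) = k$. The original proof then bounds $\rk(A + D) \le 3k$ by summing three contributions: the main block (rank $k$), the $Y$-row strip (rank $\le k$), and the $X$-column strip (rank $\le k$). To gain the factor $1/2$ in place of $1/3$, I would use the so-far untouched diagonal entries $D_{ii}$ at indices $i \in (X \cup Y) \cap (Q_1 \cap Q_2)$ — combined with the structural covering property — to absorb one of the two side strips into the main block (say, the $X$-column strip into the column span of the main block by making each $i$-th column of $A + D$ a linear combination of the main block's columns). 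The covering property guarantees that at each $i$, one of the two absorption directions (into the column span or into the row span) is available, precisely matching the one degree of freedom provided by $D_{ii}$.

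The hard part will be the global consistency of these local absorptions. Each per-index absorption at an $i$ imposes a specific linear constraint on $D_{ii}$ that depends both on $A$ and on the choices made at other indices, and the one-dimensional freedom at each $i$ is tight, so a naive assembly fails. I expect that the feasibility of a globally coherent choice of absorptions can be formulated as a matroid intersection or bipartite matching problem on the indices in $(X \cup Y) \cap (Q_1 \cap Q_2)$, with its solvability guaranteed by $Q_1 \cap Q_2 \subseteq L^* \cup R^*$ and by the full multi-entry minimality of $A$. If this can be carried out, then $\rk(A + D) \le 2k$, which gives $\erk A \le 2k = 2\,\drk A$ as required. Making this consistency argument rigorous is the main technical challenge, and it is plausible that a genuinely different technique (for instance a bilinear/LP-relaxation approach or a symmetric formulation exchanging the roles of $Q_1$ and $Q_2$) will ultimately be needed to settle the conjecture.
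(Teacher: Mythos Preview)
The paper does not prove this statement: it is explicitly stated as an open conjecture (Conjecture~\ref{Constant 1/2 in disjoint matrix rank}) in the final section, with the authors remarking that they ``found it to be, perhaps unexpectedly, a problem of at least moderate difficulty already.'' So there is no proof in the paper to compare your proposal against; the question is whether your sketch constitutes progress.

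Your proposal has a concrete error early on. The claimed covering $Q_1 \cap Q_2 \subseteq L^* \cup R^*$ does not follow from minimality. From the rank-one update criterion, if $e_i \in \operatorname{col}(A)$ and $e_i \in \operatorname{row}(A)$, then $\rk(A + tE_{ii}) < \rk A$ for some $t$ precisely when $\xi_i \neq 0$ for a solution of $A\xi = e_i$. Minimality therefore only forces $\xi_i = 0$ in this case; it does not rule the case out. A small counterexample: take $Q_1 = Q_2 = \{1,2,3\}$ and
\[
A = \begin{pmatrix} 0 & 1 & 0 \\ 0 & 0 & 1 \\ 0 & 0 & 0 \end{pmatrix}.
\]
Then $\rk(A+V) = 2$ for every diagonal $V$ with $V_{33}=0$, and $\rk(A+V)=3$ otherwise, so $\erk A = 2$ and $A$ is itself a minimizer. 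At $i=2$ one has $e_2 \in \operatorname{col}(A)$ (third column) and $e_2 \in \operatorname{row}(A)$ (first row), yet every solution of $A\xi = e_2$ has $\xi_2 = 0$. So $2 \notin L^* \cup R^*$, and your covering fails. (Incidentally $\drk A = 1$ here, so the conjecture holds with equality, but not via your mechanism.)

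Beyond this, you yourself flag the second and more serious gap: even granting some structural dichotomy at each index, the ``global consistency'' step --- simultaneously absorbing one of the two side strips using the single remaining diagonal degree of freedom at each $i \in (X \cup Y)\cap(Q_1\cap Q_2)$ --- is not an argument but a hope, and your closing sentence already concedes that a different technique may be required. As it stands the proposal is a reasonable heuristic for why one factor of $k$ in the $3k$ bound of Proposition~\ref{Disjoint rank for matrices} might be redundant, but it does not yet contain a proof, and the first structural claim it rests on is false as stated.
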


One more question that seems natural to draw attention to regarding the main theorems of this paper is the extent to which we can relax the requirement that $R$ be a family of partitions of $\lbrack d \rbrack$ to $R$ being an arbitrary non-empty element of $\mathcal{P}(\mathcal{P}(\lbrack d \rbrack))$. Although this can lead to notions of rank that may be behave in somewhat surprising ways (for instance if $d=3$ and $R = \{\{\{1,2\},\{1,3\}\}, \{\{1,2\},\{2,3\}\}, \{\{1,3\},\{2,3\}\}\}$ then the tensor $1_{x=y=z} = 1_{x=y} 1_{x=z}$ has $R$-rank one), we believe that Theorems \ref{Subtensors theorem} and \ref{Disjoint rank subtensors theorem} are probably still true in this case, at least for tensors for which the rank is well-defined.

\end{document}